\documentclass[12pt]{amsart}
\usepackage[margin=1in]{geometry}
\usepackage{dsfont}

\usepackage[sc,osf]{mathpazo} %
\usepackage[utf8]{inputenc}
\usepackage[T1]{fontenc}

\RequirePackage{mathtools}
\RequirePackage{amsthm,amsmath,amsfonts,amssymb}

\RequirePackage[colorlinks,citecolor=blue,urlcolor=blue]{hyperref}
\RequirePackage{hyperref}
\RequirePackage{graphicx}

\usepackage[numbers,sort]{natbib}

\usepackage{cleveref}
\usepackage{mathrsfs}
\usepackage{needspace}
\usepackage{newtxtext}

\theoremstyle{plain}
\newtheorem{thm}{Theorem}
\newtheorem{prop}{Proposition}
\newtheorem{lemma}{Lemma}

\theoremstyle{definition}
\newtheorem{assumption}{Assumption}
\newtheorem{remark}{Remark}

\newcommand{\defeq}{\coloneqq}

\newcommand{\eqstop}{.}
\newcommand{\eqcomma}{,}

\newcommand{\ie}{\textit{i.e.}}

\newcommand{\dd}{\mathrm{d}}
\newcommand{\dtv}{\dd_{\mathsf{TV}}}

\renewcommand{\vec}[1]{\boldsymbol{#1}}

\crefname{thm}{theorem}{theorems}
\Crefname{thm}{Theorem}{Theorems}
\crefname{prop}{proposition}{propositions}
\Crefname{prop}{Proposition}{Propositions}
\crefname{lemma}{lemma}{lemmas}
\Crefname{lemma}{Lemma}{Lemmas}
\crefname{cor}{corollary}{corollaries}
\Crefname{cor}{Corollary}{Corollaries}
\crefname{corollary}{corollary}{corollaries}
\Crefname{corollary}{Corollary}{Corollaries}
\crefname{remark}{remark}{remarks}
\Crefname{remark}{Remark}{Remarks}
\crefname{assumption}{assumption}{assumptions}
\Crefname{assumption}{Assumption}{Assumptions}
\title[Cutoff in the TK model]{Cutoff profiles and boundary effects in the Togashi--Kaneko model of autocatalytic reactions with weak mutations} 

\author{Wasiur R. KhudaBukhsh}
\thanks{Wasiur R. KhudaBukhsh was partially supported by the Engineering and Physical
Sciences Research Council (EPSRC) of UK Research and Innovation
(UKRI) [grant number EP/Y027795/1].
}
\address{Wasiur R. KhudaBukhsh\\
School of Mathematical Sciences\\
University of Nottingham\\
University Park\\
Nottingham NG7 2RD\\
United Kingdom
}
\email{wasiur.khudabukhsh@nottingham.ac.uk}
\urladdr{https://www.wasiur.xyz/}

\author{Yangrui Xiang}
\thanks{Yangrui Xiang was supported in part by the Engineering and Physical
Sciences Research Council (EPSRC) of UK Research and Innovation
(UKRI) [grant number EP/Y027795/1] and by the National Science
Foundation (NSF) [grant number DMS-2246815].
}
\address{Yangrui Xiang\\
109 Lockett Hall\\
Department of Mathematics\\
Louisiana State University\\
Baton Rouge, LA 70803, United States of America  
}
\email{yxiang8@lsu.edu}

\usepackage{etoolbox}
\makeatletter
\patchcmd{\@setaddresses}{\def\\{\unskip, \ignorespaces}}{\def\\{\newline\hspace*{\parindent}}}{}{\PackageWarning{main}{address patch failed}}
\makeatother

\begin{document}

\begin{abstract}
We determine explicit bulk and worst-case total variation cutoff
profiles for the configuration law of a two-species
Togashi--Kaneko model with a fixed population of $n$ particles
and weak mutations. Boundary initial conditions introduce
an additional logarithmic growth phase and change both the leading
mixing-time coefficient and the limiting cutoff profile.

Let $\kappa_1,\kappa_2>0$ be the autocatalytic rate constants and
put $\lambda_*=\kappa_1\kappa_2/(\kappa_1+\kappa_2)$.
For deterministic initial configurations whose density converges
to $m\in(0,1)$, the total variation distance to equilibrium at time
$t_n(s)=\frac{\log n}{2\lambda_*}+\frac{s}{\lambda_*}$
converges to $\mathcal G(r(m)e^{-s})$ for every fixed
$s\in\mathbb R$, where
$\mathcal G(r)=\dtv(\mathcal N(r,1),\mathcal N(0,1))$, $r(m)>0$ is explicit, and $\mathcal N(a,b^2)$ denotes the Gaussian distribution with mean $a$ and variance $b^2$.
For the worst-case distance $\mathscr D_n(t)$ over all initial
configurations, we prove
$\mathscr D_n(T_n+s/\lambda_*)\to\mathcal P(s)$, where
$T_n=\left(
\frac{1}{\min\{\kappa_1,\kappa_2\}}
+\frac{1}{2\lambda_*}
\right)\log n$.
The profile $\mathcal P$ is the total variation distance between
the equilibrium Gaussian and a Gaussian mixture whose random
displacement is determined by a gamma-distributed amplitude
arising from early growth at the boundary.
Unlike the bulk profile, $\mathcal P$ depends on the mutation
parameters.
Both cutoffs have windows of constant order, and we identify
the asymptotically worst boundary configurations.
\end{abstract}
\maketitle

\tableofcontents

\section{Introduction}
\label{sec:intro}
We determine explicit total variation cutoff profiles for the
configuration law of a two-species Togashi--Kaneko model with fixed-population, weak mutations, and nonlinear kinetics
(see \Cref{sec:TK_model_desrciption}).
The density-dependent rates produce a unique stable interior
coexistence equilibrium for the limiting density equation.
In our model, weak mutations allow a missing species
to reappear from a boundary state, initiating a growth phase
driven by autocatalysis and further mutations. Our approach connects this stochastic growth phase
with the subsequent relaxation towards equilibrium to quantify
how boundary initial conditions affect the mixing time and cutoff
profile of the full configuration law.

Our results cover deterministic bulk initial configurations and
the worst-case distance over all initial states, including the
identification of asymptotically worst starting configurations.
Both cutoff windows have constant order, but their locations have
different leading coefficients in front of $\log n$.
The bulk profile is the total variation distance between a Gaussian
law and a deterministic translate of that law. From the boundary,
early growth produces a random time shift that persists at the
equilibrium fluctuation scale, giving a worst-case profile expressed
as the distance between a Gaussian mixture and the equilibrium
Gaussian. Thus, boundary initial conditions change both the leading
mixing time and the shape of the cutoff profile.

The cutoff phenomenon
\cite{Aldous_Diaconis_cutoff,Diaconis1996cutoff}
describes an abrupt transition to equilibrium over a time window
of smaller order than the mixing time.
The cutoff profile describes the limiting total variation distance
within this window and can depend on the initial condition.
Lacoin \cite{Lacoin_profile} computed an explicit Gaussian cutoff
profile for the simple exclusion process on the circle.
Further explicit profiles have been obtained for random walks on
sparse directed graphs \cite{BordenaveCaputoSalez2018},
random transpositions \cite{Teyssier2020Profile}, and the symmetric
exclusion process with reservoirs \cite{gonccalves2021sharp}.

\subsection{The Togashi--Kaneko model}
\label{sec:TK_model_desrciption}
The TK model describes autocatalytic reactions involving
$d$ species $S_1,\ldots,S_d$, where $d$ is referred to as the dimension. In the two-species case,
the reaction network is
\begin{align}\label{eq:simpleTK}
S_1 + S_2 &\longrightarrow 2 S_2, \quad 
S_1 + S_2 \longrightarrow 2 S_1, \quad 
\emptyset \longrightarrow S_1 \longrightarrow \emptyset, \quad  
\emptyset \longrightarrow S_2 \longrightarrow \emptyset \eqstop 
\end{align}  
The reactions of the form $\emptyset \longrightarrow S_1 \longrightarrow \emptyset, \,  
\emptyset \longrightarrow S_2 \longrightarrow \emptyset 
$ describe the production of species $S_1$ and $S_2$ from the environment (``birth'' reactions) and their eventual degradation (``death'' reactions). On the other hand, the reactions of the form  $S_1 + S_2 \longrightarrow 2 S_2, \,
S_1 + S_2 \longrightarrow 2 S_1 
$
are autocatalytic since the presence of $S_2$ promotes the production of additional $S_2$ in the first reaction while $S_1$ promotes its own replication in the second. 

The interest in the reaction network \eqref{eq:simpleTK}  can be attributed to its peculiar switching dynamics \cite{Bibbona2020Interface,TK2001Transitions}, called ``discreteness-induced transitions'' in the physics and mathematical biology literature. When the autocatalytic reactions are much faster relative to the production and degradation reactions (\ie, when the autocatalytic reaction rate constants are much larger than the production and the degradation reaction rate constants), the system is observed to  spend most of its time near states where one of the species is absent, switching rapidly from one such state to another. Togashi and Kaneko observed this switching behaviour
numerically in a four-species TK model
\cite{TK2001Transitions,Togashi2003}:
\begin{align*}
S_2 + S_3 \longrightarrow 2 S_3, \,
S_3 + S_4 \longrightarrow 2 S_4, \, S_4 + S_1\longrightarrow 2 S_1,\,
\emptyset \longrightarrow S_3 \longrightarrow \emptyset, \, 
\emptyset \longrightarrow S_4 \longrightarrow \emptyset \eqstop 
\end{align*}  
In an attempt to explain the switching behaviour, the authors in \cite{Bibbona2020Interface} derived the stationary distribution for the $d$-dimensional {TK} model with a loop of autocatalytic reactions and showed that the stationary distribution was indeed bimodal with almost all mass concentrated on states where one of the species is absent, under some conditions on the reaction rate constants. 

In this paper, we will consider the 2-dimensional {TK} model with mutations, which comprises the following reactions: 
\begin{equation}\label{eq:2dTKmutations}
\begin{gathered}
S_1+S_2\longrightarrow 2S_2,\quad
S_1+S_2\longrightarrow 2S_1,\quad
\emptyset\longrightarrow S_1\longrightarrow\emptyset,\quad
\emptyset\longrightarrow S_2\longrightarrow\emptyset\eqstop\\
S_1\longrightarrow S_2\eqcomma\quad
S_2\longrightarrow S_1\eqstop
\end{gathered}
\end{equation}
Here, the last two reactions describe mutations. When the autocatalytic reactions and the mutation reactions are fast (of the order $\varepsilon^{-1}$ for some small positive parameter $\varepsilon$) compared to the production and the degradation reactions, the individual species counts, when modelled as a continuous-time Markov chain, are fast and have rapid jumps. On the other hand, the total population size, which jumps only when there is a birth or a death reaction, is slow.  In a recent work \cite{FuEtAl2026Fragility}, the authors established  a rigorous stochastic averaging principle for this model, which allows one to describe the dynamics of the slow variable (total population size) in terms of ergodic moments of the fast species. See \cite[Theorem 3.1]{FuEtAl2026Fragility}.

Stochastic averaging describes the effective slow dynamics through
equilibrium averages of the fast dynamics with the slow variable
held fixed; see
\cite{Hasminskii1966a,Khasminskii1966b,Ganguly2026tQSSA,
Kurtz1992Averaging,Ball2006AAM,Ganguly2026StatInference,
Elsayed2024StochasticAveraging}.
For the TK model, the averaging result of
\cite[Theorem 3.1]{FuEtAl2026Fragility} motivates a quantitative
study of how quickly the species dynamics approach equilibrium
when the total population is held fixed.
In this paper, we modify the model slightly (see \Cref{subsec:TK_model_def} and \Cref{remark:difference2Fupaper} below) and focus on the dynamics of the fast species (with the slow variable, total population size, frozen) and  report the cutoff phenomenon.

Consider a fixed-population TK-type model with $n$ 
particles, each of type $0$ or $1$. When the fraction of type $1$
particles is $m$, the rates at which an individual particle changes type
are
\[
0\to1:\quad \kappa_1m(1-m)+\frac{\iota_1}{n}(1-m),
\qquad
1\to0:\quad \kappa_2m(1-m)+\frac{\iota_2}{n}m,
\]
where all four parameters are fixed and strictly positive. Mutations
have total rate of order one and allow a missing species to reappear.
These rates differ from the frozen mass-action rates in
\cite{FuEtAl2026Fragility}; the precise comparison is given
in Remark~\ref{remark:difference2Fupaper}.
The factors $1-m$ in the $0\to1$ rate and $m$ in the $1\to0$
rate decrease as the species being produced becomes more abundant.
Together, they give the limiting density equation a stable
interior coexistence equilibrium. We study this modification
to understand how the random early growth of a species introduced
by mutation affects the mixing time and cutoff profile.
The limiting global density satisfies
\[
  \dot m
  =m(1-m)\left(\kappa_1-(\kappa_1+\kappa_2)m\right).
\]
Its stable interior equilibrium is
$ \alpha_* = \frac{\kappa_1}{\kappa_1+\kappa_2}$.

\subsection{Convergence profiles and the effect of the boundary}

Our results identify two distinct cutoff profiles for the configuration law: a Gaussian translation profile in the bulk and a
mutation-dependent Gaussian-mixture profile in the worst case. The respective
cutoff locations and their common window scale are
\[
 t_n=\frac{\log n}{2\lambda_*},\qquad
 T_n=t_n+\frac{\log n}{\min\{\kappa_1,\kappa_2\}},\qquad
 w_n=\lambda_*^{-1}.
\]

The two locations are separated by
    $(\min\{\kappa_1,\kappa_2\})^{-1}\log n$, although both
windows are of order one. Moreover, for every fixed bulk initial densities in an interval
$[\varepsilon_0,1-\varepsilon_0]$, with $\varepsilon_0\in(0,1/2)$,
and every fixed mixing level $\epsilon\in(0,1)$,
$
 t_{\mathrm{mix},\varepsilon_0}^{(n)}(\epsilon)=t_n+O(1),
 \
 \mathscr D_n\!\left(
 t_{\mathrm{mix},\varepsilon_0}^{(n)}(\epsilon)
 \right)\to 1,
$
where $\mathscr D_n$ denotes the worst-case total variation distance.
Thus, the entire fixed bulk class reaches level $\epsilon$ while the
worst-case distance remains asymptotically maximal.
Figure~\ref{fig:bulk_worst_cutoff} summarises this contrast.

\begin{figure}[!t]
    \centering
    \includegraphics[width=\linewidth]{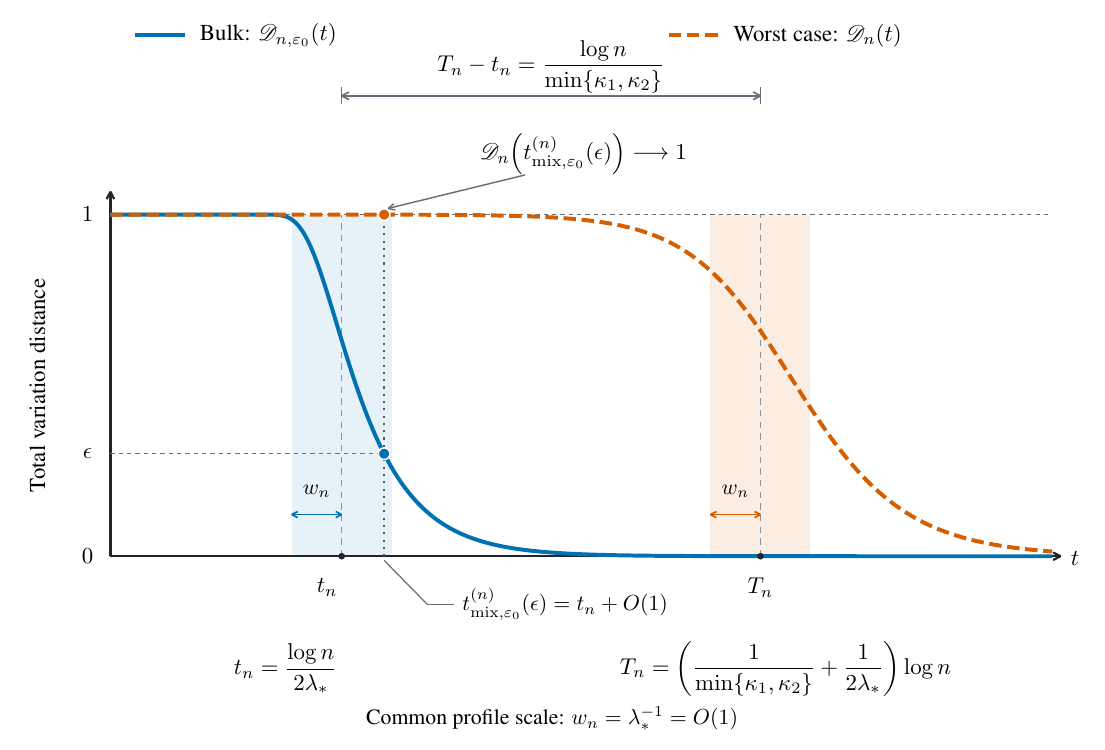}
    \caption{
    Bulk (solid) and worst-case (dashed) total variation
    distances, for fixed $\varepsilon_0\in(0,1/2)$.
    The cutoff locations are separated by
    $(\min\{\kappa_1,\kappa_2\})^{-1}\log n$, while both cutoff windows have scale $w_n=\lambda_*^{-1}$.
   At the marked bulk mixing time, the bulk distance drops below $\epsilon\in(0,1)$, whereas the worst-case
    distance tends to one.
    }
    \label{fig:bulk_worst_cutoff}
\end{figure}

The bulk profile makes the role of the initial condition explicit.
For a deterministic initial sequence whose global density converges
to $m\in(0,1)$, the total variation distance to equilibrium at time
$t_n+s w_n$ converges to $\mathcal G(r(m)e^{-s})$, where 
$
 \mathcal G(r)=2\Phi(r/2)-1,
$ 
$\Phi$ is the standard normal cumulative distribution function and the function $r(m)$
is the explicit amplitude given in \eqref{def:initial_profile_amplitude}.
The convergence is uniform in $s$ in compact intervals. Maximizing
over the fixed bulk interval gives the bulk cutoff profile and
mixing-time asymptotics; see
Theorems~\ref{thm_convergence_profile} and~\ref{cutoff_profile_tk} for precise statements.

Of particular interest is $r(\alpha_*)=1$: a deterministic
configuration with limiting density $\alpha_*$ still has a
nontrivial cutoff profile around $t_n$.
We describe this dependence on the initial state by following
the two blocks defined by the initial species.
Their deterministic deviations from equilibrium remain nonzero
on the $n^{-1/2}$ fluctuation scale, even though their weighted
contributions to the global density cancel in the limit. The bulk profile quantifies the logarithmic mixing
time even when the initial global density converges to its equilibrium value.

The boundary profile arises from a different mechanism. Starting from either boundary configuration $\vec{\mathbf0}$ or
$\vec{\mathbf1}$, the emergence and early growth of the absent
species are approximated by a linear birth process with immigration. Its gamma-distributed growth amplitude produces a
random time delay. Subsequent deterministic relaxation converts this
delay into a random displacement at the equilibrium fluctuation scale.
Theorem~\ref{thm:worst_case_cutoff} identifies the resulting limit
$
 \mathscr D_n(T_n+s w_n)\longrightarrow\mathcal P(s),
$
uniformly in $s$ in compact intervals, and gives the worst-case
mixing time. The limiting rescaled count law at the worst endpoint is a Gaussian mixture, obtained by averaging Gaussian laws whose
means depend on the gamma growth amplitude.
The profile $\mathcal P$ is the total variation distance between this mixture and the equilibrium Gaussian.
The mutation parameters $\iota_1,\iota_2$ enter through the gamma
growth amplitudes and affect both the worst-case profile and the
order-one correction to the worst-case mixing time. They do not enter the limiting bulk profile, and the leading
logarithmic cutoff coefficients depend only on the autocatalytic
parameters $\kappa_1,\kappa_2$.

The worst-case profile is attained asymptotically from
$\vec{\mathbf0}$ if $\kappa_1<\kappa_2$ and from
$\vec{\mathbf1}$ if $\kappa_2<\kappa_1$.
When $\kappa_1=\kappa_2$, it is attained from $\vec{\mathbf0}$
if $\iota_1<\iota_2$ and from $\vec{\mathbf1}$ if
$\iota_2<\iota_1$; both boundary configurations attain it
if $\iota_1=\iota_2$ as well.
For every fixed bulk class and mixing level, the ratio of
worst-case to bulk mixing times tends to $2$ when
$\kappa_1=\kappa_2$, and to a value strictly between $2$ and $3$
otherwise.

\subsection{Relation to earlier work and proof strategy}
Barbour, Brightwell and Luczak \cite{BBL2025DensityCutoff}
establish constant-window total variation cutoff for
density-dependent population processes near stable equilibria.
Our count process has the same density-dependent leading
structure, with bounded mutation terms that would require a
perturbative treatment to apply their framework.
He, Luczak and Ross \cite{HeLuczakRoss2025SIS} identify the
cutoff location and window for the infected-count process of
the logistic SIS model with a fixed self-infection rate per
individual.
We determine explicit profiles for the full configuration law,
covering both bulk initial configurations and growth from
boundary states.

Barbour, Hamza, Kaspi and Klebaner
\cite{BarbourHamzaKaspiKlebaner2015Escape}
describe boundary escape through a branching approximation
followed by deterministic evolution with a random time shift.
In our model, the initial approximation is a birth process
with immigration. We show how its gamma-distributed growth
amplitude determines the random displacement near equilibrium
and hence the Gaussian-mixture cutoff profile. Unimodality of the birth--death likelihood ratios, together with
an estimate for the erasure of initial labels, yields the limiting
total variation distance between the full configuration law and
its invariant measure. Together with the bulk results, estimates for fixed and growing
initial minority populations identify the worst-case profile
over all configurations.

Our initial-block decomposition and quantitative Gaussian
approximation are closely related to the noisy voter analysis
of Aljovin, Jara and Xiang \cite{JaraXiang}.
Their weak-noise results distinguish logarithmic thermalisation
from equilibration on the time scale $n$, in Kantorovich distance.
Here, the two block densities contribute to the bulk total
variation profile on the same logarithmic scale.
The Gaussian approximation uses Barbour's semigroup approach
to Stein's method \cite{Barbour1990Diffusion}.

Our product-measure comparison follows the entropy method
of Gon{\c{c}}alves, Jara, Marinho and Menezes
\cite{gonccalves2021sharp}, who obtain explicit Gaussian profiles
for exclusion with reservoirs from suitable product initial laws.
For our deterministic initial configurations, a short restart
combines entropy estimates with a coupling of the block counts
to obtain a total variation approximation.
Long-time entropy estimates control the subsequent evolution
of the reference measure.
We use Yau's relative entropy method \cite{Yau-relative-entropy};
related applications appear in
\cite{jara2018non,jara_amorim_xiang}.

\subsection{Organisation of the paper}
Section~\ref{sec:Preliminaries} defines the model and the distances. Section~\ref{sec:main_results} states the cutoff results and the
quantitative approximations. Sections~\ref{sec:tk_approx}--\ref{sec:cutoff_profile}
prove the bulk profile. Section~\ref{sec:worst_case} identifies the
boundary profile and compares worst-case and bulk mixing.

\section{Preliminaries and notations}
\label{sec:Preliminaries}

\subsection{A two-species Togashi--Kaneko model with weak mutations}
\label{subsec:TK_model_def}
We consider a two-species, fixed-population version of the Togashi--Kaneko stochastic autocatalytic reaction network. We focus on the fast variable in the stochastic averaging result \cite[Theorem 3.1]{FuEtAl2026Fragility} with slightly modified  kinetics in this paper. As such, the jump rates, which are specified below, differ from the mass-action rates of the fast process (with the slow variable frozen) in
\cite{FuEtAl2026Fragility}. Let
\(n\in\mathbb N\) be the total number of particles, and define
\[
    \Lambda_n\defeq \{1,\ldots,n\},\qquad Q\defeq \{0,1\},\qquad
    \Omega_n\defeq Q^{\Lambda_n}.
\]
The two elements of \(Q\) label the molecular species \(X_0\) and \(X_1\) (denoted by $S_1$ and $S_2$ in \cite{FuEtAl2026Fragility}).  A configuration \(\sigma=(\sigma_x;x\in\Lambda_n)\in\Omega_n\) records the species type of each particle: \(\sigma_x=i\) means that particle \(x\) is of species \(X_i\). The index \(x\) is only a label for the particles. Let \(\vec{\mathbf 0}\) and \(\vec{\mathbf 1}\) be the constant configurations
with values \(0\) and \(1\), respectively, and define
    $\Omega_n^*\defeq \Omega_n\setminus\{\vec{\mathbf 0},\vec{\mathbf 1}\}$.
We use \(\Omega_n^*\) when excluding boundary initial configurations. The Markov chain itself is defined on all of \(\Omega_n\). For \(x\in\Lambda_n\) and \(\sigma\in\Omega_n\), let
\(\sigma^x\in\Omega_n\) be obtained from \(\sigma\) by switching the state at
\(x\):
    $$(\sigma^x)_y\defeq
    \begin{cases}
        1-\sigma_x, & y=x,\\
        \sigma_y, & y\ne x.
    \end{cases}$$
The \emph{global density} of species \(X_1\) in a configuration $\sigma$ is defined as
\begin{align}
\label{def:global_density}
    M^{(n)}(\sigma)\defeq \frac1n\sum_{x\in\Lambda_n}\sigma_x .
\end{align}
Thus,  \(1-M^{(n)}(\sigma)\) is the density of species \(X_0\). Now, fix parameters \(\kappa_1,\kappa_2,\iota_1,\iota_2>0\). For
\(x\in\Lambda_n\), define the \emph{transition rate}
\begin{equation}
\label{def:TK_flip_rate}
\begin{aligned}
    c_x(\sigma)
    &\defeq (1-\sigma_x)
    \left(
        \kappa_1M^{(n)}(\sigma)\bigl(1-M^{(n)}(\sigma)\bigr)
        +\frac{\iota_1}{n}\bigl(1-M^{(n)}(\sigma)\bigr)
    \right) \\
    &\quad+\sigma_x
    \left(
        \kappa_2M^{(n)}(\sigma)\bigl(1-M^{(n)}(\sigma)\bigr)
        +\frac{\iota_2}{n}M^{(n)}(\sigma)
    \right)\eqstop
\end{aligned}
\end{equation}
Equivalently, writing \(m=M^{(n)}(\sigma)\), a particle of species \(X_0\)
changes into species \(X_1\) at rate
$
    \kappa_1m(1-m)+\frac{\iota_1}{n}(1-m),
$
whereas a particle of species \(X_1\) changes into species \(X_0\) at rate
$
    \kappa_2m(1-m)+\frac{\iota_2}{n}m.
$
The terms with coefficients \(\kappa_1\) and \(\kappa_2\) describe
\emph{autocatalytic reactions}; their common factor \(m(1-m)\) makes these reactions possible only when both species coexist. The terms with coefficients
\(\iota_1\) and \(\iota_2\) describe \emph{weak mutations}: their per-particle rates
are of order \(n^{-1}\), so their total contribution is of order one.  The \emph{generator} of the continuous-time Markov chain $\bigl(\sigma^{(n)}(t);t\ge0\bigr)$
is given by
\begin{equation}
\label{def:TK_generator}
    L_nf(\sigma)\defeq \sum_{x\in\Lambda_n}
    c_x(\sigma)\bigl(f(\sigma^x)-f(\sigma)\bigr)\eqcomma
\end{equation}
for bounded measurable $f:\Omega_n\to\mathbb R$. Mutations prevent the two boundary configurations from being absorbing states. For \(\sigma,\eta\in\Omega_n\), the \emph{jump rate} from a configuration \(\sigma\) to the configuration
\(\eta\) is
    $R_n(\sigma,\eta)\defeq \sum_{x\in\Lambda_n}
    c_x(\sigma)\mathbf1_{\{\eta=\sigma^x\}}$.
Thus, \(R_n(\sigma,\eta)>0\) only when \(\sigma\) and \(\eta\) differ at exactly one site. The \emph{carré-du-champ operator} associated with the generator \(L_n\) is
\[
    \Gamma_n(f,g)\defeq L_n(fg)-fL_ng-gL_nf.
\]
For flip dynamics such as ours,
    $\Gamma_n(f,g)(\sigma)=\sum_{x\in\Lambda_n}c_x(\sigma)
    \bigl(f(\sigma^x)-f(\sigma)\bigr)
    \bigl(g(\sigma^x)-g(\sigma)\bigr)$.
When \(f=g\), we simply write \(\Gamma_nf\defeq \Gamma_n(f,f)\), so that
$\Gamma_nf(\sigma)=\sum_{x\in\Lambda_n}c_x(\sigma)
    \bigl(f(\sigma^x)-f(\sigma)\bigr)^2$.

The dynamics is of mean-field type: its rates depend on \(\sigma\) only through
\(\sigma_x\) and \(M^{(n)}(\sigma)\). If \(\varphi\) is a permutation of
\(\Lambda_n\), define \((\varphi\sigma)_x\defeq \sigma_{\varphi^{-1}(x)}\). Then, note that
\[
    M^{(n)}(\varphi\sigma)=M^{(n)}(\sigma),
    \qquad
    L_n(f\circ\varphi)=(L_nf)\circ\varphi.
\]
Thus, the dynamics is equivariant under arbitrary permutations of particle labels. Because \(\iota_1,\iota_2>0\), the chain is irreducible and has a unique \emph{invariant probability measure}, which we denote by
$
    \mu_{\kappa_1,\kappa_2,\iota_1,\iota_2}^{(n)}.
$
The explicit formula for the invariant measure can be found in Appendix~\ref{appendix:approx_invariant_measure}.
The permutation symmetry and uniqueness of the invariant measure imply
\[
    \mu_{\kappa_1,\kappa_2,\iota_1,\iota_2}^{(n)}(\sigma)
    =
    \mu_{\kappa_1,\kappa_2,\iota_1,\iota_2}^{(n)}(\eta)
    \qquad\text{whenever }
    M^{(n)}(\sigma)=M^{(n)}(\eta).
\]
Thus, the invariant measure is uniform on each global-density level set. In the following, we write
    $M_t^{(n)}\defeq M^{(n)}\left(\sigma^{(n)}(t)\right)$
for the \emph{global density process}. Let \(K_t^{(n)}=nM_t^{(n)}\) be the number of particles of type \(1\)
at time \(t\), and denote its transition rate from \(k\) to
\(\ell\ne k\) by \(q_n(k,\ell)\). The generator
\eqref{def:TK_generator} gives
\[
\begin{aligned}
 q_n(k,k+1)
 &=\frac{(n-k)^2}{n^2}(\kappa_1k+\iota_1),
 &&0\le k<n,\\
 q_n(k,k-1)
 &=\frac{k^2}{n^2}\{\kappa_2(n-k)+\iota_2\},
 &&1\le k\le n.
\end{aligned}
\]
These are the birth and death rates denoted by \(b_n(k)\) and
\(d_n(k)\), respectively, in \eqref{eq:count_rates_worst_case},
with \(b_n(n)=d_n(0)=0\).

\begin{remark}[Relation to the mass-action TK model]
\label{remark:difference2Fupaper}
The process studied here is a TK-type mean-field spin system with nonlinear kinetics. Write \(q_{\rm MA}(k,\ell)\) for the transition rate from count
\(k\) to count \(\ell\) in the comparison mass-action process.
The  mass-action rates in \cite{FuEtAl2026Fragility} (with the slow variable total population size frozen at $n$) are
\[
 q_{\rm MA}(k,k+1)=(n-k)(\kappa_1k+\iota_1),\qquad
 q_{\rm MA}(k,k-1)=k\left(\kappa_2(n-k)+\iota_2\right).
\]
For $k=nM^{(n)}$, the generator \eqref{def:TK_generator} instead gives
\[
 q_n(k,k+1)=\frac{(n-k)^2}{n^2}(\kappa_1k+\iota_1),\qquad
 q_n(k,k-1)=\frac{k^2}{n^2}\left(\kappa_2(n-k)+\iota_2\right).
\]
The two directions are modified by different factors, so these
processes are not related by a common time change. In particular,
the modification changes the drift and invariant law. Moreover, the added nonlinearity allows us to see interesting behaviour. 
\end{remark}

\subsection{Notations}
\label{subsec:notations}

For \(T>0\), let \(\mathcal D([0,T],\Omega_n)\) be the space of
\emph{càdlàg paths}, equipped with the
Skorohod topology and its Borel \(\sigma\)-algebra (see \cite{Billingsley1999Convergence,Ethier_Kurtz1986MPCC}). For probability measures \(\mu,\nu\) on \(\Omega_n\), their \emph{total variation} distance
 is defined as
\[
    \dtv(\mu,\nu)\defeq
    \frac12\sum_{\sigma\in\Omega_n}|\mu(\sigma)-\nu(\sigma)|
    =
    \sup_{A\subseteq\Omega_n}|\mu(A)-\nu(A)|.
\]
For a probability measure
\(\mu_0\) on \(\Omega_n\), let \(\mathbb P_{\mu_0}^{(n)}\) be the law of the chain with generator \(L_n\) and initial distribution
\(\mu_0\), and let \(\mathbb E_{\mu_0}^{(n)}\) be the corresponding
expectation. If \(\mu_0=\delta_{\sigma_0}\), we write
$
    \mathbb P_{\sigma_0}^{(n)}\defeq \mathbb P_{\delta_{\sigma_0}}^{(n)}, $ and $
    \mathbb E_{\sigma_0}^{(n)}\defeq \mathbb E_{\delta_{\sigma_0}}^{(n)}.
$
When the initial distribution is clear, we write simply
\(\mathbb P^{(n)}\) and \(\mathbb E^{(n)}\). For a random element \(X\), we denote its law under \(\mathbb P\) by
$
    \operatorname{Law}(X;\mathbb P)\defeq \mathbb P\circ X^{-1}.
$
When the probability measure is clear from the context, we write \(\operatorname{Law}(X)\). For a probability measure \(\nu\) on \(\Omega_n\) and a function \(f:\Omega_n\to\mathbb R\), we write
    $\mathbb E_\nu[f]\defeq \sum_{\sigma\in\Omega_n}f(\sigma)\nu(\sigma)$.
Let \(\left(S_t^{(n)};t\ge0\right)\) be the \emph{Markov semigroup} generated by \(L_n\). Its action on bounded, measurable functions is
    $S_t^{(n)}f(\sigma)\defeq
    \mathbb E_\sigma^{(n)}
    \left[f\left(\sigma^{(n)}(t)\right)\right]$, for $f:\Omega_n\to\mathbb R$.
For an initial distribution \(\mu_0\) on \(\Omega_n\), we write
\(\mu_0S_t^{(n)}\) for the distribution of the chain at time \(t\).
Thus, for every \(f:\Omega_n\to\mathbb R\),
\[
    \int_{\Omega_n}f(\sigma)\,
        \bigl(\mu_0S_t^{(n)}\bigr)(d\sigma)
    =
    \int_{\Omega_n}S_t^{(n)}f(\sigma)\,\mu_0(d\sigma)
    =
    \mathbb E_{\mu_0}^{(n)}
    \left[f\left(\sigma^{(n)}(t)\right)\right].
\]
For a deterministic initial configuration \(\sigma_0\), we abbreviate
\(\delta_{\sigma_0}S_t^{(n)}\) as \(\sigma_0S_t^{(n)}\).

\subsubsection{The Kantorovich--Rubinstein distance}
\label{subsec:kantorovich}

Let \((\mathcal E,d)\) be a complete separable metric space with its Borel
\(\sigma\)-algebra. For \(f:\mathcal E\to\mathbb R\), define its
\emph{Lipschitz seminorm} by
\[
    \|f\|_{\mathrm{Lip}}\defeq
    \sup_{\substack{x,y\in\mathcal E\\x\ne y}}
    \frac{|f(x)-f(y)|}{d(x,y)}.
\]
Let \(\mathcal P_1(\mathcal E)\) denote the set of probability measures
\(\mu\) on \(\mathcal E\) satisfying
$
    \int_{\mathcal E}d(x,x_0)\,\mu(dx)<\infty
$
for some, equivalently every, \(x_0\in\mathcal E\). For
\(\mu,\nu\in\mathcal P_1(\mathcal E)\), the \emph{Kantorovich--Rubinstein distance} is defined as
\[
    d_K(\mu,\nu)\defeq
    \sup_{\|f\|_{\mathrm{Lip}}\le1}
    \left|
        \int_{\mathcal E}f\,d\mu-\int_{\mathcal E}f\,d\nu
    \right|.
\]
This is also called the \(1\)-Wasserstein distance. A \emph{coupling} of \(\mu\) and \(\nu\) is a probability measure \(\pi\) on
\(\mathcal E\times\mathcal E\) with marginals \(\mu\) and \(\nu\). We denote the set of all couplings of $\mu$ and $\nu$ by \(\Pi(\mu,\nu)\). The Kantorovich--Rubinstein duality theorem states that
\begin{equation}
\label{KR-duality}
    d_K(\mu,\nu)=
    \inf_{\pi\in\Pi(\mu,\nu)}
    \int_{\mathcal E\times\mathcal E}d(x,y)\,\pi(dx,dy).
\end{equation}
See \cite{Villani2009optimaltransport} for more details on optimal transport. For integral probability metrics in general, we refer the readers to \cite{Mueller1997IntegralProbMetrics,Rachev1991ProbabilityMetrics,Zolotarev1983ProbabilityMetrics}. 
For random elements \(X,Y\) with values in \(\mathcal E\), we write
\[
    d_K(X,Y)\defeq d_K\left(\operatorname{Law}(X),\operatorname{Law}(Y)\right),
    \qquad
    d_K(X,\nu)\defeq d_K\left(\operatorname{Law}(X),\nu\right).
\]
In applications to random vectors in \(\mathbb R^Q\), \(d\) is the Euclidean distance.
Vector norms are Euclidean, matrix norms are induced operator norms,
and \(\|D^\ell f\|_\infty\) is the supremum of the induced
\(\ell\)-linear norm. Dependence on a bulk parameter is indicated explicitly.

\subsubsection{Distance to stationarity}
\label{subsec:distance}
For \(\sigma_0\in\Omega_n\), define the \emph{distance to stationarity} by
\[
    \mathscr D_n(t;\sigma_0)\defeq
    \dtv\left(
        \sigma_0S_t^{(n)},
        \mu_{\kappa_1,\kappa_2,\iota_1,\iota_2}^{(n)}
    \right),
\]
and define the \emph{worst-case distance to stationarity} by
\[
    \mathscr D_n(t)\defeq
    \sup_{\sigma_0\in\Omega_n}\mathscr D_n(t;\sigma_0).
\]
For \(\varepsilon_0\in(0,1/2)\) and all sufficiently large \(n\),
so that the admissible set is nonempty, define the
\emph{bulk distance to stationarity} by
\[
    \mathscr D_{n,\varepsilon_0}(t)\defeq
    \sup_{\substack{\sigma_0\in\Omega_n:\\
    \varepsilon_0\le M^{(n)}(\sigma_0)\le1-\varepsilon_0}}
    \mathscr D_n(t;\sigma_0).
\]
For \(\epsilon\in(0,1)\), define the \emph{\(\epsilon\)-mixing time} and the
\emph{bulk \(\epsilon\)-mixing time}, respectively, by
\[
    t_{\mathrm{mix}}^{(n)}(\epsilon)\defeq
    \inf\{t\ge0:\mathscr D_n(t)\le\epsilon\},
    \qquad
    t_{\mathrm{mix},\varepsilon_0}^{(n)}(\epsilon)\defeq
    \inf\{t\ge0:\mathscr D_{n,\varepsilon_0}(t)\le\epsilon\}.
\]
As is the convention, we use the standard abbreviation
$
    t_{\mathrm{mix}}^{(n)}\defeq t_{\mathrm{mix}}^{(n)}(1/4).
$

\subsubsection{Convergence and cutoff profiles}
\label{subsec:sharp_convergence}
Following the profile formulation in \cite{Lacoin_profile,gonccalves2021sharp}, let \(t_n>0\) and \(w_n>0\) satisfy \(w_n/t_n\to0\), and let \((\sigma_0^{(n)})_{n\ge1}\) be a sequence in $\Omega_n$. We say that this sequence has a
\emph{convergence profile} \(\mathcal P:\mathbb R\to[0,1]\) at times \(t_n\)
with windows \(w_n\) if
\[
    \mathscr D_n(t_n+sw_n;\sigma_0^{(n)})
    \longrightarrow\mathcal P(s),
    \qquad s\in\mathbb R.
\]
We say that the sequence of chains has a \emph{cutoff profile}
\(\mathcal P\) at times \(t_n\) with windows \(w_n\) if
\begin{align*}
    \mathscr D_n(t_n+sw_n)\longrightarrow\mathcal P(s),&
    \qquad s\in\mathbb R,\\
    \text{ and }\lim_{s\to-\infty}\mathcal P(s)=1,
    \qquad &
    \lim_{s\to+\infty}\mathcal P(s)=0.
\end{align*}
The bulk version is obtained by replacing \(\mathscr D_n\) with
\(\mathscr D_{n,\varepsilon_0}\). Thus, a convergence profile may depend on the chosen initial sequence, whereas a cutoff profile is obtained after optimising over the relevant class of initial configurations.

Let \(Z\sim\mathcal N(0,1)\), and let \(\Phi\) be its cumulative distribution function. Following \cite{gonccalves2021sharp},  define the \emph{Gaussian profile function}
\(\mathcal G:[0,\infty)\to[0,1)\) by
\[
    \mathcal G(r)\defeq
    \dtv\left(\mathcal N(r,1),\mathcal N(0,1)\right)
    =
    \frac12\mathbb E\left[\left|e^{rZ-r^2/2}-1\right|\right]
    =
    2\Phi\left(\frac r2\right)-1.
\]
The function \(\mathcal G\) is continuous and strictly increasing. Hence, for
every \(r>0\), the map \(s\mapsto\mathcal G(re^{-s})\) decreases continuously
from \(1\) to \(0\) as \(s\) goes from \(-\infty\) to \(+\infty\), and
\(\mathcal G^{-1}(\epsilon)\) is well-defined for \(\epsilon\in(0,1)\).

\section{Main results}
\label{sec:main_results}
Let us define the \emph{profile time scale} by
\begin{equation}
\label{def:profile_time}
    t^{(n)}(s)\defeq t_n+sw_n,\qquad
    t_n\defeq \frac{\kappa_1+\kappa_2}{2\kappa_1\kappa_2}\log n,\qquad
    w_n\defeq \frac{\kappa_1+\kappa_2}{\kappa_1\kappa_2}.
\end{equation}
Throughout the paper, write
\(\lambda_*\defeq\kappa_1\kappa_2/(\kappa_1+\kappa_2)\), so that
\(t_n=(2\lambda_*)^{-1}\log n\) and \(w_n=\lambda_*^{-1}\).

For \(m\in(0,1)\), define the \emph{initial-density profile amplitude} by
\begin{equation}
\label{def:initial_profile_amplitude}
    r(m)\defeq
    \frac{\alpha_*^{\,1-\alpha_*}(1-\alpha_*)^{\alpha_*}}
    {m^{\,1-\alpha_*}(1-m)^{\alpha_*}}
    \left[
        \frac{(1-m)\alpha_*^2+m(1-\alpha_*)^2}
        {\alpha_*(1-\alpha_*)}
    \right]^{1/2}\eqcomma
\end{equation}
with the \emph{equilibrium density} given by
$
    \alpha_*\defeq \frac{\kappa_1}{\kappa_1+\kappa_2}.
$
In the convergence-profile theorem, \(m\) is the limiting initial global
density:
$
    M^{(n)}\left(\sigma_0^{(n)}\right)\longrightarrow m.
$
For \(\varepsilon_0\in(0,1/2)\), define the
\emph{maximal bulk profile amplitude} by
\begin{equation}
\label{def:bulk_profile_amplitude}
    r_{\varepsilon_0}^*\defeq
    \max_{m\in[\varepsilon_0,1-\varepsilon_0]}r(m).
\end{equation}
The maximum exists because \(r\) is continuous on
\([\varepsilon_0,1-\varepsilon_0]\).

\subsection{Convergence and bulk cutoff profiles}
\label{convergence_profile}
The first theorem gives the profile from a deterministic bulk initial
configuration. The second maximizes it over the bulk class. We write down the following bulk initial condition assumption. 
\begin{assumption}[Bulk initial condition]
\label{assumption:bulk_initial_condition_tk}
Fix \(\varepsilon_0\in(0,1/2)\), and assume that
$
    \varepsilon_0
    \le
    M^{(n)}(\sigma_0)
    \le
    1-\varepsilon_0.
$
\end{assumption}

\begin{thm}[Convergence profile with deterministic bulk initial configuration]
\label{thm_convergence_profile}
Fix \(\varepsilon_0\in(0,1/2)\). Let
\((\sigma_0^{(n)})_{n\ge2}\), with
\(\sigma_0^{(n)}\in\Omega_n^*\), satisfy
Assumption~\ref{assumption:bulk_initial_condition_tk} for all
sufficiently large \(n\), and
assume that
$
    M^{(n)}\left(\sigma_0^{(n)}\right)
    \longrightarrow
    m\in[\varepsilon_0,1-\varepsilon_0].
$
Then, for every \(s\in\mathbb R\),
\begin{equation}
\label{eq:individual_convergence_profile}
    \mathscr D_n\left(
        t^{(n)}(s);
        \sigma_0^{(n)}
    \right)
    \longrightarrow
    \mathcal G\left(r(m)e^{-s}\right)\eqcomma
\end{equation}
as $n\to \infty$.  The convergence is uniform in \(s\) in compact subsets of
\(\mathbb R\).
\end{thm}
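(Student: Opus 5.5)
The plan is to reduce the total variation distance on $\Omega_n$ to a distance between laws of two integer counts, and then identify the limit of those counts with a Gaussian translation problem. Write $B_0=\{x:\sigma_{0,x}^{(n)}=0\}$ and $B_1=\{x:\sigma_{0,x}^{(n)}=1\}$ for the two initial blocks. Let $K^{0}_t$ and $K^{1}_t$ be the numbers of type-$1$ particles in $B_0$ and $B_1$ at time $t$.

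\textbf{Step 1: reduction to block counts.} By the permutation equivariance of $L_n$, the law $\sigma_0^{(n)}S_t^{(n)}$ is invariant under permutations preserving $B_0$ and $B_1$. The invariant measure is uniform on global-density level sets, so it is invariant under these permutations as well. Both measures are therefore uniform on the fibres of $\sigma\mapsto(K^0,K^1)$, and the total variation distance on $\Omega_n$ equals the total variation distance between the laws of $(K^0_t,K^1_t)$ under the dynamics and under $\mu^{(n)}_{\kappa_1,\kappa_2,\iota_1,\iota_2}$. Under the invariant measure, the pair is a hypergeometric split of the global count.

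\textbf{Step 2: deterministic block trajectories.} Given the global density $m_t$, each particle flips independently. This gives block ODEs $\dot\rho_i=\kappa_1 m(1-m)(1-\rho_i)-\kappa_2 m(1-m)\rho_i$, driven by the logistic-type equation for $m_t$. Both equations are explicitly solvable. Linearising at $\alpha_*$ gives the decay rate $(\kappa_1+\kappa_2)\alpha_*(1-\alpha_*)=\lambda_*$, both for $m_t$ (since the derivative of the drift at $\alpha_*$ is $-\lambda_*$) and for the block differences. I will compute the exact asymptotics
\[
\rho_i(t)-\alpha_*\sim c_i(m)e^{-\lambda_* t}.
\]
Integrating $\dd\log m/\dd t$ and $\dd\log(1-m)/\dd t$ should produce the prefactor $\alpha_*^{1-\alpha_*}(1-\alpha_*)^{\alpha_*}/(m^{1-\alpha_*}(1-m)^{\alpha_*})$ appearing in \eqref{def:initial_profile_amplitude}. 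At $t^{(n)}(s)$ the deviations are of order $n^{-1/2}e^{-s}$.

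\textbf{Step 3: Gaussian fluctuations and limiting profile.} Next I need to show that $n^{-1/2}\bigl(K^0_t-|B_0|\rho_0(t),\,K^1_t-|B_1|\rho_1(t)\bigr)$, evaluated at $t=t^{(n)}(s)$, is close in total variation to the equilibrium Gaussian. This Gaussian has the covariance of the rescaled hypergeometric split of the equilibrium count. The limiting distance is then the total variation distance between two Gaussians with the same covariance $\Sigma$ and mean shift $v=\sqrt n\,(|B_i|/n)(\rho_i-\alpha_*)$, namely $\mathcal G\bigl(\sqrt{v^\top\Sigma^{-1}v}\bigr)$. A direct computation of this Mahalanobis norm should give $r(m)e^{-s}$, including the bracket $\bigl[(1-m)\alpha_*^2+m(1-\alpha_*)^2\bigr]/\bigl(\alpha_*(1-\alpha_*)\bigr)$. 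In particular, this explains why $r(\alpha_*)=1$: the block deviations cancel in the global density but not in the pair. Uniformity in $s$ over compacts then follows from the continuity and monotonicity of $s\mapsto\mathcal G(r e^{-s})$, by a standard Dini-type argument once pointwise convergence is established.

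\textbf{Main obstacle.} The hard step is upgrading the Gaussian approximation from a Kantorovich-type statement to total variation for lattice-valued counts. I would obtain the Kantorovich bound from Barbour's semigroup version of Stein's method. For total variation, I would use a restart at time $t^{(n)}(s)-C$. At that time the law of the counts is close to a product (binomial-type) reference measure with the deterministic block means, which I would get from a short coupling of the block counts. Yau's relative entropy method then shows that the entropy of the evolved law relative to the evolved reference measure stays $o(1)$, and Pinsker's inequality converts this to total variation. Finally, the reference measure started from binomial block laws has an explicit local CLT, which gives total variation convergence to the shifted Gaussian.
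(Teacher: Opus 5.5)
Your plan is essentially the paper's proof, and it is correct in outline. Exchangeability within the initial blocks reduces everything to block counts; the explicit limiting ODE gives $\sqrt n\,(\rho^{(i,n)}_{t^{(n)}(s)}-\alpha_*)\to(i-\alpha_*)H_*e^{-s}/(m^{1-\alpha_*}(1-m)^{\alpha_*})$; a Barbour--Stein Kantorovich CLT is upgraded to $\dtv(\sigma_0^{(n)}S^{(n)}_{t},\nu^{(n)}_{t})\to0$ by a short restart from coupled configurations, combined with Yau's entropy method and Pinsker; and the reference-measure profile is the Gaussian shift whose Mahalanobis norm (the paper's LAN limit) equals $r(m)e^{-s}$.

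Two points are left implicit. First, in the paper Yau's inequality is only ever started from zero entropy: for the chain over a short time $u$ from each coupled configuration, and for $\nu^{(n)}_TS^{(n)}_u$ against $\nu^{(n)}_{T+u}$. The chain law itself is carried forward by total-variation contraction, not by an entropy bound. Second, your ``equilibrium Gaussian'' with covariance $\alpha_*(1-\alpha_*)\operatorname{diag}(a^0,a^1)$ needs $\dtv(\mu^{(n)}_{\kappa_1,\kappa_2,\iota_1,\iota_2},\nu^{(n)}_*)=O(n^{-1/2})$, which the paper obtains from the explicit detailed-balance formula for the count chain.
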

This theorem is proved in Section~\ref{proofs of cutoff profiles}.
Thus, the profile time and window are the same for all bulk initial sequences, whereas the profile amplitude depends on the limiting initial density \(m\). Taking the supremum over all bulk initial configurations gives the corresponding cutoff profile and the sharp bulk mixing-time asymptotics.

\begin{thm}[Bulk cutoff profile]
\label{cutoff_profile_tk}
Fix \(\varepsilon_0\in(0,1/2)\). Then, for every \(s\in\mathbb R\),
\begin{equation}
\label{eq:bulk_cutoff_profile_main_result}
    \mathscr D_{n,\varepsilon_0}
    \left(t^{(n)}(s)\right)
    \longrightarrow
    \mathcal G\left(r_{\varepsilon_0}^*e^{-s}\right).
\end{equation}
The convergence is uniform in \(s\) in compact subsets of
\(\mathbb R\). Moreover, for every \(\epsilon\in(0,1)\),
\begin{equation}
\label{mixing_time_result}
\begin{aligned}
t_{\mathrm{mix},\varepsilon_0}^{(n)}(\epsilon)
&=
\frac{\kappa_1+\kappa_2}
{2\kappa_1\kappa_2}\log n
+
\frac{\kappa_1+\kappa_2}
{\kappa_1\kappa_2}
\log\left(
    \frac{r_{\varepsilon_0}^*}
    {\mathcal G^{-1}(\epsilon)}
\right)
+o(1).
\end{aligned}
\end{equation}
In particular,
$
    t_{\mathrm{mix},\varepsilon_0}^{(n)}(\epsilon)
    =
    \frac{\kappa_1+\kappa_2}
    {2\kappa_1\kappa_2}\log n+O(1).
$
\end{thm}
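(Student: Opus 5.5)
We derive Theorem~\ref{cutoff_profile_tk} from Theorem~\ref{thm_convergence_profile} by a compactness argument over the bulk class, then invert the limiting profile to obtain the mixing time.

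\emph{Lower bound.} Since $r$ is continuous on $[\varepsilon_0,1-\varepsilon_0]$, pick $m^*$ attaining $r^*_{\varepsilon_0}$. Choose configurations $\sigma_0^{(n)}\in\Omega_n^*$ with $\lfloor nm^*\rfloor$ ones; these satisfy Assumption~\ref{assumption:bulk_initial_condition_tk} for large $n$ (after adjusting by one site if $m^*$ is an endpoint). Theorem~\ref{thm_convergence_profile} gives $\mathscr D_n(t^{(n)}(s);\sigma_0^{(n)})\to\mathcal G(r^*_{\varepsilon_0}e^{-s})$ uniformly on compacts. Hence
\[
\liminf_n\inf_{s\in K}\bigl[\mathscr D_{n,\varepsilon_0}(t^{(n)}(s))-\mathcal G(r^*_{\varepsilon_0}e^{-s})\bigr]\ge0 .
\]

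\emph{Upper bound.} Suppose the upper bound fails on a compact $K$. Then there are $\delta>0$, $s_n\in K$ and admissible $\sigma_0^{(n)}$ with $\mathscr D_n(t^{(n)}(s_n);\sigma_0^{(n)})\ge\mathcal G(r^*_{\varepsilon_0}e^{-s_n})+\delta$ along a subsequence. Pass to a further subsequence with $M^{(n)}(\sigma_0^{(n)})\to m\in[\varepsilon_0,1-\varepsilon_0]$ and $s_n\to s\in K$. Theorem~\ref{thm_convergence_profile} is stated for full sequences, but we can extend the subsequence to a full sequence by filling the other indices with configurations of density close to $m$. Its uniform convergence on compacts, together with continuity of $s\mapsto\mathcal G(r(m)e^{-s})$, then gives
\[
\mathscr D_n(t^{(n)}(s_n);\sigma_0^{(n)})\to\mathcal G(r(m)e^{-s})\le\mathcal G(r^*_{\varepsilon_0}e^{-s}),
\]
using that $\mathcal G$ is increasing. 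This contradicts the assumed gap $\delta$ after letting $n\to\infty$. The main subtlety is that the supremum is over configurations, not just densities. This is harmless because the law of the count process depends only on the initial count, and by permutation equivariance $\mathscr D_n(t;\sigma_0)$ depends only on $M^{(n)}(\sigma_0)$. So the supremum is a maximum over finitely many densities, and the subsequence argument applies directly. The only place real care is needed is the uniformity in Theorem~\ref{thm_convergence_profile}; we take it as given.

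\emph{Mixing time.} Set $F(s)=\mathcal G(r^*_{\varepsilon_0}e^{-s})$. This function is continuous and strictly decreasing, with $F(s)=\epsilon$ at $s_\epsilon=\log\bigl(r^*_{\varepsilon_0}/\mathcal G^{-1}(\epsilon)\bigr)$. Fix $\eta>0$. Uniform convergence at the two points $s_\epsilon\pm\eta$ gives
\[
\mathscr D_{n,\varepsilon_0}(t^{(n)}(s_\epsilon+\eta))<\epsilon<\mathscr D_{n,\varepsilon_0}(t^{(n)}(s_\epsilon-\eta))
\]
for large $n$. Since $t\mapsto\mathscr D_n(t;\sigma_0)$ is not obviously monotone for a fixed start, we use that the infimum definition of $t_{\mathrm{mix},\varepsilon_0}^{(n)}$ only needs the upper point to bound it from above. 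For the lower bound we need $\mathscr D_{n,\varepsilon_0}(t)>\epsilon$ for all $t\le t^{(n)}(s_\epsilon-\eta)$. Uniform convergence on $[-S,s_\epsilon-\eta]$ handles this range, because $F>\epsilon+c$ there. For $t<t^{(n)}(-S)$ with $S$ large, we argue that the bulk distance stays near $1$ before the window. This needs either a separate early-time estimate or monotonicity of the worst-case bulk distance. We expect this to be the main obstacle, and we would first try to deduce it from the Gaussian approximation behind Theorem~\ref{thm_convergence_profile}: the count mean stays at distance $\gg\sqrt n$ from equilibrium before time $t_n-S w_n$, and a Chebyshev test set then gives distance near $1$. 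Finally, multiplying $s_\epsilon\pm\eta$ by $w_n$ and letting $\eta\to0$ yields \eqref{mixing_time_result}.
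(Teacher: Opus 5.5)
Your proof of the profile convergence \eqref{eq:bulk_cutoff_profile_main_result} follows the paper's route. You use a subsequence/compactness argument over the limiting initial density, which reduces everything to the individual profile $\mathcal G(r(m)e^{-s})$, and you take the lower bound from a maximising density. That part is sound. Your remark that $\mathscr D_n(t;\sigma_0)$ depends on $\sigma_0$ only through its count is also correct: it follows from permutation equivariance and exchangeability of the invariant measure. It is what implicitly lets the paper write the supremum as a maximum over $k/n$.

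The gap is in the mixing-time statement \eqref{mixing_time_result}. The upper bound $t_{\mathrm{mix},\varepsilon_0}^{(n)}(\epsilon)\le t^{(n)}(s_\epsilon+\eta)$ follows from a single time. The lower bound, however, requires $\mathscr D_{n,\varepsilon_0}(t)>\epsilon$ for \emph{every} $t\le t^{(n)}(s_\epsilon-\eta)$. You leave the range before the window open, call it the main obstacle, and offer only a tentative Chebyshev argument.

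The missing idea is that the monotonicity you doubt holds for elementary reasons. Since $\mu_{\kappa_1,\kappa_2,\iota_1,\iota_2}^{(n)}$ is invariant and total variation contracts under a Markov kernel,
\[
\begin{aligned}
\mathscr D_n(t+u;\sigma_0)
&=\dtv\bigl((\sigma_0S_t^{(n)})S_u^{(n)},\mu_{\kappa_1,\kappa_2,\iota_1,\iota_2}^{(n)}S_u^{(n)}\bigr)\\
&\le\dtv\bigl(\sigma_0S_t^{(n)},\mu_{\kappa_1,\kappa_2,\iota_1,\iota_2}^{(n)}\bigr)=\mathscr D_n(t;\sigma_0),
\end{aligned}
\]
and a supremum of nonincreasing functions is nonincreasing. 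Hence $\mathscr D_{n,\varepsilon_0}(t^{(n)}(s_\epsilon-\eta))>\epsilon$ already gives $\mathscr D_{n,\varepsilon_0}(t)>\epsilon$ for all earlier $t$. This yields $t^{(n)}(s_\epsilon-\eta)\le t_{\mathrm{mix},\varepsilon_0}^{(n)}(\epsilon)\le t^{(n)}(s_\epsilon+\eta)$, which is exactly how the paper concludes.

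No early-time estimate is needed. Your Chebyshev route might be completable using the quantitative CLT and a start with density different from $\alpha_*$. As written, though, your proposal does not prove \eqref{mixing_time_result}.
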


The proofs are given in Section~\ref{proofs of cutoff profiles}. 

\subsection{Worst-case cutoff profile}
Boundary effects change both the leading constant in the logarithmic
mixing time and the cutoff profile. The following theorem is proved
in Section~\ref{sec:worst_case}.

\begin{thm}[Worst-case cutoff profile]
\label{thm:worst_case_cutoff}
Let
\[
 T_n\defeq\left(\frac{1}{\min\{\kappa_1,\kappa_2\}}
                    +\frac{1}{2\lambda_*}\right)\log n,
 \qquad w_n=\lambda_*^{-1}.
\]
Put $H_*\defeq\alpha_*^{1-\alpha_*}(1-\alpha_*)^{\alpha_*}$, and let
$W_0\sim\mathrm{Gamma}(\iota_1/\kappa_1,1)$ and
$W_1\sim\mathrm{Gamma}(\iota_2/\kappa_2,1)$. Define
\begin{equation}
\label{eq:boundary_profile_functions}
\begin{aligned}
 \mathcal P_0(s)&\defeq\sup_{z\in\mathbb R}
 \left\{\mathbb E\left[\Phi\left(z+
 \frac{\alpha_*H_*e^{-s}W_0^{-(1-\alpha_*)}}
      {\sqrt{\alpha_*(1-\alpha_*)}}\right)\right]-\Phi(z)\right\},\\
 \mathcal P_1(s)&\defeq\sup_{z\in\mathbb R}
 \left\{\mathbb E\left[\Phi\left(z+
 \frac{(1-\alpha_*)H_*e^{-s}W_1^{-\alpha_*}}
      {\sqrt{\alpha_*(1-\alpha_*)}}\right)\right]-\Phi(z)\right\}.
\end{aligned}
\end{equation}
For every fixed $s\in\mathbb R$,
\begin{equation}
\label{eq:worst_case_profile}
 \mathscr D_n(T_n+sw_n)\longrightarrow\mathcal P(s),
 \qquad
 \mathcal P(s)\defeq
 \begin{cases}
  \mathcal P_0(s),&\kappa_1<\kappa_2,\\
  \mathcal P_1(s),&\kappa_2<\kappa_1,\\
  \max\{\mathcal P_0(s),\mathcal P_1(s)\},&\kappa_1=\kappa_2.
 \end{cases}
\end{equation}
The convergence is uniform on compact intervals. The function
$\mathcal P$ is continuous and strictly decreasing, with limits one
and zero at $-\infty$ and $+\infty$, respectively. Consequently,
for every $\epsilon\in(0,1)$,
\begin{equation}
\label{eq:worst_case_mixing_asymptotic}
 t_{\mathrm{mix}}^{(n)}(\epsilon)
 =T_n+w_n\mathcal P^{-1}(\epsilon)+o(1).
\end{equation}
In particular, the cutoff window has order one, and
\begin{equation}
\label{eq:worst_case_cutoff_location}
 \mathscr D_n((1-\delta)T_n)\longrightarrow1,
 \qquad \mathscr D_n((1+\delta)T_n)\longrightarrow0
 \quad(0<\delta<1).
\end{equation}
If \(\kappa_1<\kappa_2\), the worst-case profile is attained
asymptotically from \(\vec{\mathbf0}\); if
\(\kappa_2<\kappa_1\), it is attained from \(\vec{\mathbf1}\).
When \(\kappa_1=\kappa_2\), the mutation parameters determine
the worst endpoint: it is \(\vec{\mathbf0}\) if
\(\iota_1<\iota_2\), and \(\vec{\mathbf1}\) if
\(\iota_2<\iota_1\).
If both \(\kappa_1=\kappa_2\) and \(\iota_1=\iota_2\),
both boundary configurations attain the same worst-case profile. Moreover, for every fixed $s\in\mathbb R$,
\begin{equation}
\label{eq:worst_case_on_bulk_window}
 \mathscr D_n(t_n+sw_n)\longrightarrow1.
\end{equation}
\end{thm}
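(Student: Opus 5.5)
The proof has three stages: reduce the configuration law to the count process, analyse the boundary start via a branching approximation, and compare all initial configurations to identify the supremum.

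\emph{Reduction to the count process.} Because the invariant measure is uniform on density level sets, it is enough to control two things: the law of the count $K_t^{(n)}$, and the erasure of the initial labels. The label issue arises because, from a configuration $\sigma_0$, the conditional law given $K_t^{(n)}=k$ need not be uniform. From the boundary configuration $\vec{\mathbf 0}$ there is nothing to erase, since all labels are exchangeable and the conditional law is exactly uniform. Hence
\[
\mathscr D_n(t;\vec{\mathbf 0})=\dtv\bigl(\mathrm{Law}(K^{(n)}_t),\pi_n\bigr),
\]
where $\pi_n$ is the birth--death stationary law of the count. The same holds from $\vec{\mathbf 1}$. By the appendix, $\pi_n$ is approximately $\mathcal N(n\alpha_*,n\alpha_*(1-\alpha_*))$ in total variation, which follows from a local CLT for the explicit product formula.

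\emph{Boundary analysis from $\vec{\mathbf 0}$.} This stage has three steps.
\begin{enumerate}
\item While $K\le n^{\beta}$ with $\beta<1/2$, couple $K$ with a linear birth process with immigration, with birth rate $\kappa_1 k$ and immigration rate $\iota_1$. The error is controlled because the factor $(n-k)^2/n^2=1+O(k/n)$ and deaths occur at rate $O(k^2/n)$. The classical result $e^{-\kappa_1 t}Y_t\to W_0\sim\mathrm{Gamma}(\iota_1/\kappa_1,1)$ almost surely then gives $K_{\tau}\approx W_0e^{\kappa_1\tau}$ at times $\tau=\beta\kappa_1^{-1}\log n$.
\item From density $n^{\beta-1}$ onward, the density follows the ODE $\dot m=m(1-m)(\kappa_1-(\kappa_1+\kappa_2)m)$ up to fluctuations of order $n^{-1/2}$. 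The diffusion approximation near the boundary is justified via martingale bounds on $\log m$.
\item The ODE has the conserved quantity $m^{1-\alpha_*}(1-m)^{\alpha_*}$ combined with the exponential rate $\lambda_*$ near $\alpha_*$. Solving it exactly from small initial data $w/n$, the deviation at time $\kappa_1^{-1}\log n+t$ is $m_t-\alpha_*\approx -C(w)e^{-\lambda_* t}$. Here $C(w)$ is computed explicitly from the first integral. At time $T_n+sw_n$ this deviation, scaled by $\sqrt n$, equals $-\alpha_* H_* e^{-s}W_0^{-(1-\alpha_*)}$, matching the mean shift in $\mathcal P_0$.
\end{enumerate}
Conditionally on the early randomness (which essentially fixes $W_0$), the later fluctuations are Gaussian with variance $\alpha_*(1-\alpha_*)$. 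This is obtained by the same Stein/semigroup Gaussian approximation used for the bulk theorems, applied after restarting at an intermediate time. Therefore the count law is close in total variation to the Gaussian mixture. Since the shift has a fixed sign, the total variation distance between the mixture and $\mathcal N(0,1)$ is a supremum over half-lines. This supremum yields the formula $\mathcal P_0$. Continuity, strict monotonicity and the limits at $\pm\infty$ follow by dominated convergence.

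\emph{Worst case and comparison.} The start $\vec{\mathbf 1}$ is handled symmetrically, with rate $\kappa_2$ and amplitude $W_1$. For the upper bound over all $\sigma_0$, I would use monotone coupling of the count, since the birth--death chain is attractive. This sandwiches any start between $\vec{\mathbf 0}$ and $\vec{\mathbf 1}$, combined with the label-erasure estimate that the paper states earlier. Starts with $k\ge 1$ minority particles delay less: the amplitude is $\mathrm{Gamma}(\iota/\kappa+k)$, which is stochastically larger and so gives a smaller shift. Starts with $k\to\infty$ reach the bulk regime and are covered by the bulk theorems at time $t_n+O(1)\ll T_n$. If $\kappa_1<\kappa_2$, the $\vec{\mathbf 1}$ start mixes by time $(\kappa_2^{-1}+(2\lambda_*)^{-1})\log n+O(1)$, much earlier than $T_n$, so only $\mathcal P_0$ survives. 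When $\kappa_1=\kappa_2$, the maximum is taken; $\alpha_*=1/2$ makes the two expressions symmetric, so the $\iota$ comparison follows from stochastic ordering of the Gamma laws. The mixing-time asymptotics \eqref{eq:worst_case_mixing_asymptotic} and the location statement \eqref{eq:worst_case_cutoff_location} follow from the uniform convergence and strict monotonicity of $\mathcal P$. For \eqref{eq:worst_case_on_bulk_window}: at time $t_n+sw_n$, the chain started at $\vec{\mathbf 0}$ still has density $o(1)$ with high probability, so its distance to stationarity tends to one.

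\emph{Main obstacle.} The main obstacle is the matching between the branching regime and the ODE regime with $o(1)$ total-variation error at the $n^{-1/2}$ scale. Errors of order $n^{-1/2}$ in the time shift, or relative errors in $W_0$, are amplified through the deterministic flow. I would handle this by choosing $\beta$ small, proving $|K_\tau e^{-\kappa_1\tau}-W_0|\to0$ in probability, and using Lipschitz dependence of the flow on initial data in logarithmic coordinates.
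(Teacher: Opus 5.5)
Your boundary analysis matches the paper's in substance: the birth--immigration coupling below a mesoscopic level, the Tavar\'e gamma limit, a clock martingale for the passage into the bulk, and the first-integral computation of the shift $\alpha_*H_*e^{-s}W_0^{-(1-\alpha_*)}$. The gamma-additivity comparison for $k\ge1$ seeds also matches. Two steps, however, do not go through as written.

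\emph{Total variation from $\vec{\mathbf 0}$.} The Stein/semigroup approximation controls only $d_K$, so ``therefore the count law is close in total variation to the Gaussian mixture'' does not follow. The paper supplies the missing ingredient through the unimodality of $j\mapsto P_t(k,j)/\pi_n(j)$ for a birth--death chain started at a point (Ding--Lubetzky--Peres). This makes $\dtv(P_t(k,\cdot),\pi_n)$ equal to the oscillation $\max_jH_t(j)-\min_jH_t(j)$ of the difference of distribution functions. That functional is continuous under weak convergence to continuous limits. An alternative would be to restart at the hitting time of $\lfloor nq\rfloor$ and use the entropy-based total variation approximation by the product reference measure, uniformly over bulk starts and compact windows, followed by a local CLT for the resulting mixture. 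Either way, such an argument must be supplied. Your stated main obstacle is not the real difficulty. The linearised flow contracts, so an $o_{\mathbb P}(1)$ error in the entrance time only multiplies the displacement by $e^{o(1)}$. The limit $\tau_{\lfloor nq\rfloor}-\kappa_1^{-1}\log n\Rightarrow\psi(q)-\kappa_1^{-1}\log W_{0,\ell}$ plus the strong Markov property suffices.

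\emph{Upper bound over all initial configurations.} This gap is more serious. A monotone coupling gives only $\operatorname{Law}_0(K_t)\preceq\operatorname{Law}_k(K_t)\preceq\operatorname{Law}_n(K_t)$, which bounds distribution functions, not total variation. Moreover, for starts other than $\vec{\mathbf 0},\vec{\mathbf 1}$ the configuration law is not exchangeable, so its distance can exceed the count distance. Even with unimodality added, write $F_j$ for the distribution function of $K_t$ from $j$ and $G$ for that of $\pi_n$. The sandwich then bounds the distance from $k$ by $\sup(F_0-G)-\inf(F_n-G)$. When $\kappa_1=\kappa_2$ this tends to $\mathcal P_0(s)+\mathcal P_1(s)$, not $\max\{\mathcal P_0(s),\mathcal P_1(s)\}$.

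The paper instead takes a maximising $\sigma_0^{(n)}$, passes to subsequences, and splits by the count $k_n$:
\begin{itemize}
\item bulk counts are handled by the bulk cutoff theorem;
\item minority counts $k_n\to\infty$, and fixed seeds at the faster endpoint, use the uniform entrance bound $\sup_{k<\lfloor an\rfloor}\mathbb E_k|\tau_{\lfloor an\rfloor}-\kappa_1^{-1}\log(n/(k+1))|\le C_a$, the strong Markov property, and the bulk distance;
\item fixed seeds at the slower endpoint give $\mathcal P_{0,k}\le\mathcal P_0$, combined with a coupling that erases the labels of the $k$ initial minority sites.
\end{itemize}
Your remark that starts with $k\to\infty$ are covered by the bulk theorems at time $t_n+O(1)$ misses this uniformity. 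Their entrance time $\kappa_1^{-1}\log(n/k_n)$ can be arbitrarily close to $\kappa_1^{-1}\log n$ when $k_n$ grows slowly.

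Finally, \eqref{eq:worst_case_on_bulk_window} cannot be justified by ``density $o(1)$'' when $\kappa_1=\kappa_2$. In that case $t_n=\kappa_1^{-1}\log n$, and the density is of order one. The claim follows instead from time monotonicity, $T_n-t_n\to\infty$, and $\mathcal P(-R)\to1$.
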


\begin{remark}[Bulk and worst-case mixing]
The bulk and worst-case mixing times both have order $\log n$,
but their leading coefficients differ. 
Thus the boundary growth period contributes an additional
$\min\{\kappa_1,\kappa_2\}^{-1}\log n$ to the leading-order
mixing time. This separation has order $\log n$, whereas both
cutoff windows have order one.

By \eqref{eq:worst_case_on_bulk_window}, the worst-case distance
tends to one throughout every fixed bulk profile window.
Since
$t_{\mathrm{mix},\varepsilon_0}^{(n)}(\epsilon)=t_n+O(1)$,
time monotonicity further gives
 $\mathscr D_n\left(
 t_{\mathrm{mix},\varepsilon_0}^{(n)}(\epsilon)
 \right)\longrightarrow1$.
Hence, when all initial configurations with density in
$[\varepsilon_0,1-\varepsilon_0]$ have mixed to level $\epsilon$,
the worst initial configurations remain asymptotically at
total variation distance one from equilibrium.
The boundary effect therefore changes the leading-order cutoff
location, as well as the shape of the cutoff profile. 
\end{remark}

\subsection{The quantitative approximation for the Togashi--Kaneko model
with weak mutations}
\label{subsec:results_TK}

Recall the global density \(M^{(n)}\) and the global density process
\(M_t^{(n)}=M^{(n)}(\sigma^{(n)}(t))\) introduced in
Subsections~\ref{subsec:TK_model_def} and~\ref{subsec:notations}. To retain
the dependence on the initial configuration, fix
\(\sigma_0\in\Omega_n^*\), and define the \emph{initial blocks} and their
proportions by
\[
    A^i=A^i(\sigma_0)\defeq \{x\in\Lambda_n:(\sigma_0)_x=i\},
    \qquad
    a^i=a^i(\sigma_0)\defeq \frac{|A^i|}{n},
    \qquad i\in Q.
\]
Then \(a^0,a^1>0\), \(a^0+a^1=1\), and
$
    a^1=M^{(n)}(\sigma_0),
    a^0=1-M^{(n)}(\sigma_0).
$
Throughout this subsection, all quantitative estimates are stated under Assumption~\ref{assumption:bulk_initial_condition_tk}.

For \(i\in Q\), define the \emph{local density}
\(M^{(i,n)}:\Omega_n\to[0,1]\) by
\[
    M^{(i,n)}(\sigma)\defeq \frac{1}{a^in}\sum_{x\in A^i}\sigma_x,
    \qquad \sigma\in\Omega_n.
\]
We write
$
    M_t^{(i,n)}\defeq M^{(i,n)}(\sigma^{(n)}(t)),
    \vec M^{(n)}(\sigma)\defeq
    \left(M^{(0,n)}(\sigma),M^{(1,n)}(\sigma)\right),
    \vec a\defeq (a^0,a^1).
$
The global and local densities satisfy
    $M^{(n)}(\sigma)
    =
    \sum_{i\in Q}a^iM^{(i,n)}(\sigma)
    =
    \vec a\cdot\vec M^{(n)}(\sigma)$.
For \(\vec m=(m^0,m^1)\in[0,1]^2\), set
$
    m\defeq \vec a\cdot\vec m=a^0m^0+a^1m^1.
$
For \(i\in Q\), define the \emph{local drift} and
\emph{local carré-du-champ coefficient} by
\[
\begin{aligned}
F^{(i,n)}(\vec m)
&\defeq
(1-m^i)
\left[
    \kappa_1m(1-m)+\frac{\iota_1}{n}(1-m)
\right]
-
m^i
\left[
    \kappa_2m(1-m)+\frac{\iota_2}{n}m
\right],\\
G^{(i,n)}(\vec m)
&\defeq
(1-m^i)
\left[
    \kappa_1m(1-m)+\frac{\iota_1}{n}(1-m)
\right]
+
m^i
\left[
    \kappa_2m(1-m)+\frac{\iota_2}{n}m
\right].
\end{aligned}
\]
Let
$
    \vec {F^{(n)}}\defeq
    \left(F^{(0,n)},F^{(1,n)}\right),
    \vec {G^{(n)}}\defeq
    \left(G^{(0,n)},G^{(1,n)}\right).
$
A direct computation gives
\[
    L_nM^{(i,n)}
    =
    F^{(i,n)}(\vec M^{(n)}),
    \qquad
    \Gamma_nM^{(i,n)}
    =
    \frac{1}{a^in}G^{(i,n)}(\vec M^{(n)}).
\]
The functions \(\vec {F^{(n)}}\), \(\vec {G^{(n)}}\), and their derivatives up to order two are bounded uniformly in \(n\) on \([0,1]^2\), with bounds depending only on \(\kappa_1,\kappa_2,\iota_1,\iota_2\). 
We also introduce the local coefficients obtained by removing
the mutation terms proportional to \(n^{-1}\), with the block
weights \(\vec a\) held fixed. For
\(\vec m=(m^0,m^1)\in[0,1]^2\), recall that
\(m=\vec a\cdot\vec m\), and set, for \(i\in Q\),
\[
\begin{aligned}
 F^{(i,\infty)}(\vec m)
 &\defeq
 m(1-m)\bigl[\kappa_1(1-m^i)-\kappa_2m^i\bigr],\\
 G^{(i,\infty)}(\vec m)
 &\defeq
 m(1-m)\bigl[\kappa_1(1-m^i)+\kappa_2m^i\bigr].
\end{aligned}
\]
The corresponding vector-valued functions are
\[
 \vec F^{(\infty)}
 \defeq \bigl(F^{(0,\infty)},F^{(1,\infty)}\bigr),
 \qquad
 \vec G^{(\infty)}
 \defeq \bigl(G^{(0,\infty)},G^{(1,\infty)}\bigr).
\]

Now, define the \emph{deterministic local density}
    $\vec{\rho_t^{(n)}}
    \defeq
\left(\rho_t^{(0,n)},\rho_t^{(1,n)}\right)$, for $t\ge0$,
as the solution of the differential equation
\begin{equation}
\label{mean_field_ode}
\begin{cases}
\partial_t\rho_t^{(i,n)}
=
F^{(i,n)}(\vec{\rho_t^{(n)}}),
& i\in Q,\\
\rho_0^{(i,n)}
=
M^{(i,n)}(\sigma_0)=i,
& i\in Q.
\end{cases}
\end{equation}
The associated \emph{deterministic global density} is
$
    \rho_t^{(n)}
    \defeq
    \vec a\cdot\vec{\rho_t^{(n)}}
    =
    \sum_{i\in Q}a^i\rho_t^{(i,n)}.
$
Recall that
$
    \alpha_*=\frac{\kappa_1}{\kappa_1+\kappa_2}.
$
By the deterministic estimate \eqref{eq:sharp_bulk_deterministic_decay},
proved in Section~\ref{sec:cutoff_profile} using only ODE arguments,
under Assumption~\ref{assumption:bulk_initial_condition_tk} there exist constants \(C_{\varepsilon_0},c_{\varepsilon_0}>0\), depending only on \(\kappa_1,\kappa_2,\iota_1,\iota_2,\varepsilon_0\), such that, for all sufficiently large \(n\),
\[
    \left\|
        \vec{\rho_t^{(n)}}-(\alpha_*,\alpha_*)
    \right\|
    \le
    C_{\varepsilon_0}
    \left(
        e^{-c_{\varepsilon_0}t}+\frac1n
    \right),
    \qquad t\ge0.
\]
Define the \emph{global density fluctuation} and the
\emph{local density fluctuation vector} by
\[
    \xi_t^{(n)}
    \defeq
    \sqrt n\left(M_t^{(n)}-\rho_t^{(n)}\right),
    \qquad
    \xi_t^{(i,n)}
    \defeq
    \sqrt{a^in}
    \left(M_t^{(i,n)}-\rho_t^{(i,n)}\right),
\]
and
$
    \vec{\xi_t^{(n)}}
    \defeq
    \left(\xi_t^{(0,n)},\xi_t^{(1,n)}\right).
$
They satisfy the exact identity
$
    \xi_t^{(n)}
    =
    \sum_{i\in Q}\sqrt{a^i}\,\xi_t^{(i,n)}.
$
We use the same notation for the functions for $\eta \in \Omega_n$,
\[
 \xi_t^{(i,n)}(\eta)=\sqrt{a^in}
        \bigl(M^{(i,n)}(\eta)-\rho_t^{(i,n)}\bigr),\qquad
 \xi_t^{(n)}(\eta)=\sqrt n
        \bigl(M^{(n)}(\eta)-\rho_t^{(n)}\bigr).
\]
Under \(\nu_t^{(n)}\), we denote this local fluctuation
vector by \(\vec\zeta_t^{(n)}\).

The following estimate gives polynomial control of the local fluctuations and is the moment bound used in the quantitative diffusion approximation.

\begin{lemma}[A priori estimate for the local density fluctuations]
\label{lemma: a_priori_estimate_for_the_magnetization_vector}
Under Assumption~\ref{assumption:bulk_initial_condition_tk}, for all
sufficiently large \(n\), there exists a constant
$
    C_{\varepsilon_0}
    =
    C_{\varepsilon_0}
    (\kappa_1,\kappa_2,\iota_1,\iota_2)
$
such that, for every \(t\ge0\) and \(i\in Q\),
    \[\mathbb E_{\sigma_0}^{(n)}
    \left[
        \left(\xi_t^{(i,n)}\right)^2
    \right]
    \le
    C_{\varepsilon_0}t\eqcomma\]
so that
$
    \mathbb E_{\sigma_0}^{(n)}
    \left[
        \left(
            M_t^{(i,n)}-\rho_t^{(i,n)}
        \right)^2
    \right]
    \le
    \frac{C_{\varepsilon_0}t}{a^in}
    \le
    \frac{C_{\varepsilon_0}t}
    {\min_{j\in Q}a^jn}.
$
\end{lemma}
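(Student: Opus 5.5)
The plan is to derive a differential inequality for the second moments via Dynkin's formula. Then I would close it with Gronwall's lemma on a finite initial interval and a Lyapunov-type contraction near $(\alpha_*,\alpha_*)$ afterwards. For $i\in Q$, the process $\xi_t^{(i,n)}$ is a semimartingale with drift $\sqrt{a^in}\bigl(F^{(i,n)}(\vec M_t^{(n)})-F^{(i,n)}(\vec{\rho_t^{(n)}})\bigr)$ and predictable quadratic variation rate $a^in\,\Gamma_nM^{(i,n)}=G^{(i,n)}(\vec M_t^{(n)})$. Applying $L_n$ to $(\xi^{(i,n)}_t)^2$ (time-dependent through $\rho_t$) gives
\[
\frac{d}{dt}\mathbb E_{\sigma_0}^{(n)}\bigl[(\xi_t^{(i,n)})^2\bigr]
=2\sqrt{a^in}\,\mathbb E_{\sigma_0}^{(n)}\Bigl[\xi_t^{(i,n)}\bigl(F^{(i,n)}(\vec M_t^{(n)})-F^{(i,n)}(\vec{\rho_t^{(n)}})\bigr)\Bigr]
+\mathbb E_{\sigma_0}^{(n)}\bigl[G^{(i,n)}(\vec M_t^{(n)})\bigr].
\]
The last term is bounded by a constant depending only on the parameters. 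Since $\vec{F^{(n)}}$ is Lipschitz on $[0,1]^2$ uniformly in $n$ and $a^j\ge\varepsilon_0$, we have $\sqrt{a^in}\,|F^{(i,n)}(\vec M)-F^{(i,n)}(\vec\rho)|\le C_{\varepsilon_0}\sum_j|\xi^{(j,n)}|$. Hence $V_t\defeq\sum_i\mathbb E[(\xi_t^{(i,n)})^2]$ satisfies $V_t'\le C V_t+C$ with $V_0=0$. Gronwall then gives $V_t\le e^{Ct}-1\le C'_{T_0}t$ for $t\in[0,T_0]$, for any fixed $T_0$.

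For large times I would exploit stability of the linearisation. By the deterministic estimate \eqref{eq:sharp_bulk_deterministic_decay}, there is $T_0=T_0(\varepsilon_0)$ such that $\vec{\rho_t^{(n)}}$ lies in a small ball around $(\alpha_*,\alpha_*)$ for $t\ge T_0$ and $n$ large. The Jacobian of $\vec F^{(\infty)}$ at $(\alpha_*,\alpha_*)$ is stable. In the global direction $m$ the eigenvalue is $f'(\alpha_*)=-(\kappa_1+\kappa_2)\alpha_*(1-\alpha_*)<0$, where $f(m)=m(1-m)(\kappa_1-(\kappa_1+\kappa_2)m)$. In the block-difference direction it is $-\alpha_*(1-\alpha_*)(\kappa_1\cdots)$, namely minus the coefficient $\kappa_1 m(1-m)+\kappa_2 m(1-m)$ of $m^i$, which is also negative. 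So I would pick a positive definite matrix $P$ solving a Lyapunov equation, uniformly in $\vec a\in[\varepsilon_0,1-\varepsilon_0]$ (a compact family), and set $U_t=\mathbb E[\langle \vec{\xi_t}, P\vec{\xi_t}\rangle]$ with rescaled coordinates. The linear part of the drift then contributes $\le -cU_t$, and the $O(1/n)$ mutation corrections and the $O(\|\vec\rho_t-(\alpha_*,\alpha_*)\|)$ variation of the Jacobian can be absorbed for $t\ge T_0$.

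The main obstacle is the nonlinear remainder. We have $|F(\vec M)-F(\vec\rho)-DF(\vec\rho)(\vec M-\vec\rho)|\le C\|\vec M-\vec\rho\|^2$, and after multiplying by $\sqrt n\,\xi$ this produces a cubic term $C|\xi|^3/\sqrt n$. I would handle it by truncation. On the event $\{\|\vec M_t-\vec\rho_t\|\le\delta\}$ the remainder is at most $C\delta\|\vec M-\vec\rho\|$, and for $\delta$ small it is absorbed into $-cU_t$. On the complement the integrand is crudely bounded by $Cn$. It therefore suffices to show $\sup_{t\ge0}\mathbb P_{\sigma_0}(\|\vec M_t-\vec\rho_t\|>\delta)\le Ce^{-c n}$, or even $\le C/n^{2}$. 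I would attempt this via the exponential martingale $\exp\bigl(\theta\sqrt n\,\xi^{(i,n)}_t-\int_0^t(\cdots)\bigr)$, a Kurtz-type law of large numbers on $[0,T_0]$, and then a restart argument on unit time intervals using the attracting neighbourhood. This is the step I expect to need the most care.

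Granting it, one gets $U_t'\le -cU_t+C$ for $t\ge T_0$, so $U_t\le U_{T_0}+C/c$. Combining with the bound on $[0,T_0]$ gives $\mathbb E[(\xi_t^{(i,n)})^2]\le C_{\varepsilon_0}\max(t,1)$. This is at most $C_{\varepsilon_0}t$ for $t\ge1$, and is already $\le Ct$ for $t\le1$ by the Gronwall step. The stated bound for $\mathbb E[(M_t^{(i,n)}-\rho_t^{(i,n)})^2]$ then follows by dividing by $a^in\ge\min_j a^jn$.
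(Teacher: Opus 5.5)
Your Gronwall step on $[0,T_0]$ and the quadratic Lyapunov computation near $(\alpha_*,\alpha_*)$ are sound. In fact $\beta^{(n)}(t)\to-\lambda_*I$, so $P=I$ already works. The gap is the probability estimate you defer, on which everything hinges: as you state it, it is not what your sketch delivers. A restart argument on intervals of length $L$, combined by a union bound, gives at best $\mathbb P_{\sigma_0}^{(n)}(\|\vec M_t^{(n)}-\vec{\rho_t^{(n)}}\|>\delta)\le C(1+t/L)e^{-cn}$, which grows with $t$. A bound uniform over all $t\ge0$ would require controlling how the process returns after an excursion; for instance, from the boundary the return takes time of order $\log n$ with a random delay. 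Your plan does not address this. So the conclusion $U_t\le U_{T_0}+C/c$ is not justified as written. The exponential-martingale, Kurtz-type and restart estimates you would need also remain a substantial piece of unperformed work.

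The missing observation is that the target bound is linear in $t$. It therefore suffices that $n$ times the probability of leaving the good neighbourhood by time $t$ is at most $C(1+t)$, and this follows from second moments alone. Let $\tau$ be the first time $u\ge T_0$ with $\|\vec M_u^{(n)}-\vec{\rho_u^{(n)}}\|>\delta$. For $T_0\le u<\tau$, your own computation gives $(\partial_u+L_n)\langle\vec\xi_u,P\vec\xi_u\rangle\le -c\langle\vec\xi_u,P\vec\xi_u\rangle+C$. Dynkin's formula and your bounded-time estimate then give $\mathbb E[\langle\vec\xi_{t\wedge\tau},P\vec\xi_{t\wedge\tau}\rangle]\le Ct$. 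On $\{\tau\le t\}$ the stopped quadratic form is at least $c\,\varepsilon_0 n\delta^2$, so Chebyshev gives $\mathbb P(\tau\le t)\le Ct/n$. Since $\|\vec{\xi_t^{(n)}}\|^2\le n$ always, you get $\mathbb E\|\vec{\xi_t^{(n)}}\|^2\le Ct+n\,\mathbb P(\tau\le t)\le Ct$.

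This is exactly the paper's device, but the paper needs it only for the scalar global fluctuation $\xi_t^{(n)}$. There it uses contraction of $F^{(n)}$ on an interval $\mathcal I$ around $\alpha_*$, which $\rho_t^{(n)}$ enters within a bounded time. The local coordinates then need no linearisation or truncation at all, because of the exact identity
$F^{(i,n)}(\vec M)-F^{(i,n)}(\vec\rho)=-A_t^{(n)}(M^{(i,n)}-\rho_t^{(i,n)})+B_t^{(i,n)}(M^{(n)}-\rho_t^{(n)})$.
Here $A_t^{(n)}\ge c_{\varepsilon_0}>0$ for all $t\ge0$, not only near equilibrium, and $B_t^{(i,n)}$ is bounded. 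This yields $y_i'\le -c\,y_i+C\,\mathbb E[(\xi_t^{(n)})^2]+C$, and the linear global bound closes the argument.
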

This lemma is proved in Section~\ref{Priori_est_Mag_Vec}.

We can now state the quantitative Gaussian approximation for the local
fluctuation vector.

\begin{prop}[Quantitative CLT for the local density fluctuation vector]
\label{prop:CLT_mag_tk}
Under Assumption~\ref{assumption:bulk_initial_condition_tk}, fix \(C_0>0\).
Let \(\vec Z_t^{(n)}\sim\mathcal N(0,\Gamma_t^{(n)})\), where
\(\Gamma_t^{(n)}\) is defined in \eqref{Gamma_tk_clt}.
For all sufficiently large \(n\) and every \(0\le t\le C_0\log n\),
\[
    d_K\left(
        \operatorname{Law}\left(
            \vec{\xi_t^{(n)}};
            \mathbb P_{\sigma_0}^{(n)}
        \right),
        \operatorname{Law}\left(\vec {Z_t^{(n)}}\right)
    \right)
    \le
    \frac{
        C_{\varepsilon_0,C_0}(t^{1/3}+t^2)
    }{
        \left(\min_{i\in Q}a^in\right)^{1/6}
    },
\]
where the constant
$
    C_{\varepsilon_0,C_0}
    =
    C_{\varepsilon_0,C_0}
    (\kappa_1,\kappa_2,\iota_1,\iota_2)
$
is independent of \(n,t\), and \(\sigma_0\).
\end{prop}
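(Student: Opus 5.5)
We prove the bound with Barbour's semigroup version of Stein's method \cite{Barbour1990Diffusion}, comparing the pre-limit fluctuation vector with the Gaussian process obtained by linearising the drift along the deterministic trajectory. Let $A_t^{(n)}\defeq D\vec F^{(n)}(\vec{\rho_t^{(n)}})$, rescaled by the block weights so that it acts on $\vec\xi$ coordinates; its entries are $\sqrt{a^i/a^j}\,\partial_jF^{(i,n)}$. Let $\Sigma_t^{(n)}=\mathrm{diag}(G^{(i,n)}(\vec{\rho_t^{(n)}}))$. The Gaussian law $\mathcal N(0,\Gamma_t^{(n)})$ is the time-$t$ marginal of the time-inhomogeneous Ornstein--Uhlenbeck process $dY=A_u Y\,du+\Sigma_u^{1/2}dB_u$ started at $0$. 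We take $\Gamma_t^{(n)}=\int_0^t P_{u,t}\Sigma_uP_{u,t}^{\top}du$, with $P_{u,t}$ the propagator of $A$, which is what \eqref{Gamma_tk_clt} should be.

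First we reduce to smooth test functions. Fix a $1$-Lipschitz $f$ and mollify it at scale $\delta$ to get $f_\delta$, with $\|D^2f_\delta\|_\infty\lesssim\delta^{-1}$ and $\|D^3f_\delta\|_\infty\lesssim\delta^{-2}$, at a cost of $O(\delta)$. Let $\mathcal T_{u,t}$ be the OU semigroup from $u$ to $t$. For a Gaussian semigroup, $D^kf$ is transported by $P_{u,t}$, so $\|D^k\mathcal T_{u,t}f_\delta\|\le\|P_{u,t}\|^k\|D^kf_\delta\|$. The Duhamel (interpolation) identity gives
\[
\mathbb E f_\delta(\vec{\xi_t^{(n)}})-\mathbb E f_\delta(\vec Z_t^{(n)})=\int_0^t\mathbb E\bigl[(\mathcal L^{(n)}_u-\mathcal L^{\mathrm{OU}}_u)\,\mathcal T_{u,t}f_\delta(\vec{\xi_u^{(n)}})\bigr]\,du,
\]
where $\mathcal L^{(n)}_u$ is the generator of the time-dependent process $\vec\xi_u$. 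It equals $L_n$ minus the transport term $\sqrt{a^in}\,\partial_t\rho^{(i,n)}\partial_i$.

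Next we compute the generator difference. A flip at a site of block $i$ moves $\xi^{(i)}$ by $\pm(a^in)^{-1/2}$. Taylor-expanding to third order, the first-order term gives the drift $\sqrt{a^in}\,[F^{(i,n)}(\vec M)-F^{(i,n)}(\vec\rho)]$. Its difference from $(A_u\vec\xi)_i$ is a second-order Taylor remainder of size $\lesssim\|\vec\xi\|^2/\sqrt{\min_i a^in}$ times $\|D\mathcal T f\|$. The second-order term gives $\tfrac12 G^{(i,n)}(\vec M)\partial_{ii}$. Its difference from $\tfrac12\Sigma_u$ is $\lesssim\|\vec\xi\|/\sqrt{\min a^in}$ times $\|D^2\mathcal Tf_\delta\|$. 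The cubic remainder is $\lesssim(\min a^in)^{-1/2}\|D^3\mathcal Tf_\delta\|$. Taking expectations and using Lemma~\ref{lemma: a_priori_estimate_for_the_magnetization_vector} ($\mathbb E\|\vec\xi_u\|^2\le Cu$), the integrand is bounded by $(\min a^in)^{-1/2}\|P_{u,t}\|^k(u+\sqrt u\,\delta^{-1}+\delta^{-2})$. We then integrate over $u\in[0,t]$ and optimise in $\delta$. Balancing $\delta$ against $\delta^{-2}(\min a^in)^{-1/2}$ gives $\delta\sim(\min a^in)^{-1/6}$, which is the source of the exponent $1/6$. The polynomial factor $t^{1/3}+t^2$ comes from the $u$-integrals: the $\delta$-independent drift term gives roughly $t^2$, and the small-$t$ regime gives the $t^{1/3}$ term after optimisation.

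The main obstacle is controlling the propagator norms $\|P_{u,t}\|$ uniformly for $t\le C_0\log n$, without exponential growth in $t-u$. The linearisation $A_u$ is not symmetric. In particular, one direction of the $2\times2$ block linearisation, the one along which the weighted global density $\vec a\cdot\vec m$ is preserved, is only neutrally stable to leading order. We expect to handle this with the explicit structure. Using $F^{(i,\infty)}=m(1-m)[\kappa_1-(\kappa_1+\kappa_2)m^i]$, the matrix $A_u$ has the block-difference mode contracting at rate $(\kappa_1+\kappa_2)\rho(1-\rho)$, and the global mode contracting at rate $\approx\lambda_*$ near equilibrium. The $n^{-1}$ mutation corrections contribute at most $e^{Ct/n}=O(1)$ for $t\le C_0\log n$. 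So $\sup_{u\le t}\|P_{u,t}\|\le C_{\varepsilon_0,C_0}$, and this is where Assumption~\ref{assumption:bulk_initial_condition_tk} enters, since it keeps $\rho_t(1-\rho_t)$ bounded below by the deterministic decay estimate \eqref{eq:sharp_bulk_deterministic_decay}. If this uniform bound failed, we would fall back on a weaker bound linear in $t$, which still fits within the $t^2$ budget.
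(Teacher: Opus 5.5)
Your proposal is correct and is essentially the paper's proof. It uses the same linearised diffusion and the same Taylor-expanded generator comparison, controlled by $\mathbb E\|\vec{\xi_u^{(n)}}\|^2\le C_{\varepsilon_0}u$. It bounds derivatives of the backward solution by $\|D^\ell g_{s,t}\|_\infty\le\|\Phi^{(n)}(t,s)\|^\ell\|D^\ell f\|_\infty$, and its smoothing step yields $t^2N^{-1/2}+t^{3/4}N^{-1/4}+t^{1/3}N^{-1/6}$ with $N=\min_i a^in$. The smoothing scale must depend on $t$, for example $\delta\asymp\max\{t^{1/3}N^{-1/6},t^{3/4}N^{-1/4}\}$, not just $N^{-1/6}$.

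Two side remarks are off. First, no direction is neutrally stable, since $D\vec F^{(\infty)}(\alpha_*,\alpha_*)=-\lambda_*I$. The actual transient issue is that the global mode evolves with rate $(F^{(n)})'(\rho_t^{(n)})$, which can be positive before the uniformly bounded entrance time. The paper handles this through the triangular structure $\beta^{(n)}(t)=-A_t^{(n)}I+\mathbf v_t^{(n)}\mathbf u^{\intercal}$. Second, your fallback of a linearly growing propagator would not fit the stated bound, because $\|\Phi^{(n)}\|$ enters cubed in the third-derivative term.
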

This proposition is proved in Section~\ref{sub:backwards_fokker-planck_equation}.

\begin{remark}[Choice of the logarithmic time window]
\label{remark:choice_C0_log_window_CLT}
The estimates hold on \(0\le t\le C_0\log n\) for every
positive constant \(C_0\) independent of \(n\).
For the subsequent mixing-time arguments, we choose \(C_0\)
sufficiently large. In particular, taking
 $C_0>
 \frac{2(\kappa_1+\kappa_2)}{\kappa_1\kappa_2}$
ensures that this interval contains the time \(4t_n\) used
in the upper bound on the bulk mixing time.
\end{remark}

\subsection{A CLT for the reference measure}
\label{subsec:results_aux_pro}

We now introduce the product reference measure to approximate the local fluctuations of the TK model. Recall the blocks \(A^i\), their relative sizes \(a^i\), and the deterministic local density \(\vec{\rho_t^{(n)}}\) defined in
Subsection~\ref{subsec:results_TK}. For \(t\ge0\), define the
\emph{reference measure}
\begin{align}
\label{reference_measure}
\nu_t^{(n)}
&\defeq
\bigotimes_{i\in Q}
\bigotimes_{x\in A^i}
\mathrm{Ber}\left(\rho_t^{(i,n)}\right),
\\
\intertext{so that, for every \(\eta\in\Omega_n\),}
\nu_t^{(n)}(\eta)
&=
\prod_{i\in Q}\prod_{x\in A^i}
\left(
    \eta_x\rho_t^{(i,n)}
    +(1-\eta_x)(1-\rho_t^{(i,n)})
\right).
\notag
\end{align}
Analogously, we also define the \emph{equilibrium reference measure}
\begin{equation}
\label{reference_measure_limit}
    \nu_*^{(n)}
    \defeq
    \bigotimes_{x\in\Lambda_n}\mathrm{Ber}(\alpha_*),
    \qquad
    \alpha_*=\frac{\kappa_1}{\kappa_1+\kappa_2}.
\end{equation}
Under \(\nu_t^{(n)}\), $a^inM^{(i,n)}\sim\mathrm{Bin}\left(a^in,\rho_t^{(i,n)}\right)$, for $i\in Q$.
Define the \emph{reference-measure local fluctuation vector} by
\[
    \zeta_t^{(i,n)}
    \defeq
    \sqrt{a^in}
    \left(
        M^{(i,n)}-\rho_t^{(i,n)}
    \right),
    \qquad
    \vec{\zeta_t^{(n)}}
    \defeq
    \left(
        \zeta_t^{(0,n)},\zeta_t^{(1,n)}
    \right).
\]
The following identity provides its exact second moment.

Under $\nu_t^{(n)}$, the block counts are independent with laws
$\mathrm{Bin}(a^in,\rho_t^{(i,n)})$. Consequently,
\begin{align}
\label{quadratic_fluctuation_estimates_under_reference_measure}
 \mathbb E_{\nu_t^{(n)}}[\zeta_t^{(i,n)}]=0,
 \qquad
 \mathbb E_{\nu_t^{(n)}}[(\zeta_t^{(i,n)})^2]
 =\rho_t^{(i,n)}(1-\rho_t^{(i,n)})\le\tfrac14.
\end{align}

Define the \emph{reference Gaussian vector} by
\[
    \vec{\bar Z_t^{(n)}}
    \sim
    \mathcal N\left(0,\bar\Gamma_t^{(n)}\right),
    \qquad
    \bar\Gamma_t^{(n)}
    \defeq
    \begin{pmatrix}
    \rho_t^{(0,n)}(1-\rho_t^{(0,n)}) & 0\\
    0 & \rho_t^{(1,n)}(1-\rho_t^{(1,n)})
    \end{pmatrix}.
\]

\begin{prop}[Quantitative CLT for the reference measure]
\label{prop:CLT_mag_GD}
For every $n\ge2$, $\sigma_0\in\Omega_n^*$, and $t\ge0$,
\[
 d_K\left(
  \operatorname{Law}(\vec{\zeta_t^{(n)}};\nu_t^{(n)}),
  \operatorname{Law}(\vec{\bar Z_t^{(n)}})
 \right)
 \le\frac32\sum_{i\in Q}\frac1{\sqrt{a^in}}
 \le\frac3{\sqrt{\min_{i\in Q}a^in}}.
\]
\end{prop}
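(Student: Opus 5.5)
The plan is to reduce the two-dimensional statement to two one-dimensional Berry--Esseen-type bounds in Kantorovich distance, using independence of the block counts under $\nu_t^{(n)}$. Under $\nu_t^{(n)}$ the coordinates $\zeta_t^{(0,n)}$ and $\zeta_t^{(1,n)}$ are independent, since they are functions of the disjoint families $(\eta_x)_{x\in A^0}$ and $(\eta_x)_{x\in A^1}$ of independent Bernoulli variables. The Gaussian vector $\vec{\bar Z_t^{(n)}}$ also has independent coordinates $\bar Z^{(i)}\sim\mathcal N(0,p_i(1-p_i))$, where $p_i\defeq\rho_t^{(i,n)}$.

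First, take the product of near-optimal one-dimensional couplings $(\zeta^{(i)},\bar Z^{(i)})$ for $i\in Q$. Since the Euclidean norm is bounded by the $\ell^1$ norm, \eqref{KR-duality} gives
\[
 d_K\bigl(\vec{\zeta_t^{(n)}},\vec{\bar Z_t^{(n)}}\bigr)
 \le \sum_{i\in Q} d_K\bigl(\zeta_t^{(i,n)},\bar Z^{(i)}\bigr).
\]
It therefore suffices to show, for each $i$, that $d_K(\zeta_t^{(i,n)},\bar Z^{(i)})\le \frac{3}{2}(a^in)^{-1/2}$. The second inequality in the statement then follows trivially from $\sum_{i\in Q} (a^in)^{-1/2}\le 2(\min_i a^in)^{-1/2}$.

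Second, for a fixed block write $N=a^in$ and $\zeta=N^{-1/2}\sum_{k=1}^N(\eta_k-p)$, a normalized sum of i.i.d. centred variables $X_k=\eta_k-p$ with variance $v=p(1-p)$. If $v=0$, then $\zeta=0=\bar Z$ almost surely and there is nothing to prove; this can in fact happen only at $t=0$, since $\rho_0^{(i,n)}=i$. Otherwise, rescale to unit variance. The classical Stein-method bound in Wasserstein-1 distance for sums of independent mean-zero unit-variance variables (e.g. Chen--Goldstein--Shao, or Goldstein's constant-one $L^1$ Berry--Esseen bound) gives
\[
 d_K\bigl(\zeta/\sqrt v,\mathcal N(0,1)\bigr)\le \frac{1}{\sqrt N}\,\frac{\mathbb E|X_1|^3}{v^{3/2}}.
\]
Multiplying back by $\sqrt v$ yields $d_K(\zeta,\bar Z)\le N^{-1/2}\,\mathbb E|X_1|^3/v$. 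Now $\mathbb E|X_1|^3=p(1-p)\bigl(p^2+(1-p)^2\bigr)\le v$. Hence $d_K\le N^{-1/2}$, which is even better than the claimed $\tfrac32 N^{-1/2}$, so there is slack in the constant.

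The main obstacle is the choice of the one-dimensional bound with an explicit constant that is uniform in $p$, including $p$ near $0$ or $1$. There the naive Berry--Esseen ratio $\mathbb E|X|^3/v^{3/2}$ blows up, and it is only the rescaling back by $\sqrt v$ that cancels it. If a clean citation with constant one is not acceptable, I would instead prove the bound directly with Stein's method. Take a $1$-Lipschitz $h$, let $f$ be the solution of $v f'(w)-wf(w)=h(w)-\mathbb E h(\bar Z)$, and use the standard bounds $\|f'\|_\infty\le \sqrt{2/\pi}/\sqrt v$ and $\|f''\|_\infty\le 2/v$ (the unit-variance bounds, rescaled). The leave-one-out expansion $\mathbb E[\zeta f(\zeta)]=\sum_k N^{-1/2}\mathbb E[X_kf(\zeta)]$ and Taylor's formula then give an error at most $\tfrac{1}{2}\|f''\|_\infty N^{-1/2}\,\mathbb E|X|^3\cdot\tfrac{3}{2}$, which is at most $\tfrac32 N^{-1/2}$ after using $\mathbb E|X|^3\le v$. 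This matches the constant in the statement.
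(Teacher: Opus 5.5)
Your proof is correct. Its outer structure matches the paper's: the two block counts under $\nu_t^{(n)}$ are independent, the coordinates of $\vec{\bar Z_t^{(n)}}$ are independent, one-dimensional couplings are taken in product, and $\|(x^0,x^1)\|\le|x^0|+|x^1|$.

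The two arguments differ in the one-dimensional estimate.
\begin{itemize}
\item The paper proves it from scratch. It solves the rescaled Stein equation $pqf'-xf=h-\mathbb E h(\sqrt{pq}Z)$ for smooth $1$-Lipschitz $h$ and uses $\|f''\|_\infty\le 2/(pq)$. A leave-one-out Taylor expansion with the exact Bernoulli moments $\mathbb E X_j^2=pq/N$, $\mathbb E|X_j|=2pq/\sqrt N$, $\mathbb E|X_j|^3=pq(p^2+q^2)/N^{3/2}$ then gives $(1+2pq)/\sqrt N\le 3/(2\sqrt N)$. A final truncation and Gaussian-smoothing step extends this to all Lipschitz test functions.
\item You instead cite the constant-one $L^1$ Berry--Esseen bound (Goldstein, via zero-bias couplings). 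It controls $d_K$ directly, so no smoothing step is needed. After rescaling it gives $\mathbb E|X|^3/(v\sqrt N)=(p^2+q^2)/\sqrt N\le 1/\sqrt N$ per block.
\end{itemize}
Your route yields a sharper constant, $\sum_i(a^in)^{-1/2}$ in place of $\tfrac32\sum_i(a^in)^{-1/2}$, but relies on a deeper external theorem. The paper's route is elementary and self-contained.

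One correction to your fallback sketch: your displayed error bound is too small by a factor of two. With $\xi_k=X_k/\sqrt N$, the leave-one-out argument bounds the Stein discrepancy by
\[
\|f''\|_\infty\sum_k\bigl(\mathbb E\xi_k^2\,\mathbb E|\xi_k|+\tfrac12\mathbb E|\xi_k|^3\bigr).
\]
If you then use the crude inequality $\mathbb E\xi_k^2\,\mathbb E|\xi_k|\le\mathbb E|\xi_k|^3$, you get $\tfrac32\cdot\tfrac2v\cdot N^{-1/2}\mathbb E|X|^3=3(p^2+q^2)/\sqrt N$. This approaches $3/\sqrt N$ as $p\to0$, not $\tfrac32/\sqrt N$. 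To reach the stated constant along that route you must use the exact value $\mathbb E X^2\,\mathbb E|X|=2(pq)^2$, as the paper does. This slip does not affect your main argument.
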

The proof is given in Section~\ref{sec:GD_approx}.

\subsection{Approximation by the reference measure}
\label{results_approach}

We compare the TK dynamics with the reference measure
\(\nu_t^{(n)}\). For each initial configuration \(\sigma_0\), the blocks \(A^i\), the deterministic density \(\vec{\rho_t^{(n)}}\), the reference measure \(\nu_t^{(n)}\), and the fluctuation vectors
\(\vec{\xi_t^{(n)}}\) and \(\vec{\zeta_t^{(n)}}\) are those constructed from
\(\sigma_0\) in the preceding subsections. Recall that
$
    t_n=\frac{\kappa_1+\kappa_2}{2\kappa_1\kappa_2}\log n.
$
The first result compares the two local fluctuation vectors throughout the logarithmic window relevant for the cutoff analysis.

\begin{thm}[Approximation of the local fluctuation vector]
\label{mag_fluc_estimate_dk}
Fix \(\varepsilon_0\in(0,1/2)\) and
$
    C_0>\frac{\kappa_1+\kappa_2}{2\kappa_1\kappa_2}.
$
Then, as $n\to \infty$,
\[
\begin{aligned}
&\sup_{\substack{\sigma_0\in\Omega_n:\\
\varepsilon_0\le M^{(n)}(\sigma_0)\le1-\varepsilon_0}}
\ \sup_{t_n\le t\le C_0\log n}
d_K\left(
    \operatorname{Law}\left(
        \vec{\xi_t^{(n)}};
        \mathbb P_{\sigma_0}^{(n)}
    \right),
    \operatorname{Law}\left(
        \vec{\zeta_t^{(n)}};
        \nu_t^{(n)}
    \right)
\right)
\longrightarrow0\eqstop
\end{aligned}
\]

\end{thm}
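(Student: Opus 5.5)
The plan is to insert the two Gaussian laws and use the triangle inequality:
\[
d_K\bigl(\vec{\xi_t^{(n)}},\vec{\zeta_t^{(n)}}\bigr)
\le d_K\bigl(\vec{\xi_t^{(n)}},\vec Z_t^{(n)}\bigr)
+d_K\bigl(\vec Z_t^{(n)},\vec{\bar Z_t^{(n)}}\bigr)
+d_K\bigl(\vec{\bar Z_t^{(n)}},\vec{\zeta_t^{(n)}}\bigr).
\]
The first term is handled by \Cref{prop:CLT_mag_tk} with the given $C_0$. Under \Cref{assumption:bulk_initial_condition_tk} we have $\min_i a^i n\ge \varepsilon_0 n$, so on $t\le C_0\log n$ the bound is $O\bigl((\log n)^2 n^{-1/6}\bigr)$, uniformly in the bulk class. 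The third term is bounded by \Cref{prop:CLT_mag_GD} by $3(\varepsilon_0 n)^{-1/2}$. Both tend to zero uniformly, so everything reduces to comparing the two centred Gaussians.

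For centred Gaussians on $\mathbb R^2$, the synchronous coupling $\Gamma^{1/2}Y$, $\bar\Gamma^{1/2}Y$ with $Y$ standard normal gives
\[
d_K\le \mathbb E\|(\Gamma^{1/2}-\bar\Gamma^{1/2})Y\|\le \sqrt2\,\|\Gamma^{1/2}-\bar\Gamma^{1/2}\|_{\mathrm{HS}}.
\]
By the Powers--Størmer / Ando inequality, $\|\Gamma^{1/2}-\bar\Gamma^{1/2}\|\le\|\Gamma-\bar\Gamma\|^{1/2}$. It therefore suffices to show that
\[
\sup_{\sigma_0}\ \sup_{t_n\le t\le C_0\log n}\bigl\|\Gamma_t^{(n)}-\bar\Gamma_t^{(n)}\bigr\|\to0 .
\]
Here $\Gamma_t^{(n)}$ from \eqref{Gamma_tk_clt} should solve the linearised covariance (Lyapunov) equation
\[
\dot\Gamma_t=D\vec F(\vec\rho_t)\,\Gamma_t+\Gamma_t\,D\vec F(\vec\rho_t)^{\!\top}+\mathrm{diag}\bigl(G^{(i,n)}(\vec\rho_t)\bigr),\qquad \Gamma_0=0,
\]
with $D\vec F$ conjugated by $\mathrm{diag}(\sqrt{a^i})$ for the $\sqrt{a^in}$ scaling. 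Its limit as $t\to\infty$ is the stationary solution $\Gamma_\infty$ of the Lyapunov equation at the fixed point $(\alpha_*,\alpha_*)$. Meanwhile $\bar\Gamma_t\to\alpha_*(1-\alpha_*)I$ at rate $C(e^{-ct}+1/n)$ by the deterministic decay estimate.

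The main obstacle is to verify $\Gamma_\infty=\alpha_*(1-\alpha_*)I$, and to do so with a quantitative rate. The linearisation at $(\alpha_*,\alpha_*)$ has two parts. Each block relaxes individually at rate $(\kappa_1+\kappa_2)\alpha_*(1-\alpha_*)=\lambda_*\cdot$const. The term through $m$ vanishes, because $\kappa_1(1-\alpha_*)-\kappa_2\alpha_*=0$. So $D\vec F(\alpha_*,\alpha_*)=-\theta I$ with
\[
\theta=(\kappa_1+\kappa_2)\alpha_*(1-\alpha_*),
\]
and the noise is $G^{(i,\infty)}=2\kappa_1\alpha_*(1-\alpha_*)^2$ on each coordinate. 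The stationary covariance is then
\[
\frac{G}{2\theta}=\frac{\kappa_1(1-\alpha_*)}{\kappa_1+\kappa_2}=\alpha_*(1-\alpha_*),
\]
times the identity, which is exactly the reference Bernoulli variance. This is the key identity, and I would check it by direct computation.

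For the rate, I would write $\Gamma_t-\bar\Gamma_t=:E_t$. I would also verify that $\bar\Gamma_t$ itself nearly solves the Lyapunov equation along the trajectory, with an error term of size $O(\|\vec\rho_t-(\alpha_*,\alpha_*)\|)$. Then $E_t$ satisfies a linear equation driven by this error. Near time $t$ its generator is uniformly contracting, because $-\theta I$ is plus a perturbation. Gronwall on $[t/2,t]$, combined with $\|E_{t/2}\|\le C$ (from \Cref{lemma: a_priori_estimate_for_the_magnetization_vector}-type bounds, or directly from the equation), gives
\[
\|E_t\|\le C\bigl(e^{-ct/2}+n^{-1}\bigr),
\]
uniformly over the bulk class. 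The constants depend only on $\varepsilon_0$, through the uniform ODE decay. Since $t\ge t_n\to\infty$, this tends to zero, which completes the proof.
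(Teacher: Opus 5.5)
Your proposal is correct and follows the paper's proof. Both use the triangle inequality through the two Gaussians via \Cref{prop:CLT_mag_tk} and \Cref{prop:CLT_mag_GD}, and the real work in both is the covariance matching of \Cref{lemma:gaussian_covariance_matching_log_window}, which rests on exactly your identities $D\vec F^{(\infty)}(\alpha_*,\alpha_*)=-\lambda_*I$ and $G^{(i,\infty)}(\alpha_*,\alpha_*)=2\lambda_*\alpha_*(1-\alpha_*)$ together with exponential stability of the Lyapunov flow.

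The differences are cosmetic. The paper compares $\Gamma_t^{(n)}$ and $\bar\Gamma_t^{(n)}$ separately with $\alpha_*(1-\alpha_*)I$ and then uses \Cref{dk_two_gaussians}; this gives a linear rate but needs a uniform lower bound on the eigenvalues. You compare $\Gamma_t^{(n)}$ and $\bar\Gamma_t^{(n)}$ directly and use the square-root operator inequality, which avoids the eigenvalue bound at the cost of only a square-root rate.

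Two small remarks on your rate step. First, $E_0=0$, because $\bar\Gamma_0^{(n)}=0$. Second, the bound on $\|\Gamma_{t/2}^{(n)}\|$ should come from \eqref{Gamma_tk_clt} together with \eqref{fundamental_matrix_bound_bfp}, not from the a priori lemma: that lemma concerns $\vec\xi$ rather than the Gaussian, and its bound grows linearly in $t$. Even a linear bound would still suffice once multiplied by $e^{-\lambda_* t/2}$.
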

This theorem is proved in Section~\ref{approach_tk_nu_t}.
Using Yau's relative entropy method, we can convert the fluctuation estimate in Kantorovich distance to an estimate in total variation distance.

\begin{thm}[Total variation approximation of the configuration law]
\label{convergence_in_total_variation}
Fix \(\varepsilon_0\in(0,1/2)\) and
 $C_0>\frac{\kappa_1+\kappa_2}{2\kappa_1\kappa_2}$.
For every fixed \(\delta>0\), independent of \(n\), we have
\[
\sup_{\substack{\sigma_0\in\Omega_n:\\
\varepsilon_0\le M^{(n)}(\sigma_0)\le1-\varepsilon_0}}
\ \sup_{t_n+\delta\le t\le C_0\log n}
\dtv\left(
    \sigma_0S_t^{(n)},
    \nu_t^{(n)}
\right)
\longrightarrow0
\qquad\text{as }n\to\infty.
\]
\end{thm}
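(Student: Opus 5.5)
We have to prove Theorem~\ref{convergence_in_total_variation}: uniformly over bulk $\sigma_0$ and $t\in[t_n+\delta,C_0\log n]$, $\dtv(\sigma_0S_t^{(n)},\nu_t^{(n)})\to0$. The strategy is a short restart at an intermediate time $s_n\defeq t-\delta/2\ge t_n+\delta/2$, followed by entropy control of the reference measure over the last interval of length $\delta/2$. We would proceed in four steps.

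\textbf{Step 1: reduce to block counts at time $s_n$.} By permutation equivariance, the law $\sigma_0S_{s_n}^{(n)}$ is exchangeable within each block $A^i$. Indeed, permutations preserving $A^0$ and $A^1$ fix $\sigma_0$ and commute with $S^{(n)}$. Hence, conditionally on the block counts $(a^inM^{(i,n)})_{i\in Q}$, the law is uniform. The same holds for $\nu_{s_n}^{(n)}$. Therefore
\[
\dtv\bigl(\sigma_0S_{s_n}^{(n)},\nu_{s_n}^{(n)}\bigr)
=\dtv\bigl(\operatorname{Law}(\vec{\xi_{s_n}^{(n)}}),\operatorname{Law}(\vec{\zeta_{s_n}^{(n)}};\nu_{s_n}^{(n)})\bigr)
\]
on the lattice $\prod_i (a^in)^{-1/2}(\mathbb Z-a^in\rho^{(i,n)}_{s_n})$.

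Theorem~\ref{mag_fluc_estimate_dk} gives this distance only in $d_K$. We upgrade it as follows. Under $\nu$ the block counts are independent binomials, which satisfy a local CLT. Lattice laws close in $d_K$ need not be close in total variation, so at this step we accept only a weak statement: closeness of the two laws after smoothing at a mesoscopic scale $\ell_n$ with $1\ll\ell_n\ll\sqrt n$ in the count variables. This is what the $d_K$ bound delivers.

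\textbf{Step 2: remove the smoothing by the dynamics.} We run both laws for a short additional time $h_n\to0$, with $h_n\gg\ell_n^2/n$. To compare them, we couple the block-count processes: since the rates depend only on the counts, the pair of block counts is itself a Markov chain. Two copies whose counts differ by at most $\ell_n$ can be coalesced with high probability within time $h_n$ via a Mineka/reflection-type coupling of the two-dimensional count chain. Each coordinate makes order-$n$ jumps per unit time, so the relevant diffusive scale over time $h_n$ is $\sqrt{nh_n}\gg\ell_n$.

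There is a complication: $\nu_{s_n}^{(n)}S^{(n)}_{h_n}$ is not $\nu_{s_n+h_n}^{(n)}$. This discrepancy is exactly what the entropy step controls.

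\textbf{Step 3: Yau's relative entropy method.} Let $f_u=\dd(\nu_{s}S_{u-s})/\dd\nu_u$ for $u\ge s$. Then
\[
\frac{\dd}{\dd u}H(\nu_sS_{u-s}\mid\nu_u)\le -\mathcal D_u+\int (L_n^{*,\nu_u}\mathbf1-\partial_u\log\nu_u)\,f_u\,\dd\nu_u.
\]
Because $\vec\rho$ solves \eqref{mean_field_ode}, the linear terms of the integrand cancel. What remains is a quadratic function of the block fluctuations, of size $O(n^{-1/2})\cdot(\zeta^{(0)},\zeta^{(1)})^{\otimes2}$ plus $O(1/n)$, because the rate depends nonlinearly on $m$. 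Using the entropy inequality together with Gaussian concentration of $\zeta$ under the product measure $\nu_u$, we expect the bound
\[
H\le C(H+1)\,n^{-1/2}
\]
over bounded time intervals, uniformly in bulk $\sigma_0$, using the decay estimate for $\vec\rho$. Gronwall then gives $H(\nu_sS_{r}\mid\nu_{s+r})=O(n^{-1/2})$ for $r\le\delta$, and Pinsker converts this into a total variation bound.

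\textbf{Step 4: assemble.} The triangle inequality combines Steps~1 and~2 (distance between $\sigma_0S_{s_n+h_n}$ and $\nu_{s_n}S_{h_n}$ tends to zero) with Step~3. The resulting estimate is then propagated to time $t$ by contractivity of $S^{(n)}$ in total variation, together with one more application of Step~3. Uniformity in $t$ and in $\sigma_0$ follows because every constant depends only on $\varepsilon_0,C_0$.

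The main obstacle is Step 2: converting $d_K$ closeness on the $\sqrt n$ scale into total variation closeness. I would first attempt the simpler route of using Theorem~\ref{mag_fluc_estimate_dk} at scale $\ell_n$, combined with a coupling time bound for the count chain near $(\alpha_*,\alpha_*)$, where the rates are bounded below by $cn$ on each block.
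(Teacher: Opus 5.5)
Your plan is viable, and Steps 1, 3 and 4 coincide with the paper. The paper also reduces to the original-block counts by within-block exchangeability. It extracts from Theorem~\ref{mag_fluc_estimate_dk} a coupling in which these counts differ by $o(\sqrt n)$ off a small event. It then uses Lemma~\ref{long_time_estimate_entropy} plus contraction to pass from $\nu_T^{(n)}S_u^{(n)}$ to $\nu_{T+u}^{(n)}$ and on to time $t$.

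The difference is in how $d_K$-closeness is upgraded to total variation. The paper restarts at $T=t_n$ with fresh product measures $\nu_u^{(n)}(\bar\sigma;\cdot)$, built on the blocks $\{x:\bar\sigma_x=i\}$ of the time-$T$ configuration. Their binomial counts have width $\sqrt{nu}$, so explicit binomial bounds give $C\varepsilon(\sqrt u+u^{-1/2})$ between the two restarted measures. The short-time entropy estimate (Proposition~\ref{short_time_estimate}) gives $\dtv(\bar\sigma S_u^{(n)},\nu_u^{(n)}(\bar\sigma;\cdot))\le C\sqrt u$, and the paper then lets $n\to\infty$, $\varepsilon\downarrow0$, $u\downarrow0$.

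You instead coalesce the block-count chain itself over a vanishing time $h_n$. This avoids the short-time entropy machinery (small-time asymptotics of $\rho_t^{(i,n)}$, Lemma~\ref{lemma:initial_relative_entropy_limit}, the discrepancy bound) and the restarted measures. The price is that you must construct a quantitative coupling of the dynamics, whereas the paper stays entirely within entropy bounds and explicit product-measure computations.

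The coupling needs more care than ``Mineka/reflection-type'' suggests. If the difference vector performs a planar random walk, it hits the origin within $N\asymp nh_n$ steps only with probability of order $\log(\sqrt N/\ell_n)/\log\sqrt N$. That need not tend to one, since Theorem~\ref{mag_fluc_estimate_dk} only guarantees $\ell_n=o(\sqrt n)$. Coalesce sequentially instead:
\begin{enumerate}
\item The global count $K$ is an autonomous birth--death chain, so first let the two copies of $K$ move independently until they meet; jumps are $\pm1$, so they cannot cross.
\item Then keep $K$ synchronised and assign each common jump to a block independently in the two copies. The block-$0$ difference is then a lazy one-dimensional walk whose holding probability stays below $1-2\varepsilon_0(1-\varepsilon_0)+o(1)$ near $(\alpha_*,\alpha_*)$.
\item Once that difference vanishes, use identical assignments.
\end{enumerate}
Both phases take time $O(\ell_n^2/n)$ with high probability, because all rates are of order $n$.

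One smaller correction to Step 3: the remainder is not ``quadratic plus $O(1/n)$''. The correction sums in Lemma~\ref{computation_short_time_estimate_Ft} contribute $O(|M^{(n)}-\alpha_*|+\omega_t^{(n)}+n^{-1})$, i.e.\ terms of order $n^{-1/2}(1+|\xi_t^{(n)}|)$. This still yields $\tfrac{\dd}{\dd u}H\le Cn^{-1/2}(H+1)$, so your conclusion stands.
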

This theorem is proved in Section~\ref{estimate_in_dtv}.
Together with Lemma~\ref{lemma:invariant_measure_to_product_bernoulli},
Theorem~\ref{convergence_in_total_variation} gives the following
upper bound on the bulk mixing time. The sharp asymptotics are
given in Theorem~\ref{cutoff_profile_tk}.

\begin{lemma}[Upper bound on the bulk mixing time]
\label{upper_bound_mixing_time}
Fix \(\varepsilon_0\in(0,1/2)\). Then,
\[
\begin{aligned}
&\sup_{\substack{\sigma_0\in\Omega_n:\\
\varepsilon_0\le M^{(n)}(\sigma_0)\le1-\varepsilon_0}}
\dtv\left(
    \sigma_0S_{4t_n}^{(n)},
    \mu_{\kappa_1,\kappa_2,\iota_1,\iota_2}^{(n)}
\right)
\longrightarrow0
\end{aligned}
\]
as \(n\to\infty\). Consequently, for every \(\epsilon\in(0,1)\), for all sufficiently large \(n\),
\[
    t_{\mathrm{mix},\varepsilon_0}^{(n)}(\epsilon)
    \le
    4t_n
    =
    \frac{2(\kappa_1+\kappa_2)}{\kappa_1\kappa_2}\log n\eqstop
\]

\end{lemma}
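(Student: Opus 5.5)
The proof combines Theorem~\ref{convergence_in_total_variation} at the time $4t_n$ with a comparison between the reference measure $\nu_{4t_n}^{(n)}$ and the invariant measure. Fix $C_0>\frac{2(\kappa_1+\kappa_2)}{\kappa_1\kappa_2}$, as in Remark~\ref{remark:choice_C0_log_window_CLT}, so that $4t_n\le C_0\log n$. Since $t_n+1\le 4t_n$ for large $n$, Theorem~\ref{convergence_in_total_variation} with $\delta=1$ gives
\[
\sup_{\sigma_0}\dtv\bigl(\sigma_0S_{4t_n}^{(n)},\nu_{4t_n}^{(n)}\bigr)\to0,
\]
with the supremum over the bulk class. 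By the triangle inequality it remains to show
\[
\sup_{\sigma_0}\dtv\bigl(\nu_{4t_n}^{(n)},\mu^{(n)}_{\kappa_1,\kappa_2,\iota_1,\iota_2}\bigr)\to0.
\]
I will route this through the equilibrium product measure $\nu_*^{(n)}=\mathrm{Ber}(\alpha_*)^{\otimes n}$ and bound
\[
\dtv\bigl(\nu_{4t_n}^{(n)},\nu_*^{(n)}\bigr)+\dtv\bigl(\nu_*^{(n)},\mu^{(n)}\bigr).
\]

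For the second term I invoke Lemma~\ref{lemma:invariant_measure_to_product_bernoulli}, the appendix comparison of the invariant measure with a product Bernoulli law. I expect it to give $\dtv(\nu_*^{(n)},\mu^{(n)})\to0$ directly. If it instead gives a product law with a density $\alpha_n=\alpha_*+O(1/n)$, the extra error is controlled by the same Hellinger bound used below.

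For the first term I use the deterministic decay estimate. Under Assumption~\ref{assumption:bulk_initial_condition_tk},
\[
\|\vec{\rho_t^{(n)}}-(\alpha_*,\alpha_*)\|\le C_{\varepsilon_0}\bigl(e^{-c_{\varepsilon_0}t}+n^{-1}\bigr).
\]
Both measures are products of Bernoulli variables over $n$ sites, and Hellinger distance tensorizes. For $p,q$ in a compact subinterval of $(0,1)$ one has $H^2(\mathrm{Ber}(p),\mathrm{Ber}(q))\lesssim (p-q)^2$. Hence
\[
\dtv\lesssim \sqrt{n}\,\|\vec{\rho_{4t_n}^{(n)}}-(\alpha_*,\alpha_*)\|,
\]
so it suffices that this deviation is $o(n^{-1/2})$.

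Ensuring this decay rate is the main obstacle. The displayed estimate holds with some rate $c_{\varepsilon_0}$, but $e^{-c_{\varepsilon_0}4t_n}=o(n^{-1/2})$ needs $4c_{\varepsilon_0}>2\lambda_*$, that is, $c_{\varepsilon_0}>\lambda_*/2$. I will therefore use the sharp form \eqref{eq:sharp_bulk_deterministic_decay} proved in Section~\ref{sec:cutoff_profile}.

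I expect that form to hold with rate $\lambda_*$, up to constants depending on $\varepsilon_0$. Linearizing the two-block ODE at $(\alpha_*,\alpha_*)$ gives the eigenvalue $-\alpha_*(1-\alpha_*)(\kappa_1+\kappa_2)=-\lambda_*$ in both the block-difference direction and the global direction, since $\dot m=m(1-m)(\kappa_1-(\kappa_1+\kappa_2)m)$ has linearization rate $\alpha_*(1-\alpha_*)(\kappa_1+\kappa_2)=\lambda_*$. The deviation at time $4t_n$ is then $O(n^{-2}+n^{-1})=O(n^{-1})$, and the Hellinger bound gives $O(n^{-1/2})\to0$.

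Nonlinear corrections only add lower-order terms once the trajectory has entered a neighbourhood of the equilibrium, which happens after a time depending only on $\varepsilon_0$. If the sharp rate were not available, the fallback is to replace $4t_n$ by $Kt_n$ with $K$ large. Then $C_0$ must be enlarged accordingly, which the paper explicitly permits, but the stated constant $4$ genuinely needs the rate $\lambda_*$.

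The mixing-time bound follows at once. For any $\epsilon\in(0,1)$ the supremum at time $4t_n$ is below $\epsilon$ for large $n$, so the definition of $t^{(n)}_{\mathrm{mix},\varepsilon_0}(\epsilon)$ gives $t^{(n)}_{\mathrm{mix},\varepsilon_0}(\epsilon)\le 4t_n$.
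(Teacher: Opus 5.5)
Your proposal is correct and follows the paper's proof: Theorem~\ref{convergence_in_total_variation} at time $4t_n$, the sharp decay \eqref{eq:sharp_bulk_deterministic_decay} giving an $O(n^{-1})$ deviation of $\vec{\rho_{4t_n}^{(n)}}$ from $(\alpha_*,\alpha_*)$, an $O(n^{-1/2})$ bound on $\dtv(\nu_{4t_n}^{(n)},\nu_*^{(n)})$, and Lemma~\ref{lemma:invariant_measure_to_product_bernoulli}; the only difference is that you use Hellinger tensorization where the paper reduces to block counts and applies the binomial $\arcsin$ bound of Lemma~\ref{dtv_two_binomials_I}. One small slip in a side remark: since $e^{-4ct_n}=n^{-2c/\lambda_*}$, the requirement $\sqrt n\,e^{-4ct_n}\to0$ only needs $c>\lambda_*/4$, not $c>\lambda_*/2$, so the constant $4$ does not genuinely need the rate $\lambda_*$; this does not affect your argument, because you use the rate $\lambda_*$ anyway.
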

This lemma is proved in Section~\ref{estimate_in_dtv}.

\section{Quantitative approximations for the TK model
with weak mutations}
\label{sec:tk_approx}

In this section, we prove the polynomial moment estimate in
Lemma~\ref{lemma: a_priori_estimate_for_the_magnetization_vector} and the quantitative Gaussian approximation in Proposition~\ref{prop:CLT_mag_tk}. We prove the quantitative CLT using the semigroup approach to
Stein's method developed by Barbour~\cite{Barbour1990Diffusion}.
We first introduce a linear diffusion that approximates the local
fluctuation process and compare their time-dependent generators.
We then establish the global and local moment bounds needed to
control the comparison error. Finally, the backward Kolmogorov
equation and its regularity estimates yield a quantitative
Kantorovich bound. Since the approximating diffusion is Gaussian,
this gives the stated CLT. The full Taylor expansion of the Markov chain generator is provided in Appendix~\ref{append_com_tk}. Throughout this section, whenever a bulk hypothesis is required, we work under Assumption~\ref{assumption:bulk_initial_condition_tk}.

\subsection{Diffusion comparison framework}
\label{sub:diffusion_approximation_for_density_fluctuation_variables_tk}

We first introduce the linear diffusion that approximates
\(\vec{\xi_t^{(n)}}\). Let \((\vec{B_t};t\ge0)\) be a standard Brownian motion
in \(\mathbb R^Q\), and define for
 \(i,j\in Q\)
\[
\resizebox{\linewidth}{!}{$\displaystyle
    \beta_{ij}^{(n)}(t)
    \defeq
    \sqrt{\frac{a^i}{a^j}}\,
    \partial_jF^{(i,n)}
    \left(\vec{\rho_t^{(n)}}\right),
    \qquad
    \Sigma^{(n)}(t)
    \defeq
    \begin{pmatrix}
        \sqrt{G^{(0,n)}
        \left(\vec{\rho_t^{(n)}}\right)}
        & 0\\
        0 &
        \sqrt{G^{(1,n)}
        \left(\vec{\rho_t^{(n)}}\right)}
    \end{pmatrix}.
$}
\]
The \emph{approximating diffusion} is the solution of
\begin{equation}
\label{solution_sde_y}
    \dd\vec {Y_t^{(n)}}
    =
    \beta^{(n)}(t)\vec {Y_t^{(n)}}\,\dd t
    +
    \Sigma^{(n)}(t)\,\dd\vec{B_t},
    \qquad
    \vec Y_0^{(n)}=0.
\end{equation}
Its time-dependent generator is
\begin{equation}
\label{diffusion_operator_bkp}
    \mathscr L_t^{(n)}f(\vec x)
    \defeq
    \sum_{i,j\in Q}
    \beta_{ij}^{(n)}(t)x^j\partial_i f(\vec x)
    +
    \frac12
    \sum_{i\in Q}
    G^{(i,n)}
    \left(\vec{\rho_t^{(n)}}\right)
    \partial_i^2f(\vec x).
\end{equation}
Let \(\Phi^{(n)}(t,s)\) be the fundamental matrix associated with
\(\beta^{(n)}\):
\[
    \partial_t\Phi^{(n)}(t,s)
    =
    \beta^{(n)}(t)\Phi^{(n)}(t,s),
    \qquad
    \Phi^{(n)}(s,s)=I.
\]
For \eqref{solution_sde_y}, we can solve for $\vec {Y_t^{(n)}}$ explicitly 
\begin{equation}
\label{Y_tk_clt_representation}
    \vec {Y_t^{(n)}}
    =
    \int_0^t
    \Phi^{(n)}(t,s)\Sigma^{(n)}(s)\,\dd\vec B_s.
\end{equation}
Consequently, \(\vec {Y_t^{(n)}}\) is a centred Gaussian process with covariance
\begin{equation}
\label{Gamma_tk_clt}
    \Gamma_t^{(n)}
    \defeq
    \int_0^t
    \Phi^{(n)}(t,s)
    \Sigma^{(n)}(s)\Sigma^{(n)}(s)^\intercal
    \Phi^{(n)}(t,s)^\intercal\,\dd s.
\end{equation}
More generally, for \(0\le r\le t\),
\[
    \operatorname{Cov}
    \left(
        \vec {Y_t^{(n)}},\vec {Y_r^{(n)}}
    \right)
    =
    \int_0^r
    \Phi^{(n)}(t,s)
    \Sigma^{(n)}(s)\Sigma^{(n)}(s)^\intercal
    \Phi^{(n)}(r,s)^\intercal\,\dd s.
\]

The following result is the quantitative diffusion
approximation.

\begin{lemma}[Quantitative approximation of the local fluctuation vector by the diffusion]
\label{lemma:quantitative_convergence_xi_yt}
Under Assumption~\ref{assumption:bulk_initial_condition_tk}, fix \(C_0>0\).
Then, for all sufficiently large \(n\) and every \(0\le t\le C_0\log n\),
\[
    d_K\left(
        \operatorname{Law}
        \left(
            \vec{\xi_t^{(n)}};
            \mathbb P_{\sigma_0}^{(n)}
        \right),
        \operatorname{Law}\left(\vec {Y_t^{(n)}}\right)
    \right)
    \le
    \frac{
        C_{\varepsilon_0,C_0}(t^{1/3}+t^2)
    }{
        \left(\min_{i\in Q}a^in\right)^{1/6}
    }.
\]
Here \(C_{\varepsilon_0,C_0}\) depends only on
\(\kappa_1,\kappa_2,\iota_1,\iota_2,\varepsilon_0,C_0\).
\end{lemma}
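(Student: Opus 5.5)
We will use Barbour's semigroup form of Stein's method for a fixed Lipschitz test function $h$ with $\|h\|_{\mathrm{Lip}}\le1$ and a fixed time $t\le C_0\log n$. Let $P_{r,t}$ be the transition operator of the linear diffusion \eqref{solution_sde_y} from time $r$ to time $t$, and put $u(r,\vec x)\defeq \mathbb E[h(\vec Y^{(n)}_t)\mid \vec Y^{(n)}_r=\vec x]$. Then $u$ solves the backward equation $\partial_r u+\mathscr L^{(n)}_r u=0$ with $u(t,\cdot)=h$. Since $\vec Y^{(n)}$ is Gaussian with an affine flow,
\[
u(r,\vec x)=\mathbb E\bigl[h\bigl(\Phi^{(n)}(t,r)\vec x+\vec Y^{(n)}_{r,t}\bigr)\bigr].
\]
This gives explicit regularity. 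First, $\|Du(r)\|_\infty\le\|\Phi^{(n)}(t,r)\|$, which is bounded uniformly in $r\le t$ because the linearised drift $\beta^{(n)}$ has uniformly bounded entries. For higher derivatives, $h$ is only Lipschitz, so we mollify: replace $h$ by $h_\delta=h*\varphi_\delta$, paying $\delta$ in $d_K$. Then $\|D^3u_\delta(r)\|_\infty\le C\|\Phi^{(n)}(t,r)\|^3\delta^{-2}$. One could instead use the Gaussian smoothing of $P_{r,t}$ when $t-r$ is not small, but the mollifier is simpler.

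Next we apply Dynkin's formula to the time-dependent function $(r,\sigma)\mapsto u_\delta(r,\vec\xi^{(n)}_r(\sigma))$ along the chain, with $\vec\xi_0^{(n)}=0=\vec Y_0^{(n)}$:
\[
\mathbb E\,h_\delta(\vec\xi^{(n)}_t)-\mathbb E\,h_\delta(\vec Y^{(n)}_t)
=\int_0^t\mathbb E\bigl[(\partial_r+\mathcal A_r^{(n)})u_\delta(r,\vec\xi_r^{(n)})\bigr]\,\dd r
=\int_0^t\mathbb E\bigl[(\mathcal A_r^{(n)}-\mathscr L_r^{(n)})u_\delta(r,\vec\xi^{(n)}_r)\bigr]\dd r .
\]
Here $\mathcal A_r^{(n)}$ is the generator $L_n$ acting on $\vec\xi_r$ coordinates, with the centring drift $-\sqrt{a^in}\,\partial_r\rho^{(i,n)}_r$ included.

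The key step is to Taylor-expand $\mathcal A_r^{(n)}u$, using the expansion in Appendix~\ref{append_com_tk}. A flip in block $A^i$ moves $\xi^{(i,n)}$ by $\pm(a^in)^{-1/2}$. The first-order terms give $\sqrt{a^in}\,[F^{(i,n)}(\vec M^{(n)})-F^{(i,n)}(\vec\rho^{(n)}_r)]\partial_iu$. Expanding $F^{(i,n)}$ to second order around $\vec\rho_r^{(n)}$ yields the linear drift $\sum_j\beta_{ij}^{(n)}\xi^j\partial_iu$. The remainder is bounded by $C\|D^2F\|_\infty|\vec\xi|^2(\min_i a^in)^{-1/2}\|Du\|$. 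The second-order terms give $\tfrac12G^{(i,n)}(\vec M^{(n)})\partial_i^2u$, and replacing $\vec M^{(n)}$ by $\vec\rho^{(n)}_r$ costs $C|\vec\xi|(\min a^in)^{-1/2}\|D^2u\|$. The third-order Taylor remainder is $C(\min a^in)^{-1/2}\|D^3u\|$. Taking expectations and using Lemma~\ref{lemma: a_priori_estimate_for_the_magnetization_vector}, which gives $\mathbb E|\vec\xi_r|^2\le Cr$, we get
\[
|\mathbb E h(\vec\xi_t)-\mathbb E h(\vec Y_t)|\le 2\delta+\frac{C}{(\min_i a^in)^{1/2}}\int_0^t\bigl(r\,\|Du\|+\sqrt r\,\|D^2u\|+\|D^3u\|\bigr)\dd r
\le 2\delta+\frac{C(t^2+t\delta^{-1}+t\delta^{-2})}{(\min_i a^in)^{1/2}}.
\]
The $\|D^2u\|\lesssim\delta^{-1}$ contribution is dominated by the $\delta^{-2}$ one. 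Optimising over $\delta$, that is balancing $\delta$ against $t\delta^{-2}N^{-1/2}$ with $N=\min_i a^in$, gives $\delta=t^{1/3}N^{-1/6}$. The total is then of order $(t^{1/3}+t^2)N^{-1/6}$, matching the statement.

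\textbf{Main obstacle.} The main obstacle is uniform control of $\|\Phi^{(n)}(t,r)\|$ over $0\le r\le t\le C_0\log n$. A crude Gr\"onwall bound gives $e^{C(t-r)}$, which is polynomial in $n$ and would ruin the estimate. We must therefore show that $\beta^{(n)}$ is dissipative up to a neutral direction.

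At $\vec\rho=(\alpha_*,\alpha_*)$, with $m(1-m)$ evaluated at $\alpha_*$, the symmetrised matrix has eigenvalues $0$ and $-\lambda$-type values. The neutral direction corresponds to the difference of block densities. Along that direction $\partial_r$ of the difference equals $-(\kappa_1+\kappa_2)m(1-m)\times$(difference), which is also contracting. Hence $\beta^{(n)}(r)$ has a uniformly negative logarithmic norm for $r$ large.

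For small $r$ we use only the boundedness of $\beta$ on the finite interval $[0,T_{\varepsilon_0}]$, where the ODE decay bound \eqref{eq:sharp_bulk_deterministic_decay} has not yet kicked in. Concretely, we would verify in the weighted coordinates $\sqrt{a^i}$ that $\beta^{(n)}+\beta^{(n)\intercal}\le 2\omega(r)I$ with $\int_r^t\omega\le C_{\varepsilon_0}$. This yields $\sup_{r\le t}\|\Phi^{(n)}(t,r)\|\le C_{\varepsilon_0}$, and all constants above then depend only on $\kappa_1,\kappa_2,\iota_1,\iota_2,\varepsilon_0,C_0$.
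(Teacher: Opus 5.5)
Your proof is the paper's proof in its main structure. Both use the backward Kolmogorov equation for the linear diffusion, Dynkin's formula along the chain, the generator expansion of Lemma~\ref{routine_computation_3}, the moment bound of Lemma~\ref{lemma: a_priori_estimate_for_the_magnetization_vector}, and Gaussian mollification at scale $\delta$. The paper first proves the smooth-test-function bound \eqref{smooth_test_bound_final} and then invokes Lemma~\ref{Milton_approx_B2}, which is exactly your optimisation in $\delta$. One correction: your middle term should be $t^{3/2}\delta^{-1}$, not $t\delta^{-1}$. At $\delta=t^{1/3}N^{-1/6}$ it becomes $t^{7/6}N^{-1/3}\le(t^{1/3}+t^2)N^{-1/6}$, so the conclusion is unchanged.

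The genuine difference is how you bound $\Phi^{(n)}(t,r)$ uniformly. Two of your intermediate claims are wrong and should be discarded.
\begin{itemize}
\item Bounded entries of $\beta^{(n)}$ give only $e^{C(t-r)}$, as you note yourself later.
\item There is no neutral direction at equilibrium. Since $\kappa_1(1-\alpha_*)-\kappa_2\alpha_*=0$, one has $D\vec F^{(\infty)}(\alpha_*,\alpha_*)=-\lambda_*I$, so the global and block-difference directions both contract at rate $\lambda_*$.
\end{itemize}
With this correction, your logarithmic-norm argument goes through. Estimate \eqref{eq:sharp_bulk_deterministic_decay} gives \eqref{eq:beta_sigma_close_to_limit}, namely $\|\beta^{(n)}(r)+\lambda_*I\|\le C_{\varepsilon_0}(e^{-c_{\varepsilon_0}r}+n^{-1})$; conjugating by $\mathrm{diag}(\sqrt{a^i})$ costs only a factor $\varepsilon_0^{-1/2}$. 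Hence, for large $n$, $\|\Phi^{(n)}(t,r)\|\le C_{\varepsilon_0}e^{-\lambda_*(t-r)/2}$.

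The paper argues differently in Lemma~\ref{lemma:uniform_stability_linearized_TK}. It writes $\beta^{(n)}(t)=-A_t^{(n)}I+\mathbf v_t^{(n)}\mathbf u^{\intercal}$ and observes that $y=\mathbf u^{\intercal}\mathbf z$ solves the scalar linearised global equation. That equation contracts after the bounded entrance time of Lemma~\ref{lemma: deterministic_entrance_time_bulk}, and the remaining part is damped by $A_t^{(n)}\ge a_0$. The paper's route needs only the scalar global-density dynamics. Yours is shorter and gives an explicit decay rate, but relies on the two-dimensional decay estimate of Section~\ref{sec:cutoff_profile}. That estimate is proved by ODE arguments alone, so there is no circularity.
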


The comparison begins at the generator level. Lemma~\ref{routine_computation_3} gives, for every
\(g\in C_b^3(\mathbb R^Q;\mathbb R)\),
\begin{equation}
\label{first_estimate_of_magnetization_vector}
\begin{aligned}
&\left|
\mathbb E_{\sigma_0}^{(n)}
\left[
    (\partial_t+L_n)
    g\left(\vec{\xi_t^{(n)}}\right)
\right]
-
\mathbb E_{\sigma_0}^{(n)}
\left[
    \mathscr L_t^{(n)}
    g\left(\vec{\xi_t^{(n)}}\right)
\right]
\right|\\
&\quad\le
\frac{C}
{\left(\min_{i\in Q}a^in\right)^{1/2}}
\left(
    \|Dg\|_\infty
    \mathbb E_{\sigma_0}^{(n)}
    \left[
        \left\|\vec{\xi_t^{(n)}}\right\|^2
    \right]
    +
    \|D^2g\|_\infty
    \mathbb E_{\sigma_0}^{(n)}
    \left[
        \left\|\vec{\xi_t^{(n)}}\right\|
    \right]
    +
    \|D^3g\|_\infty
\right),
\end{aligned}
\end{equation}
where \(C=C(\kappa_1,\kappa_2,\iota_1,\iota_2)\). The right-hand side of \eqref{first_estimate_of_magnetization_vector} is controlled by moments of the local fluctuation vector. We establish those
bounds next, beginning with the scalar global density and then returning to the local coordinates.

\subsection{Polynomial moment estimates}
\label{Priori_est_Mag_Vec}

We now prove the moment estimates needed in
\eqref{first_estimate_of_magnetization_vector}.  
The argument is organized in four steps. First, the deterministic global density enters a fixed contractive interval in a uniformly bounded time. Second, a rough estimate controls the global fluctuation before this entrance time. Third, a stopping-time argument uses the contractivity of the drift to obtain a linear-in-time global moment bound.
Finally, we write the local drift difference in terms of the
local and global density errors, and use the global second-moment
bound to control each local fluctuation. 

Define the \emph{global drift} and
\emph{global carré-du-champ coefficient} by
\begin{align}
\label{def:global_F_G_n}
F^{(n)}(m)
&\defeq
(1-m)
\left[
    \kappa_1m(1-m)+\frac{\iota_1}{n}(1-m)
\right]
-
m
\left[
    \kappa_2m(1-m)+\frac{\iota_2}{n}m
\right],\\
G^{(n)}(m)
&\defeq
(1-m)
\left[
    \kappa_1m(1-m)+\frac{\iota_1}{n}(1-m)
\right]
+
m
\left[
    \kappa_2m(1-m)+\frac{\iota_2}{n}m
\right].
\end{align}
Then, a direct computation yields
    $L_nM^{(n)}
    =
    F^{(n)}\left(M^{(n)}\right)$, and
    $\Gamma_nM^{(n)}
    =
    \frac1nG^{(n)}\left(M^{(n)}\right)$.
Moreover, for every \(\vec m\in[0,1]^2\),
\begin{equation}
\label{eq:global_local_FG_relation}
    F^{(n)}(\vec a\cdot\vec m)
    =
    \sum_{i\in Q}a^iF^{(i,n)}(\vec m),
    \qquad
    G^{(n)}(\vec a\cdot\vec m)
    =
    \sum_{i\in Q}a^iG^{(i,n)}(\vec m).
\end{equation}
Let
$
    F^{(\infty)}(m)
    \defeq
    m(1-m)
    \left[
        \kappa_1-(\kappa_1+\kappa_2)m
    \right].
$
The two critical points of \(F^{(\infty)}\) surrounding \(\alpha_*\) are given by
    $x_\pm
    \defeq
    \frac{
        2\kappa_1+\kappa_2
        \pm
        \sqrt{
            \kappa_1^2+\kappa_1\kappa_2+\kappa_2^2
        }
    }{
        3(\kappa_1+\kappa_2)
    }$
and satisfy
\[
    0<x_-<\alpha_*<x_+<1,\qquad
    (F^{(\infty)})'(m)<0\quad\text{for }m\in(x_-,x_+).
\]
Set
$
    \delta_0
    \defeq
    \frac12
    \min\{\alpha_*-x_-,x_+-\alpha_*\},
$
and define
\[
    \mathcal I\defeq [\alpha_*-\delta_0,\alpha_*+\delta_0],\qquad
    \mathcal J\defeq\left[\alpha_*-\frac{\delta_0}{2},
                  \alpha_*+\frac{\delta_0}{2}\right].
\]
Then,
$
    \mathcal J\subset \mathcal I\subset(x_-,x_+).
$
The deterministic global density is equivalently characterised as the
solution of
\begin{equation}
\label{mean_field_ode_global}
\begin{cases}
\partial_t\rho_t^{(n)}
=
F^{(n)}\left(\rho_t^{(n)}\right),\\
\rho_0^{(n)}
=
M^{(n)}(\sigma_0).
\end{cases}
\end{equation}
Indeed, \eqref{eq:global_local_FG_relation} and uniqueness of solutions give
$
    \rho_t^{(n)}
    =
    \sum_{i\in Q}a^i\rho_t^{(i,n)}.
$
Define the \emph{deterministic entrance time}
$
    t_{\mathrm{ent}}^{(n)}
    \defeq
    \inf\{t\ge0:\rho_t^{(n)}\in \mathcal J\}.
$
We have the following lemma.

\begin{lemma}[Uniform bound on the deterministic entrance time]
\label{lemma: deterministic_entrance_time_bulk}
Under Assumption~\ref{assumption:bulk_initial_condition_tk},
there exist \(T_{\varepsilon_0}<\infty\) and
\(n_{\varepsilon_0}\in\mathbb N\), depending only on
\(\kappa_1,\kappa_2,\iota_1,\iota_2,\varepsilon_0\), such that,
for every \(n\ge n_{\varepsilon_0}\),
\[
 t_{\mathrm{ent}}^{(n)}\le T_{\varepsilon_0},
 \qquad
 \rho_t^{(n)}\in \mathcal J
 \quad\text{for all }t\ge t_{\mathrm{ent}}^{(n)},
\]
uniformly over bulk initial configurations.
More generally, for such \(n\), every solution of
\(x'(t)=F^{(n)}(x(t))\) satisfies
\(x(s)\in \mathcal J\Rightarrow x(t)\in \mathcal J\) whenever \(t\ge s\ge0\).
\end{lemma}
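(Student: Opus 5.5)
We treat the scalar ODE $x'=F^{(n)}(x)$ as a perturbation of $x'=F^{(\infty)}(x)$. The perturbation is $O(1/n)$: explicitly
\[
F^{(n)}(m)-F^{(\infty)}(m)=\tfrac{\iota_1}{n}(1-m)^2-\tfrac{\iota_2}{n}m^2,
\]
so $\sup_{[0,1]}|F^{(n)}-F^{(\infty)}|\le(\iota_1+\iota_2)/n$. We first establish the invariance statement, then the uniform entrance bound.

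\emph{Invariance of $\mathcal J$.} Since $\mathcal J\subset(x_-,x_+)$ and $F^{(\infty)}$ is strictly decreasing there with its zero at $\alpha_*$, the numbers
\[
c_\pm\defeq\pm F^{(\infty)}(\alpha_*\mp\delta_0/2)>0
\]
are fixed positive constants. Hence $F^{(\infty)}(\alpha_*-\delta_0/2)>0>F^{(\infty)}(\alpha_*+\delta_0/2)$ strictly. Choose $n_0$ with $(\iota_1+\iota_2)/n_0<\min\{c_+,c_-\}/2$. Then for $n\ge n_0$ we still have $F^{(n)}(\alpha_*-\delta_0/2)>0$ and $F^{(n)}(\alpha_*+\delta_0/2)<0$. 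A scalar autonomous ODE with Lipschitz right-hand side cannot exit an interval through an endpoint where the vector field points strictly inward. Indeed, at a first exit time $\tau$ through, say, the left endpoint, we would need $x'(\tau)\le0$, whereas $x'(\tau)=F^{(n)}(\alpha_*-\delta_0/2)>0$. So $x(s)\in\mathcal J$ implies $x(t)\in\mathcal J$ for all $t\ge s$. Applied to $\rho^{(n)}$, this gives the second claim once the entrance time is finite.

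\emph{Uniform entrance time.} It suffices to treat initial data $m_0\in[\varepsilon_0,\alpha_*-\delta_0/2)$; the case above $\mathcal J$ is symmetric, and data already in $\mathcal J$ give $t_{\mathrm{ent}}^{(n)}=0$. On the compact interval $K_-=[\varepsilon_0,\alpha_*-\delta_0/2]$ we have $F^{(\infty)}>0$, because $F^{(\infty)}$ is positive on $(0,\alpha_*)$. Put $c_{\varepsilon_0}\defeq\min_{K_-}F^{(\infty)}>0$, and take $n_{\varepsilon_0}\ge n_0$ large enough that $(\iota_1+\iota_2)/n\le c_{\varepsilon_0}/2$. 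Then $F^{(n)}\ge c_{\varepsilon_0}/2$ on $K_-$. While $\rho_t^{(n)}\in K_-$, the solution therefore increases at rate at least $c_{\varepsilon_0}/2$, so it cannot drop below $\varepsilon_0$. It consequently reaches $\alpha_*-\delta_0/2$ by time
\[
\frac{2(\alpha_*-\delta_0/2-\varepsilon_0)}{c_{\varepsilon_0}}\le\frac{2}{c_{\varepsilon_0}}.
\]
The interval above $\mathcal J$ is handled the same way with $K_+=[\alpha_*+\delta_0/2,1-\varepsilon_0]$, on which $F^{(\infty)}<0$. Taking $T_{\varepsilon_0}$ to be the maximum of the two bounds gives the claim uniformly over bulk initial configurations, since the bound depends only on $\kappa_1,\kappa_2,\iota_1,\iota_2,\varepsilon_0$.

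The only point needing care is making sure the required strict signs survive the $O(1/n)$ perturbation with constants independent of the initial condition. Compactness of $K_\pm$, which is where $\varepsilon_0>0$ is used, handles this: $F^{(\infty)}$ vanishes at $0$ and $1$, so without $\varepsilon_0$ no uniform lower bound on $|F^{(\infty)}|$ would be available. Everything else is routine.
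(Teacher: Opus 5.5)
Your proposal is correct and follows essentially the same route as the paper: the strict inward-pointing signs of $F^{(n)}$ at $\alpha_*\pm\delta_0/2$ survive the $O(1/n)$ perturbation and give forward invariance of $\mathcal J$, and a uniform positive lower bound on $|F^{(n)}|$ over the compact intervals $[\varepsilon_0,\alpha_*-\delta_0/2]$ and $[\alpha_*+\delta_0/2,1-\varepsilon_0]$ gives the uniform entrance bound. The differences are cosmetic. You use the explicit estimate $\sup_{[0,1]}|F^{(n)}-F^{(\infty)}|\le(\iota_1+\iota_2)/n$ where the paper invokes uniform convergence, and you leave implicit two trivial points that the paper states: $K_\pm$ may be empty, in which case that starting case cannot occur, and $\rho^{(n)}_{t_{\mathrm{ent}}}\in\mathcal J$ by continuity and closedness of $\mathcal J$.
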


\begin{proof}[Proof of \Cref{lemma: deterministic_entrance_time_bulk}]
Write \(\ell=\alpha_*-\delta_0/2\) and
\(r=\alpha_*+\delta_0/2\), so that \(\mathcal J=[\ell,r]\).
Since \(F^{(\infty)}(\ell)>0\), \(F^{(\infty)}(r)<0\), and
\(F^{(n)}\to F^{(\infty)}\) uniformly on \([0,1]\), we have
\(F^{(n)}(\ell)>0\) and \(F^{(n)}(r)<0\) for all sufficiently
large \(n\). These signs prevent a solution from crossing either
endpoint outward, proving that
\(x(s)\in \mathcal J\Rightarrow x(t)\in \mathcal J\) for \(t\ge s\).

If \(\varepsilon_0<\ell\), let
\(\gamma_-=\frac12\min_{m\in[\varepsilon_0,\ell]}
F^{(\infty)}(m)>0\).
Uniform convergence gives \(F^{(n)}\ge\gamma_-\) on this interval
for all sufficiently large \(n\).
For a bulk initial density \(\rho_0^{(n)}<\ell\), integration
of the ODE therefore gives
\(\rho_t^{(n)}\ge\rho_0^{(n)}+\gamma_-t\) before entrance, and hence
\[
 t_{\mathrm{ent}}^{(n)}
 \le\frac{\ell-\rho_0^{(n)}}{\gamma_-}
 \le\frac{\ell-\varepsilon_0}{\gamma_-}
 \defeq T_-.
\]
Similarly, if \(r<1-\varepsilon_0\), applying the same argument
to \(-F^{(n)}\) on \([r,1-\varepsilon_0]\), with
\(\gamma_+=\frac12\min_{m\in[r,1-\varepsilon_0]}
(-F^{(\infty)}(m))>0\), gives
\(t_{\mathrm{ent}}^{(n)}\le
T_+\defeq(1-\varepsilon_0-r)/\gamma_+\)
whenever \(\rho_0^{(n)}>r\).

Set \(T_-=0\) if \(\varepsilon_0\ge\ell\), and \(T_+=0\) if
\(r\ge1-\varepsilon_0\), since the corresponding starting case
cannot occur. If \(\rho_0^{(n)}\in \mathcal J\), the entrance time is zero.
Thus \(T_{\varepsilon_0}=\max\{T_-,T_+\}\) gives the required
uniform bound, after choosing \(n_{\varepsilon_0}\) large enough
for all preceding inequalities to hold.
Since the trajectory is continuous and $\mathcal J$ is closed,
$\rho_{t_{\mathrm{ent}}^{(n)}}^{(n)}\in \mathcal J$.
\end{proof}

We next establish a fluctuation bound valid from time zero
without the bulk assumption. This bound will control the process
before the deterministic entrance time.

\begin{lemma}
\label{lemma: priori_estimate_for_the_global_density}
For every \(\sigma_0\in\Omega_n\), \(n\in\mathbb N\), and \(t\ge0\),
    $\mathbb E_{\sigma_0}^{(n)}
    \left[
        \left(\xi_t^{(n)}\right)^2
    \right]
    \le
    C(e^{Ct}-1)$,
where \(C=C(\kappa_1,\kappa_2,\iota_1,\iota_2)\).
\end{lemma}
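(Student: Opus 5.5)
The plan is a Dynkin martingale argument combined with Grönwall's inequality applied to $u(t)\defeq\mathbb E_{\sigma_0}^{(n)}[(\xi_t^{(n)})^2]$. It uses only that $F^{(n)}$ is globally Lipschitz on $[0,1]$ and that $G^{(n)}$ is bounded, with constants uniform in $n$; no contractivity or bulk hypothesis is needed. Since $\rho_0^{(n)}=M^{(n)}(\sigma_0)$, we have $\xi_0^{(n)}=0$ and hence $u(0)=0$.

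The first step applies Dynkin's formula to the time-dependent function $f_t(\eta)\defeq n\bigl(M^{(n)}(\eta)-\rho_t^{(n)}\bigr)^2$ on the finite state space. For $h(\eta)=M^{(n)}(\eta)-c$, the identity $L_n(h^2)=2hL_nh+\Gamma_nh$ together with $L_nM^{(n)}=F^{(n)}(M^{(n)})$ and $\Gamma_nM^{(n)}=\frac1nG^{(n)}(M^{(n)})$ gives
\[
(\partial_t+L_n)f_t
=2n\bigl(M^{(n)}-\rho_t^{(n)}\bigr)\bigl(F^{(n)}(M^{(n)})-F^{(n)}(\rho_t^{(n)})\bigr)+G^{(n)}(M^{(n)}).
\]
The time derivative contributes $-2n(M^{(n)}-\rho_t^{(n)})F^{(n)}(\rho_t^{(n)})$ through \eqref{mean_field_ode_global}, which is what produces the difference $F^{(n)}(M^{(n)})-F^{(n)}(\rho_t^{(n)})$ in the first term. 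Taking expectations, $u(t)=\int_0^t\mathbb E[(\partial_s+L_n)f_s(\sigma^{(n)}(s))]\,\dd s$; all functions involved are bounded for fixed $n$, so no integrability issue arises.

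The second step bounds the integrand. From \eqref{def:global_F_G_n}, $F^{(n)}$ is a polynomial in $m$ whose coefficients are bounded uniformly in $n\ge1$ by a constant depending only on $\kappa_1,\kappa_2,\iota_1,\iota_2$. Hence $|F^{(n)}(x)-F^{(n)}(y)|\le C_1|x-y|$ on $[0,1]$, and $0\le G^{(n)}\le C_2$ there. This yields $(\partial_t+L_n)f_t\le 2C_1 f_t+C_2$, and therefore $u'(t)\le 2C_1u(t)+C_2$ with $u(0)=0$. Grönwall's inequality then gives $u(t)\le \frac{C_2}{2C_1}(e^{2C_1t}-1)$, and taking $C=\max\{2C_1,C_2/(2C_1)\}$ puts this in the stated form $C(e^{Ct}-1)$. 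One small technicality is that $\rho_t^{(n)}$ must remain in $[0,1]$ for the Lipschitz bound to apply. This follows because $F^{(n)}(0)=\iota_1/n\ge0$ and $F^{(n)}(1)=-\iota_2/n\le0$, so $[0,1]$ is invariant for the ODE.

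The argument contains no real obstacle. The only care needed is to check that the carré-du-champ identity holds with the exact coefficient $\frac1nG^{(n)}$, which was already recorded above, and that the constants do not depend on $\sigma_0$ or $n$, which holds because the mutation terms enter only with the factor $1/n\le1$.
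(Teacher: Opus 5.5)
Your proposal is correct and matches the paper's proof. Both compute $(\partial_t+L_n)\bigl[(\xi_t^{(n)})^2\bigr]$ via the carré-du-champ identity, bound it by $C(\xi_t^{(n)})^2+C$ using uniform bounds on $(F^{(n)})'$ and $G^{(n)}$, and conclude with Dynkin's formula and Gr\"onwall's inequality from $\xi_0^{(n)}=0$. Your check that $[0,1]$ is invariant for $\rho_t^{(n)}$ makes explicit a point the paper leaves implicit.
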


\begin{proof}[Proof of \Cref{lemma: priori_estimate_for_the_global_density}]
Regard
$
    \xi_t^{(n)}(\sigma)
    \defeq
    \sqrt n
    \left(
        M^{(n)}(\sigma)-\rho_t^{(n)}
    \right)
$
as a time-dependent function of \(\sigma\). Since
$
    \partial_t\rho_t^{(n)}
    =
    F^{(n)}\left(\rho_t^{(n)}\right),
$
the carré-du-champ identity gives
\[
\begin{aligned}
(\partial_t+L_n)
\left[
    \left(\xi_t^{(n)}\right)^2
\right](\sigma)
&=
2n
\left(
    M^{(n)}(\sigma)-\rho_t^{(n)}
\right)
\left(
    F^{(n)}\left(M^{(n)}(\sigma)\right)
    -
    F^{(n)}\left(\rho_t^{(n)}\right)
\right)\\
&\quad+
G^{(n)}\left(M^{(n)}(\sigma)\right).
\end{aligned}
\]
The mean-value theorem and the uniform bounds on
\((F^{(n)})'\) and \(G^{(n)}\) imply
    $(\partial_t+L_n)
    \left[
        \left(\xi_t^{(n)}\right)^2
    \right]
    \le
    C\left(\xi_t^{(n)}\right)^2+C$.
Since \(\xi_0^{(n)}=0\), Dynkin's formula and Gr\"onwall's inequality yield the required estimate.

\end{proof}

We can now combine the bounded entrance time with the contractive drift on \(\mathcal I\). The next stopping-time argument improves the preceding exponential bound to the linear estimate needed on logarithmic time scales.

\begin{lemma}[Linear bound for global density fluctuation under bulk initial density]
\label{lemma: priori_estimate_for_the_global_density_II}
Under Assumption~\ref{assumption:bulk_initial_condition_tk},
for all sufficiently large \(n\),
 $\mathbb E_{\sigma_0}^{(n)}
 \left[\left(\xi_t^{(n)}\right)^2\right]
 \le C_{\varepsilon_0}t$, with $t\ge0$,
uniformly over bulk initial configurations.
\end{lemma}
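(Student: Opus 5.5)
The plan is to split time at the deterministic entrance time $t_{\mathrm{ent}}^{(n)}\le T_{\varepsilon_0}$ from Lemma~\ref{lemma: deterministic_entrance_time_bulk}. On the bounded interval $[0,T_{\varepsilon_0}]$, Lemma~\ref{lemma: priori_estimate_for_the_global_density} gives
\[
\mathbb E_{\sigma_0}^{(n)}\bigl[(\xi_t^{(n)})^2\bigr]\le C(e^{CT_{\varepsilon_0}}-1)\eqdef C'_{\varepsilon_0}.
\]
This already gives the bound for $t\le T_{\varepsilon_0}$: for $t\ge1$ it is at most $C'_{\varepsilon_0}t$. For $t<1$ we use $e^{Ct}-1\le Cte^{C}$.

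For $t\ge t_{\mathrm{ent}}^{(n)}$, we know $\rho_t^{(n)}\in\mathcal J$ for all later times. We want to exploit the contractivity $(F^{(n)})'\le-c<0$ on $\mathcal I$, valid for large $n$ by uniform convergence of $(F^{(n)})'$ to $(F^{(\infty)})'$. Introduce the stopping time
\[
\tau\defeq\inf\{t\ge t_{\mathrm{ent}}^{(n)}:M_t^{(n)}\notin\mathcal I\}.
\]
While $t<\tau$, both $M_t^{(n)}$ and $\rho_t^{(n)}$ lie in $\mathcal I$. The identity from the proof of Lemma~\ref{lemma: priori_estimate_for_the_global_density}, combined with the mean value theorem on the interval $\mathcal I$, then gives
\[
(\partial_t+L_n)(\xi_t^{(n)})^2\le-2c(\xi_t^{(n)})^2+C\le C.
\]
Outside $\mathcal I$ we only have the crude bound $(\partial_t+L_n)(\xi_t^{(n)})^2\le C(\xi_t^{(n)})^2+C$. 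To get a linear bound without the Grönwall blow-up, I would drop the stopping time and split the drift term directly:
\[
2n(M-\rho)(F^{(n)}(M)-F^{(n)}(\rho))\le C\,(\xi_t^{(n)})^2\,\mathbf 1_{\{M_t^{(n)}\notin\mathcal I\}}.
\]
On $\{M\notin\mathcal I\}$ with $\rho\in\mathcal J$ we have $|M-\rho|\ge\delta_0/2$, and $|\xi|\le\sqrt n$. Hence the bad term is at most $Cn\,\mathbb P(M_t^{(n)}\notin\mathcal I)$. Dynkin's formula from $t_{\mathrm{ent}}^{(n)}$ then yields
\[
\mathbb E\bigl[(\xi_t^{(n)})^2\bigr]\le C'_{\varepsilon_0}+C\,(t-t_{\mathrm{ent}}^{(n)})+Cn\int_{t_{\mathrm{ent}}^{(n)}}^{t}\mathbb P\bigl(|M_s^{(n)}-\rho_s^{(n)}|\ge\delta_0/2\bigr)\,\dd s.
\]

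The main obstacle is controlling this exit probability by $O(1/n)$ uniformly in $s$, without circularity and for all $t$, not just logarithmic ones. I would try a fourth-moment version of the same argument. Apply the generator to $(\xi_t^{(n)})^4$. Its jump terms give $(\partial_t+L_n)\xi^4\le 4\xi^3\sqrt n(F(M)-F(\rho))+C\xi^2+C/n$. On $\mathcal I$ the drift part is $\le-4c\,\xi^4$; off $\mathcal I$ it is bounded by $C\xi^4\mathbf 1_{\{M\notin\mathcal I\}}$.

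Alternatively, and more robustly, I would use a large-deviation/exponential-martingale estimate for the density-dependent chain. Since $\rho\in\mathcal J$ is attracting with a margin of $\delta_0/4$ to the boundary of $\mathcal I$, the chain started near $\mathcal J$ exits $\mathcal I$ within any time horizon $[0,T]$ with probability $\le Te^{-cn}$. This follows from a supermartingale of the form $e^{\theta n V(M)}$, with $V$ a Lyapunov function that grows away from $\alpha_*$. Its expected value at $t_{\mathrm{ent}}^{(n)}$ is controlled by the exponential moment version of Lemma~\ref{lemma: priori_estimate_for_the_global_density} on the bounded interval $[0,T_{\varepsilon_0}]$.

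For $t\le e^{cn/2}$ this makes the bad term $\le Cn\,te^{-cn}\le C$. For larger $t$, a linear bound is trivial since $(\xi_t^{(n)})^2\le n\le t$. Combining both regimes with the initial segment gives $\mathbb E[(\xi_t^{(n)})^2]\le C_{\varepsilon_0}t$ uniformly over bulk initial configurations.
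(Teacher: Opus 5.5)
Your treatment of the initial segment $[0,t_{\mathrm{ent}}^{(n)}]$ and your contractive generator bound before $\tau$ are both fine. The proof has a genuine gap, however, at exactly the step you flag as the main obstacle.

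Once you drop the stopping time, Dynkin's formula charges the bad term $Cn\,\mathbb P(M_s^{(n)}\notin\mathcal I)$ at \emph{every} time $s\in[t_{\mathrm{ent}}^{(n)},t]$. The argument therefore only closes if $\mathbb P(M_s^{(n)}\notin\mathcal I)=O(1/n)$ uniformly over polynomially long horizons. Even an elementary bound of the form $\mathbb P(\text{exit before } s)\le Cs/n$ would integrate to $O(t^2)$.

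Neither remedy you suggest supplies this estimate.
\begin{itemize}
\item The fourth-moment version is circular for the same reason as the second-moment one. Off $\mathcal I$ its bad term is of order $n^2\,\mathbb P(M_s^{(n)}\notin\mathcal I)$, and Chebyshev with fourth moments returns $C\,\mathbb E[(\xi_s^{(n)})^4]$, which is Gr\"onwall again.
\item The large-deviation route may be completable, but it is only asserted and is harder than the lemma itself. First, $e^{\theta nV(M)}$ is not a supermartingale in a neighbourhood of $\alpha_*$: there the drift term vanishes and the positive second-order jump term dominates. You would therefore need an excursion-counting argument between level sets of $V$. Second, you would need an exponential-moment bound at time $t_{\mathrm{ent}}^{(n)}$, which Lemma~\ref{lemma: priori_estimate_for_the_global_density} does not give, since it is only a second-moment estimate.
\end{itemize}
A smaller point: as written, an exit bound $se^{-cn}$ integrates to $Cnt^2e^{-cn}$, not $Cnte^{-cn}$. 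Splitting at $t=n$ instead of at $e^{cn/2}$ repairs this.

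The missing idea is simply to keep the stopping time you introduced. The drift bound $(\partial_u+L_n)(\xi_u^{(n)})^2\le-2c(\xi_u^{(n)})^2+C$ holds for $t_{\mathrm{ent}}^{(n)}\le u<\tau$. So for $t\ge t_{\mathrm{ent}}^{(n)}$, Dynkin's formula up to $t\wedge\tau$ gives, with no bad term,
\[
\mathbb E_{\sigma_0}^{(n)}\bigl[(\xi_{t\wedge\tau}^{(n)})^2\bigr]
\le\mathbb E_{\sigma_0}^{(n)}\bigl[(\xi_{t_{\mathrm{ent}}^{(n)}}^{(n)})^2\bigr]
+C\bigl(t-t_{\mathrm{ent}}^{(n)}\bigr)\le C_{\varepsilon_0}t .
\]
On $\{\tau\le t\}$ one has $M_\tau^{(n)}\notin\mathcal I$ and $\rho_\tau^{(n)}\in\mathcal J$. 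Hence $(\xi_{t\wedge\tau}^{(n)})^2\ge nd^2$ with $d=\operatorname{dist}(\mathcal J,\mathcal I^c)>0$. Markov's inequality applied to the \emph{stopped} process then yields $\mathbb P_{\sigma_0}^{(n)}(\tau\le t)\le C_{\varepsilon_0}t/(nd^2)$, with no circularity. Using $(\xi_t^{(n)})^2\le n$,
\[
\mathbb E_{\sigma_0}^{(n)}\bigl[(\xi_t^{(n)})^2\bigr]
\le\mathbb E_{\sigma_0}^{(n)}\bigl[(\xi_{t\wedge\tau}^{(n)})^2\bigr]
+n\,\mathbb P_{\sigma_0}^{(n)}(\tau\le t)\le C_{\varepsilon_0}t .
\]
This is the paper's proof. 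The price $n\,\mathbb P(\tau\le t)$ of leaving $\mathcal I$ is paid once, at the terminal time, rather than integrated over $[t_{\mathrm{ent}}^{(n)},t]$. That is why no large-deviation input is needed.
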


\begin{proof}[Proof of
\Cref{lemma: priori_estimate_for_the_global_density_II}]
Write \(T=t_{\mathrm{ent}}^{(n)}\).
By Lemma~\ref{lemma: deterministic_entrance_time_bulk},
\(T\le T_{\varepsilon_0}\).
On this bounded time interval,
Lemma~\ref{lemma: priori_estimate_for_the_global_density}
gives \(C(e^{Ct}-1)\le C_{\varepsilon_0}t\), proving the
claim for \(0\le t\le T\) and, in particular,
$\mathbb E_{\sigma_0}^{(n)}
 \left[\left(\xi_T^{(n)}\right)^2\right]
 \le C_{\varepsilon_0}T$.
 
Since \(\mathcal I\subset(x_-,x_+)\) and
\(F^{(n)}\to F^{(\infty)}\) in \(C^1([0,1])\), there exists
\(c_0>0\) such that \((F^{(n)})'\le-c_0\) on \(\mathcal I\) for all
sufficiently large \(n\).
Set \(\tau=\inf\{u\ge T:M_u^{(n)}\notin \mathcal I\}\).
For \(T\le u<\tau\), both \(M_u^{(n)}\) and
\(\rho_u^{(n)}\) belong to \(\mathcal I\), so the mean value theorem
and boundedness of \(G^{(n)}\) give
$(\partial_u+L_n)\left(\xi_u^{(n)}\right)^2
 \le -2c_0\left(\xi_u^{(n)}\right)^2+C$.
 
Applying Dynkin's formula up to \(t\wedge\tau\) and dropping
the nonpositive term yields
\begin{equation}
\label{eq:bulk_stopped_xi_bound}
\begin{aligned}
 \mathbb E_{\sigma_0}^{(n)}
 \left[\left(\xi_{t\wedge\tau}^{(n)}\right)^2\right]
 &\le
 \mathbb E_{\sigma_0}^{(n)}
 \left[\left(\xi_T^{(n)}\right)^2\right]+C(t-T)\le C_{\varepsilon_0}t,
 \qquad t\ge T.
\end{aligned}
\end{equation}

Put \(d=\operatorname{dist}(\mathcal J,\mathcal I^c)>0\).
On \(\{\tau\le t\}\), we have \(M_\tau^{(n)}\notin \mathcal I\)
and \(\rho_\tau^{(n)}\in \mathcal J\), hence
\(\left(\xi_{t\wedge\tau}^{(n)}\right)^2\ge nd^2\).
Therefore \eqref{eq:bulk_stopped_xi_bound} implies
 $\mathbb P_{\sigma_0}^{(n)}(\tau\le t)
 \le \frac{1}{nd^2}
 \mathbb E_{\sigma_0}^{(n)}
 \left[\left(\xi_{t\wedge\tau}^{(n)}\right)^2\right]
 \le \frac{C_{\varepsilon_0}t}{n}$.
Finally, \(M_t^{(n)},\rho_t^{(n)}\in[0,1]\) implies
\(\left(\xi_t^{(n)}\right)^2\le n\), so
\[
\begin{aligned}
 \mathbb E_{\sigma_0}^{(n)}
 \left[\left(\xi_t^{(n)}\right)^2\right]
 &\le
 \mathbb E_{\sigma_0}^{(n)}
 \left[\left(\xi_{t\wedge\tau}^{(n)}\right)^2\right]
 +n\mathbb P_{\sigma_0}^{(n)}(\tau\le t)\le C_{\varepsilon_0}t,
 \qquad t\ge T.
\end{aligned}
\]
Together with the bound for \(t\le T\), this proves the lemma.
\end{proof}
Now we are ready to prove a similar linear bound for local density fluctuations. 

\begin{proof}[Proof of
\Cref{lemma: a_priori_estimate_for_the_magnetization_vector}]
Put
$\delta_{\varepsilon_0}
\defeq \frac12\min\{\varepsilon_0,\alpha_*,1-\alpha_*\}$.
For all sufficiently large \(n\), the drift \(F^{(n)}\) is
positive at \(\delta_{\varepsilon_0}\) and negative at
\(1-\delta_{\varepsilon_0}\). Since the initial density lies
between these endpoints, it follows that
\begin{equation}
\label{eq:rho_global_bulk_bound}
 \delta_{\varepsilon_0}\le\rho_t^{(n)}
 \le1-\delta_{\varepsilon_0},
 \qquad t\ge0.
\end{equation}

The local drift difference admits the exact decomposition
\begin{equation}
\label{eq:local_F_difference_decomposition}
\begin{aligned}
 F^{(i,n)}\left(\vec M_t^{(n)}\right)
 -F^{(i,n)}\left(\vec{\rho_t^{(n)}}\right)
 &=-A_t^{(n)}
 \left(M_t^{(i,n)}-\rho_t^{(i,n)}\right)+B_t^{(i,n)}
 \left(M_t^{(n)}-\rho_t^{(n)}\right),
\end{aligned}
\end{equation}
where
\begin{align}
\label{eq:A_t_def}
 A_t^{(n)}
 &\defeq
 (\kappa_1+\kappa_2)\rho_t^{(n)}
 \left(1-\rho_t^{(n)}\right)
 +\frac{\iota_1(1-\rho_t^{(n)})
          +\iota_2\rho_t^{(n)}}{n},\\
\label{eq:B_t_def}
 B_t^{(i,n)}
 &\defeq
 \left[\kappa_1-(\kappa_1+\kappa_2)M_t^{(i,n)}\right]
 \left[1-M_t^{(n)}-\rho_t^{(n)}\right]-
 \frac{\iota_1(1-M_t^{(i,n)})
       +\iota_2M_t^{(i,n)}}{n}.
\end{align}
By \eqref{eq:rho_global_bulk_bound},
\(A_t^{(n)}\ge c_{\varepsilon_0}>0\), while
\(\lvert B_t^{(i,n)}\rvert\) and
\(G^{(i,n)}(\vec M_t^{(n)})\) are uniformly bounded.

Applying the time-dependent generator to
\((\xi_t^{(i,n)})^2\), using
\eqref{eq:local_F_difference_decomposition}, and then applying
Young's inequality
\(2|xy|\le\varepsilon x^2+\varepsilon^{-1}y^2\)
with a suitable \(\varepsilon>0\), we obtain
\begin{align}
\label{eq:local_xi_generator_bound_pre_young}
(\partial_t+L_n)\left[(\xi_t^{(i,n)})^2\right]
 &=-2A_t^{(n)}(\xi_t^{(i,n)})^2
   +2\sqrt{a^i}\,B_t^{(i,n)}      \xi_t^{(i,n)}\xi_t^{(n)}+G^{(i,n)}\left(\vec M_t^{(n)}\right)\\
\label{eq:local_xi_generator_bound}
 &\le-c_{\varepsilon_0}(\xi_t^{(i,n)})^2
      +C_{\varepsilon_0}(\xi_t^{(n)})^2
      +C_{\varepsilon_0},
\end{align}
where the state functions are evaluated at
\(\sigma^{(n)}(t)\).

Set
\(y_i(t)\defeq
\mathbb E_{\sigma_0}^{(n)}[(\xi_t^{(i,n)})^2]\).
Taking expectations and applying
Lemma~\ref{lemma: priori_estimate_for_the_global_density_II},
we obtain
\begin{equation}
\label{eq:local_xi_moment_differential_ineq}
 y_i'(t)\le-c_{\varepsilon_0}y_i(t)
              +C_{\varepsilon_0}(1+t),
 \qquad y_i(0)=0.
\end{equation}
Since $\xi_0^{(i,n)}=0$, multiplication by $e^{c_{\varepsilon_0}t}$ and integration yield
\[
\begin{aligned}
 y_i(t)
 \le C_{\varepsilon_0}
 \int_0^t e^{-c_{\varepsilon_0}(t-s)}(1+s)\,\dd s\le C_{\varepsilon_0}
 \left(t+t\int_0^t
 e^{-c_{\varepsilon_0}(t-s)}\,\dd s\right)
 \le C_{\varepsilon_0}(1+c_{\varepsilon_0}^{-1})t.
\end{aligned}
\]
Absorbing the final factor into the constant proves the bound
for all \(t\ge0\). The corresponding estimates for
\(M_t^{(i,n)}-\rho_t^{(i,n)}\) follow by dividing by \(a^in\).
\end{proof}

\subsection{Backward Kolmogorov equation and quantitative Gaussian approximation}
\label{sub:backwards_fokker-planck_equation}

With the moment bounds established, we return to the comparison problem. The backward Kolmogorov equation converts the difference of expectations into the time integral of the generator error, while the regularity estimate transfers derivatives of the backward solution to derivatives of its terminal condition.

For \(f\in C_b^3(\mathbb R^Q;\mathbb R)\), let
\((g_{s,t};0\le s\le t)\) solve the \emph{backward Kolmogorov equation}
\begin{equation*}
\label{def:sol_fokker_planck_eq}
\begin{cases}
(\partial_s+\mathscr L_s^{(n)})g_{s,t}=0,
&0\le s<t,\\
g_{t,t}=f.
\end{cases}
\tag{bKE}
\end{equation*}
$\mathscr L_s^{(n)}$ is defined in \eqref{diffusion_operator_bkp}.
The probabilistic representation gives
$
    g_{s,t}(\vec x)
    =
    \mathbb E
    \left[
        f\left(
            \vec {Y_t^{(n),s,\vec x}}
        \right)
    \right],
$
where \(\vec {Y_t^{(n),s,\vec x}}\) is the diffusion started from \(\vec x\) at
time \(s\). In particular,
$ \mathbb E
    \left[
        f\left(\vec {Y_t^{(n)}}\right)
    \right]
    =
    g_{0,t}(0).
$
For fixed \(t\) and \(\sigma\), we have
\[
\begin{aligned}
 \partial_s\left[g_{s,t}
 \left(\vec\xi_s^{(n)}(\sigma)\right)\right]
 =(\partial_s g_{s,t})
 \left(\vec\xi_s^{(n)}(\sigma)\right)-\sum_{i\in Q}\sqrt{a^in}\,
 \dot\rho_s^{(i,n)}\,\partial_i g_{s,t}
 \left(\vec\xi_s^{(n)}(\sigma)\right).
\end{aligned}
\]
Thus the explicit
time derivatives of \(g_{s,t}\) cancel.
Dynkin's formula and \eqref{def:sol_fokker_planck_eq} give
\begin{equation}
\label{bound_by_f}
\begin{aligned}
&\left|
\mathbb E_{\sigma_0}^{(n)}
\left[
    f\left(\vec{\xi_t^{(n)}}\right)
\right]
-
\mathbb E
\left[
    f\left(\vec {Y_t^{(n)}}\right)
\right]
\right|\\
&\qquad=
\left|
\mathbb E_{\sigma_0}^{(n)}
\left[
    \int_0^t
    \left[
        (\partial_s+L_n)
        g_{s,t}\left(\vec{\xi_s^{(n)}}\right)
        -
        \bigl((\partial_s+\mathscr L_s^{(n)})g_{s,t}\bigr)
        \left(\vec{\xi_s^{(n)}}\right)
    \right]\dd s
\right]
\right|.
\end{aligned}
\end{equation}

\subsubsection{Regularity of the backward Kolmogorov equation}
\label{sub:regularity_backwards-fokker-planck_equation}

To use \eqref{bound_by_f}, we need derivative bounds that are uniform over the
whole time interval. The uniform stability of the deterministic flow implies that, under the bulk
condition, for all
sufficiently large \(n\), the fundamental matrix satisfies
\begin{equation}
\label{fundamental_matrix_bound_bfp}
    \left\|
        \Phi^{(n)}(t,s)
    \right\|
    \le
    C_{\varepsilon_0}
    e^{-c_{\varepsilon_0}(t-s)},
    \qquad
    0\le s\le t.
\end{equation}
This is verified in Appendix~\ref{appendix_reg_bFP}.

\begin{lemma}[Regularity estimate for
\eqref{def:sol_fokker_planck_eq}]
\label{lemma:reg_bFP}
Let
\(k\in\mathbb N_0\).
Under Assumption~\ref{assumption:bulk_initial_condition_tk}, for all
sufficiently large \(n\) and for every
\(f\in C_b^k(\mathbb R^Q;\mathbb R)\), the solution of
\eqref{def:sol_fokker_planck_eq} satisfies
\[
    \left\|D^\ell g_{s,t}\right\|_\infty
    \le
    C_{\varepsilon_0,k}
    \left\|D^\ell f\right\|_\infty,
    \qquad
    0\le s\le t,\quad \ell=0,\ldots,k.
\]
\end{lemma}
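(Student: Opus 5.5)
The approach is to use the probabilistic representation $g_{s,t}(\vec x)=\mathbb E[f(\vec Y_t^{(n),s,\vec x})]$ together with the linearity of the SDE \eqref{solution_sde_y}. Since the drift is linear and the diffusion coefficient $\Sigma^{(n)}$ does not depend on the state, the solution started from $\vec x$ at time $s$ is
\[
 \vec Y_t^{(n),s,\vec x}=\Phi^{(n)}(t,s)\vec x+\int_s^t\Phi^{(n)}(t,u)\Sigma^{(n)}(u)\,\dd\vec B_u .
\]
The stochastic integral is a centred Gaussian vector $\vec N_{s,t}$ that does not depend on $\vec x$. Hence $g_{s,t}(\vec x)=\mathbb E[f(\Phi^{(n)}(t,s)\vec x+\vec N_{s,t})]$, so $g_{s,t}$ is a Gaussian average of $f$ composed with the linear map $\vec x\mapsto\Phi^{(n)}(t,s)\vec x$. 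This explicit form makes the derivative estimates direct. It also confirms that $g_{s,t}$ solves \eqref{def:sol_fokker_planck_eq}: by Itô's formula or Kolmogorov's backward equation for the linear SDE, the function $g$ inherits $C^k$ regularity from $f$, and derivatives may be passed under the expectation by dominated convergence.

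For $\ell=0$ we immediately get $\|g_{s,t}\|_\infty\le\|f\|_\infty$. For $\ell\ge1$, differentiating under the expectation gives
\[
 D^\ell g_{s,t}(\vec x)[h_1,\ldots,h_\ell]
 =\mathbb E\left[D^\ell f\bigl(\Phi^{(n)}(t,s)\vec x+\vec N_{s,t}\bigr)\bigl[\Phi^{(n)}(t,s)h_1,\ldots,\Phi^{(n)}(t,s)h_\ell\bigr]\right],
\]
and therefore
\[
 \|D^\ell g_{s,t}\|_\infty\le\|\Phi^{(n)}(t,s)\|^\ell\,\|D^\ell f\|_\infty .
\]
The fundamental-matrix estimate \eqref{fundamental_matrix_bound_bfp}, valid under Assumption~\ref{assumption:bulk_initial_condition_tk} for all sufficiently large $n$, gives $\|\Phi^{(n)}(t,s)\|\le C_{\varepsilon_0}e^{-c_{\varepsilon_0}(t-s)}\le C_{\varepsilon_0}$. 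So the claim holds with $C_{\varepsilon_0,k}=\max_{\ell\le k}C_{\varepsilon_0}^\ell$. This constant is uniform in $0\le s\le t$, in $n$, and in the bulk initial configuration, because the constants in \eqref{fundamental_matrix_bound_bfp} are.

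The only substantive input is therefore the uniform exponential bound \eqref{fundamental_matrix_bound_bfp} on $\Phi^{(n)}$, which the paper verifies in Appendix~\ref{appendix_reg_bFP}. That is the step I expect to be the main obstacle if it has to be proved from scratch. The matrix $\beta^{(n)}(t)$ is time-dependent, so eigenvalue bounds alone do not control $\Phi^{(n)}$.

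To handle this I would follow the structure of \eqref{eq:local_F_difference_decomposition}. Linearised, the dynamics of the local deviation split into a term with coefficient $-A_t^{(n)}$, which is uniformly negative by \eqref{eq:rho_global_bulk_bound}, and a coupling through the global deviation. The global deviation evolves scalarly with coefficient $(F^{(n)})'(\rho_t^{(n)})$, and this coefficient is $\le -c_0$ after the bounded entrance time of Lemma~\ref{lemma: deterministic_entrance_time_bulk}. A Grönwall argument on the triangular system (global part first, then local) then yields the exponential bound, with the bounded pre-entrance interval absorbed into the constant $C_{\varepsilon_0}$.

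Measurability and the justification of differentiating under the expectation are routine, since $f\in C_b^k$ has bounded derivatives.
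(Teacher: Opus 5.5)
Your proposal is correct and follows the paper's proof essentially step for step. Both arguments use the explicit Gaussian representation of the linear SDE and differentiate under the expectation to get $\|D^\ell g_{s,t}\|_\infty\le\|\Phi^{(n)}(t,s)\|^\ell\|D^\ell f\|_\infty$, then invoke the uniform exponential bound of Lemma~\ref{lemma:uniform_stability_linearized_TK}. Your sketch of that bound is also the paper's triangular argument: a scalar equation for the weighted global component $\mathbf u^\intercal\mathbf z_t$ with coefficient $(F^{(n)})'(\rho_t^{(n)})$, followed by the local components with the uniformly negative coefficient $-A_t^{(n)}$.
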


The proof is provided in Appendix~\ref{appendix_reg_bFP}.

\subsubsection{Proof of the quantitative diffusion approximation}
\label{sub:quantitive_approxiamation_to_diffusion_process}

With the generator comparison, the moment estimates, and the backward
regularity bound in place, we can now prove the quantitative approximation.
\begin{proof}[Proof of \Cref{lemma:quantitative_convergence_xi_yt}]
The case \(t=0\) is immediate, since both fluctuation vectors vanish.
Fix \(0<t\le C_0\log n\), let
\(f\in C_b^\infty(\mathbb R^Q;\mathbb R)\), and let \(g_{s,t}\) solve
\eqref{def:sol_fokker_planck_eq}. By \eqref{bound_by_f},
Lemma~\ref{routine_computation_3}, and Lemma~\ref{lemma:reg_bFP},
\[
\begin{aligned}
&\left|
\mathbb E_{\sigma_0}^{(n)}
\left[f\left(\vec{\xi_t^{(n)}}\right)\right]
-\mathbb E\left[f\left(\vec{Y_t^{(n)}}\right)\right]
\right|\\
&\qquad\le
\frac{C_{\varepsilon_0}}
     {\left(\min_{i\in Q}a^in\right)^{1/2}}
\int_0^t
\left[\begin{aligned}
    &\|Df\|_\infty
    \mathbb E_{\sigma_0}^{(n)}
    \left[\left\|\vec{\xi_s^{(n)}}\right\|^2\right]
    +\|D^2f\|_\infty
    \mathbb E_{\sigma_0}^{(n)}
    \left[\left\|\vec{\xi_s^{(n)}}\right\|\right]\\
    &\qquad+\|D^3f\|_\infty
\end{aligned}\right]\dd s.
\end{aligned}
\]
Lemma~\ref{lemma: a_priori_estimate_for_the_magnetization_vector}
and Cauchy--Schwarz give
\[
    \mathbb E_{\sigma_0}^{(n)}
    \left[\left\|\vec{\xi_s^{(n)}}\right\|^2\right]
    \le C_{\varepsilon_0}s,
    \qquad
    \mathbb E_{\sigma_0}^{(n)}
    \left[\left\|\vec{\xi_s^{(n)}}\right\|\right]
    \le C_{\varepsilon_0}s^{1/2}.
\]
Consequently,
\begin{equation}
\label{smooth_test_bound_final}
\begin{aligned}
&\left|
\mathbb E_{\sigma_0}^{(n)}
\left[f\left(\vec{\xi_t^{(n)}}\right)\right]
-\mathbb E\left[f\left(\vec{Y_t^{(n)}}\right)\right]
\right|\\
&\qquad\le
\frac{C_{\varepsilon_0}}
     {\left(\min_{i\in Q}a^in\right)^{1/2}}
\left[
    \|Df\|_\infty t^2
    +\|D^2f\|_\infty t^{3/2}
    +\|D^3f\|_\infty t
\right].
\end{aligned}
\end{equation}

Applying Lemma~\ref{Milton_approx_B2} to
\eqref{smooth_test_bound_final}, we obtain
\begin{equation}
\label{before_final_simplification_dK}
\begin{aligned}
&d_K\left(
    \operatorname{Law}\left(
        \vec{\xi_t^{(n)}};\mathbb P_{\sigma_0}^{(n)}
    \right),
    \operatorname{Law}\left(\vec{Y_t^{(n)}}\right)
\right)\\
&\qquad\le C_{\varepsilon_0}
\left[
    \frac{t^2}{\left(\min_{i\in Q}a^in\right)^{1/2}}
    +\frac{t^{3/4}}{\left(\min_{i\in Q}a^in\right)^{1/4}}
    +\frac{t^{1/3}}{\left(\min_{i\in Q}a^in\right)^{1/6}}
\right].
\end{aligned}
\end{equation}
Since \(\min_{i\in Q}a^in\ge1\) and
\(t^{3/4}\le t^{1/3}+t^2\) for every \(t\ge0\),
\eqref{before_final_simplification_dK} implies
\[
    d_K\left(
        \operatorname{Law}\left(
            \vec{\xi_t^{(n)}};\mathbb P_{\sigma_0}^{(n)}
        \right),
        \operatorname{Law}\left(\vec{Y_t^{(n)}}\right)
    \right)
    \le
    \frac{C_{\varepsilon_0,C_0}(t^{1/3}+t^2)}
         {\left(\min_{i\in Q}a^in\right)^{1/6}},
\]
which proves the lemma.
\end{proof}

\subsubsection{Gaussian identification and proof of the CLT}
\label{sub:CLT_for_mag_fluc_tk}

The preceding lemma compares the local fluctuation vector with the linear diffusion. It remains only to identify the law of that diffusion.

\begin{proof}[Proof of \Cref{prop:CLT_mag_tk}]
By \eqref{Y_tk_clt_representation},
\(\vec {Y_t^{(n)}}\) is a stochastic integral with a deterministic
integrand. It is therefore centred Gaussian, and the It\^o
isometry gives its covariance as \(\Gamma_t^{(n)}\).
Consequently,
 $\operatorname{Law}\left(\vec {Y_t^{(n)}}\right)
 =
 \mathcal N\left(0,\Gamma_t^{(n)}\right)
 =
 \operatorname{Law}\left(\vec {Z_t^{(n)}}\right)$.
The claimed estimate follows from
Lemma~\ref{lemma:quantitative_convergence_xi_yt}.
\end{proof}

\section{Quantitative approximation for the reference measure}
\label{sec:GD_approx}
\label{Priori_est_Mag_Vec_GD}
\label{subsec:stein_regularity_GD}
For the quantitative approximation, we use the 
Gaussian Stein method employed in \cite[Section~4.7]{JaraXiang},
applied here to sums of independent Bernoulli variables.

\begin{proof}[Proof of \Cref{prop:CLT_mag_GD}]
Let $B_1,\ldots,B_N$ be independent $\mathrm{Ber}(p)$ variables,
where $N\ge1$ and $p\in(0,1)$. Put $q=1-p$,
$X_j=(B_j-p)/\sqrt N$, $S=\sum_{j=1}^N X_j$, and $S_j=S-X_j$.
For a bounded smooth $1$-Lipschitz function $h$, the Stein equation
\[
 pqf'(x)-xf(x)=h(x)-\mathbb E h(\sqrt{pq}\,Z),
 \qquad Z\sim\mathcal N(0,1),
\]
has a bounded solution satisfying $\|f''\|_\infty\le2/(pq)$;
see \cite[Proposition~4.12]{JaraXiang}, or
\cite[Lemma~2.4, equation~(2.13)]{ChenGoldsteinShao2011Stein}
after rescaling.

Since $S_j$ and $X_j$ are independent and $\mathbb E X_j=0$,
Taylor's formula gives
\[
 \left|
 \mathbb E[X_jf(S)]
 -\mathbb E X_j^2\,\mathbb E f'(S_j)
 \right|
 \le\tfrac12\|f''\|_\infty\,\mathbb E|X_j|^3.
\]
Also, $\sum_{j=1}^N\mathbb E X_j^2=pq$ and
$|f'(S)-f'(S_j)|\le\|f''\|_\infty|X_j|$. It follows that
\[
\begin{aligned}
 \left|\mathbb E\{pqf'(S)-Sf(S)\}\right|
 &\le \sum_{j=1}^N\mathbb E X_j^2\,
       \left|\mathbb E[f'(S)-f'(S_j)]\right|+\sum_{j=1}^N
 \left|\mathbb E[X_jf(S)]
       -\mathbb E X_j^2\,\mathbb E f'(S_j)\right|\\
 &\le\|f''\|_\infty\sum_{j=1}^N
 \left\{\mathbb E X_j^2\,\mathbb E|X_j|
       +\tfrac12\mathbb E|X_j|^3\right\}.
\end{aligned}
\]
The variable $X_j$ equals $q/\sqrt N$ with probability $p$
and $-p/\sqrt N$ with probability $q$, so
\[
 \mathbb E X_j^2=\frac{pq}{N},
 \qquad
 \mathbb E|X_j|=\frac{2pq}{\sqrt N},
 \qquad
 \mathbb E|X_j|^3=\frac{pq(p^2+q^2)}{N^{3/2}}.
\]
Substituting these identities and the bound on $\|f''\|_\infty$
into the preceding estimate gives
\[
\begin{aligned}
 \left|\mathbb E h(S)-\mathbb E h(\sqrt{pq}\,Z)\right|
 \le\frac{4pq+p^2+q^2}{\sqrt N}=\frac{1+2pq}{\sqrt N}\le\frac3{2\sqrt N}.
\end{aligned}
\]

Truncation followed by Gaussian convolution extends the estimate to
every $1$-Lipschitz function.
Taking the supremum over $1$-Lipschitz functions yields
 $d_K\left(
 \frac{\mathrm{Bin}(N,p)-Np}{\sqrt N},
 \mathcal N(0,pq)
 \right)\le\frac3{2\sqrt N}$.
For $p\in\{0,1\}$, both laws are $\delta_0$, so the estimate
holds for every $p\in[0,1]$.

Apply this bound with $N=a^in$ and $p=\rho_t^{(i,n)}$ for
each $i\in Q$, and couple each block fluctuation with its Gaussian
approximation so that the expected absolute difference satisfies
this bound, choosing the pairs independently across blocks.
The resulting fluctuation coordinates have the required independent
block laws, while the Gaussian vector has covariance
$\bar\Gamma_t^{(n)}$. Since
$\|(x^0,x^1)\|\le|x^0|+|x^1|$, the coupling representation of
$d_K$ gives
\[
\begin{aligned}
 d_K\left(
 \operatorname{Law}\left(\vec{\zeta_t^{(n)}}\right),
 \mathcal N(0,\bar\Gamma_t^{(n)})
 \right)
 \le\frac32\sum_{i\in Q}\frac1{\sqrt{a^in}}\le\frac3{\sqrt{\min_{i\in Q}(a^in)}}.
\end{aligned}
\]
This proves the proposition.
\end{proof}

\section{Approximation via the reference measure}
\label{approach_tk_nu_t}

In this section, we prove Theorem~\ref{mag_fluc_estimate_dk}. The two
fluctuation vectors are already approximated by centred Gaussian vectors in Propositions~\ref{prop:CLT_mag_tk} and~\ref{prop:CLT_mag_GD}. It therefore remains to compare the corresponding covariance matrices on the logarithmic time window.

\begin{lemma}[Matching of the Gaussian covariances]
\label{lemma:gaussian_covariance_matching_log_window}
Fix \(\varepsilon_0\in(0,1/2)\) and
$
    C_0>\frac{\kappa_1+\kappa_2}{2\kappa_1\kappa_2}.
$
Then, as $n\to \infty$,
\[
\begin{aligned}
&\sup_{\substack{\sigma_0\in\Omega_n:\\
\varepsilon_0\le M^{(n)}(\sigma_0)\le1-\varepsilon_0}}
\ \sup_{t_n\le t\le C_0\log n}
d_K\left(
    \vec{Z_t^{(n)}},
    \vec{\bar Z_t^{(n)}}
\right)
\longrightarrow0\eqcomma
\end{aligned}
\]

where
$
    \vec{Z_t^{(n)}}\sim\mathcal N(0,\Gamma_t^{(n)}),
    \qquad
    \vec{\bar Z_t^{(n)}}\sim\mathcal N(0,\bar\Gamma_t^{(n)}).
$
\end{lemma}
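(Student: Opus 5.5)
The plan is to compare both covariance matrices with a common limit, the equilibrium matrix $\Gamma_*\defeq\alpha_*(1-\alpha_*)I$, rather than with each other directly. Then we convert the covariance closeness into Kantorovich closeness by a synchronous coupling. For centred Gaussians, take $X\sim\mathcal N(0,I)$ and couple $\Gamma^{1/2}X$ with $\bar\Gamma^{1/2}X$. This gives
\[
 d_K\bigl(\mathcal N(0,\Gamma),\mathcal N(0,\bar\Gamma)\bigr)\le \bigl\|\Gamma^{1/2}-\bar\Gamma^{1/2}\bigr\|_{F}.
\]
The matrix square root is continuous at the positive definite matrix $\Gamma_*$. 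It therefore suffices to show that $\|\Gamma_t^{(n)}-\Gamma_*\|\to0$ and $\|\bar\Gamma_t^{(n)}-\Gamma_*\|\to0$, uniformly over bulk $\sigma_0$ and over $t\ge t_n$.

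\emph{Reference covariance.} The deterministic estimate \eqref{eq:sharp_bulk_deterministic_decay} gives $\|\vec{\rho_t^{(n)}}-(\alpha_*,\alpha_*)\|\le C_{\varepsilon_0}(e^{-c_{\varepsilon_0}t}+n^{-1})$. Since $x\mapsto x(1-x)$ is Lipschitz, we get $\|\bar\Gamma_t^{(n)}-\Gamma_*\|\le C_{\varepsilon_0}(e^{-c_{\varepsilon_0}t_n}+n^{-1})\to0$ for $t\ge t_n$.

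\emph{Identification of the limit for $\Gamma_t^{(n)}$.} Differentiating \eqref{Gamma_tk_clt} shows that $\Gamma_t^{(n)}$ solves the Lyapunov equation
\[
\dot\Gamma=\beta^{(n)}(t)\Gamma+\Gamma\beta^{(n)}(t)^\intercal+\Sigma^{(n)}(t)^2,\qquad \Gamma_0=0.
\]
We identify the limiting coefficients at $(\alpha_*,\alpha_*)$ after dropping the $O(n^{-1})$ mutation terms. From the form of $F^{(i,\infty)}$,
\[
\partial_jF^{(i,\infty)}=-(\kappa_1+\kappa_2)m(1-m)\,\delta_{ij}+a^j\bigl[\kappa_1(1-m^i)-\kappa_2m^i\bigr](1-2m).
\]
The bracket vanishes at $m^i=\alpha_*$. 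Hence $\beta_*=-A_*I$ with $A_*=(\kappa_1+\kappa_2)\alpha_*(1-\alpha_*)$, independently of $\vec a$; this is what makes the limit uniform over bulk initial data. Likewise
\[
G^{(i,\infty)}(\alpha_*,\alpha_*)=\alpha_*(1-\alpha_*)\bigl[\kappa_1(1-\alpha_*)+\kappa_2\alpha_*\bigr]=\frac{2\kappa_1\kappa_2}{\kappa_1+\kappa_2}\,\alpha_*(1-\alpha_*).
\]
The stationary equation $-2A_*\Gamma+\Sigma_*^2=0$ then gives $\Gamma=\frac{\kappa_1\kappa_2}{(\kappa_1+\kappa_2)^2}I=\alpha_*(1-\alpha_*)I=\Gamma_*$. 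This matches the invariant-measure heuristics of Lemma~\ref{lemma:invariant_measure_to_product_bernoulli}.

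\emph{Quantitative convergence.} Set $\Delta_t=\Gamma_t^{(n)}-\Gamma_*$. Since $\Gamma_*$ is constant, $\Delta$ satisfies $\dot\Delta=\beta^{(n)}\Delta+\Delta\beta^{(n)\intercal}+R_t$, where
\[
R_t=\beta^{(n)}\Gamma_*+\Gamma_*\beta^{(n)\intercal}+\Sigma^{(n)2}=(\beta^{(n)}-\beta_*)\Gamma_*+\Gamma_*(\beta^{(n)}-\beta_*)^\intercal+(\Sigma^{(n)2}-\Sigma_*^2).
\]
Variation of constants gives
\[
\Delta_t=-\Phi^{(n)}(t,0)\Gamma_*\Phi^{(n)}(t,0)^\intercal+\int_0^t\Phi^{(n)}(t,s)R_s\Phi^{(n)}(t,s)^\intercal\,\dd s .
\]
The coefficients $\beta^{(n)},\Sigma^{(n)2}$ are smooth functions of $\vec\rho$ and $\vec a$, with $\sqrt{a^i/a^j}\le\varepsilon_0^{-1/2}$, plus $O(n^{-1})$ terms. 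The deterministic decay therefore gives $\|R_s\|\le C_{\varepsilon_0}(e^{-c_{\varepsilon_0}s}+n^{-1})$. Combining this with \eqref{fundamental_matrix_bound_bfp}, we obtain
\[
\|\Delta_t\|\le C_{\varepsilon_0}\Bigl(e^{-2c t}+\int_0^t e^{-2c(t-s)}\bigl(e^{-cs}+n^{-1}\bigr)\,\dd s\Bigr)\le C_{\varepsilon_0}\bigl(te^{-ct}+n^{-1}\bigr),
\]
with $c=c_{\varepsilon_0}$. This tends to $0$ uniformly for $t\ge t_n$, and the upper limit $C_0\log n$ plays no role.

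\emph{Main obstacle.} The delicate point is uniformity in the bulk initial configuration. We need the constants in \eqref{fundamental_matrix_bound_bfp} and in the deterministic decay to depend only on $\varepsilon_0$, and we need the off-diagonal entries of $\beta^{(n)}$, which carry the factors $\sqrt{a^i/a^j}$, to converge to those of $\beta_*$ uniformly. The vanishing of the rank-one term at equilibrium handles the latter. The former is exactly what Appendix~\ref{appendix_reg_bFP} and \eqref{eq:sharp_bulk_deterministic_decay} provide, so the remaining work is bookkeeping.
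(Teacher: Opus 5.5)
Your proposal is correct and follows the paper's proof essentially step for step. Both covariances are compared with $\alpha_*(1-\alpha_*)I$ through the Lyapunov equation and variation of constants, and $d_K$ is then controlled by a synchronous Gaussian coupling; the paper invokes the quantitative bound of Lemma~\ref{dk_two_gaussians} at the point where you use continuity of the matrix square root. The only blemish is your last displayed inequality: it drops the $e^{-2ct}$ term and so fails near $t=0$, but the integral actually gives $C_{\varepsilon_0}(e^{-c_{\varepsilon_0}t}+n^{-1})$, which is all you need for $t\ge t_n$.
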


\begin{proof}[Proof of \Cref{lemma:gaussian_covariance_matching_log_window}]
Throughout this proof, $I$ denotes the $2\times2$ identity matrix.
Recall that \(\Gamma_t^{(n)}=\operatorname{Cov}(\vec{Y_t^{(n)}})\). Differentiating \eqref{Gamma_tk_clt} gives
\begin{equation}
\label{eq:Gamma_matrix_ode}
    \frac{\dd}{\dd t}\Gamma_t^{(n)}
    =
    \beta^{(n)}(t)\Gamma_t^{(n)}
    +
    \Gamma_t^{(n)}\beta^{(n)}(t)^T
    +
    \Sigma^{(n)}(t)\Sigma^{(n)}(t)^T,
    \qquad
    \Gamma_0^{(n)}=0.
\end{equation}
The covariance of the Gaussian vector is
$\bar\Gamma_t^{(n)}
    =
    \begin{pmatrix}
    \rho_t^{(0,n)}(1-\rho_t^{(0,n)}) & 0\\
    0 & \rho_t^{(1,n)}(1-\rho_t^{(1,n)})
    \end{pmatrix}$.
At \((\alpha_*,\alpha_*)\), direct differentiation of the drift and evaluation of the carré-du-champ coefficients give
$
    D\vec F^{(\infty)}(\alpha_*,\alpha_*)
    =
    -\frac{\kappa_1\kappa_2}{\kappa_1+\kappa_2}I,
$
and
$
    G^{(i,\infty)}(\alpha_*,\alpha_*)
    =
    \frac{2\kappa_1\kappa_2}{\kappa_1+\kappa_2}
    \alpha_*(1-\alpha_*),
    \ i\in Q.
$
By \eqref{eq:sharp_bulk_deterministic_decay}, proved below,
uniformly over the bulk initial conditions,
$\left\|
        \vec{\rho_t^{(n)}}-(\alpha_*,\alpha_*)
    \right\|
    \le
    C_{\varepsilon_0}
    \left(e^{-c_{\varepsilon_0}t}+\frac1n\right)$, for $t\ge0$.
    
The first and second derivatives of \(\vec {F^{(n)}}\) and the first derivatives
of \(\vec {G^{(n)}}\) are uniformly bounded on \([0,1]^2\). Both \(\vec {F^{(n)}}\) and  \(\vec {G^{(n)}}\) differ from their
respective limits by $O(n^{-1})$ in $C^1([0,1]^2)$.  Since \(\min_{i\in Q}a^i\ge\varepsilon_0\), it follows that
\begin{equation}
\label{eq:beta_sigma_close_to_limit}
\begin{aligned}
&\left\|
    \beta^{(n)}(t)
    +
    \frac{\kappa_1\kappa_2}{\kappa_1+\kappa_2}I
\right\|
+
\left\|
    \Sigma^{(n)}(t)\Sigma^{(n)}(t)^T
    -
    \frac{2\kappa_1\kappa_2}{\kappa_1+\kappa_2}
    \alpha_*(1-\alpha_*)I
\right\|\\
&\qquad+
\left\|
    \bar\Gamma_t^{(n)}
    -
    \alpha_*(1-\alpha_*)I
\right\|
\le
C_{\varepsilon_0}
\left(e^{-c_{\varepsilon_0}t}+\frac1n\right).
\end{aligned}
\end{equation}

Using \eqref{eq:Gamma_matrix_ode} and
$\partial_t[\Phi^{(n)}(t,0)^{-1}]
=-\Phi^{(n)}(t,0)^{-1}\beta^{(n)}(t)$,
we differentiate
$\Phi^{(n)}(t,0)^{-1}
[\Gamma_t^{(n)}-\alpha_*(1-\alpha_*)I]
\Phi^{(n)}(t,0)^{-T}$
with respect to $t$.
We then integrate the resulting identity from $0$ to $t$
and multiply on the left by $\Phi^{(n)}(t,0)$ and on the
right by $\Phi^{(n)}(t,0)^T$.
Since $\Gamma_0^{(n)}=0$ and
$\Phi^{(n)}(t,0)\Phi^{(n)}(s,0)^{-1}=\Phi^{(n)}(t,s)$,
we obtain
\[
\begin{aligned}
 \Gamma_t^{(n)}-\alpha_*(1-\alpha_*)I
 &=
 -\alpha_*(1-\alpha_*)
 \Phi^{(n)}(t,0)\Phi^{(n)}(t,0)^T\\
 &\quad+\int_0^t\Phi^{(n)}(t,s)
 \Bigl\{
 \alpha_*(1-\alpha_*)
 \bigl[\beta^{(n)}(s)+\beta^{(n)}(s)^T
       +2\lambda_*I\bigr]\\
 &\qquad\qquad+
 \Sigma^{(n)}(s)\Sigma^{(n)}(s)^T
 -2\lambda_*\alpha_*(1-\alpha_*)I
 \Bigr\}\Phi^{(n)}(t,s)^T\,\dd s.
\end{aligned}
\]
Here we added and subtracted
$2\lambda_*\alpha_*(1-\alpha_*)I$
to express the integrand in terms of deviations from the
limiting coefficients. In particular,
 $\left\|
 \beta^{(n)}(s)+\beta^{(n)}(s)^T+2\lambda_*I
 \right\|
 \le 2\left\|\beta^{(n)}(s)+\lambda_*I\right\|$,
so \eqref{eq:beta_sigma_close_to_limit} bounds the expression
in braces by
$C_{\varepsilon_0}(e^{-c_{\varepsilon_0}s}+n^{-1})$.

Choose $c_{\varepsilon_0}>0$ small enough that both
\eqref{fundamental_matrix_bound_bfp} and
\eqref{eq:beta_sigma_close_to_limit} hold with this exponent. It follows that
\[
\begin{aligned}
 \left\|\Gamma_t^{(n)}-\alpha_*(1-\alpha_*)I\right\|
 &\le C_{\varepsilon_0}e^{-2c_{\varepsilon_0}t}+C_{\varepsilon_0}\int_0^t
 e^{-2c_{\varepsilon_0}(t-s)}
 \left(e^{-c_{\varepsilon_0}s}+\frac1n\right)\dd s\\
 &\le C_{\varepsilon_0}
 \left(e^{-c_{\varepsilon_0}t}+\frac1n\right).
\end{aligned}
\]
The triangle inequality, together with the bound on
$\bar\Gamma_t^{(n)}-\alpha_*(1-\alpha_*)I$ in
\eqref{eq:beta_sigma_close_to_limit}, therefore yields
\begin{equation}
\label{eq:Gamma_barGamma_close}
 \left\|\Gamma_t^{(n)}-\bar\Gamma_t^{(n)}\right\|
 \le C_{\varepsilon_0}
 \left(e^{-c_{\varepsilon_0}t}+\frac1n\right),
 \qquad t\ge0.
\end{equation}
Since \(t\ge t_n\) on the window under consideration,
$
    e^{-c_{\varepsilon_0}t}
    \le
    e^{-c_{\varepsilon_0}t_n}
    \longrightarrow0.
$
Hence, \(\Gamma_t^{(n)}\) and \(\bar\Gamma_t^{(n)}\) converge uniformly on this
window to the positive definite matrix
\(\alpha_*(1-\alpha_*)I\).
Lemma~\ref{dk_two_gaussians} therefore gives
$
    d_K\left(
        \vec{Z_t^{(n)}},
        \vec{\bar Z_t^{(n)}}
    \right)
    \le
    C
    \left\|
        \Gamma_t^{(n)}-\bar\Gamma_t^{(n)}
    \right\|.
$
Taking the supremum over the bulk initial conditions and
\(t_n\le t\le C_0\log n\), and then using
\eqref{eq:Gamma_barGamma_close}, proves the lemma.
\end{proof}

\begin{proof}[Proof of \Cref{mag_fluc_estimate_dk}]
The bulk assumption gives $\min_{i\in Q}a^in\ge\varepsilon_0n$.
Thus, Propositions~\ref{prop:CLT_mag_tk}
and~\ref{prop:CLT_mag_GD}, together with the triangle inequality,
yield, for all sufficiently large $n$,
\[
\begin{aligned}
 &d_K\left(
 \operatorname{Law}\left(
 \vec{\xi_t^{(n)}};\mathbb P_{\sigma_0}^{(n)}
 \right),
 \operatorname{Law}\left(
 \vec{\zeta_t^{(n)}};\nu_t^{(n)}
 \right)
 \right)\\
 &\quad\le
 \frac{C_{\varepsilon_0,C_0}(\log n)^2}{n^{1/6}}
 +d_K\left(
 \operatorname{Law}\left(\vec{Z_t^{(n)}}\right),
 \operatorname{Law}\left(\vec{\bar Z_t^{(n)}}\right)
 \right),
\end{aligned}
\]
uniformly over the bulk initial configurations and
$t_n\le t\le C_0\log n$.
The first term converges to zero, and the Gaussian term converges to zero
uniformly by
Lemma~\ref{lemma:gaussian_covariance_matching_log_window}.
Taking the supremum over these configurations and times
proves the theorem.
\end{proof}

\section{Approximation of the invariant measure}
\label{approx_invariant_measure}

This section establishes the invariant-measure approximation used in the
mixing-time and cutoff arguments.
By the permutation invariance established in
Subsection~\ref{subsec:TK_model_def}, both
\(\mu_{\kappa_1,\kappa_2,\iota_1,\iota_2}^{(n)}\) and \(\nu_*^{(n)}\) are
uniform conditional on the global count \(nM^{(n)}\). The corresponding
count distribution under \(\nu_*^{(n)}\) is
\(\mathrm{Bin}(n,\alpha_*)\).

\begin{lemma}[Approximation of the invariant measure]
\label{lemma:invariant_measure_to_product_bernoulli}
For fixed $\kappa_1,\kappa_2,\iota_1,\iota_2>0$, there is a finite
constant $C$, depending only on these parameters, such that
\[
 \dtv\left(\mu_{\kappa_1,\kappa_2,\iota_1,\iota_2}^{(n)},
            \nu_*^{(n)}\right)\le Cn^{-1/2},\qquad n\ge1.
\]
\end{lemma}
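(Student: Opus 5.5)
By permutation invariance, both $\mu^{(n)}\defeq\mu^{(n)}_{\kappa_1,\kappa_2,\iota_1,\iota_2}$ and $\nu_*^{(n)}$ are uniform on each level set $\{nM^{(n)}=k\}$. Hence their total variation distance equals the distance between their count laws. Let $\pi_n$ be the stationary law of the birth--death chain $K^{(n)}_t$ with rates $q_n(k,k\pm1)$, and let $\beta_n=\mathrm{Bin}(n,\alpha_*)$. It then suffices to show $\dtv(\pi_n,\beta_n)\le Cn^{-1/2}$.

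The plan is to compare the two laws through their one-step ratios. Detailed balance gives
\[
\frac{\pi_n(k+1)}{\pi_n(k)}=\frac{(n-k)^2(\kappa_1k+\iota_1)}{(k+1)^2\{\kappa_2(n-k-1)+\iota_2\}},
\qquad
\frac{\beta_n(k+1)}{\beta_n(k)}=\frac{(n-k)\kappa_1}{(k+1)\kappa_2}.
\]
Here we used $\alpha_*/(1-\alpha_*)=\kappa_1/\kappa_2$. Dividing the two ratios, the leading factors cancel and we are left with
\[
\frac{k+\iota_1/\kappa_1}{k+1}\cdot\frac{n-k}{n-k-1+\iota_2/\kappa_2}.
\]
Telescoping this product gives $\pi_n(k)=\beta_n(k)h_n(k)/\mathbb E_{\beta_n}[h_n]$, where
\[
h_n(k)\defeq\frac{\Gamma(k+\iota_1/\kappa_1)}{\Gamma(k+1)}\cdot\frac{\Gamma(n-k+1)}{\Gamma(n-k+\iota_2/\kappa_2)},\qquad 0\le k\le n .
\]
This is an explicit tilt of the binomial.

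With this representation, we would use the elementary bound
\[
\dtv(\pi_n,\beta_n)=\tfrac12\mathbb E_{\beta_n}\bigl|h_n/\mathbb E_{\beta_n}h_n-1\bigr|\le \frac{\mathbb E_{\beta_n}|h_n(K)-h_n(\lfloor\alpha_*n\rfloor)|}{\mathbb E_{\beta_n}h_n}.
\]
The argument then splits into a bulk region $|k-\alpha_*n|\le\delta n$ and its complement. In the bulk, Stirling's formula (or the standard estimate $\Gamma(x+a)/\Gamma(x+b)=x^{a-b}(1+O(1/x))$) gives $h_n(k)\asymp h_n(\lfloor\alpha_*n\rfloor)\asymp n^{\iota_1/\kappa_1-\iota_2/\kappa_2}$. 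It also shows that $\log h_n$ has derivative $O(1/n)$ there, so
\[
|h_n(k)-h_n(\lfloor\alpha_*n\rfloor)|\le C h_n(\lfloor\alpha_*n\rfloor)\,|k-\alpha_*n|/n .
\]
Since $\mathbb E_{\beta_n}|K-\alpha_*n|\le\sqrt n$, the bulk contributes $O(n^{-1/2})$. The same bulk estimate gives $\mathbb E_{\beta_n}h_n\ge c\,h_n(\lfloor\alpha_*n\rfloor)$, which controls the denominator.

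The main obstacle is the boundary region, where $h_n$ is not comparable to its central value. For example, $h_n(0)=\Gamma(\iota_1/\kappa_1)\,\Gamma(n+1)/\Gamma(n+\iota_2/\kappa_2)$. There I would use a crude uniform bound $h_n(k)\le Cn^{C}$ for all $0\le k\le n$, which follows from the same Gamma-ratio estimates since every factor is at most polynomial in $n$. Combined with the Chernoff bound $\beta_n(|K-\alpha_*n|>\delta n)\le 2e^{-2\delta^2n}$, the boundary contributes $O(n^{C}e^{-cn})$ relative to $h_n(\lfloor\alpha_*n\rfloor)$. This is negligible against $n^{-1/2}$. Adding the bulk and boundary contributions gives the claimed bound for large $n$. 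For the finitely many remaining $n$, the bound holds after enlarging $C$, since the distance is at most one.
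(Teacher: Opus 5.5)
Your route is the same as the paper's: reduce to the count laws, write $\pi_n$ as an explicit tilt of $\mathrm{Bin}(n,\alpha_*)$, bound the distance by $\mathbb E|h_n(K)-c|/\mathbb E h_n$, and combine Gamma-ratio asymptotics in the bulk with exponential concentration outside it.

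There is one algebraic slip in the telescoping, which you should fix. Write $\ell_j=\iota_j/\kappa_j$. The factor $(n-k)/(n-k-1+\ell_2)$ you correctly derived requires $g(j)=\Gamma(j+\ell_2)/\Gamma(j+1)$ evaluated at $j=n-k$. So the tilt is
\[
h_n(k)=\frac{\Gamma(k+\ell_1)}{\Gamma(k+1)}\cdot\frac{\Gamma(n-k+\ell_2)}{\Gamma(n-k+1)},
\]
and not the version with the second ratio inverted. Already at $n=1$, your formula gives $h_1(1)/h_1(0)=\ell_1\ell_2$ instead of the required $\ell_1/\ell_2$. Consequently, the bulk order is $n^{\ell_1+\ell_2-2}$ rather than $n^{\ell_1-\ell_2}$, and $h_n(0)=\Gamma(\ell_1)\Gamma(n+\ell_2)/\Gamma(n+1)$.

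Nothing downstream depends on the specific exponent. The $O(1/n)$ log-increments in the bulk still hold for the corrected $h_n$. So does the polynomial bound on $h_n(k)/h_n(\lfloor\alpha_*n\rfloor)$ off the bulk. The argument therefore goes through once $h_n$ is corrected.

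In fact, the corrected $h_n$ is a $k$-independent multiple of the paper's beta-binomial weight $A_n(k)$. The paper exploits this in two places:
\begin{itemize}
\item $A_n$ is a probability mass function, so $A_n\le1$ replaces your crude polynomial bound on the boundary region.
\item In the bulk, $nA_n(k)=h(k/n)+O(n^{-1})$, where $h$ is the $\mathrm{Beta}(\ell_1,\ell_2)$ density; this replaces your log-derivative estimate.
\end{itemize}
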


The explicit invariant count law, its partition-function asymptotics, and
the proof of the lemma are given in
Appendix~\ref{appendix:approx_invariant_measure}.

\section{From Kantorovich--Rubinstein distance to total variation distance}
\label{sec:from_dk_to_dtv}

In this section, we prove Theorem~\ref{convergence_in_total_variation} and Lemma~\ref{upper_bound_mixing_time}. We first establish short-time and long-time relative entropy estimates with respect to \(\nu_t^{(n)}\). 

\subsection{Yau's relative entropy method}
\label{Yau's relative entropy method}

To compare the law of the Togashi--Kaneko model with the time-dependent
reference measure \(\nu_t^{(n)}\), we use Yau's relative entropy method
\cite{Yau-relative-entropy}. This approach is a standard tool for quantitative comparisons between interacting particle systems and time-dependent non-equilibrium reference measures; see also
\cite{jara2018non, jara_amorim_xiang,gonccalves2021sharp} for applications to non-equilibrium fluctuations.

Let $\mu_t^{(n)}$ denote the law of the Markov chain generated by
$L_n$ at time $t$. Whenever $\nu_t^{(n)}$ has full support, define
the relative density and relative entropy by
\begin{equation}
\label{def:relative_density}
 f_t^{(n)}\defeq\frac{\dd\mu_t^{(n)}}{\dd\nu_t^{(n)}},
 \qquad
 H_n(t)\defeq H(\mu_t^{(n)}\mid\nu_t^{(n)})
 \defeq\int_{\Omega_n}f_t^{(n)}\log f_t^{(n)}\,\dd\nu_t^{(n)}.
\end{equation}
For each fixed $t$, let $L_n^*$ be the adjoint of $L_n$ in
$L^2(\nu_t^{(n)})$, so that
\[
 \int_{\Omega_n}fL_ng\,\dd\nu_t^{(n)}
 =\int_{\Omega_n}gL_n^*f\,\dd\nu_t^{(n)},
 \qquad f,g:\Omega_n\to\mathbb R.
\]
Following Yau's method, set
 $\mathbb F_t^{(n)}(\eta)
 \defeq L_n^*\mathbf1(\eta)
 -\partial_t\log\nu_t^{(n)}(\eta)$.
Here the fluctuations are the state functions defined in
Section~\ref{sec:main_results}, satisfying
$\xi_t^{(n)}=\sum_i\sqrt{a^i}\xi_t^{(i,n)}$ pointwise.

The following inequality controls the evolution of the relative entropy.
We refer to \cite{Yau-relative-entropy,KipnisLandim1999} for background
on Yau's relative entropy method and to
\cite[Lemma~A.1]{jara2018non} for a proof.

\begin{prop}[Yau's relative entropy inequality]
\label{yau_ineq}
Assume that \(t\mapsto\nu_t^{(n)}(\eta)\) is continuously differentiable for every \(\eta\), and that \(\nu_t^{(n)}\) has full support for
\(t\in[s_0,s_1]\), and let
$
    \mu_t^{(n)}
    \defeq 
    \mu_{s_0}^{(n)}S_{t-s_0}^{(n)}.
$
With \(f_t^{(n)}\) defined by \eqref{def:relative_density}, for almost every
\(t\in(s_0,s_1)\),
\begin{equation}
\label{eq:Yau_entropy_upper_bound_F_n}
\begin{aligned}
\frac{\dd}{\dd t}
H\left(
    \mu_t^{(n)}
    \,\middle|\,
    \nu_t^{(n)}
\right)
\leq
-\int_{\Omega_n}
\Gamma_n
\left(
    \sqrt{f_t^{(n)}}
\right)
\,\dd\nu_t^{(n)}
+
\mathbb E_{\mu_t^{(n)}}
\left[
    L_n^*\mathbf1
    -
    \partial_t\log\nu_t^{(n)}
\right].
\end{aligned}
\end{equation}
In particular,
\begin{equation}
\label{eq:Yau_entropy_upper_bound_F_n_inequality}
\frac{\dd}{\dd t}
H\left(
    \mu_t^{(n)}
    \,\middle|\,
    \nu_t^{(n)}
\right)
\le
\mathbb E_{\mu_t^{(n)}}
\left[
    \mathbb F_t^{(n)}
\right].
\end{equation}
\end{prop}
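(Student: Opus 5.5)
We prove Yau's inequality by differentiating the relative entropy directly on the finite state space $\Omega_n$. On $[s_0,s_1]$ the law $\mu_t^{(n)}$ solves the forward equation $\partial_t\mu_t^{(n)}(\eta)=(\mu_t^{(n)}L_n)(\eta)$, and $\nu_t^{(n)}(\eta)$ is $C^1$ and strictly positive. Hence $f_t^{(n)}=\mu_t^{(n)}/\nu_t^{(n)}$ is differentiable in $t$ for each $\eta$.

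\textbf{Step 1: the time derivative of $f_t^{(n)}$.} Using the adjoint in $L^2(\nu_t^{(n)})$, the forward equation can be rewritten as
\[
\partial_t f_t^{(n)}=L_n^*f_t^{(n)}-f_t^{(n)}\,\partial_t\log\nu_t^{(n)}.
\]
This holds because, for every test function $g$,
\[
\sum_\eta g\,\partial_t\mu_t^{(n)}=\sum_\eta (L_ng)\,\mu_t^{(n)}=\int (L_ng)\,f_t^{(n)}\,\dd\nu_t^{(n)}=\int g\,L_n^*f_t^{(n)}\,\dd\nu_t^{(n)}.
\]

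\textbf{Step 2: differentiate the entropy.} Write $H_n(t)=\sum_\eta \mu_t^{(n)}\log f_t^{(n)}$. Then
\[
H_n'(t)=\sum_\eta \partial_t\mu_t^{(n)}\log f_t^{(n)}+\sum_\eta \nu_t^{(n)}\partial_t f_t^{(n)}.
\]
\begin{itemize}
\item The second sum equals $\partial_t\sum_\eta\mu_t^{(n)}=0$ after subtracting $\sum_\eta f_t^{(n)}\partial_t\nu_t^{(n)}$. To keep the bookkeeping clean, I will instead expand $\mu\log\mu-\mu\log\nu$. This gives
\[
H_n'(t)=\int L_n(\log f_t^{(n)})\,\dd\mu_t^{(n)}-\mathbb E_{\mu_t^{(n)}}\bigl[\partial_t\log\nu_t^{(n)}\bigr],
\]
where the term $\sum_\eta\partial_t\mu_t^{(n)}$ vanishes since total mass is conserved.
\item For the first term, write $\int L_n(\log f)\,f\,\dd\nu=\int \log f\,L_n^*f\,\dd\nu$.
\end{itemize}

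\textbf{Step 3: the key pointwise inequality (main obstacle).} This step produces the Dirichlet-form term. I would use the jump-rate formula for $L_n^*f$ and the elementary inequality
\[
a(\log b-\log a)\le 2\sqrt a(\sqrt b-\sqrt a)\qquad (a,b>0),
\]
which follows from $\log x\le x-1$ applied with $x=\sqrt{b/a}$. With $a=f(\eta)$ and $b=f(\eta^x)$, this gives
\[
f\,L_n\log f\le 2\sqrt f\,L_n\sqrt f.
\]
Integrating against $\nu_t^{(n)}$ yields
\[
\int L_n(\log f)\,f\,\dd\nu\le 2\int\sqrt f\,L_n\sqrt f\,\dd\nu.
\]

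Next, use the identity $2\sqrt f\,L_n\sqrt f=L_n f-\Gamma_n(\sqrt f)$, which comes from the carré-du-champ definition with $f=(\sqrt f)^2$. Also
\[
\int L_n f\,\dd\nu=\int f\,L_n^*\mathbf 1\,\dd\nu=\mathbb E_{\mu_t^{(n)}}[L_n^*\mathbf 1].
\]
Combining these gives \eqref{eq:Yau_entropy_upper_bound_F_n}.

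\textbf{Conclusion.} Since $\Gamma_n(\sqrt f)\ge0$, dropping that term yields \eqref{eq:Yau_entropy_upper_bound_F_n_inequality}, which is exactly the bound with $\mathbb F_t^{(n)}$.

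The statement says ``for almost every $t$'' only because $\nu$ is merely assumed $C^1$; the computation is valid pointwise wherever the derivatives exist. The step needing care is the sign convention of the adjoint and making sure the $\log$ inequality is applied in the right direction; everything else is finite-sum algebra.
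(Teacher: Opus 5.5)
Your proof is correct and is the standard argument behind \cite[Lemma~A.1]{jara2018non}, which the paper cites instead of proving. You reduce $H_n'(t)$ to $\int f_t^{(n)}L_n\log f_t^{(n)}\,\dd\nu_t^{(n)}-\mathbb E_{\mu_t^{(n)}}[\partial_t\log\nu_t^{(n)}]$, then combine $a(\log b-\log a)\le 2\sqrt a(\sqrt b-\sqrt a)$ with $2\sqrt f\,L_n\sqrt f=L_nf-\Gamma_n(\sqrt f)$ and $\int L_nf\,\dd\nu_t^{(n)}=\mathbb E_{\mu_t^{(n)}}[L_n^*\mathbf 1]$. The one point to state explicitly is that $f_t^{(n)}>0$ for $t>s_0$, which is what makes $\log f_t^{(n)}$ finite and lets you take $a,b>0$: every single-site flip rate is positive because $\iota_1,\iota_2>0$, so the chain is irreducible and $\mu_t^{(n)}$ has full support.
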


The entropy inequality reduces the comparison with the reference measure to estimating \(\mathbb F_t^{(n)}\). The explicit computations of \(L_n^*\mathbf1\), \(\partial_t\log\nu_t^{(n)}\), and
\(\mathbb F_t^{(n)}\) are given in
Appendix~\ref{appendix_Yau_computations}. These estimates are the main
ingredients in the short-time and long-time entropy estimates.

\subsection{Short-time estimate}
\label{sec:Short-time estimate}

\begin{lemma}[Short-time entropy estimate]
\label{short-time-estimation}
Under Assumption~\ref{assumption:bulk_initial_condition_tk},
there exist $T_0\in(0,1]$ and $C_{\varepsilon_0}<\infty$,
depending only on $\kappa_1,\kappa_2,\iota_1,\iota_2$ and $\varepsilon_0$,
such that, for all sufficiently large $n$,
\[
 H\left(\eta_0S_t^{(n)}\,\middle|\,\nu_t^{(n)}\right)
 \le C_{\varepsilon_0}t,
 \qquad 0<t\le T_0,
\]
uniformly over the admissible initial configurations $\eta_0$.

\end{lemma}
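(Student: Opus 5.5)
The plan is to run Yau's relative entropy inequality, Proposition~\ref{yau_ineq}, on an interval $[s_0,t]$ with $s_0\downarrow0$. The singular behaviour of $\nu_t^{(n)}$ near $t=0$ is absorbed by showing that $\mathbb E_{\mu_t^{(n)}}[\mathbb F_t^{(n)}]$ is bounded by a constant, not merely integrable.

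\textbf{Step 1: deterministic lower bounds.} Note that $\rho_0^{(i,n)}=i$, so $\nu_0^{(n)}=\delta_{\eta_0}$, and $\nu_t^{(n)}$ has full support for every $t>0$. By Assumption~\ref{assumption:bulk_initial_condition_tk}, at $t=0$ we have $m=a^1\in[\varepsilon_0,1-\varepsilon_0]$. Hence $\partial_t\rho^{(0,n)}_0=F^{(0,n)}(0,1)\ge\kappa_1\varepsilon_0(1-\varepsilon_0)$ and $-\partial_t\rho^{(1,n)}_0\ge\kappa_2\varepsilon_0(1-\varepsilon_0)$. Since $\vec F^{(n)}$ is uniformly $C^1$, there are $T_0\in(0,1]$ and $c>0$ such that, uniformly in $n$ and in admissible $\eta_0$,
\[
c\,t\le\rho_t^{(0,n)}\le C t,\qquad c\,t\le 1-\rho_t^{(1,n)}\le Ct,\qquad 0\le t\le T_0 .
\]
Consequently $\rho_t^{(0,n)}\le 1/2\le\rho_t^{(1,n)}$ on this interval.

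\textbf{Step 2: structure of $\mathbb F_t^{(n)}$.} Using the appendix formulas, I will write
\[
L_n^*\mathbf1(\eta)=\sum_x\Bigl[c_x(\eta^x)\tfrac{\nu_t(\eta^x)}{\nu_t(\eta)}-c_x(\eta)\Bigr],
\qquad
\partial_t\log\nu_t(\eta)=\sum_{i}\sum_{x\in A^i}\frac{(\eta_x-\rho^{(i)}_t)\,\dot\rho^{(i)}_t}{\rho^{(i)}_t(1-\rho^{(i)}_t)} .
\]
Both sums reduce to functions of the block counts. The dangerous terms carry a factor $1/\rho_t^{(0,n)}$ (from $x\in A^0$ with $\eta_x=1$) or $1/(1-\rho_t^{(1,n)})$ (from $x\in A^1$ with $\eta_x=0$).

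The key algebraic point is as follows. The flip rates $c_x$ depend on $\eta$ only through $M^{(n)}(\eta)$ and $\eta_x$, up to an $O(1/n)$ shift from evaluating at $\eta^x$. Moreover $\dot\rho^{(i)}=F^{(i,n)}(\vec\rho_t)$ is exactly the mean-field balance of these rates. Therefore the $O(1/\rho)$ parts of the two sums cancel once the rates are evaluated at $\rho_t^{(n)}$. What remains is a sum of terms of the following types:
\begin{itemize}
\item $\displaystyle\frac{n\,(M^{(i,n)}-\rho^{(i)}_t)\,(M^{(n)}-\rho^{(n)}_t)}{\rho^{(i)}_t(1-\rho^{(i)}_t)}$ times bounded coefficients;
\item bounded multiples of $n(M^{(i,n)}-\rho^{(i)}_t)$ with coefficients whose mean is controlled;
\item $O(1)$ remainders from the $1/n$ corrections.
\end{itemize}
In the bookkeeping I will keep every linear term multiplied by the factor $(M^{(n)}-\rho^{(n)}_t)$, or else show that its coefficient vanishes exactly, as in the standard Yau computation for product measures with parameter $\rho_t$ solving the mean-field ODE. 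I expect this exact cancellation to be the main obstacle. The $1/\rho$ singularity leaves no room for an $O(\sqrt n)$ error, so the decomposition must be done exactly, essentially by mirroring the identity \eqref{eq:local_F_difference_decomposition}.

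\textbf{Step 3: bounding the expectation.} Taking expectations under $\mu_t^{(n)}=\eta_0S_t^{(n)}$, Cauchy--Schwarz and Lemma~\ref{lemma: a_priori_estimate_for_the_magnetization_vector} (together with Lemma~\ref{lemma: priori_estimate_for_the_global_density_II}) give
\[
n\,\mathbb E\bigl|M^{(i,n)}_t-\rho^{(i)}_t\bigr|\,\bigl|M^{(n)}_t-\rho^{(n)}_t\bigr|\le C_{\varepsilon_0}t .
\]
Dividing by $\rho^{(0)}_t\ge ct$, or by $1-\rho^{(1)}_t\ge ct$, from Step~1 leaves $C_{\varepsilon_0}$. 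Hence $\mathbb E_{\mu_t}[\mathbb F_t^{(n)}]\le C_{\varepsilon_0}$ for $0<t\le T_0$. By \eqref{eq:Yau_entropy_upper_bound_F_n_inequality},
\[
H(\mu_t\mid\nu_t)\le H(\mu_{s_0}\mid\nu_{s_0})+C_{\varepsilon_0}(t-s_0).
\]

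\textbf{Step 4: initial entropy.} It remains to show $H(\mu_{s}\mid\nu_{s})\to0$ as $s\downarrow0$ for each fixed $n$. Both measures put mass $1-O(ns)$ on $\eta_0$. For a configuration differing from $\eta_0$ in $k$ sites, both masses are of order $s^k$ with ratio at most $C^k$; for $\mu_s$ this follows by comparing with independent flips at rates bounded above and below. Hence
\[
H(\mu_s\mid\nu_s)\le O(ns)+\sum_k k\log C\;\mathbb P(k\text{ flips by time }s)\longrightarrow0 .
\]
Letting $s_0\downarrow0$ yields $H(\eta_0S_t^{(n)}\mid\nu_t^{(n)})\le C_{\varepsilon_0}t$ for $0<t\le T_0$, uniformly over admissible $\eta_0$.
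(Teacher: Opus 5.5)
Your route is the paper's: Yau's inequality on $[s_0,t]$, the bounds $\rho_t^{(0,n)},\,1-\rho_t^{(1,n)}\ge ct$, Cauchy--Schwarz against the $O(t)$ second moments for the quadratic terms, and $H_n(s_0)\to0$. Your observation that no bare linear terms survive is also correct. But Step~2 misclassifies one family of terms, and Step~3 therefore omits an estimate that is genuinely needed.

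The $\pm1/n$ shift in the rates $c_x(\eta^x)$ is $O(1/n)$ per site. However, it is multiplied by the likelihood ratio $\nu_t^{(n)}(\eta^x)/\nu_t^{(n)}(\eta)=(1-\rho_t^{(0,n)})/\rho_t^{(0,n)}\asymp t^{-1}$ at the occupied sites of $A^0$. It is then summed over the $na^0M^{(0,n)}$ such sites, so the factor $1/n$ disappears. As in Lemma~\ref{computation_short_time_estimate_Ft}, this produces
\[
a^0M^{(0,n)}\,\frac{1-\rho_t^{(0,n)}}{\rho_t^{(0,n)}}\Bigl[\kappa_1\bigl(2M^{(n)}-1\bigr)+\frac{\iota_1-\kappa_1}{n}\Bigr],
\]
together with its mirror image $a^1(1-M^{(1,n)})\,\rho_t^{(1,n)}/(1-\rho_t^{(1,n)})\,[\cdots]$ from the vacant sites of $A^1$. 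These are not $O(1)$ remainders: pathwise they can be of order $t^{-1}$. Since they are not products of two fluctuations, your Cauchy--Schwarz step does not control them, and the conclusion $\mathbb E_{\mu_t}[\mathbb F_t^{(n)}]\le C_{\varepsilon_0}$ does not follow from what you wrote.

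The second-moment bounds you cite cannot repair this. They give only $\mathbb E[M_t^{(0,n)}]\le\rho_t^{(0,n)}+C\sqrt{t/n}$, hence
\[
\mathbb E[M_t^{(0,n)}]/\rho_t^{(0,n)}\le C\bigl(1+(nt)^{-1/2}\bigr).
\]
This blows up as $t\downarrow0$ at fixed $n$, and after integration yields only $H_n(t)\le C(t+\sqrt{t/n})$, which is not $Ct$ for $t<1/n$.

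The missing ingredient is the first-moment discrepancy bound
\[
\mathbb E_{\eta_0}^{(n)}\bigl[M_t^{(0,n)}+1-M_t^{(1,n)}\bigr]\le C_{\varepsilon_0}t
\]
of Lemma~\ref{lemma:defect_estimate_initial_blocks}. It follows by counting flips: the total flip rate is at most $Cn$ and each block has at least $\varepsilon_0 n$ sites. Equivalently, $\frac{\dd}{\dd t}\mathbb E[M_t^{(0,n)}]=\mathbb E[F^{(0,n)}(\vec M_t^{(n)})]$ is uniformly bounded. Because $2M^{(n)}-1\le1$ and $1-2M^{(n)}\le1$, this bounds the two singular correction terms from above by $C_{\varepsilon_0}$ in expectation. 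The remaining two correction terms are at most $2\rho_t^{(0,n)}$ and $2(1-\rho_t^{(1,n)})$ times bounded coefficients. With this added, your Steps~3 and~4 go through as written.
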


\begin{proof}[Proof of \Cref{short-time-estimation}]
The initial derivatives of \(\rho_t^{(0,n)}\) and
\(1-\rho_t^{(1,n)}\) in \eqref{mean_field_ode} are uniformly
positive. Decreasing \(T_0\) if necessary gives
\[
\begin{gathered}
c_{\varepsilon_0}t\le\rho_t^{(0,n)}\le C_{\varepsilon_0}t,\qquad
c_{\varepsilon_0}t\le1-\rho_t^{(1,n)}\le C_{\varepsilon_0}t,\\
1-\rho_t^{(0,n)}\ge\tfrac12,\qquad
\rho_t^{(1,n)}\ge\tfrac12,\qquad 0<t\le T_0.
\end{gathered}
\]
Lemmas~\ref{lemma: a_priori_estimate_for_the_magnetization_vector}
and~\ref{lemma: priori_estimate_for_the_global_density_II} give
 $\mathbb E_{\eta_0}^{(n)}
 \left[(\xi_t^{(n)})^2+\sum_{i\in Q}(\xi_t^{(i,n)})^2\right]
 \le C_{\varepsilon_0}t$.
Since $\rho_t^{(i,n)}(1-\rho_t^{(i,n)})\ge c_{\varepsilon_0}t$,
Cauchy--Schwarz yields
\[
 \frac{\mathbb E_{\eta_0}^{(n)}
       [|\xi_t^{(i,n)}\xi_t^{(n)}|]}
      {\rho_t^{(i,n)}(1-\rho_t^{(i,n)})}
 \le
 \frac{
 \sqrt{\mathbb E_{\eta_0}^{(n)}[(\xi_t^{(i,n)})^2]}
 \sqrt{\mathbb E_{\eta_0}^{(n)}[(\xi_t^{(n)})^2]}
 }{c_{\varepsilon_0}t}
 \le C_{\varepsilon_0}.
\]
The coefficients multiplying these products in
Lemma~\ref{computation_short_time_estimate_Ft} are uniformly bounded,
so each leading term has bounded expected absolute value.

For the correction terms,
Lemma~\ref{lemma:defect_estimate_initial_blocks} gives
$\mathbb E_{\eta_0}^{(n)}
[M_t^{(0,n)}+1-M_t^{(1,n)}]\le C_{\varepsilon_0}t$.
Together with
$\rho_t^{(0,n)},\,1-\rho_t^{(1,n)}\ge c_{\varepsilon_0}t$,
this implies
 $\mathbb E_{\eta_0}^{(n)}
 \left[
 \frac{M_t^{(0,n)}}{\rho_t^{(0,n)}}
 +\frac{1-M_t^{(1,n)}}{1-\rho_t^{(1,n)}}
 \right]\le C_{\varepsilon_0}$.
The other two correction factors are bounded by
$2\rho_t^{(0,n)}$ and $2(1-\rho_t^{(1,n)})$, respectively,
and hence by $C_{\varepsilon_0}t$.
Since all their remaining coefficients are uniformly bounded,
summing these estimates gives
\[
 \mathbb E_{\eta_0S_t^{(n)}}[\mathbb F_t^{(n)}]
 \le
 \mathbb E_{\eta_0}^{(n)}
 \left[|\mathbb F_t^{(n)}(\sigma^{(n)}(t))|\right]
 \le C_{\varepsilon_0},
 \qquad 0<t\le T_0.
\]
Integrating Proposition~\ref{yau_ineq}  yields
 $H_n(t)\le H_n(s)+C_{\varepsilon_0}(t-s)$, for $0<s<t\le T_0$.
For fixed $n$, Lemma~\ref{lemma:initial_relative_entropy_limit}
gives $H_n(s)\to0$ as $s\downarrow0$, proving
$H_n(t)\le C_{\varepsilon_0}t$.
\end{proof}

\begin{prop}[Short-time total variation estimate]
\label{short_time_estimate}
Under the assumptions of Lemma~\ref{short-time-estimation},
\[
\dtv\left(\eta_0S_t^{(n)},\nu_t^{(n)}\right)
\le C_{\varepsilon_0}\sqrt t,\qquad 0<t\le T_0.
\]
\end{prop}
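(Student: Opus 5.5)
This follows from Lemma~\ref{short-time-estimation} and Pinsker's inequality. For every admissible initial configuration $\eta_0$ satisfying Assumption~\ref{assumption:bulk_initial_condition_tk}, every sufficiently large $n$, and every $0<t\le T_0$, Lemma~\ref{short-time-estimation} gives
\[
 H\left(\eta_0S_t^{(n)}\,\middle|\,\nu_t^{(n)}\right)\le C_{\varepsilon_0}t .
\]
The relative entropy is well defined here because $\nu_t^{(n)}$ has full support for $t>0$. Indeed, $\rho_t^{(0,n)}\ge c_{\varepsilon_0}t>0$ and $1-\rho_t^{(1,n)}\ge c_{\varepsilon_0}t>0$ by the proof of Lemma~\ref{short-time-estimation}. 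Also $1-\rho_t^{(0,n)}\ge\tfrac12$ and $\rho_t^{(1,n)}\ge\tfrac12$. So every Bernoulli factor in \eqref{reference_measure} has parameter strictly between $0$ and $1$.

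Pinsker's inequality, $\dtv(\mu,\nu)\le\sqrt{\tfrac12 H(\mu\mid\nu)}$, applied with $\mu=\eta_0S_t^{(n)}$ and $\nu=\nu_t^{(n)}$, then gives
\[
 \dtv\left(\eta_0S_t^{(n)},\nu_t^{(n)}\right)\le\sqrt{\tfrac12 C_{\varepsilon_0}t}.
\]
Renaming the constant $\sqrt{C_{\varepsilon_0}/2}$ as $C_{\varepsilon_0}$ gives the claimed bound. It is uniform over admissible $\eta_0$ and holds for all sufficiently large $n$, with the same $T_0$ as in Lemma~\ref{short-time-estimation}.

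All the work is contained in Lemma~\ref{short-time-estimation}; the only remaining check is that $\nu_t^{(n)}$ has full support, which is verified above. This short-time bound is presumably used for the restart at a small time before the long-time entropy argument that leads to Theorem~\ref{convergence_in_total_variation}.
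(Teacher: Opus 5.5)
Your proposal is correct and matches the paper's proof, which applies Pinsker's inequality directly to the entropy bound of Lemma~\ref{short-time-estimation}. Your explicit full-support check on $\nu_t^{(n)}$ is a harmless addition that the paper leaves implicit.
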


\begin{proof}[Proof of \Cref{short_time_estimate}]
Apply Pinsker's inequality,
\(\dtv(\mu,\nu)\le\sqrt{H(\mu\mid\nu)/2}\).
\end{proof}

\subsection{Long-time estimate}
\label{sec:long_time_estimate}

\begin{lemma}[Long-time entropy estimate]
\label{long_time_estimate_entropy}
Under Assumption~\ref{assumption:bulk_initial_condition_tk},
there exist $C_{\varepsilon_0},c_{\varepsilon_0}>0$ such that,
for all sufficiently large $n$, $r\ge t_n$, and $s\ge0$,
\[
 H\left(\nu_r^{(n)}S_s^{(n)}\,\middle|\,\nu_{r+s}^{(n)}\right)
 \le
 \exp\left\{
 \frac{C_{\varepsilon_0}}{\sqrt n}
 \left(s+\frac{1-e^{-c_{\varepsilon_0}s}}{c_{\varepsilon_0}}\right)
 \right\}-1.
\]
Consequently, for every fixed $C_0>0$,
\[
 \sup_{r\ge t_n}\sup_{0\le s\le C_0\log n}
 \dtv\left(\nu_r^{(n)}S_s^{(n)},\nu_{r+s}^{(n)}\right)
 \longrightarrow0.
\]
Both assertions hold uniformly over the admissible initial configurations.
\end{lemma}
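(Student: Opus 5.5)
The plan is to apply Yau's relative entropy inequality, Proposition~\ref{yau_ineq}, on the interval $[r,r+s]$ with initial law $\mu_r^{(n)}=\nu_r^{(n)}$, so that $H_n(r)=0$. The aim is then a Gr\"onwall-type differential inequality for $H_n(r+u)$, $0\le u\le s$.

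\textbf{Step 1: structure of $\mathbb F_t^{(n)}$.} Using the explicit formula for $\mathbb F_t^{(n)}$ from Appendix~\ref{appendix_Yau_computations}, write $\mathbb F_t^{(n)}$ as a polynomial in the centred block variables $\omega_x=(\eta_x-\rho_t^{(i,n)})/\sqrt{\rho_t^{(i,n)}(1-\rho_t^{(i,n)})}$, $x\in A^i$. Because $\nu_t^{(n)}$ is built from the ODE \eqref{mean_field_ode}, the terms linear in the block sums cancel exactly. Since the rates depend on $\eta$ only through $M^{(n)}$, what remains has the form
\[
\mathbb F_t^{(n)}=\sum_{i,j}\frac{c_{ij}(t)}{\sqrt n}\,\zeta_t^{(i,n)}\zeta_t^{(j,n)}+\frac{1}{\sqrt n}\,\mathcal R_t,
\]
up to constant centring terms. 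Here the $c_{ij}$ are bounded, $\mathcal R_t$ contains cubic terms of size $n^{-1/2}\,|\zeta|^3$ and $O(n^{-1/2})$ mutation corrections, and the whole expression has $\nu_t^{(n)}$-mean zero. For $r\ge t_n$, the estimate \eqref{eq:sharp_bulk_deterministic_decay} places $\rho_t^{(i,n)}$ in a compact subset of $(0,1)$, uniformly over bulk configurations. This is why the restriction $r\ge t_n$ (indeed any $r\ge T_{\varepsilon_0}$) is used: it removes the singular $1/\rho$ factors that appeared in the short-time estimate.

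\textbf{Step 2: entropy inequality.} Apply $\mathbb E_\mu[F]\le \gamma^{-1}H(\mu\mid\nu)+\gamma^{-1}\log\mathbb E_\nu[e^{\gamma F}]$ with a fixed small $\gamma$. Under the product Binomial law $\nu_t^{(n)}$, the quantity $n^{-1/2}\zeta^2$ times $\gamma\sqrt n$ is a squared sub-Gaussian variable with small constant. Hence $\log\mathbb E_{\nu_t}[e^{\gamma\sqrt n\,\mathbb F_t}]\le C$, by Gaussian concentration of the block sums and the mean-zero property. The cubic terms are handled the same way using $|\zeta|\le\sqrt n$.

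With $\gamma$ replaced by $\gamma\sqrt n$, this yields
\[
\frac{d}{du}H\le \frac{C}{\sqrt n}\,\bigl(H+1\bigr)\cdot \theta(u).
\]
The first-order part of $\mathbb F$ carries the factor $\rho_t^{(0,n)}-\rho_t^{(1,n)}$, or the drift deviation, which decays like $e^{-c_{\varepsilon_0}t}$. This is where the two-term expression $s+(1-e^{-cs})/c$ in the claimed bound comes from: the weight is $\theta(u)=1+e^{-c_{\varepsilon_0}u}$. Gr\"onwall from $H=0$ then gives exactly $\exp\{C n^{-1/2}(s+(1-e^{-cs})/c)\}-1$.

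\textbf{Step 3: total variation.} For $s\le C_0\log n$, the exponent is $O(n^{-1/2}\log n)\to0$, and Pinsker's inequality gives the total variation statement, uniformly in $r\ge t_n$ and over admissible configurations.

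\textbf{Main obstacle.} The delicate step is Step~2: making the exponential moment bound of the quadratic form under $\nu_t^{(n)}$ uniform in $n$ and $t$. The correct scaling, $\gamma$ of order $\sqrt n$ against coefficients of order $n^{-1/2}$, has to be verified so that $\log\mathbb E_\nu[e^{\gamma \mathbb F}]/\gamma$ produces the factor $n^{-1/2}$. I would first try a Hubbard--Stratonovich (Gaussian linearization) argument combined with Hoeffding's lemma for the Bernoulli sums to bound $\mathbb E_\nu[\exp(\delta\,\zeta^{(i)}\zeta^{(j)})]$ for small $\delta$.
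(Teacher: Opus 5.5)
Your architecture matches the paper's: Yau's inequality started from $\nu_r^{(n)}$, so the initial entropy vanishes, then an entropy inequality against a quadratic exponential moment under the product measure, Gr\"onwall, and Pinsker. The gap is in Step~1, which misidentifies where the factor $n^{-1/2}$ comes from; as stated, it is false. By Lemma~\ref{computation_short_time_estimate_Ft}, the quadratic part of $\mathbb F_t^{(n)}$ is
\[
 \sum_{i\in Q}\frac{\sqrt{a^i}\,\xi_t^{(i,n)}\xi_t^{(n)}}{\rho_t^{(i,n)}(1-\rho_t^{(i,n)})}
 \Bigl\{\bigl[\kappa_1-(\kappa_1+\kappa_2)\rho_t^{(i,n)}\bigr]\bigl(1-M^{(n)}-\rho_t^{(n)}\bigr)+O(n^{-1})\Bigr\}.
\]
This is a product of order-one fluctuation variables times a coefficient that is of order one whenever $\rho_t^{(i,n)}$ merely lies in a compact subset of $(0,1)$. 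There is no structural $n^{-1/2}$ in front of it. The coefficient is small only because $\kappa_1-(\kappa_1+\kappa_2)\rho_t^{(i,n)}=-(\kappa_1+\kappa_2)(\rho_t^{(i,n)}-\alpha_*)$, so it is $O(\omega_t^{(n)}+n^{-1})$ with $\omega_t^{(n)}=\max_i|\rho_t^{(i,n)}-\alpha_*|$.

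The two correction sums are not structurally $O(n^{-1/2})$ either. Their leading parts combine to $(\kappa_1+\kappa_2)(2M^{(n)}-1)(M^{(n)}-\alpha_*)$, which is $O(n^{-1/2}|\xi_t^{(n)}|+\omega_t^{(n)})$, and the ratio errors add a further $O(\omega_t^{(n)}+n^{-1})$. The correct pathwise bound is therefore $|\mathbb F_t^{(n)}|\le C(\omega_t^{(n)}+n^{-1/2})[1+(\xi_t^{(n)})^2+\sum_i(\xi_t^{(i,n)})^2]$. This gives $\frac{\dd}{\dd v}H\le C(\omega_{r+v}^{(n)}+n^{-1/2})(H+1)$ with a fixed $\gamma$, via Lemma~\ref{lemma:Yau_quadratic_exponential_moment}. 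Your rescaling $\gamma\sqrt n$ is equivalent, and the Hoeffding-plus-Gaussian-linearisation argument you propose is exactly that lemma.

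What is missing is the real role of $r\ge t_n$. Combining \eqref{eq:sharp_bulk_deterministic_decay} with $e^{-\lambda_*t_n}=n^{-1/2}$ gives $\omega_{r+v}^{(n)}\le C_{\varepsilon_0}(n^{-1/2}e^{-\lambda_*v}+n^{-1})$. Integrating then produces exactly $n^{-1/2}\bigl(s+(1-e^{-\lambda_*s})/\lambda_*\bigr)$. So the decaying weight in your $\theta(u)$ is the deviation of both block densities from $\alpha_*$, not $\rho_t^{(0,n)}-\rho_t^{(1,n)}$ and not a ``first-order part''. Its $n^{-1/2}$ prefactor is supplied by $r\ge t_n$, not by the structure of $\mathbb F_t^{(n)}$.

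In particular, your remark that any $r\ge T_{\varepsilon_0}$ would suffice is wrong. For bounded $r$, $\int_0^s\omega_{r+v}^{(n)}\,\dd v$ is of order one uniformly in $s$, and the argument only yields $H\le e^{O(1)}-1$. This is not an artefact of the method: in general, the dynamics builds inter-block correlations and variance mismatches of order $\omega_t^{(n)}$ that the product measure $\nu_t^{(n)}$ lacks.
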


\begin{proof}[Proof of \Cref{long_time_estimate_entropy}]
Suppose first that $\rho_t^{(i,n)}\in[\delta,1-\delta]$ for some
fixed $\delta\in(0,1/2)$, all $i\in Q$, and $t\in[r,r+s]$, and set
$\omega_t^{(n)}
\defeq\max_{i\in Q}|\rho_t^{(i,n)}-\alpha_*|$.
Since
$\kappa_1(1-\rho_t^{(i,n)})-\kappa_2\rho_t^{(i,n)}
=-(\kappa_1+\kappa_2)(\rho_t^{(i,n)}-\alpha_*)$,
the bounded denominators in
Lemma~\ref{computation_short_time_estimate_Ft} imply that each
coefficient of $\xi_t^{(i,n)}\xi_t^{(n)}$ is bounded in absolute
value by $C(\omega_t^{(n)}+n^{-1})$.

For the correction terms,
\[
 \frac{1-\rho_t^{(i,n)}}{\rho_t^{(i,n)}}
 =\frac{\kappa_2}{\kappa_1}+O(\omega_t^{(n)}),
 \qquad
 \frac{\rho_t^{(i,n)}}{1-\rho_t^{(i,n)}}
 =\frac{\kappa_1}{\kappa_2}+O(\omega_t^{(n)}),
\]
uniformly in $i$ and $t\in[r,r+s]$.
Using $\sum_i a^iM^{(i,n)}=M^{(n)}$, we find that their leading parts sum to
\[
\begin{aligned}
 \kappa_2M^{(n)}(2M^{(n)}-1)
 +\kappa_1(1-M^{(n)})(1-2M^{(n)})=(\kappa_1+\kappa_2)
 (2M^{(n)}-1)(M^{(n)}-\alpha_*).
\end{aligned}
\]
The combined ratio error satisfies
\[
\begin{aligned}
 \Bigg|&
 \kappa_1(2M^{(n)}-1)\sum_i a^iM^{(i,n)}
 \left[\frac{1-\rho_t^{(i,n)}}{\rho_t^{(i,n)}}
       -\frac{\kappa_2}{\kappa_1}\right]\\
 &+\kappa_2(1-2M^{(n)})\sum_i a^i(1-M^{(i,n)})
 \left[\frac{\rho_t^{(i,n)}}{1-\rho_t^{(i,n)}}
       -\frac{\kappa_1}{\kappa_2}\right]\Bigg|
 \le C\omega_t^{(n)},
\end{aligned}
\]
since $\sum_i a^i[M^{(i,n)}+(1-M^{(i,n)})]=1$.
Here we used $|2M^{(n)}-1|\le1$ and $0\le M^{(i,n)}\le1$.
Both ratios are uniformly bounded, so the explicit $n^{-1}$
terms contribute at most $C/n$.
Thus the combined correction sums have absolute value at most
$C|M^{(n)}-\alpha_*|+C(\omega_t^{(n)}+n^{-1})$.
Moreover,
 $|M^{(n)}-\alpha_*|
 \le n^{-1/2}|\xi_t^{(n)}|
 +\sum_i a^i|\rho_t^{(i,n)}-\alpha_*|
 \le n^{-1/2}|\xi_t^{(n)}|+\omega_t^{(n)}$.
Applying $2|xy|\le x^2+y^2$ and $|x|\le1+x^2$ therefore gives
\begin{equation}
\label{eq:long_time_F_pathwise_bound}
 |\mathbb F_t^{(n)}|
 \le C(\omega_t^{(n)}+n^{-1/2})
 \left[1+(\xi_t^{(n)})^2+\sum_{i\in Q}(\xi_t^{(i,n)})^2\right].
\end{equation}

For the fixed $\gamma>0$ in
Lemma~\ref{lemma:Yau_quadratic_exponential_moment}, the variational
entropy inequality gives
\[
 \mathbb E_\mu
 \left[1+(\xi_t^{(n)})^2+\sum_i(\xi_t^{(i,n)})^2\right]
 \le\frac{H(\mu\mid\nu_t^{(n)})+\log C}{\gamma}
 \le C[H(\mu\mid\nu_t^{(n)})+1].
\]
Applying Proposition~\ref{yau_ineq} to the evolution from
$\nu_r^{(n)}$ yields
\begin{equation}
\label{long_time_yau}
\begin{aligned}
 \frac{\dd}{\dd v}
H\left(\nu_r^{(n)}S_v^{(n)}\,\middle|\,\nu_{r+v}^{(n)}\right)\le C(\omega_{r+v}^{(n)}+n^{-1/2})
 \left[
 H\left(\nu_r^{(n)}S_v^{(n)}\,\middle|\,\nu_{r+v}^{(n)}\right)+1
 \right].
\end{aligned}
\end{equation}
The initial entropy is zero. Dividing by the entropy plus one and integrating its logarithmic derivative, we obtain
\begin{equation}
\label{eq:shifted_time_entropy_bound}
 H\left(\nu_r^{(n)}S_s^{(n)}\,\middle|\,\nu_{r+s}^{(n)}\right)
 \le
 \exp\left\{
 C\int_0^s[\omega_{r+v}^{(n)}+n^{-1/2}]\,\dd v
 \right\}-1.
\end{equation}

For $r\ge t_n$, \eqref{eq:sharp_bulk_deterministic_decay} and
$e^{-\lambda_*t_n}=n^{-1/2}$ imply
\[
 \omega_{r+v}^{(n)}
 \le C_{\varepsilon_0}
 \left(e^{-\lambda_*(r+v)}+n^{-1}\right)
 \le C_{\varepsilon_0}
 \left(n^{-1/2}e^{-\lambda_*v}+n^{-1}\right).
\]
 Integrating and using $n^{-1}\le n^{-1/2}$ gives
\[
 C\int_0^s[\omega_{r+v}^{(n)}+n^{-1/2}]\,\dd v
 \le\frac{C_{\varepsilon_0}}{\sqrt n}
 \left(s+\frac{1-e^{-\lambda_*s}}{\lambda_*}\right).
\]
Substitution proves the first assertion with
$c_{\varepsilon_0}=\lambda_*$.
For $0\le s\le C_0\log n$, the exponent is uniformly
$O(\log n/\sqrt n)$; hence the entropy tends uniformly to zero,
and Pinsker's inequality proves the second assertion.
\end{proof}
\subsection{An estimate in total variation distance}
\label{estimate_in_dtv}

We now convert the Kantorovich--Rubinstein estimate into a total variation
estimate by evolving a coupling for a short additional time.
All constants and \(o_n(1)\)-terms below are uniform over bulk initial
configurations.

\begin{proof}[Proof of \Cref{convergence_in_total_variation}]
Write \(T=t_n\), and, for the original blocks \(A^0,A^1\), put
\[
X^j(\bar\sigma)=\sum_{x\in A^j}\bar\sigma_x
=a^jnM^{(j,n)}(\bar\sigma),\qquad j\in Q.
\]
We have that $\sigma_0S_T^{(n)}$ is invariant
under permutations within those blocks; $\nu_T^{(n)}$ has the
same invariance by its product form.
Consequently, both laws are uniform conditional on the block
counts $(X^0,X^1)$. For any two such measures,
\begin{equation}
\label{eq:block_exchangeable_TV_identity}
\dtv(\mu,\nu)=
\dtv\left(\operatorname{Law}((X^0,X^1);\mu),
          \operatorname{Law}((X^0,X^1);\nu)\right).
\end{equation}

Fix $0<\varepsilon<1$.
Theorem~\ref{mag_fluc_estimate_dk} and
Kantorovich--Rubinstein duality give, for all sufficiently large
$n$, uniformly over bulk initial configurations, a coupling satisfying
\begin{equation}
\label{eq:coupled_fluctuation_L1}
\mathbb E\left[\|\vec{\xi_T}^{(n)}-\vec{\zeta_T}^{(n)}\|\right]
\le\varepsilon^2.
\end{equation}
Given the coupled block counts, sample each configuration uniformly
among those with its prescribed counts. Since both laws are uniform
conditional on the block counts, this gives a coupling of
$\sigma_0S_T^{(n)}$ and $\nu_T^{(n)}$ that preserves the original
joint distribution of the counts.

Let $\mathcal A_\varepsilon$ be the event that
$\|\vec\xi_T^{(n)}-\vec\zeta_T^{(n)}\|\le\varepsilon$
and all four local densities belong to
$[\alpha_*/2,(1+\alpha_*)/2]$.
By \eqref{eq:coupled_fluctuation_L1} and Markov's inequality,
$\mathbb P(\|\vec\xi_T^{(n)}-\vec\zeta_T^{(n)}\|>\varepsilon)
\le\varepsilon$.
The second-moment estimates,
\eqref{eq:sharp_bulk_deterministic_decay}, and $a^j\ge\varepsilon_0$
also give
\[
\begin{aligned}
 \mathbb E\left[
 \bigl(M^{(j,n)}(\sigma^{(n)}(T))-\alpha_*\bigr)^2
 +\bigl(M^{(j,n)}(\eta^{(n)}(T))-\alpha_*\bigr)^2
 \right]\le C_{\varepsilon_0}\frac{1+\log n}{n},
 \ j\in Q.
\end{aligned}
\]
Thus Markov's inequality and a union bound yield
\begin{equation}
\label{eq:A_epsilon_probability}
 \mathbb P(\mathcal A_\varepsilon^c)
 \le\varepsilon+C_{\varepsilon_0}\frac{1+\log n}{n}.
\end{equation}
For fixed $0<\varepsilon<1$, this ensures
$\mathbb P(\mathcal A_\varepsilon)>0$ for all sufficiently large $n$,
so the conditioning below is well defined.
So on
$\mathcal A_\varepsilon$,
\begin{equation}
\label{eq:local_count_difference_A}
 |X^j(\sigma^{(n)}(T))-X^j(\eta^{(n)}(T))|
 \le\varepsilon\sqrt{a^jn},
 \qquad j\in Q.
\end{equation}
Finally, $M^{(n)}=n^{-1}\sum_jX^j$ and
$\sum_j\sqrt{a^j}\le\sqrt2$, by Cauchy--Schwarz and
$a^0+a^1=1$, imply
\begin{equation}
\label{eq:global_density_difference_A}
 |M^{(n)}(\sigma^{(n)}(T))-M^{(n)}(\eta^{(n)}(T))|
 \le\frac{\varepsilon}{\sqrt n}\sum_j\sqrt{a^j}
 \le\frac{\sqrt2\,\varepsilon}{\sqrt n}.
\end{equation}
Denote the conditional laws of $\sigma^{(n)}(T)$ and
$\eta^{(n)}(T)$ given $\mathcal A_\varepsilon$ by
$\sigma_\varepsilon^{(n)}(T)$ and $\eta_\varepsilon^{(n)}(T)$,
respectively. Both laws remain uniform conditional on their
block counts.

Let $(\bar\sigma,\bar\eta)$ be any pair in the support of the
conditional coupling given $\mathcal A_\varepsilon$.
We restart at time $T=t_n$, writing $u$ for the elapsed time.
Let $\nu_u^{(n)}(\bar\sigma;\cdot)$ be the product reference
measure restarted from $\bar\sigma$, with blocks
$A^i(\bar\sigma)=\{x:\bar\sigma_x=i\}$ and probabilities
$\rho_u^{(i,n)}(\bar\sigma)$ solving \eqref{mean_field_ode}
with initial vector $(0,1)$. Define the corresponding quantities
for $\bar\eta$ in the same way. Since both restarted systems
have the same initial vector, subtracting their integral equations
gives
\[
\begin{aligned}
 \max_i|\rho_u^{(i,n)}(\bar\sigma)-\rho_u^{(i,n)}(\bar\eta)|
 &\le C\int_0^u
 \max_i|\rho_v^{(i,n)}(\bar\sigma)-\rho_v^{(i,n)}(\bar\eta)|\,\dd v\\
 &\quad+Cu|M^{(n)}(\bar\sigma)-M^{(n)}(\bar\eta)|.
\end{aligned}
\]
Consequently, using \eqref{eq:global_density_difference_A},
\begin{equation}
\label{eq:restarted_ODE_stability}
\begin{aligned}
 \max_{i\in Q}|\rho_u^{(i,n)}(\bar\sigma)
                  -\rho_u^{(i,n)}(\bar\eta)|
 \le Cu e^{Cu}|M^{(n)}(\bar\sigma)-M^{(n)}(\bar\eta)|\le C_{\varepsilon_0}u\varepsilon/\sqrt n,
 \quad 0\le u\le T_0,
\end{aligned}
\end{equation}
for every pair in $\mathcal A_\varepsilon$.

On $\mathcal A_\varepsilon$, the global densities
$M^{(n)}(\bar\sigma)=\sum_j a^jM^{(j,n)}(\bar\sigma)$ and
$M^{(n)}(\bar\eta)$ belong to $[\alpha_*/2,(1+\alpha_*)/2]$,
since their original block densities lie in this interval.
Thus both configurations satisfy the bulk condition with parameter
$\min\{\varepsilon_0,\alpha_*/2,(1-\alpha_*)/2\}$.
At the restart, $u=0$, the new block densities equal $(0,1)$.
After reducing $T_0$ if necessary, for $0\le u\le T_0$,
\begin{equation}
\label{eq:restarted_ODE_boundary_bounds}
\begin{gathered}
 c_{\varepsilon_0}u\le\rho_u^{(0,n)}(\bar\sigma)
 \le C_{\varepsilon_0}u,
 \qquad
 c_{\varepsilon_0}u\le1-\rho_u^{(1,n)}(\bar\sigma)
 \le C_{\varepsilon_0}u,\\
 1-\rho_u^{(0,n)}(\bar\sigma)\ge\tfrac12,
 \qquad
 \rho_u^{(1,n)}(\bar\sigma)\ge\tfrac12,
\end{gathered}
\end{equation}
and the same bounds hold for $\bar\eta$.

Fix $0<u\le T_0$ and a pair
$(\bar\sigma,\bar\eta)\in\mathcal A_\varepsilon$.
Under $\nu_u^{(n)}(\bar\sigma;\cdot)$, the count in the original
block $A^j$ satisfies
\begin{equation}
\label{eq:block_count_restarted_reference}
 X^j\overset d=
 \mathrm{Bin}(a^jn-X^j(\bar\sigma),\rho_u^{(0,n)}(\bar\sigma))
 +
 \mathrm{Bin}(X^j(\bar\sigma),\rho_u^{(1,n)}(\bar\sigma)),
\end{equation}
with independent summands, and likewise for $\bar\eta$.
By the definition of $\mathcal A_\varepsilon$, all four trial
numbers are at least $c_{\varepsilon_0}a^jn$; corresponding
numbers differ by at most $\varepsilon\sqrt{a^jn}$ by
\eqref{eq:local_count_difference_A}.

Here $i$ is the index in $\rho_u^{(i,n)}$, while $j$ labels
the original block. For each summand, change the trial number first and the parameter
second. For $i=1$, the triangle inequality gives
\[
\begin{aligned}
 &\dtv\left(
 \mathrm{Bin}(X^j(\bar\sigma),\rho_u^{(1,n)}(\bar\sigma)),
 \mathrm{Bin}(X^j(\bar\eta),\rho_u^{(1,n)}(\bar\eta))
 \right)\\
 &\quad\le\dtv\left(
 \mathrm{Bin}(X^j(\bar\sigma),\rho_u^{(1,n)}(\bar\sigma)),
 \mathrm{Bin}(X^j(\bar\eta),\rho_u^{(1,n)}(\bar\sigma))
 \right)\\
 &\qquad+\dtv\left(
 \mathrm{Bin}(X^j(\bar\eta),\rho_u^{(1,n)}(\bar\sigma)),
 \mathrm{Bin}(X^j(\bar\eta),\rho_u^{(1,n)}(\bar\eta))
 \right).
\end{aligned}
\]
The same decomposition applies to the $i=0$ summands.
For either $i$, Lemma~\ref{dtv_two_binomials_II} bounds the
trial-number change by
\[
 \frac{|X^j(\bar\sigma)-X^j(\bar\eta)|}
 {2\sqrt{c_{\varepsilon_0}a^jn}}
 \sqrt{\frac{\rho_u^{(i,n)}(\bar\sigma)}
 {1-\rho_u^{(i,n)}(\bar\sigma)}}
 \le C_{\varepsilon_0}\varepsilon
 \begin{cases}
 \sqrt u,&i=0,\\
 u^{-1/2},&i=1,
 \end{cases}
\]
where the last bounds follow from
\eqref{eq:restarted_ODE_boundary_bounds}.

For the parameter change, every $p$ between the corresponding
parameters satisfies $p(1-p)\ge c_{\varepsilon_0}u$.
Since the trial number is at most $a^jn$,
Lemma~\ref{dtv_two_binomials_I},
$|\frac{\dd}{\dd p}\arcsin(1-2p)|=[p(1-p)]^{-1/2}$,
and \eqref{eq:restarted_ODE_stability} bound this change by
\[
\begin{aligned}
 \frac{\sqrt{a^jn}}2
 \left|\arcsin(1-2\rho_u^{(i,n)}(\bar\sigma))
       -\arcsin(1-2\rho_u^{(i,n)}(\bar\eta))\right|\le C_{\varepsilon_0}\sqrt{a^jn}\,
 \frac{u\varepsilon}{\sqrt{nu}}
 \le C_{\varepsilon_0}\varepsilon\sqrt u.
\end{aligned}
\]
The counts in distinct original blocks are independent under each
product measure, so Lemma~\ref{formula_total_variation} gives
\begin{equation}
\label{eq:restarted_reference_TV_comparison}
\begin{aligned}
 \dtv\left(
 \operatorname{Law}((X^0,X^1);\nu_u^{(n)}(\bar\sigma;\cdot)),
 \operatorname{Law}((X^0,X^1);\nu_u^{(n)}(\bar\eta;\cdot))
 \right)\le
 C_{\varepsilon_0}\varepsilon(\sqrt u+u^{-1/2}).
\end{aligned}
\end{equation}

 We recall that $\sigma_\varepsilon^{(n)}(T)$ denotes the conditional
probability distribution of $\sigma^{(n)}(T)$ given
$\mathcal A_\varepsilon$. The integral
$\int\nu_u^{(n)}(\bar\sigma;\cdot)\,
\sigma_\varepsilon^{(n)}(T)(\dd\bar\sigma)$
denotes a probability measure on $\Omega_n$: first choose
$\bar\sigma$ according to $\sigma_\varepsilon^{(n)}(T)$,
then sample a configuration from
$\nu_u^{(n)}(\bar\sigma;\cdot)$.
Explicitly, its probability of an event $B\subseteq\Omega_n$ is
\[
\begin{aligned}
 &\left[
 \int\nu_u^{(n)}(\bar\sigma;\cdot)\,
       \sigma_\varepsilon^{(n)}(T)(\dd\bar\sigma)
 \right](B)=
 \sum_{\bar\sigma\in\Omega_n}
 \nu_u^{(n)}(\bar\sigma;B)\,
 \mathbb P\bigl(\sigma^{(n)}(T)=\bar\sigma
 \mid\mathcal A_\varepsilon\bigr).
\end{aligned}
\]
This weighted sum is a probability measure because each restarted
reference measure is a probability measure and the conditional
weights are nonnegative and sum to one.
The same construction applies to $\eta_\varepsilon^{(n)}(T)$.
Both mixtures can therefore be represented using the joint law
of $(\sigma^{(n)}(T),\eta^{(n)}(T))$ conditional on
$\mathcal A_\varepsilon$.

Permutations within the original blocks preserve the conditional
starting laws and relabel the restarted blocks without changing
the ODE parameters. Hence both mixtures are invariant under these
permutations and are uniform conditional on $(X^0,X^1)$.
Applying \eqref{eq:block_exchangeable_TV_identity} to the mixtures,
then \eqref{eq:restarted_reference_TV_comparison}, gives
\[
\begin{aligned}
 \dtv\Bigl(
 \int\nu_u^{(n)}(\bar\sigma;\cdot)\,
       \sigma_\varepsilon^{(n)}(T)(\dd\bar\sigma),
 \int\nu_u^{(n)}(\bar\eta;\cdot)\,
       \eta_\varepsilon^{(n)}(T)(\dd\bar\eta)
 \Bigr)
 \le C_{\varepsilon_0}\varepsilon(\sqrt u+u^{-1/2}).
\end{aligned}
\]

Every starting configuration in these conditional laws has global
density in $[\alpha_*/2,(1+\alpha_*)/2]$.
Proposition~\ref{short_time_estimate} therefore gives
$\dtv(\bar\sigma S_u^{(n)},\nu_u^{(n)}(\bar\sigma;\cdot))
\le C_{\varepsilon_0}\sqrt u$ uniformly, and likewise for $\bar\eta$. In the underlying entropy estimate, sending time to zero is
justified by Lemma~\ref{lemma:initial_relative_entropy_limit}
for each fixed $n$. The short-time bound holds uniformly for every possible starting configuration, so it also holds after averaging over a random starting configuration.
It follows that
\begin{equation}
\label{eq:conditional_short_time_bounds}
\begin{aligned}
 &\dtv\left(
 \sigma_\varepsilon^{(n)}(T)S_u^{(n)},
 \int\nu_u^{(n)}(\bar\sigma;\cdot)\,
       \sigma_\varepsilon^{(n)}(T)(\dd\bar\sigma)
 \right)\\
 &\quad+
 \dtv\left(
 \eta_\varepsilon^{(n)}(T)S_u^{(n)},
 \int\nu_u^{(n)}(\bar\eta;\cdot)\,
       \eta_\varepsilon^{(n)}(T)(\dd\bar\eta)
 \right)
 \le 2C_{\varepsilon_0}\sqrt u.
\end{aligned}
\end{equation}
Inserting the two restarted mixtures between the evolved laws
and applying the triangle inequality now gives
\begin{equation}
\label{eq:conditional_evolved_TV_bound}
 \dtv\left(
 \sigma_\varepsilon^{(n)}(T)S_u^{(n)},
 \eta_\varepsilon^{(n)}(T)S_u^{(n)}
 \right)
 \le C_{\varepsilon_0}
 \left[\sqrt u+\varepsilon(\sqrt u+u^{-1/2})\right].
\end{equation}
Removing conditioning adds at most
\(\mathbb P(\mathcal A_\varepsilon^c)\). Hence
\begin{equation}
\label{eq:unconditioned_evolved_TV_bound}
\dtv(\sigma_0S_{T+u}^{(n)},\nu_T^{(n)}S_u^{(n)})
\le\varepsilon+o_n(1)+
C_{\varepsilon_0}\left[\sqrt u+
\varepsilon(\sqrt u+u^{-1/2})\right].
\end{equation}
Fix $0<u<\min\{\delta,T_0\}$.
Lemma~\ref{long_time_estimate_entropy} gives
$\dtv(\nu_T^{(n)}S_u^{(n)},\nu_{T+u}^{(n)})=o_n(1)$
uniformly over bulk initial configurations.
The preceding bound and the triangle inequality, followed by
$n\to\infty$ and then $\varepsilon\downarrow0$, imply
 $\limsup_{n\to\infty}\sup_{\sigma_0\ \mathrm{bulk}}
 \dtv(\sigma_0S_{T+u}^{(n)},\nu_{T+u}^{(n)})
 \le C_{\varepsilon_0}\sqrt u$.

For $T+\delta\le t\le C_0\log n$, we have $t-T-u\ge0$ and,
by the semigroup property,
$\sigma_0S_t^{(n)}
=(\sigma_0S_{T+u}^{(n)})S_{t-T-u}^{(n)}$.
Inserting $\nu_{T+u}^{(n)}S_{t-T-u}^{(n)}$ in the triangle
inequality and then using total variation contraction gives
\begin{equation}
\label{eq:Markov_contraction_after_smoothing}
\begin{aligned}
 \dtv(\sigma_0S_t^{(n)},\nu_t^{(n)})
 &\le
 \dtv(\sigma_0S_t^{(n)},
      \nu_{T+u}^{(n)}S_{t-T-u}^{(n)})+
 \dtv(\nu_{T+u}^{(n)}S_{t-T-u}^{(n)},\nu_t^{(n)})\\
 &\le
\dtv(\sigma_0S_{T+u}^{(n)},\nu_{T+u}^{(n)})+
 \dtv(\nu_{T+u}^{(n)}S_{t-T-u}^{(n)},\nu_t^{(n)}).
\end{aligned}
\end{equation}
The second term is uniformly $o_n(1)$ by
Lemma~\ref{long_time_estimate_entropy}.
Hence the upper limit of the required supremum is at most
$C_{\varepsilon_0}\sqrt u$.
Letting $u\downarrow0$ proves the theorem. 
\end{proof}

\begin{proof}[Proof of \Cref{upper_bound_mixing_time}]
Choose \(C_0>2/\lambda_*\). Then
\(t_n+1\le4t_n\le C_0\log n\) for all sufficiently large \(n\).
Theorem~\ref{convergence_in_total_variation} gives
$\sup_{\sigma_0\ \mathrm{bulk}}
\dtv(\sigma_0S_{4t_n}^{(n)},\nu_{4t_n}^{(n)})\to 0$.
By \eqref{eq:sharp_bulk_deterministic_decay},
\(\max_i|\rho_{4t_n}^{(i,n)}-\alpha_*|
\le C_{\varepsilon_0}/n\). Therefore
\eqref{eq:block_exchangeable_TV_identity} and
Lemmas~\ref{dtv_two_binomials_I} and~\ref{formula_total_variation} give
\[
\dtv(\nu_{4t_n}^{(n)},\nu_*^{(n)})
\le C_{\varepsilon_0}\sum_{i\in Q}
\sqrt{a^in}\,|\rho_{4t_n}^{(i,n)}-\alpha_*|
\le C_{\varepsilon_0}/\sqrt n.
\]
Combining these bounds with
Lemma~\ref{lemma:invariant_measure_to_product_bernoulli} proves
that the bulk distance at \(4t_n\) tends to zero, and hence
\(t_{\mathrm{mix},\varepsilon_0}^{(n)}(\epsilon)\le4t_n\)
for every fixed \(\epsilon\in(0,1)\) and all sufficiently large \(n\).
\end{proof}

\section{The bulk cutoff profile}
\label{sec:cutoff_profile}

In this section, we prove
Theorems~\ref{thm_convergence_profile} and~\ref{cutoff_profile_tk}. The proof has three parts. We first obtain the
precise profile-scale asymptotics of the deterministic local densities. We
then compute the total variation profile of the product reference measure.
Finally, we transfer this explicit profile to the original chain and maximise
over the bulk initial densities.

\subsection{Deterministic asymptotics on the profile scale}

The first lemma identifies the leading $n^{-1/2}$-order deviation
of each deterministic local density from equilibrium, uniformly
on compact intervals of the profile parameter.
\begin{lemma}
\label{lemma:deterministic_profile_asymptotics}
Under the assumptions of Theorem~\ref{thm_convergence_profile}, for every
\(R>0\) and \(i\in Q\),
\begin{equation}
\label{eq:deterministic_profile_asymptotics}
\sup_{|s|\le R}
\left|
\sqrt n\left(\rho_{t^{(n)}(s)}^{(i,n)}-\alpha_*\right)
-(i-\alpha_*)
\frac{\alpha_*^{\,1-\alpha_*}(1-\alpha_*)^{\alpha_*}}
     {m^{\,1-\alpha_*}(1-m)^{\alpha_*}}e^{-s}
\right|\longrightarrow0.
\end{equation}
\end{lemma}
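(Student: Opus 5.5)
The plan is to solve the limiting ODE essentially in closed form, using two scalar quantities: the global density $\rho_t^{(n)}$ and the block difference $D_t^{(n)}\defeq\rho_t^{(1,n)}-\rho_t^{(0,n)}$. From \eqref{mean_field_ode}, write
$u=\kappa_1\rho(1-\rho)+\frac{\iota_1}{n}(1-\rho)$ and $v=\kappa_2\rho(1-\rho)+\frac{\iota_2}{n}\rho$, evaluated at $\rho=\rho_t^{(n)}$. Both blocks then satisfy $\partial_t\rho^{(i,n)}=u-(u+v)\rho^{(i,n)}$. Hence
\[
 \partial_tD_t^{(n)}=-(u+v)D_t^{(n)},\qquad D_0^{(n)}=1.
\]
Since $\rho=a^0\rho^{(0,n)}+a^1\rho^{(1,n)}$, we also have the exact identity
\[
 \rho_t^{(i,n)}=\rho_t^{(n)}+(i-a^1)D_t^{(n)}.
\]
It therefore suffices to find the $n^{-1/2}$-order asymptotics of $\rho^{(n)}-\alpha_*$ and of $D^{(n)}$ at $t^{(n)}(s)$, uniformly for $|s|\le R$, and then use $a^1=M^{(n)}(\sigma_0^{(n)})\to m$.

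First I treat the limiting system with the $n^{-1}$ terms dropped. Here $\bar\rho'=-(\kappa_1+\kappa_2)\bar\rho(1-\bar\rho)(\bar\rho-\alpha_*)$ and $\bar D'=-(\kappa_1+\kappa_2)\bar\rho(1-\bar\rho)\bar D$, with $\bar\rho_0=a^1$ and $\bar D_0=1$. A partial-fraction integration gives the conserved quantity
\[
 \frac{\bar\rho_t-\alpha_*}{\bar\rho_t^{\,1-\alpha_*}(1-\bar\rho_t)^{\alpha_*}}
 =\frac{a^1-\alpha_*}{(a^1)^{1-\alpha_*}(1-a^1)^{\alpha_*}}\,e^{-\lambda_*t},
\]
using $(\kappa_1+\kappa_2)\alpha_*(1-\alpha_*)=\lambda_*$. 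For $\bar D$, I will verify by direct differentiation that
\[
 \bar D_t=\frac{\bar\rho_t^{\,1-\alpha_*}(1-\bar\rho_t)^{\alpha_*}}{(a^1)^{1-\alpha_*}(1-a^1)^{\alpha_*}}\,e^{-\lambda_*t}.
\]
The check is that the logarithmic derivative equals $-(\kappa_1+\kappa_2)[(\bar\rho-\alpha_*)(1-\alpha_*-\bar\rho)+\alpha_*(1-\alpha_*)]=-(\kappa_1+\kappa_2)\bar\rho(1-\bar\rho)$. This formula is valid even when $a^1=\alpha_*$, which avoids dividing by $m-\alpha_*$.

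At $t=t^{(n)}(s)$ we have $e^{-\lambda_*t}=n^{-1/2}e^{-s}$, and $\bar\rho_t\to\alpha_*$ uniformly for $|s|\le R$ by Lemma~\ref{lemma: deterministic_entrance_time_bulk} and the contraction on $\mathcal J$. Therefore $\sqrt n(\bar\rho_t-\alpha_*)\to(m-\alpha_*)K_me^{-s}$ and $\sqrt n\,\bar D_t\to K_me^{-s}$, where
\[
 K_m\defeq\frac{\alpha_*^{1-\alpha_*}(1-\alpha_*)^{\alpha_*}}{m^{1-\alpha_*}(1-m)^{\alpha_*}}.
\]
Substituting into the identity for $\rho_t^{(i,n)}$ gives the limit $[(m-\alpha_*)+(i-m)]K_me^{-s}=(i-\alpha_*)K_me^{-s}$, which is the claimed limit.

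The step I expect to be the main obstacle is controlling the $n^{-1}$ mutation terms over the time horizon $t^{(n)}(s)\asymp\log n$. I need $\sqrt n\,|\rho_t^{(n)}-\bar\rho_t|$ and $\sqrt n\,|D_t^{(n)}-\bar D_t|$ to tend to $0$ uniformly. A plain Gr\"onwall bound is too lossy at this time scale. Instead I will use that, after the uniformly bounded entrance time $T_{\varepsilon_0}$, both $\rho^{(n)}$ and $\bar\rho$ lie in $\mathcal J$, where the drift derivative is at most $-c_0$ (as in the proof of Lemma~\ref{lemma: priori_estimate_for_the_global_density_II}). The difference $e_t=\rho_t^{(n)}-\bar\rho_t$ then satisfies $|e_t|'\le -c_0|e_t|+C/n$, so $|e_t|\le C/n$ for all $t$. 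Before time $T_{\varepsilon_0}$, Gr\"onwall over a bounded interval already gives $O(1/n)$.

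For $D$, write $\log D^{(n)}_t-\log\bar D_t=-\int_0^t[(u+v)-(\kappa_1+\kappa_2)\bar\rho(1-\bar\rho)]\,\dd r$. The integrand is $O(|e_r|+n^{-1})=O(n^{-1})$, so this difference is $O(\log n/n)$ and $D^{(n)}_t=\bar D_t(1+O(\log n/n))$. Multiplying by $\sqrt n$, all errors are $O(\log n/\sqrt n)\to0$. The replacement of $a^1$ by $m$ inside $K_m$ is handled by continuity, which completes the argument.
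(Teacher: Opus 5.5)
Your proof is correct. The explicit-solution step matches the paper's: the paper's identity
$e^{\lambda_*t}\bigl(\widehat\rho_t^{(i)}(a)-\alpha_*\bigr)=(i-\alpha_*)\,\widehat\rho_t(a)^{1-\alpha_*}(1-\widehat\rho_t(a))^{\alpha_*}/\bigl(a^{1-\alpha_*}(1-a)^{\alpha_*}\bigr)$
is exactly your two closed forms combined through $\rho^{(i)}=\rho+(i-a^1)D$. In the limiting system $\bar\rho-\alpha_*$ and $\bar D$ solve the same linear equation, so $\bar\rho_t-\alpha_*=(a^1-\alpha_*)\bar D_t$.

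You differ genuinely in the finite-$n$ comparison. The paper compares the two two-dimensional systems directly. It runs Gr\"onwall up to a fixed time $T$, then uses a first-exit argument in a ball around $(\alpha_*,\alpha_*)$, where the limiting field contracts because $D\vec F_a^{(\infty)}(\alpha_*,\alpha_*)=-\lambda_*I$. This yields $\sup_{t\ge0}\|\vec{\rho_t^{(n)}}-\vec{\widehat\rho_t}\|\le C_{\varepsilon_0}/n$. You instead use that $D^{(n)}$ solves an exactly linear scalar equation even at finite $n$. Then only the scalar global density needs a contraction argument, which the drift bound on $\mathcal I$ already provides, and the block difference is compared through its explicit exponential. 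This route is more elementary and avoids the two-dimensional exit-time argument.

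The price is that your relative error for $D$ grows like $t/n$. That is enough on time scales $o(n)$, and here $t^{(n)}(s)=O(\log n)$, so the lemma follows. The paper's uniform-in-time bound \eqref{eq:finite_n_limiting_ODE_uniform_profile} does more: it also gives \eqref{eq:sharp_bulk_deterministic_decay} for all $t\ge0$, which the paper relies on elsewhere, for instance in the long-time entropy estimate with arbitrary $s\ge0$.

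One small point. Lemma~\ref{lemma: deterministic_entrance_time_bulk} is stated for $F^{(n)}$, not $F^{(\infty)}$. For $\bar\rho$ you should either rerun its argument with $F^{(\infty)}$, or read off $|\bar\rho_t-\alpha_*|\le C_{\varepsilon_0}e^{-\lambda_*t}$ directly from your conserved quantity. Either gives a uniform time after which both $\rho^{(n)}$ and $\bar\rho$ lie in $\mathcal J$, which your estimate for $e_t$ requires.
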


\begin{proof}[Proof of \Cref{lemma:deterministic_profile_asymptotics}]
Write \(m_n\defeq M^{(n)}(\sigma_0^{(n)})\to m\). For
\(a\in[\varepsilon_0,1-\varepsilon_0]\), let
\(\vec{\widehat\rho_t}(a)
=(\widehat\rho_t^{(0)}(a),\widehat\rho_t^{(1)}(a))\)
solve the limiting system obtained from \eqref{mean_field_ode} by deleting
the terms of order \(n^{-1}\) and setting \(a^0=1-a\), \(a^1=a\), with
\(\widehat\rho_0^{(i)}(a)=i\). Its global density is
\(\widehat\rho_t(a)
\defeq(1-a)\widehat\rho_t^{(0)}(a)+a\widehat\rho_t^{(1)}(a)\).
The limiting equations are
\[
\partial_t\widehat\rho_t^{(i)}(a)
=(\kappa_1+\kappa_2)\widehat\rho_t(a)
\bigl(1-\widehat\rho_t(a)\bigr)
\bigl(\alpha_*-\widehat\rho_t^{(i)}(a)\bigr),
\qquad i\in Q,
\]
and hence
\begin{equation}
\label{eq:limiting_scalar_ode_profile}
\partial_t\widehat\rho_t(a)
=(\kappa_1+\kappa_2)\widehat\rho_t(a)
\bigl(1-\widehat\rho_t(a)\bigr)
\bigl(\alpha_*-\widehat\rho_t(a)\bigr),
\qquad\widehat\rho_0(a)=a.
\end{equation}
Direct differentiation using these equations and the initial conditions
gives, including when \(a=\alpha_*\),
\begin{equation}
\label{eq:exact_local_deterministic_profile}
e^{\lambda_*t}\bigl(\widehat\rho_t^{(i)}(a)-\alpha_*\bigr)
=(i-\alpha_*)
\frac{\widehat\rho_t(a)^{\,1-\alpha_*}
      (1-\widehat\rho_t(a))^{\alpha_*}}
     {a^{\,1-\alpha_*}(1-a)^{\alpha_*}}.
\end{equation}
The global density remains between \(a\) and \(\alpha_*\), and hence
in a fixed compact subset of \((0,1)\). Summing
\eqref{eq:exact_local_deterministic_profile} with weights \(1-a,a\) gives
\(|\widehat\rho_t(a)-\alpha_*|\le C_{\varepsilon_0}e^{-\lambda_*t}\),
uniformly in \(a\). Since \(x\mapsto x^{1-\alpha_*}(1-x)^{\alpha_*}\)
is Lipschitz on that compact set, the same identity yields
\begin{equation}
\label{eq:uniform_limiting_local_profile}
\sup_{a\in[\varepsilon_0,1-\varepsilon_0]}
\left|
e^{\lambda_*t}\bigl(\widehat\rho_t^{(i)}(a)-\alpha_*\bigr)
-(i-\alpha_*)
\frac{\alpha_*^{\,1-\alpha_*}(1-\alpha_*)^{\alpha_*}}
     {a^{\,1-\alpha_*}(1-a)^{\alpha_*}}
\right|
\le C_{\varepsilon_0}e^{-\lambda_*t}.
\end{equation}

We next compare the finite-\(n\) and limiting systems. Write
\(\vec F_a^{(n)}\) and \(\vec F_a^{(\infty)}\) for their vector fields
with \(a^0=1-a\), \(a^1=a\), and put \(\vec p=(\alpha_*,\alpha_*)\).
Solutions of both systems starting in $[0,1]^2$ remain there
for all times. On \([0,1]^2\), both vector fields have a
common Lipschitz constant independent of $n$ and $a$.
For \(a\) and \(\vec x\in[0,1]^2\),
\[
\|\vec F_a^{(n)}(\vec x)-\vec F_a^{(\infty)}(\vec x)\|_2\le C/n,
\qquad D\vec F_a^{(\infty)}(\vec p)=-\lambda_*I.
\]
By continuity of the Jacobian, uniformly in \(a\), choose \(\delta>0\)
such that \(\overline B(\vec p,2\delta)\subset(0,1)^2\) and
\begin{align}
\label{contraction_for_limiting_ode}
\langle\vec x-\vec y,
\vec F_a^{(\infty)}(\vec x)-\vec F_a^{(\infty)}(\vec y)\rangle
\le-\frac{\lambda_*}{2}\|\vec x-\vec y\|_2^2,
\qquad\vec x,\vec y\in\overline B(\vec p,2\delta).
\end{align}
Let \(\vec{\rho_t^{(n)}}(a)\) be the finite-\(n\) solution from
\((0,1)\), so that \(\vec{\rho_t^{(n)}}=\vec{\rho_t^{(n)}}(m_n)\).
By \eqref{eq:exact_local_deterministic_profile}, there is a fixed
\(T=T_{\varepsilon_0}<\infty\) such that
\(\sup_a\sup_{t\ge T}\|\vec{\widehat\rho_t}(a)-\vec p\|_2\le\delta/2\).
Since both solutions start from $(0,1)$, subtracting their integral
equations and using the uniform Lipschitz bound and
$\|\vec F_a^{(n)}-\vec F_a^{(\infty)}\|_\infty\le C/n$ gives
 $\|\vec{\rho_t^{(n)}}(a)-\vec{\widehat\rho_t}(a)\|_2
 \le \frac{Ct}{n}
 +C\int_0^t
 \|\vec{\rho_s^{(n)}}(a)-\vec{\widehat\rho_s}(a)\|_2\,\dd s$.
 
Gr\"onwall's inequality therefore yields
 $\sup_a\sup_{0\le t\le T}
 \|\vec{\rho_t^{(n)}}(a)-\vec{\widehat\rho_t}(a)\|_2
 \le \frac{CTe^{CT}}n=\frac{C_T}{n}$.
For sufficiently large \(n\), the finite-\(n\) density therefore lies
in \(B(\vec p,\delta)\) at time \(T\). Fix \(a\), set
\(\vec z_t=\vec{\rho_t^{(n)}}(a)-\vec{\widehat\rho_t}(a)\), and let
\(\tau\) be the first exit time of \(\vec{\rho_t^{(n)}}(a)\) from
\(B(\vec p,2\delta)\) after \(T\), with \(\tau=\infty\) if there is
no exit. For $T\le t<\tau$, \eqref{contraction_for_limiting_ode} and the
uniform bound give
\[
 \frac12\frac{\mathrm d}{\mathrm dt}\|\vec z_t\|_2^2
 \le-\frac{\lambda_*}{2}\|\vec z_t\|_2^2
 +\frac Cn\|\vec z_t\|_2
 \le-\frac{\lambda_*}{4}\|\vec z_t\|_2^2+\frac C{n^2},
\]
where the second inequality uses Young's inequality in the form
$\frac Cn\|\vec z_t\|_2
\le\frac{\lambda_*}{4}\|\vec z_t\|_2^2
+\frac{C^2}{\lambda_*n^2}$.
Multiplying by $2e^{\lambda_*(t-T)/2}$ and integrating from $T$
to $t$ yields
\[
 \|\vec z_t\|_2^2
 \le e^{-\lambda_*(t-T)/2}\|\vec z_T\|_2^2+\frac C{n^2}
 \le\frac{C_{\varepsilon_0}}{n^2},
\]
since $\|\vec z_T\|_2\le C_T/n$.
For sufficiently large \(n\), a finite exit time would therefore imply,
by continuity, \(\|\vec{\rho_\tau}^{(n)}(a)-\vec p\|_2\le\delta<2\delta\),
a contradiction. Thus \(\tau=\infty\). Combining this with the
bounded-time comparison, uniformly in \(a\), proves
\begin{equation}
\label{eq:finite_n_limiting_ODE_uniform_profile}
\sup_{a\in[\varepsilon_0,1-\varepsilon_0]}\sup_{t\ge0}
\|\vec{\rho_t^{(n)}}(a)-\vec{\widehat\rho_t}(a)\|_2
\le C_{\varepsilon_0}/n.
\end{equation}
Together with \eqref{eq:exact_local_deterministic_profile} and \eqref{eq:finite_n_limiting_ODE_uniform_profile}, this gives
\begin{equation}
\label{eq:sharp_bulk_deterministic_decay}
\sup_{a\in[\varepsilon_0,1-\varepsilon_0]}
\max_{i\in Q}|\rho_t^{(i,n)}(a)-\alpha_*|
\le C_{\varepsilon_0}\bigl(e^{-\lambda_*t}+n^{-1}\bigr),
\qquad t\ge0.
\end{equation}

Finally, \(t^{(n)}(s)\to\infty\) uniformly for \(|s|\le R\), and
\(\sqrt n\,e^{-\lambda_*t^{(n)}(s)}=e^{-s}\).
Using \eqref{eq:uniform_limiting_local_profile},
\eqref{eq:finite_n_limiting_ODE_uniform_profile}, and the Lipschitz
continuity of \(a\mapsto[a^{1-\alpha_*}(1-a)^{\alpha_*}]^{-1}\)
on \([\varepsilon_0,1-\varepsilon_0]\), we obtain
\[
\begin{aligned}
&\sup_{|s|\le R}
\left|
\sqrt n\left(\rho_{t^{(n)}(s)}^{(i,n)}-\alpha_*\right)
-(i-\alpha_*)
\frac{\alpha_*^{\,1-\alpha_*}(1-\alpha_*)^{\alpha_*}}
     {m^{\,1-\alpha_*}(1-m)^{\alpha_*}}e^{-s}
\right|\\
&\qquad\le C_{\varepsilon_0,R}
\left(n^{-1/2}+|m_n-m|\right)\longrightarrow0.
\end{aligned}
\]
This proves \eqref{eq:deterministic_profile_asymptotics}.
\end{proof}

\subsection{Reference-measure approximation and product profile}

The deterministic asymptotics determine the parameters of the product
reference measure on the cutoff window. We first show that this measure
approximates the law of the original chain in total variation, uniformly
for the profile parameter $s$ in compact intervals.

\begin{lemma}
\label{lemma:profile_window_reference_approximation}
Under the assumptions of Theorem~\ref{thm_convergence_profile}, for every
\(R>0\),
\[
    \sup_{|s|\le R}
    \dtv\left(
        \sigma_0^{(n)}S_{t^{(n)}(s)}^{(n)},
        \nu_{t^{(n)}(s)}^{(n)}
    \right)
    \longrightarrow0.
\]
\end{lemma}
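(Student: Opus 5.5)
This should follow from Theorem~\ref{convergence_in_total_variation}, applied with a suitable choice of $\delta$ and $C_0$. Fix $R>0$. For $|s|\le R$ we have $t^{(n)}(s)=t_n+s w_n\ge t_n-Rw_n$. The only issue is that for $s<0$ the times lie below $t_n$, whereas the theorem requires $t\ge t_n+\delta$.

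To handle this, I will not rerun the whole argument. Instead I observe that the proofs of Theorems~\ref{mag_fluc_estimate_dk} and~\ref{convergence_in_total_variation} used the starting time $T=t_n$ only through the following facts:
\begin{itemize}
\item the Gaussian covariance matching \eqref{eq:Gamma_barGamma_close}, which needs only $t\to\infty$;
\item the CLT error bound, which is $(\log n)^2 n^{-1/6}$ for any $t\le C_0\log n$;
\item the long-time entropy estimate, Lemma~\ref{long_time_estimate_entropy}, which needs $\omega_{r+v}^{(n)}\le C(n^{-1/2}e^{-\lambda_* v}+n^{-1})$.
\end{itemize}
The first two hold verbatim on the window $[t_n-Rw_n-1,C_0\log n]$.

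For the third, let $r\ge t_n-Rw_n-1$. By \eqref{eq:sharp_bulk_deterministic_decay},
\[
\omega_{r+v}^{(n)}\le C_{\varepsilon_0}\bigl(e^{\lambda_*(Rw_n+1)}n^{-1/2}e^{-\lambda_* v}+n^{-1}\bigr),
\]
so the entropy bound is only changed by a constant factor $e^{\lambda_*(Rw_n+1)}$, still uniformly $O(\log n/\sqrt n)$. Hence Lemma~\ref{long_time_estimate_entropy} and Theorem~\ref{mag_fluc_estimate_dk} hold with $t_n$ replaced by $t_n':=t_n-Rw_n-1$, uniformly over bulk configurations.

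I then repeat the restart argument of the proof of Theorem~\ref{convergence_in_total_variation} with $T=t_n'$:
\begin{itemize}
\item couple at time $T$ via Kantorovich duality;
\item restrict to the good event $\mathcal A_\varepsilon$;
\item evolve for a short time $u\le T_0$, using Proposition~\ref{short_time_estimate} and the binomial comparisons;
\item propagate forward using the contraction \eqref{eq:Markov_contraction_after_smoothing} together with the modified long-time estimate.
\end{itemize}
Taking $\delta=1/2$, this yields
\[
\sup_{t_n'+1/2\le t\le C_0\log n}\dtv\bigl(\sigma_0S_t^{(n)},\nu_t^{(n)}\bigr)\to0 .
\]
Since $[t_n-Rw_n,t_n+Rw_n]$ is contained in this range for large $n$ (choosing $C_0>1/(2\lambda_*)$), the lemma follows.

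The main obstacle is checking that nothing in the restart argument secretly used $T\ge t_n$ beyond the long-time entropy bound. In particular, the event $\mathcal A_\varepsilon$ needs the local densities at time $T$ to be near $\alpha_*$. This holds because the second moments remain $O(\log n/n)$ and $\rho_T^{(i,n)}-\alpha_*=O(n^{-1/2})$ at $T=t_n'$.
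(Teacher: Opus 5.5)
Your proposal is correct and follows essentially the paper's route. The paper also observes that the quantitative CLTs, the covariance matching, the good event $\mathcal A_\varepsilon$, and the entropy bound \eqref{eq:shifted_time_entropy_bound} need only a restart time of the form $t_n+O(1)$, and then reruns the short-restart argument. The only difference is organisational: the paper restarts at the $s$-dependent time $t^{(n)}(s)-u$ and finishes with one short entropy comparison over $[t^{(n)}(s)-u,t^{(n)}(s)]$, whereas you restart once at $t_n-Rw_n-1$ and propagate forward by contraction and the shifted long-time estimate, exactly as in the proof of Theorem~\ref{convergence_in_total_variation}.
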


The proof is given in
Appendix~\ref{appendix_profile_window_reference}. It is a uniform version of
the short-time comparison argument used in
Subsection~\ref{estimate_in_dtv}. It remains to compute the profile of the explicit product measures.

\begin{lemma}
\label{lemma:reference_measure_profile}
Under the assumptions of Theorem~\ref{thm_convergence_profile}, for every
\(R>0\),
\[
\begin{aligned}
\sup_{|s|\le R}
\left|
    \dtv\left(
        \nu_{t^{(n)}(s)}^{(n)},
        \nu_*^{(n)}
    \right)
    -
    \mathcal G\left(r(m)e^{-s}\right)
\right|
\longrightarrow0.
\end{aligned}
\]
\end{lemma}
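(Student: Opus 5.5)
The plan is to reduce the total variation distance to a statement about two independent binomial block counts, and then pass to a Gaussian limit through likelihood ratios.

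\emph{Reduction to block counts.} Both $\nu_t^{(n)}$ and $\nu_*^{(n)}$ are product measures that are constant in law within each original block $A^i$. Hence both are uniform conditional on the block counts $(X^0,X^1)$, and \eqref{eq:block_exchangeable_TV_identity} gives
\[
\dtv\left(\nu_t^{(n)},\nu_*^{(n)}\right)=\dtv\left(\mathrm{Bin}(N_0,p_0)\otimes\mathrm{Bin}(N_1,p_1),\ \mathrm{Bin}(N_0,\alpha_*)\otimes\mathrm{Bin}(N_1,\alpha_*)\right),
\]
where $N_i=a^in$ and $p_i=\rho_t^{(i,n)}$. At $t=t^{(n)}(s)$, Lemma~\ref{lemma:deterministic_profile_asymptotics} gives $\sqrt n(p_i-\alpha_*)\to(i-\alpha_*)He^{-s}$ uniformly for $|s|\le R$, where $H=H_*/(m^{1-\alpha_*}(1-m)^{\alpha_*})$. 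Since $a^i\to a_i^\infty$ with $a_1^\infty=m$ and $a_0^\infty=1-m$, the standardized shifts satisfy
\[
h_i^{(n)}(s)\defeq\frac{\sqrt{N_i}(p_i-\alpha_*)}{\sqrt{\alpha_*(1-\alpha_*)}}\longrightarrow h_i(s)\defeq\frac{\sqrt{a_i^\infty}(i-\alpha_*)He^{-s}}{\sqrt{\alpha_*(1-\alpha_*)}},
\]
uniformly on $|s|\le R$.

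\emph{Likelihood-ratio limit.} Write the total variation distance as $\mathbb E_{\alpha_*}[(L_n-1)_+]$, where $L_n$ is the likelihood ratio of the product $\mathrm{Bin}(N_i,p_i)$ law with respect to the product $\mathrm{Bin}(N_i,\alpha_*)$ law. Put $z_i=(X^i-N_i\alpha_*)/\sqrt{N_i\alpha_*(1-\alpha_*)}$. A second-order Taylor expansion of $\log(p/q)$ and $\log((1-p)/(1-q))$ gives
\[
\log L_n=\sum_i\left(h_i^{(n)}z_i-\tfrac12(h_i^{(n)})^2\right)+\varepsilon_n,
\]
with $|\varepsilon_n|\le Cn^{-1/2}(1+\|z\|)$ uniformly for $|s|\le R$. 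Under $\nu_*$, Proposition~\ref{prop:CLT_mag_GD} (applied with $\rho\equiv\alpha_*$) shows that $(z_0,z_1)$ converges to a standard Gaussian vector $(Z_0,Z_1)$ in $d_K$, with rate $O(n^{-1/2})$ that does not depend on $s$.

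\emph{Passing to the limit.} For uniform integrability, note $\mathbb E_{\alpha_*}[L_n^2]=\prod_i\bigl(1+(p_i-\alpha_*)^2/(\alpha_*(1-\alpha_*))\bigr)^{N_i}$, which is bounded uniformly for $|s|\le R$. Together with the weak convergence and the continuity of $(z,h)\mapsto(e^{h\cdot z-|h|^2/2}-1)_+$, this gives
\[
\dtv\to\mathbb E\left[\left(e^{h(s)\cdot Z-|h(s)|^2/2}-1\right)_+\right]=\dtv\left(\mathcal N(h(s),I),\mathcal N(0,I)\right)=\mathcal G(|h(s)|).
\]
Rotation invariance reduces the last identity to one dimension.

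\emph{Identifying the amplitude.} We have
\[
|h(s)|^2=\frac{H^2e^{-2s}\left[(1-m)\alpha_*^2+m(1-\alpha_*)^2\right]}{\alpha_*(1-\alpha_*)}=r(m)^2e^{-2s},
\]
which yields $\mathcal G(r(m)e^{-s})$.

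\emph{Uniformity in $s$ (main obstacle).} The delicate step is making the convergence uniform over $|s|\le R$ rather than pointwise. I would handle it by an equicontinuity argument. The map $h\mapsto\mathbb E[(e^{h\cdot z-|h|^2/2}-1)_+]$ is Lipschitz on bounded sets of $h$, uniformly over laws of $z$ with bounded second moments, and the exponential-moment bound above is uniform in $s$. The Taylor error $\varepsilon_n$ is controlled by truncating to $\|z\|\le K$, since the second moment of $z$ is bounded. Uniform convergence $h^{(n)}(s)\to h(s)$ combined with this equicontinuity then upgrades pointwise convergence to uniform convergence on compacts.
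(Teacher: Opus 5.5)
Your proposal is correct and follows essentially the same route as the paper: you expand the log-likelihood of the block-product measure against $\nu_*^{(n)}$, use a joint CLT for the standardised block counts, use the exact second-moment identity for uniform integrability, and identify $|h(s)|=r(m)e^{-s}$. The only difference is how you get uniformity in $s$: the paper simply reruns the pointwise argument along arbitrary sequences $s_n\to s$, which works because all your ingredients are already uniform in $s$. Your equicontinuity route also works, but the Lipschitz claim needs either the uniform sub-Gaussian moments of the standardised binomials or your truncation combined with the $L^2$ bound, not merely bounded second moments as you phrase it.
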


\begin{proof}[Proof of \Cref{lemma:reference_measure_profile}]
Put \(\delta_{i,n}(s)=\rho_{t^{(n)}(s)}^{(i,n)}-\alpha_*\),
\(K_i=\sum_{x\in A^i}\eta_x\), and
\(\Psi_{t^{(n)}(s)}^{(n)}
=\dd\nu_{t^{(n)}(s)}^{(n)}/\dd\nu_*^{(n)}\).
Under \(\nu_*^{(n)}\), the counts in the disjoint blocks are independent, with
\(K_i\sim\mathrm{Bin}(a^in,\alpha_*)\), and
\begin{equation}
\label{eq:reference_profile_TV_likelihood}
\dtv(\nu_{t^{(n)}(s)}^{(n)},\nu_*^{(n)})
=\tfrac12\mathbb E_{\nu_*^{(n)}}|\Psi_{t^{(n)}(s)}^{(n)}-1|.
\end{equation}
Fix \(R>0\). Lemma~\ref{lemma:deterministic_profile_asymptotics}
gives \(\sup_{|s|\le R}|\delta_{i,n}(s)|\le C_R/\sqrt n\).
Expanding the logarithms in the product likelihood yields
\begin{equation}
\label{eq:likelihood_ratio_LAN}
\begin{split}
\log\Psi_{t^{(n)}(s)}^{(n)}
=\sum_{i\in Q}\frac{\delta_{i,n}(s)(K_i-a^in\alpha_*)}
                         {\alpha_*(1-\alpha_*)}-\frac12\sum_{i\in Q}
\frac{a^in\delta_{i,n}(s)^2}{\alpha_*(1-\alpha_*)}
+\mathcal R_n(s),
\end{split}
\end{equation}
where
\[
\sup_{|s|\le R}\mathbb E_{\nu_*^{(n)}}|\mathcal R_n(s)|
\le C\sup_{|s|\le R}\sum_i
\left[\delta_{i,n}(s)^2\sqrt{a^in}
+a^in|\delta_{i,n}(s)|^3\right]
\le C_Rn^{-1/2}.
\]
To prove uniform convergence, take any sequence \(s_n\in[-R,R]\)
and a subsequence on which \(s_n\to s\). The standardised counts
converge jointly to two independent standard normals, since
\(a^1\to m\in(0,1)\). The deterministic asymptotics give
\[
\sum_i\frac{a^in\delta_{i,n}(s_n)^2}{\alpha_*(1-\alpha_*)}
\longrightarrow
\frac{\alpha_*^{2(1-\alpha_*)}(1-\alpha_*)^{2\alpha_*}}
     {m^{2(1-\alpha_*)}(1-m)^{2\alpha_*}}
\frac{(1-m)\alpha_*^2+m(1-\alpha_*)^2}{\alpha_*(1-\alpha_*)}
e^{-2s}
=r(m)^2e^{-2s}.
\]
Under $\nu_*^{(n)}$, the counts are independent, with
$\mathbb E K_i=a^in\alpha_*$ and
$\operatorname{Var}(K_i)=a^in\alpha_*(1-\alpha_*)$.
Thus the linear term in \eqref{eq:likelihood_ratio_LAN} is
centred and has variance
$\sum_i a^in\delta_{i,n}(s_n)^2/
[\alpha_*(1-\alpha_*)]\to r(m)^2e^{-2s}$.
The joint CLT for the standardised counts and convergence of
their coefficients give a centred Gaussian limit for this term.
Together with the quadratic-term limit and
$\mathcal R_n(s_n)\to0$ in probability, this yields
\begin{equation}
\label{eq:reference_profile_log_likelihood_limit}
 \log\Psi_{t^{(n)}(s_n)}^{(n)}
 \Longrightarrow r(m)e^{-s}Z-\tfrac12r(m)^2e^{-2s},
 \qquad Z\sim\mathcal N(0,1).
\end{equation}

For each site in $A^i$, the expected squared likelihood factor is
\[
 \alpha_*\left(1+\frac{\delta_{i,n}(s)}{\alpha_*}\right)^2
 +(1-\alpha_*)\left(1-\frac{\delta_{i,n}(s)}{1-\alpha_*}\right)^2
 =1+\frac{\delta_{i,n}(s)^2}{\alpha_*(1-\alpha_*)}.
\]
Independence across sites and $1+x\le e^x$ therefore give
\begin{equation}
\label{eq:reference_profile_second_moment}
\begin{aligned}
 \mathbb E_{\nu_*^{(n)}}[(\Psi_{t^{(n)}(s)}^{(n)})^2]
 =\prod_i\left(1+\frac{\delta_{i,n}(s)^2}
 {\alpha_*(1-\alpha_*)}\right)^{a^in}\le\exp\left\{\sum_i
 \frac{a^in\delta_{i,n}(s)^2}{\alpha_*(1-\alpha_*)}\right\}
 \le e^{C_R},
\end{aligned}
\end{equation}
uniformly for $|s|\le R$, since
$\sup_{|s|\le R}|\delta_{i,n}(s)|\le C_R/\sqrt n$
and $\sum_i a^i=1$.
Consequently, the likelihood ratios are uniformly integrable for
large \(n\) and \(|s|\le R\). Passing to expectations in
\eqref{eq:reference_profile_TV_likelihood}, we obtain
\[
\dtv(\nu_{t^{(n)}(s_n)}^{(n)},\nu_*^{(n)})
\longrightarrow
\tfrac12\mathbb E\left|e^{r(m)e^{-s}Z-r(m)^2e^{-2s}/2}-1\right|
=\mathcal G(r(m)e^{-s}).
\]
By continuity the comparison
profile at \(s_n\) has the same limit. Every sequence has such a
subsequence, proving uniform convergence on \([-R,R]\).
\end{proof}

\subsection{Proofs of the convergence and bulk cutoff profiles}
\label{proofs of cutoff profiles}

We now transfer the explicit reference-measure profile to the original chain
and its invariant measure.

\begin{proof}[Proof of \Cref{thm_convergence_profile}]
Fix $R>0$. For $|s|\le R$ and sufficiently large $n$, the
triangle inequality gives
\[
\begin{aligned}
 &\left|
 \mathscr D_n(t^{(n)}(s);\sigma_0^{(n)})
 -\dtv(\nu_{t^{(n)}(s)}^{(n)},\nu_*^{(n)})
 \right|\\
 &\qquad\le
 \dtv(\sigma_0^{(n)}S_{t^{(n)}(s)}^{(n)},
      \nu_{t^{(n)}(s)}^{(n)})
 +\dtv(\mu_{\kappa_1,\kappa_2,\iota_1,\iota_2}^{(n)},
       \nu_*^{(n)}).
\end{aligned}
\]
The right-hand side tends to zero uniformly for $|s|\le R$ by
Lemmas~\ref{lemma:profile_window_reference_approximation}
and~\ref{lemma:invariant_measure_to_product_bernoulli}.
Combining this with Lemma~\ref{lemma:reference_measure_profile}
proves the theorem, since $R>0$ is arbitrary.
\end{proof}

The bulk profile requires uniformity over all admissible initial
configurations. This follows from compactness of the bulk-density interval.

\begin{proof}[Proof of \Cref{cutoff_profile_tk}]
For every \(R>0\),
\begin{equation}
\label{eq:individual_bulk_profile_uniform}
\begin{aligned}
&\sup_{\substack{\sigma_0\in\Omega_n:\\
\varepsilon_0\le M^{(n)}(\sigma_0)\le1-\varepsilon_0}}
\ \sup_{|s|\le R}
\Bigg|
\mathscr D_n\left(
    t^{(n)}(s);
    \sigma_0
\right)
-
\mathcal G\left(
    r\left(M^{(n)}(\sigma_0)\right)e^{-s}
\right)
\Bigg|
\longrightarrow0.
\end{aligned}
\end{equation}
Indeed, if this failed, one could choose a subsequence of initial
configurations and profile parameters for which the displayed difference stays
bounded away from zero. By compactness, after passing to a further subsequence indexed
by $n_j$, we have
\[
 M^{(n_j)}(\sigma_0^{(n_j)})\longrightarrow
 m\in[\varepsilon_0,1-\varepsilon_0],
 \qquad
 s_{n_j}\longrightarrow s_0\in[-R,R].
\]
The preceding two lemmas, together with
Lemma~\ref{lemma:invariant_measure_to_product_bernoulli}, would then force
the difference to converge to zero, a contradiction.

Taking the supremum over \(\sigma_0\) in
\eqref{eq:individual_bulk_profile_uniform} gives
\[
\begin{aligned}
\mathscr D_{n,\varepsilon_0}
\left(t^{(n)}(s)\right)
&=
\max_{\substack{0\le k\le n:\\
\varepsilon_0\le k/n\le1-\varepsilon_0}}
\mathcal G\left(r(k/n)e^{-s}\right)+o(1).
\end{aligned}
\]
The admissible values \(k/n\) become dense in
\([\varepsilon_0,1-\varepsilon_0]\). Since \(r\) and \(\mathcal G\) are
continuous and \(\mathcal G\) is increasing,
\begin{equation}
\label{eq:bulk_profile_limit}
    \mathscr D_{n,\varepsilon_0}
    \left(t^{(n)}(s)\right)
    \longrightarrow
    \mathcal G\left(r_{\varepsilon_0}^*e^{-s}\right),
\end{equation}
uniformly in \(s\).

Fix \(\epsilon\in(0,1)\) and \(\delta>0\), and set
    $s_\epsilon\defeq \log\left(\frac{r_{\varepsilon_0}^*}{\mathcal G^{-1}(\epsilon)}\right)$, and
    $r_{\varepsilon_0}^*e^{-s_\epsilon}=\mathcal G^{-1}(\epsilon)$.

Since \(\mathcal G\) is strictly increasing,
\[
    \mathcal G\left(r_{\varepsilon_0}^*e^{-(s_\epsilon-\delta)}\right)
    =\mathcal G\left(e^\delta\mathcal G^{-1}(\epsilon)\right)>\epsilon,
    \qquad
    \mathcal G\left(r_{\varepsilon_0}^*e^{-(s_\epsilon+\delta)}\right)
    =\mathcal G\left(e^{-\delta}\mathcal G^{-1}(\epsilon)\right)<\epsilon.
\]
Hence, by \eqref{eq:bulk_profile_limit},
$
    \mathscr D_{n,\varepsilon_0}\left(t^{(n)}(s_\epsilon-\delta)\right)>\epsilon,
    \
    \mathscr D_{n,\varepsilon_0}\left(t^{(n)}(s_\epsilon+\delta)\right)<\epsilon,
$
for all sufficiently large \(n\).

The map \(t\mapsto\mathscr D_{n,\varepsilon_0}(t)\) is nonincreasing. Indeed, for every \(u\ge0\), invariance of \(\mu_{\kappa_1,\kappa_2,\iota_1,\iota_2}^{(n)}\) and contraction of total variation under \(S_u^{(n)}\) give
\begin{align*}
\mathscr D_n(t+u;\sigma_0)
&=\dtv\left((\sigma_0S_t^{(n)})S_u^{(n)},\mu_{\kappa_1,\kappa_2,\iota_1,\iota_2}^{(n)}S_u^{(n)}\right)\\
&\le\dtv\left(\sigma_0S_t^{(n)},\mu_{\kappa_1,\kappa_2,\iota_1,\iota_2}^{(n)}\right)
=\mathscr D_n(t;\sigma_0).
\end{align*}
Taking the supremum over the bulk initial configurations preserves this monotonicity. Therefore, by the definition of \(t_{\mathrm{mix},\varepsilon_0}^{(n)}(\epsilon)\),
    $t^{(n)}(s_\epsilon-\delta)
    \le t_{\mathrm{mix},\varepsilon_0}^{(n)}(\epsilon)
    \le t^{(n)}(s_\epsilon+\delta)$
for all sufficiently large \(n\). Since
    $t^{(n)}(s)=\frac{\kappa_1+\kappa_2}{2\kappa_1\kappa_2}\log n
    +\frac{\kappa_1+\kappa_2}{\kappa_1\kappa_2}s$,
the preceding bounds imply
\[
\begin{aligned}
\left|
    t_{\mathrm{mix},\varepsilon_0}^{(n)}(\epsilon)
    -\frac{\kappa_1+\kappa_2}{2\kappa_1\kappa_2}\log n
    -\frac{\kappa_1+\kappa_2}{\kappa_1\kappa_2}s_\epsilon
\right|
&\le\frac{\kappa_1+\kappa_2}{\kappa_1\kappa_2}\delta .
\end{aligned}
\]
Taking the upper limit as \(n\to\infty\) and then letting \(\delta\downarrow0\), we obtain
\[
    t_{\mathrm{mix},\varepsilon_0}^{(n)}(\epsilon)
    =
    \frac{\kappa_1+\kappa_2}{2\kappa_1\kappa_2}\log n
    +\frac{\kappa_1+\kappa_2}{\kappa_1\kappa_2}
    \log\left(\frac{r_{\varepsilon_0}^*}{\mathcal G^{-1}(\epsilon)}\right)
    +o(1),
\]
which is \eqref{mixing_time_result}.
\end{proof}

\section{Worst-case and boundary mixing}
\label{sec:worst_case}
Starting near a boundary configuration introduces a second time scale.
The minority population first grows from a finite number of particles,
and its random growth amplitude produces an order-one fluctuation in
the entrance time to the bulk. The subsequent relaxation takes the
bulk mixing time. Thus, the boundary changes both the leading mixing
time and the cutoff profile. We identify the random entrance time,
transfer it to the bulk Gaussian limit, and then optimise over all
initial configurations. The auxiliary estimates are collected in
Appendix~\ref{app:worst_case_tools}.

Recall that $K_t=nM_t^{(n)}$. Its birth and death rates are
\begin{equation}
\label{eq:count_rates_worst_case}
 b_n(k)=(1-k/n)^2(\kappa_1k+\iota_1),\qquad
 d_n(k)=(k/n)^2\{\kappa_2(n-k)+\iota_2\}.
\end{equation}
For a fixed integer $k\ge0$, a sequence $\sigma_0^{(n)}\in\Omega_n$
has \emph{$k$ boundary seeds} at the lower (respectively, upper)
boundary if $nM^{(n)}(\sigma_0^{(n)})=k$
(respectively, $n(1-M^{(n)}(\sigma_0^{(n)}))=k$)
for all sufficiently large $n$.
The seeds are the minority particles, whose locations may vary
with $n$; $k=0$ gives the corresponding boundary configuration.
\subsection{The random boundary delay}

\begin{lemma}[Sharp entrance from a fixed minority count]
\label{lemma:sharp_boundary_entrance}
Fix an integer $\ell\ge0$ and $q\in(0,\alpha_*)$. Start the count
process at $\ell$, and let
$\tau_{\lfloor nq\rfloor}=\inf\{t\ge0:K_t=\lfloor nq\rfloor\}$.
Then
\begin{equation}
\label{eq:sharp_boundary_entrance_limit}
 \tau_{\lfloor nq\rfloor}-\frac{\log n}{\kappa_1}
 \ \Longrightarrow\
 \psi(q)-\frac{\log W_{0,\ell}}{\kappa_1},
 \qquad W_{0,\ell}\sim\operatorname{Gamma}
                      (\ell+\iota_1/\kappa_1,1),
\end{equation}
where the gamma distribution has scale one and
\begin{equation}
\label{eq:boundary_clock_definition}
 \psi(x)=\frac1{\lambda_*}
       \log\frac{\alpha_*x^{1-\alpha_*}(1-x)^{\alpha_*}}
                     {\alpha_*-x},\qquad 0<x<\alpha_*.
\end{equation}
The reflected assertion holds from $K_0=n-\ell$, with
$(\kappa_1,\iota_1,\alpha_*)$ replaced by
$(\kappa_2,\iota_2,1-\alpha_*)$ and
$W_{1,\ell}\sim\operatorname{Gamma}(\ell+\iota_2/\kappa_2,1)$.
\end{lemma}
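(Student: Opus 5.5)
The plan has three phases: an early phase where the count process $K_t$ is compared with a linear birth process with immigration, a middle phase where it follows the logistic ODE, and a glue step that matches the two at an intermediate level. Throughout, write $\phi_t(x)$ for the flow of the limiting scalar equation
\[
\dot x=x(1-x)\bigl(\kappa_1-(\kappa_1+\kappa_2)x\bigr)=(\kappa_1+\kappa_2)x(1-x)(\alpha_*-x),
\]
and note that $\psi$ in \eqref{eq:boundary_clock_definition} is built as a clock for this flow. Since $\frac{\dd}{\dd x}\psi(x)=1/F^{(\infty)}(x)$ (a partial-fraction check using $\lambda_*=\kappa_1(1-\alpha_*)$), and $\psi(x)=\kappa_1^{-1}\log x+o(1)$ as $x\downarrow0$, the time to go from a small level $\epsilon$ to $q$ along the flow is
\[
\psi(q)-\psi(\epsilon)=\psi(q)-\kappa_1^{-1}\log\epsilon+o_\epsilon(1).
\]

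\emph{Step 1: early phase.} Fix a level $N=N_n\to\infty$ with $N=o(n^{1/2})$, for instance $N=\lfloor n^{1/3}\rfloor$. For $k\le N$ we have $b_n(k)=(\kappa_1k+\iota_1)(1+O(N/n))$ and $d_n(k)\le \kappa_2k^2/n+O(k^2/n^2)$. A standard coupling then shows that, with probability $1-o(1)$, $K$ agrees up to $\tau_N$ with the pure birth process $Y$ with rates $\kappa_1k+\iota_1$ started at $\ell$.
\begin{itemize}
\item The total death intensity before $\tau_N$ is at most $\sum_{k\le N}(k^2/n)\cdot\frac1{\kappa_1k}=O(N^2/n)\to0$.
\item The birth-rate discrepancy integrates to $O(N\log N/n)\to0$.
\end{itemize}
For $Y$, the martingale limit $e^{-\kappa_1t}Y_t\to W_{0,\ell}$ holds a.s., and $W_{0,\ell}\sim\mathrm{Gamma}(\ell+\iota_1/\kappa_1,1)$. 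This is the classical Yule-with-immigration computation: $Y_t$ is negative binomial with $p=e^{-\kappa_1t}$. Consequently
\[
\tau_N-\kappa_1^{-1}\log N\to-\kappa_1^{-1}\log W_{0,\ell}\quad\text{a.s. on the coupling event,}
\]
and hence in distribution for $K$.

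\emph{Step 2: deterministic phase.} Starting from $K_{\tau_N}=N$ (strong Markov property), I would show that $M^{(n)}$ follows $\phi_t(N/n)$ up to hitting $q$ with negligible time error. The delicate part is the initial stretch where $M$ is tiny, because there relative fluctuations matter. I would handle it with a multiplicative martingale. Set
\[
U_t=\psi\bigl(M^{(n)}_{\tau_N+t}\bigr)-t,
\]
extending $\psi$ suitably. Its drift is $O\bigl(1/(n\,F^{(\infty)}(M))\bigr)+O(\iota/(nM))$, using the Itô correction $\tfrac12\psi''\,G/n$ with $\psi''\sim -1/(\kappa_1 x^2)$ and $G\sim\kappa_1x$. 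Its quadratic variation rate is $\psi'(M)^2G(M)/n\approx 1/(\kappa_1 nM)$. Over the path, where $nM$ grows like $Ne^{\kappa_1t}$, both integrate to $O(1/N)\to0$.

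The key point is that $\psi$ has a bounded derivative and bounded $G$ on $[N/n,q]$, because $q<\alpha_*$ stays away from the zero of $F^{(\infty)}$ at $\alpha_*$. Doob's inequality on the stopped process then gives
\[
\tau_{\lfloor nq\rfloor}-\tau_N=\psi(q)-\psi(N/n)+o_P(1).
\]
A localisation is needed to rule out an early return below $N/2$, which is handled by the same martingale bound. The $n^{-1}$ mutation terms in $F^{(n)}$ change $\psi$ only by $O(\log n/n)$.

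\emph{Step 3: combine.} Adding the two phases,
\[
\tau_{\lfloor nq\rfloor}=\kappa_1^{-1}\log N-\kappa_1^{-1}\log W_{0,\ell}+\psi(q)-\kappa_1^{-1}\log(N/n)+o_P(1),
\]
where $\psi(N/n)=\kappa_1^{-1}\log(N/n)+O(N/n)$. The $\log N$ terms cancel and leave $\kappa_1^{-1}\log n$, which gives \eqref{eq:sharp_boundary_entrance_limit}.

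The reflected statement follows from the symmetry $k\mapsto n-k$. This symmetry swaps $(\kappa_1,\iota_1)\leftrightarrow(\kappa_2,\iota_2)$ and $\alpha_*\leftrightarrow1-\alpha_*$, while leaving $\lambda_*$ unchanged.

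The main obstacle is Step 2's control near the small-density end. There the relative noise of $M$ is of order $(nM)^{-1/2}$, and one must check that the accumulated time error from this matching stretch is $o(1)$ rather than $O(1)$. I expect the $O(1/N)$ quadratic-variation bound to settle this, provided the stopping-time localisation rules out dips of $K$ back toward the boundary after $\tau_N$.
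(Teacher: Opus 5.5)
Your proposal is correct and follows essentially the paper's route. First you couple with the linear birth process with immigration up to a mesoscopic level $n^\gamma$, $\gamma<1/2$; the birth-rate discrepancy is really $O(N^2/n)$ rather than $O(N\log N/n)$, which is still $o(1)$. Then you run the clock $\psi(K/n)$, whose generator is $1+O(1/k)$ and whose carr\'e-du-champ is $O(1/k)$.

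The differences are only in the bookkeeping of the second stage. The paper excludes a return to $j_n/2$ with the supermartingale $e^{-hK}$; your use of the clock martingale itself also works. The paper also replaces your ``$O(1/N)$ along the path'' heuristic with the rigorous bound $\mathbb E_{j_n}\tau=O(\log n)$ from Dynkin's formula, which makes the accumulated errors $O(\log n/j_n)=o(1)$. Finally, the mutation terms shift the clock by $O(1/N)$ rather than $O(\log n/n)$; your $O(\iota/(nM))$ drift term already captures this.
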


The proof has two stages. Up to a level $j_n=\lfloor n^\gamma\rfloor$,
$0<\gamma<1/2$, the count process agrees, with probability $1-o(1)$,
with the linear birth process with immigration of rate
$\kappa_1 k+\iota_1$. Its classical gamma limit
\cite[Theorem~5 and Corollary~1]{Tavare1987BirthImmigration} gives the
random delay $-\kappa_1^{-1}\log W_{0,\ell}$. Between $j_n$ and
$\lfloor nq\rfloor$, the clock $\psi(K_t/n)$ has drift $1+O(K_t^{-1})$
and quadratic variation rate $O(K_t^{-1})$. Its accumulated error vanishes, so the passage from $j_n$ to
$\lfloor nq\rfloor$ takes
$\psi(q)-\psi(j_n/n)+o_{\mathbb P}(1)$ time.
The complete proof is in Appendix~\ref{app:sharp_boundary_entrance}.

\subsection{From the entrance time to the boundary profile}

Put $\sigma_*^2=\alpha_*(1-\alpha_*)$ and
$T_{i,n}=\left(\frac1{\kappa_i}+\frac1{2\lambda_*}\right)\log n$, for $i\in\{1,2\}$.
For a fixed integer $k\ge0$, let
$W_{0,k}\sim\operatorname{Gamma}(k+\iota_1/\kappa_1,1)$ and
$W_{1,k}\sim\operatorname{Gamma}(k+\iota_2/\kappa_2,1)$.
Define $\mathcal P_{0,k}$ and $\mathcal P_{1,k}$ by
\eqref{eq:boundary_profile_functions}, with $W_0,W_1$ replaced by
$W_{0,k},W_{1,k}$, respectively. In particular,
$\mathcal P_{0,0}=\mathcal P_0$ and $\mathcal P_{1,0}=\mathcal P_1$.

\begin{lemma}[Profiles from fixed boundary seeds]
\label{lemma:fixed_seed_count_profile}
Let $\pi_n$ be the invariant count law. For every fixed integer
$k\ge0$ and $s\in\mathbb R$, the count chain started at $k$ satisfies
\begin{equation}
\label{eq:fixed_seed_count_weak_limit}
 \frac{K^{(n)}_{T_{1,n}+s/\lambda_*}-n\alpha_*}{\sqrt n}
 \Rightarrow
 \sigma_*Z-\alpha_*H_*e^{-s}W_{0,k}^{-(1-\alpha_*)},
\end{equation}
where $Z\sim\mathcal N(0,1)$ is independent of $W_{0,k}$, and
\begin{equation}
\label{eq:fixed_seed_count_profile}
 \dtv\left(
 \operatorname{Law}_k(K^{(n)}_{T_{1,n}+s/\lambda_*}),\pi_n
 \right)\longrightarrow\mathcal P_{0,k}(s).
\end{equation}
The same profile limit holds for the full configuration chain
from every sequence of initial configurations with exactly $k$
ones. The reflected count chain started at $n-k$, and every
sequence of configurations with exactly $k$ zeros, have profile
$\mathcal P_{1,k}(s)$ at time $T_{2,n}+s/\lambda_*$.
All these total variation profile limits are uniform for $s$
in compact intervals.
\end{lemma}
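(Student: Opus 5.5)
We would prove \eqref{eq:fixed_seed_count_weak_limit} by a strong Markov decomposition at the entrance time of Lemma~\ref{lemma:sharp_boundary_entrance}, then upgrade the weak limit to total variation by a monotonicity argument, and finally pass from counts to configurations. Fix $q\in(0,\alpha_*)$, say $q=\alpha_*/2$, and let $\tau=\tau_{\lfloor nq\rfloor}$. By Lemma~\ref{lemma:sharp_boundary_entrance} with $\ell=k$,
\[
\tau=\frac{\log n}{\kappa_1}+\psi(q)-\frac{\log W_{0,k}}{\kappa_1}+o_{\mathbb P}(1).
\]
Hence the remaining time is
\[
u_n\defeq T_{1,n}+s/\lambda_*-\tau=\frac{\log n}{2\lambda_*}+\frac{s}{\lambda_*}-\psi(q)+\frac{\log W_{0,k}}{\kappa_1}+o_{\mathbb P}(1).
\]
Since $W_{0,k}>0$ almost surely, $u_n$ lies in $t_n+O_{\mathbb P}(1)$.

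At time $\tau$ the count equals $\lfloor nq\rfloor$ exactly, a bulk density. By the strong Markov property we restart the count chain there. The count chain is a one-dimensional birth--death chain, so only the global density matters. We then apply the global versions of the bulk estimates: Lemma~\ref{lemma: priori_estimate_for_the_global_density_II} and the one-dimensional analogue of Proposition~\ref{prop:CLT_mag_tk}, obtained by summing $\xi^{(n)}=\sum_i\sqrt{a^i}\xi^{(i,n)}$, whose limiting variance is $\sum_i a^i\sigma_*^2=\sigma_*^2$ by Lemma~\ref{lemma:gaussian_covariance_matching_log_window}. These give, uniformly for remaining times in $t_n+[-R,R]$,
\[
\frac{K_{\tau+u}-n\rho_u(q)}{\sqrt n}\Rightarrow\sigma_*Z,
\]
where $\rho_u(q)$ solves the scalar ODE \eqref{mean_field_ode_global} from $q$. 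The deterministic part follows from the explicit first integral \eqref{eq:exact_local_deterministic_profile}, summed, together with the definition \eqref{eq:boundary_clock_definition} of $\psi$, which is built exactly so that $\sqrt n(\rho_u(q)-\alpha_*)\to-\alpha_*H_*e^{-\lambda_*(u-t_n)-\lambda_*\psi(q)}$. Substituting $u=u_n$ and using $\lambda_*/\kappa_1=1-\alpha_*$ gives the mean $-\alpha_*H_*e^{-s}W_{0,k}^{-(1-\alpha_*)}$. The Gaussian noise after $\tau$ is independent of $\mathcal F_\tau$, and it is uniform in the random remaining time because the remaining time is tight and the bulk estimates are uniform on compact windows. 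This proves \eqref{eq:fixed_seed_count_weak_limit}; uniformity in $s$ follows by the usual subsequence argument.

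For \eqref{eq:fixed_seed_count_profile}, note first that $\pi_n$ is asymptotically $\mathrm{Bin}(n,\alpha_*)$ by Lemma~\ref{lemma:invariant_measure_to_product_bernoulli}. Weak convergence alone does not give total variation convergence, so we use monotone likelihood ratios. We will show that the ratio of the time-$t$ count law started from $k$ to $\pi_n$ is monotone in the count, at least on the $\sqrt n$ window. The plan is a coupling argument: birth--death chains started from an extreme state are stochastically monotone, and ratios $P_t(k,\cdot)/\pi_n(\cdot)$ inherit monotonicity from the initial point mass at the bottom, by total positivity of birth--death transition kernels. With a monotone ratio, the total variation distance equals $\sup_x\{\pi_n(K> x)-P(K>x)\}$ up to the sign convention. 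Each half-line probability converges by the weak limit, because the limit laws have continuous distribution functions, and uniform convergence of distribution functions via P\'olya's theorem gives the $\sup_z$ formula defining $\mathcal P_{0,k}$. Mass outside the $\sqrt n$ window is negligible by tightness. We expect this step to be the main obstacle, namely making the total-positivity and unimodality argument rigorous for the nonlinear rates \eqref{eq:count_rates_worst_case}.

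For the full configuration chain with exactly $k$ initial ones, exchangeability within the blocks (the $k$ seeds and the $n-k$ others) reduces the total variation distance to that of the pair of block counts, as in \eqref{eq:block_exchangeable_TV_identity}. The invariant measure is uniform given the global count. It therefore suffices to show that, given the global count, the number of ones among the $k$ seed sites is asymptotically $\mathrm{Bin}$-hypergeometric, that is, that the initial labels are erased. Each seed site flips at rate of order one once the density is in the bulk, so over the remaining time of order $\log n$ its state is asymptotically independent of its initial value. This costs $o(1)$ in total variation because $k$ is fixed. The upper boundary statements follow by the symmetry $\sigma\mapsto1-\sigma$, with $(\kappa_1,\iota_1,\alpha_*)$ replaced by $(\kappa_2,\iota_2,1-\alpha_*)$.
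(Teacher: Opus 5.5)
Your weak-limit argument is sound. Restarting at $\tau_{\lfloor nq\rfloor}$ and projecting Proposition~\ref{prop:CLT_mag_tk} onto $(\sqrt{a^0},\sqrt{a^1})$ is a slightly lighter route than the paper's transfer through Lemma~\ref{lemma:profile_window_reference_approximation}, and $B(q)e^{\lambda_*\psi(q)}=\alpha_*H_*$ gives the right displacement.

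The genuine gap is the total variation step for $k\ge1$. A birth--death kernel propagates a monotone likelihood ratio only from a monotone initial ratio. For $k=0$ your argument works: reversibility gives $P_t(0,j)/\pi_n(j)=P_t(j,0)/\pi_n(0)$, which is nonincreasing in $j$ by stochastic monotonicity. For $k\ge1$, however, write $\pi_n(j)/P_t(k,j)=\sum_i\pi_n(i)P_t(i,j)/P_t(k,j)$. By the TP$_2$ inequalities this is a sum of nondecreasing terms ($i>k$) and nonincreasing terms ($i<k$), so nothing forces monotonicity. Your hedge ``at least on the $\sqrt n$ window'' has no argument behind it.

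What total positivity actually yields, via variation diminishing, is that every superlevel set of $j\mapsto P_t(k,j)/\pi_n(j)$ is an interval. This is \emph{unimodality}, which the paper imports from Ding--Lubetzky--Peres in Lemma~\ref{lemma:birth_death_tv_oscillation}. With only unimodality, the positive part of $P_t(k,\cdot)-\pi_n$ sits on an interval rather than a half-line. The exact identity is then $\dtv=\max_jH_t(j)-\min_jH_t(j)$, where $H_t(j)=P_t(k,\{0,\ldots,j\})-\pi_n(\{0,\ldots,j\})$ and $H_t(-1)=0$. Your half-line formula equals $\max_jH_t(j)$ and so gives only a lower bound, which is useless for the upper bound needed later in the worst-case theorem.

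Passing to the limit gives $\sup_zh-\inf_zh$ with $h(z)=\mathbb E[\Phi(z+A_s)]-\Phi(z)$. You still need $\inf_zh=0$ to reach $\mathcal P_{0,k}(s)$; this holds because $A_s>0$ almost surely, so $h>0$ and $h(z)\to0$ as $z\to\pm\infty$. This unimodality-plus-oscillation step is the missing idea.

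The passage to configurations is also only heuristic. Exchangeability within the two blocks correctly reduces matters to a joint statement. You need the number of ones among the $k$ seeds, jointly with $K_t$, to be close in total variation to its hypergeometric law given the count, that is, asymptotically $\mathrm{Bin}(k,\alpha_*)$ and independent of $K_t$. ``Each seed forgets its initial value'' is a marginal statement and does not give this.

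The paper makes it rigorous in Lemma~\ref{lem:finite_minority_label_erasure}. It couples with a chain started from a uniformly random occupied set of size $k$, sharing the count clocks and pairing the discrepancy sets. The discrepancy $\ell_t\le k$ is then nonincreasing and decreases at rate at least $c\ell_t$ while $K_t/n$ stays in a fixed neighbourhood of $\alpha_*$. That condition holds on a final window of fixed length $L$, by \eqref{eq:fixed_seed_count_weak_limit} and a Gr\"onwall bound. The uniformly started chain is exchangeable, so its distance to equilibrium is exactly the count distance.

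Finally, uniformity in $s$ of the total variation limits does not follow from your subsequence argument for the weak limit alone. It needs the monotonicity of the distances in $s$ together with continuity of $\mathcal P_{0,k}$, and you do not prove the latter.
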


\begin{proof}[Proof of \Cref{lemma:fixed_seed_count_profile}]
Fix $q\in(0,\alpha_*)$, write $j_n=\lfloor nq\rfloor$, and put
$B(q)=(\alpha_*-q)H_*/[q^{1-\alpha_*}(1-q)^{\alpha_*}]$.
For an initial configuration with $j_n$ ones,
$a^1=j_n/n\to q$ and $a^0\to1-q$.
Under $\nu_{t_n+u/\lambda_*}^{(n)}$, the count is the sum of
independent binomials with trial numbers $a^in$ and parameters
$\rho_{t_n+u/\lambda_*}^{(i,n)}$.
Lemma~\ref{lemma:deterministic_profile_asymptotics} gives
\[
\begin{aligned}
 \sqrt n\sum_{i\in Q}a^i
 \bigl(\rho_{t_n+u/\lambda_*}^{(i,n)}-\alpha_*\bigr)
 &\longrightarrow
 \frac{(q-\alpha_*)H_*e^{-u}}
      {q^{1-\alpha_*}(1-q)^{\alpha_*}}
 =-B(q)e^{-u},\\
 \frac1n\operatorname{Var}_{\nu_{t_n+u/\lambda_*}^{(n)}}
 (nM^{(n)})
 &=\sum_{i\in Q}a^i\rho_{t_n+u/\lambda_*}^{(i,n)}
 \bigl(1-\rho_{t_n+u/\lambda_*}^{(i,n)}\bigr)
 \longrightarrow\sigma_*^2,
\end{aligned}
\]
uniformly for $u$ in compact intervals.
Each centred Bernoulli summand divided by $\sqrt n$ has absolute
value at most $n^{-1/2}$, so the Lindeberg condition holds.
The CLT gives the Gaussian limit under the reference measure;
Lemma~\ref{lemma:profile_window_reference_approximation} transfers it to
the chain started at $j_n$:
\begin{equation}
\label{eq:bulk_count_kernel_weak}
 \frac{K^{(n)}_{t_n+u/\lambda_*}-n\alpha_*}{\sqrt n}
 \Rightarrow\mathcal N(-B(q)e^{-u},\sigma_*^2).
\end{equation}
The argument also applies along every sequence $u_n\to u$.
Compactness and continuity of the Gaussian limit therefore give
uniform convergence of expectations of each bounded Lipschitz
test function on compact intervals of $u$.

Now start the count chain at $k$, let $\tau_n$ be its first visit
to $j_n$, and put $U_n=\tau_n-\kappa_1^{-1}\log n$.
Lemma~\ref{lemma:sharp_boundary_entrance} gives
 $U_n\Rightarrow U
 \defeq\psi(q)-\kappa_1^{-1}\log W_{0,k}$.
Thus $(U_n)$ is tight and, since $t_n\to\infty$,
$\mathbb P_k(\tau_n>T_{1,n}+s/\lambda_*)
 =\mathbb P_k(U_n>t_n+s/\lambda_*)\longrightarrow0$.
On $\{\tau_n\le T_{1,n}+s/\lambda_*\}$, the remaining time is
 $T_{1,n}+\frac{s}{\lambda_*}-\tau_n
 =t_n+\frac{s-\lambda_*U_n}{\lambda_*}$.
Since $K_{\tau_n}^{(n)}=j_n$, the strong Markov property gives
the transition law from $j_n$ over this duration.

Fix a bounded Lipschitz function $f:\mathbb R\to\mathbb R$ and define
\[
\begin{aligned}
 \mathcal K_n(v)
 \defeq
 \mathbb E_{j_n}\left[
 f\left(
 \frac{K^{(n)}_{t_n+s/\lambda_*-v}-n\alpha_*}{\sqrt n}
 \right)\right] \ \text{and} \ \
 \mathcal K(v)
 \defeq
 \mathbb E\left[
 f\left(\sigma_*Z-B(q)e^{-s}e^{\lambda_*v}\right)\right],
\end{aligned}
\]
where $Z\sim\mathcal N(0,1)$ is independent of $W_{0,k}$.
Set $\mathcal K_n(v)=0$ when $t_n+s/\lambda_*-v<0$.
For fixed $R>0$ and $|v|\le R$,
$t_n+s/\lambda_*-v\ge t_n+s/\lambda_*-R>0$ for all sufficiently
large $n$. Since $s-\lambda_*v$ ranges over a compact interval,
\eqref{eq:bulk_count_kernel_weak} gives
 $\sup_{|v|\le R}|\mathcal K_n(v)-\mathcal K(v)|
 \longrightarrow0$.
Both functions are bounded in absolute value by $\|f\|_\infty$, so
\[
 \left|
 \mathbb E_k[\mathcal K_n(U_n)]-\mathbb E_k[\mathcal K(U_n)]
 \right|
 \le
 \sup_{|v|\le R}|\mathcal K_n(v)-\mathcal K(v)|
 +2\|f\|_\infty\mathbb P_k(|U_n|>R).
\]
Taking $n\to\infty$ and then $R\to\infty$ shows that the left-hand
side tends to zero. Moreover, $\mathcal K$ is bounded and continuous
by bounded convergence, so $U_n\Rightarrow U$ gives
$\mathbb E_k[\mathcal K(U_n)]\to\mathbb E[\mathcal K(U)]$.
The strong Markov property therefore yields
\[
\begin{aligned}
 \mathbb E_k\left[
 f\left(
 \frac{K^{(n)}_{T_{1,n}+s/\lambda_*}-n\alpha_*}{\sqrt n}
 \right)\right]
 =\mathbb E_k[\mathcal K_n(U_n)]+o(1)\to
 \mathbb E\left[
 f\left(\sigma_*Z-B(q)e^{-s}e^{\lambda_*U}\right)\right].
\end{aligned}
\]
The error is bounded by
$\|f\|_\infty\mathbb P_k(\tau_n>T_{1,n}+s/\lambda_*)$.
Since $B(q)e^{\lambda_*\psi(q)}=\alpha_*H_*$ and
$\lambda_*/\kappa_1=1-\alpha_*$,
 $B(q)e^{-s}e^{\lambda_*U}
 =\alpha_*H_*e^{-s}W_{0,k}^{-(1-\alpha_*)}$.
As $f$ is arbitrary, this proves
\eqref{eq:fixed_seed_count_weak_limit}.

After centring at $n\alpha_*$ and dividing by $\sigma_*\sqrt n$,
the count law thus converges to the law of $Z-A_s$, where
 $A_s\defeq\frac{\alpha_*H_*}{\sigma_*}
 e^{-s}W_{0,k}^{-(1-\alpha_*)}>0 \ \text{almost surely}$.
Applying
Lemma~\ref{invariant_measure_to_bin} gives
 $\dtv\bigl(\pi_n,\mathrm{Bin}(n,\alpha_*)\bigr)
 \le Cn^{-1/2}$.
The binomial law has mean $n\alpha_*$ and variance $n\sigma_*^2$,
so its central limit theorem gives the standard normal limit
for the stationary count under the same rescaling.

The difference of the two limiting distribution functions is
 $h(z)\defeq\mathbb E[\Phi(z+A_s)]-\Phi(z)$.
Both distribution functions are continuous. Since $A_s>0$
almost surely, $h(z)>0$ for every finite $z$, while bounded
convergence gives $h(z)\to0$ as $z\to\pm\infty$.
Thus $\inf_z h(z)=0$.
Writing $\phi$ for the standard normal density, Tonelli's theorem
identifies the density of $Z-A_s$ as
\[
 g(z)=\mathbb E[\phi(z+A_s)],
 \qquad
 \frac{g(z)}{\phi(z)}
 =\mathbb E[e^{-A_sz-A_s^2/2}].
\]
The expectation is finite because
$-A_sz-A_s^2/2=z^2/2-(z+A_s)^2/2\le z^2/2$.
This bound is uniform on compact intervals of $z$, so dominated
convergence proves continuity of the ratio.
The ratio is strictly decreasing because $A_s>0$ almost surely.
Since $g$ and $\phi$ both integrate to one, it takes values
both above and below one and therefore crosses one at a unique
$c\in\mathbb R$. Hence $g(z)>\phi(z)$ for $z<c$ and
$g(z)<\phi(z)$ for $z>c$.

Boundedness of $\phi$ justifies differentiation under the
expectation, giving $h'(z)=g(z)-\phi(z)$. Thus $h$ attains its
maximum at $c$, and
\[
\begin{aligned}
 &\dtv\bigl(\operatorname{Law}(Z-A_s),\mathcal N(0,1)\bigr)
 =\frac12\int_{\mathbb R}|g(z)-\phi(z)|\,\mathrm dz
 =\int_{-\infty}^{c}\{g(z)-\phi(z)\}\,\mathrm dz\\
 &=\mathbb E[\Phi(c+A_s)]-\Phi(c)
 =h(c)=\sup_z h(z)-\inf_z h(z)=\mathcal P_{0,k}(s).
\end{aligned}
\]
Multiplication by $\sigma_*>0$ preserves total variation, so
\[
 \mathcal P_{0,k}(s)
 =
 \dtv\left(
 \operatorname{Law}\left(
 \sigma_*Z-\alpha_*H_*e^{-s}W_{0,k}^{-(1-\alpha_*)}
 \right),
 \mathcal N(0,\sigma_*^2)
 \right).
\]

We now apply Lemma~\ref{lemma:birth_death_tv_oscillation}
with deterministic initial state $k$, observation time
$T_{1,n}+s/\lambda_*$, and the strictly increasing affine map
$x\mapsto(x-n\alpha_*)/(\sigma_*\sqrt n)$.
The two finite-$n$ distribution functions and their respective
continuous limits are explicitly
\[
\begin{aligned}
 &\mathbb P_k\left(
 \frac{K^{(n)}_{T_{1,n}+s/\lambda_*}-n\alpha_*}
      {\sigma_*\sqrt n}\le z
 \right)
 \longrightarrow\mathbb E[\Phi(z+A_s)],\\
 &\pi_n\left(
 \left\{j\in\{0,\ldots,n\}:
 \frac{j-n\alpha_*}{\sigma_*\sqrt n}\le z\right\}
 \right)
 \longrightarrow\Phi(z),
 \qquad z\in\mathbb R.
\end{aligned}
\]
Consequently,
\[
 \dtv\left(
 \operatorname{Law}_k(K^{(n)}_{T_{1,n}+s/\lambda_*}),\pi_n
 \right)
 \longrightarrow
 \sup_z h(z)-\inf_z h(z)
 =\mathcal P_{0,k}(s),
\]
proving \eqref{eq:fixed_seed_count_profile}.
By Lemma~\ref{lem:finite_minority_label_erasure}, the full
configuration distance differs from this count distance by
$o(1)$ for every sequence of initial configurations with exactly
$k$ ones. It therefore has the same profile.
Applying the argument to $n-K^{(n)}$, with
$(\kappa_1,\iota_1)$ and $(\kappa_2,\iota_2)$ interchanged,
gives the corresponding assertions for $k$ zeros at time
$T_{2,n}+s/\lambda_*$.

Finally, using the same $W_{0,k}$ for different values of $s$
gives $A_{s'}=e^{s-s'}A_s\to A_s$ almost surely as $s'\to s$.
The triangle inequality gives
\[
 \left|\mathcal P_{0,k}(s')-\mathcal P_{0,k}(s)\right|
 \le
 \mathbb E\left[
 \dtv\bigl(\mathcal N(-A_{s'},1),\mathcal N(-A_s,1)\bigr)
 \right].
\]
The integrand equals
$2\Phi(|A_{s'}-A_s|/2)-1$, tends to zero almost surely,
and is bounded by one. Bounded convergence proves continuity
of $\mathcal P_{0,k}$; the reflected profile is continuous
by the same argument.
For each $n$, the count and configuration distances are
nonincreasing in $s$ by Markov contraction and stationarity.
Their pointwise convergence to continuous profiles is therefore
uniform on compact intervals.
\end{proof}

\subsection{Optimisation over the initial configuration}

The preceding lemma identifies the profiles from fixed boundary seeds.
Two auxiliary lemmas complete the passage to worst-case mixing.
Lemma~\ref{lem:uniform_nonzero_minority_entrance} gives the uniform
entrance estimate
 $\sup_{0\le k<\lfloor an\rfloor}
 \mathbb E_k\left|\tau_{\lfloor an\rfloor}
       -\frac1{\kappa_1}\log\frac n{k+1}\right|\le C_a$
for sufficiently small fixed $a>0$.
Consequently, starting with $k_n\to\infty$ minority particles
reduces the entrance-time centring by
$\kappa_i^{-1}\log(k_n+1)\to\infty$. At time $T_n+s/\lambda_*$, the total variation distance tends to
zero for growing minority counts, including bulk initial conditions,
and for fixed seeds at the faster endpoint;
see Lemma~\ref{lem:growing_minority_already_mixed}.
Thus the worst-case limit is determined by comparing fixed-seed
profiles at the slower endpoint, or at both endpoints when
$\kappa_1=\kappa_2$. The \emph{lower endpoint} is the all-zero configuration
$\vec{\mathbf0}$, with count $0$ and density $0$.
The \emph{upper endpoint} is the all-one configuration
$\vec{\mathbf1}$, with count $n$ and density $1$.

\begin{proof}[Proof of \Cref{thm:worst_case_cutoff}]
For fixed $k\ge1$, let
$V_k\sim\operatorname{Gamma}(k,1)$ be independent of $W_{0,0}$.
By gamma additivity,
$W_{0,0}+V_k\sim\operatorname{Gamma}\left(k+\frac{\iota_1}{\kappa_1},1\right)$,
which is the law of $W_{0,k}$. We therefore construct it as
 $W_{0,k}=W_{0,0}+V_k\ge W_{0,0}
 \ \text{almost surely}$.
For $k=0$, take $V_0=0$. Since $1-\alpha_*>0$, this construction gives
 $W_{0,k}^{-(1-\alpha_*)}
 \le W_{0,0}^{-(1-\alpha_*)}$, almost surely.
Thus, for every fixed $s,z\in\mathbb R$, monotonicity of $\Phi$
implies
\[
\begin{aligned}
 \mathbb E\left[
 \Phi\left(z+\frac{\alpha_*H_*}{\sigma_*}e^{-s}
                  W_{0,k}^{-(1-\alpha_*)}\right)\right]\le
 \mathbb E\left[
 \Phi\left(z+\frac{\alpha_*H_*}{\sigma_*}e^{-s}
                  W_{0,0}^{-(1-\alpha_*)}\right)\right].
\end{aligned}
\]
Subtracting $\Phi(z)$ and taking the supremum over $z$ yields
 $\mathcal P_{0,k}(s)\le\mathcal P_{0,0}(s)=\mathcal P_0(s)$.
At the other endpoint, gamma additivity similarly gives a
coupling with $W_{1,k}\ge W_{1,0}$. Since $\alpha_*>0$, the
map $w\mapsto w^{-\alpha_*}$ is decreasing, and the same
comparison proves
 $\mathcal P_{1,k}(s)\le\mathcal P_{1,0}(s)=\mathcal P_1(s)$.

Fix $s\in\mathbb R$, choose $a$ as in
Lemma~\ref{lem:uniform_nonzero_minority_entrance}, and put
$j_n=\lfloor an\rfloor$. Since the configuration space is finite,
for each $n$ we may choose $\sigma_0^{(n)}$ attaining
$\mathscr D_n(T_n+s/\lambda_*)$. Write
$k_n=nM^{(n)}(\sigma_0^{(n)})$, and pass to a subsequence along
which these distances converge to
 $L=\limsup_{n\to\infty}\mathscr D_n(T_n+s/\lambda_*)$.
Every further subsequence retains the same limit $L$.

By passing to a further subsequence, we may assume that
$k_n$ lies either in $[j_n,n-j_n]$, below $j_n$,
or above $n-j_n$ throughout. If $j_n\le k_n\le n-j_n$,
the initial configurations are in the bulk, and
Lemma~\ref{lem:growing_minority_already_mixed} gives $L=0$.

Suppose instead that $k_n<j_n$. Since $k_n$ is a nonnegative
integer, we may pass to a further subsequence on which either
$k_n\to\infty$ or $k_n=k$ for a fixed integer $k$.
In the first case, the same lemma gives $L=0$.
In the second case, it also gives $L=0$ if
$\kappa_1>\kappa_2$, since the lower endpoint is then the
faster endpoint.

If $k_n>n-j_n$, apply the same argument to the number of zeros,
$n-k_n$. The limit is again $L=0$ if this number tends to
infinity, or if it is fixed and $\kappa_2>\kappa_1$.
Thus only fixed minority counts at an endpoint with the
smaller growth rate can contribute; when the rates agree,
both endpoints remain to be considered.

For a fixed minority count near zero with
$\kappa_1\le\kappa_2$, we have $T_n=T_{1,n}$.
Lemma~\ref{lemma:fixed_seed_count_profile} and the gamma
comparison therefore give
 $L=\mathcal P_{0,k}(s)\le\mathcal P_0(s)\le\mathcal P(s)$.
Similarly, near $n$ with $\kappa_2\le\kappa_1$, we have
$T_n=T_{2,n}$ and
 $L=\mathcal P_{1,k}(s)\le\mathcal P_1(s)\le\mathcal P(s)$.
These cases cover all possibilities, and hence
 $\limsup_{n\to\infty}
 \mathscr D_n(T_n+s/\lambda_*)\le\mathcal P(s)$.
For the lower bound, suppose first that $\kappa_1<\kappa_2$.
Then $T_n=T_{1,n}$, and the worst-case distance is at least
the distance from $\vec{\mathbf0}$. Applying
Lemma~\ref{lemma:fixed_seed_count_profile} with $k=0$ gives
\[
\begin{aligned}
 \liminf_{n\to\infty}\mathscr D_n(T_n+s/\lambda_*)
 \ge
 \lim_{n\to\infty}
 \mathscr D_n(T_n+s/\lambda_*;\vec{\mathbf0})=\mathcal P_0(s)=\mathcal P(s).
\end{aligned}
\]
If $\kappa_2<\kappa_1$, the same argument starting from
$\vec{\mathbf1}$ gives the lower bound
$\mathcal P_1(s)=\mathcal P(s)$.

When $\kappa_1=\kappa_2$, both endpoint centres equal $T_n$.
The distances from $\vec{\mathbf0}$ and $\vec{\mathbf1}$
therefore converge to $\mathcal P_0(s)$ and $\mathcal P_1(s)$,
respectively. Since the worst-case distance dominates both
endpoint distances,
 $\liminf_{n\to\infty}\mathscr D_n(T_n+s/\lambda_*)
 \ge\max\{\mathcal P_0(s),\mathcal P_1(s)\}
 =\mathcal P(s)$.
Together with the preceding limsup bound, this proves
\eqref{eq:worst_case_profile}.

We next establish the properties of the limiting profile.
Replacing the number of ones by the number of zeros at the upper boundary, we can write
both profiles as
\[
 \mathcal P_i(s)=\sup_{z\in\mathbb R}h_s(z),
 \qquad
 h_s(z)=\mathbb E\left[\Phi(z+e^{-s}V)\right]-\Phi(z),
\]
where $\sigma_*=\sqrt{\alpha_*(1-\alpha_*)}$ and
 $V=\frac{H_*}{\sigma_*}
 \begin{cases}
 \alpha_*W_0^{-(1-\alpha_*)}, & i=0,\\
 (1-\alpha_*)W_1^{-\alpha_*}, & i=1.
 \end{cases}$
The standardised displacement amplitude $V$ is finite and
strictly positive almost surely.

Since $\Phi$ is Lipschitz with constant $(2\pi)^{-1/2}$,
\[
\begin{aligned}
 |\mathcal P_i(s')-\mathcal P_i(s)|
 \le \sup_z|h_{s'}(z)-h_s(z)|\le
 \mathbb E\left[
 \min\left\{1,
 \frac{|e^{-s'}-e^{-s}|V}{\sqrt{2\pi}}\right\}
 \right].
\end{aligned}
\]
Bounded convergence proves continuity as $s'\to s$.

For fixed $s$, the function $h_s$ is continuous, strictly
positive at every finite $z$, and tends to zero as
$z\to\pm\infty$. Its positive supremum is therefore attained
at a finite point. If $s_1<s_2$, let $z_2$ be a maximiser
for $h_{s_2}$. Since $V>0$ almost surely, strict monotonicity
of $\Phi$ gives
\[
 \mathcal P_i(s_1)
 \ge h_{s_1}(z_2)
 >h_{s_2}(z_2)
 =\mathcal P_i(s_2).
\]
Thus both component profiles are strictly decreasing.
Their maximum is also strictly decreasing: choose a component
attaining the maximum at $s_2$ and apply the same inequality.
Continuity of their maximum follows from continuity of the
two components.

As $s\to+\infty$, the bound
 $0\le\mathcal P_i(s)
 \le
 \mathbb E\left[
 \min\left\{1,\frac{e^{-s}V}{\sqrt{2\pi}}\right\}
 \right]$
and bounded convergence give $\mathcal P_i(s)\to0$.
For the other limit, fix $R>0$. Evaluating the difference of
the distribution functions at $z=-R$ gives
 $\mathcal P_i(s)
 \ge
 \mathbb E\left[\Phi(-R+e^{-s}V)\right]-\Phi(-R)$.
As $s\to-\infty$, the right-hand side tends to
$1-\Phi(-R)=\Phi(R)$. Letting $R\to\infty$, and using
$\mathcal P_i(s)\le1$, proves $\mathcal P_i(s)\to1$.
The same limits hold for $\mathcal P$.

For each $n$, the function
$s\mapsto\mathscr D_n(T_n+s/\lambda_*)$ is nonincreasing.
Its pointwise limit $\mathcal P$ is continuous, so convergence
is uniform on compact intervals.

Fix $\epsilon\in(0,1)$ and write
$s_\epsilon=\mathcal P^{-1}(\epsilon)$.
For every $\eta>0$, strict monotonicity gives
\[
 \mathcal P(s_\epsilon-\eta)>\epsilon,
 \qquad
 \mathcal P(s_\epsilon+\eta)<\epsilon.
\]
By profile convergence, these inequalities also hold for the
corresponding finite-$n$ distances for all sufficiently large
$n$. Hence
 $T_n+\frac{s_\epsilon-\eta}{\lambda_*}
 \le t_{\mathrm{mix}}^{(n)}(\epsilon)
 \le T_n+\frac{s_\epsilon+\eta}{\lambda_*}$.
Since $\eta>0$ is arbitrary, this proves
\eqref{eq:worst_case_mixing_asymptotic}.

For fixed $\delta\in(0,1)$ and $R>0$, for all sufficiently
large $n$,
\[
 (1-\delta)T_n\le T_n-R/\lambda_*,
 \qquad
 (1+\delta)T_n\ge T_n+R/\lambda_*.
\]
Time monotonicity and profile convergence therefore give
\[
\begin{aligned}
 \liminf_{n\to\infty}\mathscr D_n((1-\delta)T_n)
 \ge\mathcal P(-R)\quad \text{and} \quad
 \limsup_{n\to\infty}\mathscr D_n((1+\delta)T_n)
 \le\mathcal P(R).
\end{aligned}
\]
Letting $R\to\infty$ proves
\eqref{eq:worst_case_cutoff_location}.
Similarly, since $T_n-t_n\to\infty$, for fixed $s$ and $R$,
 $t_n+s/\lambda_*\le T_n-R/\lambda_*$
for all sufficiently large $n$. Thus
 $\liminf_{n\to\infty}
 \mathscr D_n(t_n+s/\lambda_*)\ge\mathcal P(-R)$.
Letting $R\to\infty$ proves
\eqref{eq:worst_case_on_bulk_window}.

Finally, if $\kappa_1=\kappa_2=\kappa$, then
$\alpha_*=H_*=\sigma_*=1/2$.
Both profiles use the shift $\tfrac12e^{-s}W_i^{-1/2}$, where
$W_0\sim\operatorname{Gamma}(\iota_1/\kappa,1)$ and
$W_1\sim\operatorname{Gamma}(\iota_2/\kappa,1)$.
If $\iota_1\le\iota_2$, gamma additivity gives a coupling
with $W_0\le W_1$, and hence
$\mathcal P_0(s)\ge\mathcal P_1(s)$ for every $s$.
The reverse ordering holds if $\iota_2\le\iota_1$.
Thus, if $\iota_1<\iota_2$, the worst-case profile is attained
asymptotically from $\vec{\mathbf0}$.
If $\iota_2<\iota_1$, it is attained from $\vec{\mathbf1}$. In each case, the distance from the indicated
boundary configuration converges to $\mathcal P(s)$ at time
$T_n+s/\lambda_*$ for every fixed $s$.
If $\iota_1=\iota_2$, the two endpoint profiles coincide,
and both $\vec{\mathbf0}$ and $\vec{\mathbf1}$ attain the
worst-case profile asymptotically.
\end{proof}

The window cannot be replaced by an $o(1)$ window. Indeed, for
$0<\epsilon_1<\epsilon_2<1$,
\[
 t_{\mathrm{mix}}^{(n)}(\epsilon_1)
 -t_{\mathrm{mix}}^{(n)}(\epsilon_2)
 \longrightarrow\lambda_*^{-1}
 \bigl(\mathcal P^{-1}(\epsilon_1)
              -\mathcal P^{-1}(\epsilon_2)\bigr)>0.
\]
The comparison with bulk mixing is explicit:
\begin{equation}
\label{eq:worst_bulk_ratio}
 \frac{t_{\mathrm{mix}}^{(n)}(\epsilon)}
      {t_{\mathrm{mix},\varepsilon_0}^{(n)}(\epsilon)}
 \longrightarrow
 1+\frac{2\lambda_*}{\min\{\kappa_1,\kappa_2\}}
 =1+\frac{2\max\{\kappa_1,\kappa_2\}}{\kappa_1+\kappa_2},
\end{equation}
for every fixed $\epsilon\in(0,1)$ and
$\varepsilon_0\in(0,1/2)$. The ratio is $2$ when the growth rates
agree and lies strictly between $2$ and $3$ otherwise. Both windows have order one, but their profiles have different
forms. The bulk profile $\mathcal G(r(m)e^{-s})$ corresponds to a
deterministic Gaussian translation and is independent of the
mutation parameters $\iota_1,\iota_2$. These parameters enter
the worst-case profile through the gamma amplitudes generated
during escape from the boundary.

\begin{appendix}

\section{Auxiliary estimates for probability metrics}
\label{appendix_metric_estimates}

\subsection{Some useful bounds in Kantorovich--Rubinstein distance}
\label{appendix_KR_estimates}

We state the regularisation lemma from \cite{JaraXiang} and the Gaussian
comparison used in the covariance estimate. Derivative norms below are the
suprema of the induced multilinear operator norms.
\begin{lemma}[Regularisation lemma]
\label{Milton_approx_B2}
Let $\mu,\nu\in\mathcal P_1(\mathbb R^d)$ and let $\ell\ge2$ be an
integer. Suppose that $a_1,\ldots,a_\ell\ge0$ satisfy
\begin{equation}
\label{eq:regularization_input}
 \left|\int f\,d\mu-\int f\,d\nu\right|
 \le\sum_{j=1}^\ell a_j\|D^jf\|_\infty,
 \qquad f\in C_b^\infty(\mathbb R^d).
\end{equation}
Then
\begin{equation}
\label{eq:regularization_conclusion}
 d_K(\mu,\nu)
 \le C(d,\ell)\max_{1\le j\le\ell}a_j^{1/j}.
\end{equation}
\end{lemma}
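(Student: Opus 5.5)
We prove the regularisation lemma by mollification: replace a $1$-Lipschitz test function by a smooth approximation whose derivatives are controlled by the smoothing scale, then optimise that scale. Fix a $1$-Lipschitz $h$; by Kantorovich--Rubinstein duality \eqref{KR-duality} it suffices to bound $|\int h\,d\mu-\int h\,d\nu|$ uniformly in such $h$. Subtracting $h(0)$ changes nothing, since $\mu,\nu$ are probability measures.

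Let $\phi$ be the standard Gaussian density on $\mathbb R^d$ and set $h_\varepsilon=h*\phi_\varepsilon$ with $\phi_\varepsilon(x)=\varepsilon^{-d}\phi(x/\varepsilon)$. Then $|h_\varepsilon-h|\le\varepsilon\,\mathbb E|N|\le\varepsilon\sqrt d$ pointwise. For $j\ge1$, write $D^jh_\varepsilon=D(h)*D^{j-1}\phi_\varepsilon$, using that $h$ is differentiable almost everywhere with $|Dh|\le1$. This gives
\[
 \|D^jh_\varepsilon\|_\infty\le\|D^{j-1}\phi_\varepsilon\|_{L^1}=c_{d,j}\,\varepsilon^{-(j-1)},
\]
with $c_{d,1}=1$. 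The function $h_\varepsilon$ itself is unbounded, so it is not in $C_b^\infty$. To fix this, truncate: let $h_{\varepsilon,R}=\chi_R\,(h_\varepsilon-h_\varepsilon(0))$, where $\chi_R(x)=\chi(x/R)$ is a smooth cutoff equal to one on $B_R$. The derivatives of $\chi_R$ are $O(R^{-i})$ and $|h_\varepsilon-h_\varepsilon(0)|\le |x|$, so on the support of $\chi_R$ the product-rule terms are bounded. Hence $\|D^jh_{\varepsilon,R}\|_\infty\le c_{d,j}\varepsilon^{-(j-1)}+C_{d,j}R^{-1}$ for $R\ge1$. Since $\mu,\nu\in\mathcal P_1$, dominated convergence gives $\int h_{\varepsilon,R}\,d\mu\to\int h_\varepsilon\,d\mu$ as $R\to\infty$, and likewise for $\nu$. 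Applying \eqref{eq:regularization_input} to $h_{\varepsilon,R}$ and letting $R\to\infty$ yields
\[
 \left|\int h\,d\mu-\int h\,d\nu\right|
 \le 2\sqrt d\,\varepsilon+C(d,\ell)\sum_{j=1}^\ell a_j\varepsilon^{-(j-1)}.
\]
I expect this truncation step to be the only real technicality; the rest is bookkeeping.

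It remains to choose $\varepsilon$. Put $M=\max_{1\le j\le\ell}a_j^{1/j}$. If $M\ge1$, use the trivial choice $\varepsilon=1$: the bound becomes $C\sum_j a_j\le C\sum_j M^j\le C\ell M^\ell$, which is the wrong power of $M$ for the claimed estimate. In this regime I would instead take $\varepsilon=M$. For $j\ge2$ this gives $a_j\varepsilon^{-(j-1)}\le M^jM^{1-j}=M$, and the $j=1$ term is $a_1\le M$. So the total is $\le C(d,\ell)M$ in every case, and the large-$M$ case needs no separate treatment. Indeed, $\varepsilon=M$ works for every $M>0$; if $M=0$, let $\varepsilon\downarrow0$. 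Taking the supremum over $1$-Lipschitz $h$ gives \eqref{eq:regularization_conclusion}.
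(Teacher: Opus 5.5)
Your argument is correct and is essentially the paper's proof. Both use Gaussian mollification with derivative bounds of order $\varepsilon^{1-j}$ and uniform approximation error of order $\varepsilon$, then a truncation into $C_b^\infty$ that is removed by dominated convergence using $\mu,\nu\in\mathcal P_1$, and finally the choice $\varepsilon=\max_j a_j^{1/j}$ (the paper takes $\theta=A+\varepsilon$ to cover $A=0$). The only difference is that the paper clips the values, $f^R=\max\{-R,\min\{f,R\}\}$, \emph{before} mollifying; this gives a $C_b^\infty$ function with $R$-independent derivative bounds and no product-rule terms.

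Your cutoff step contains one small slip. For $j=1$, the term $(h_\varepsilon-h_\varepsilon(0))\,D\chi_R$ has size up to $|x|\,\|D\chi\|_\infty/R$, which is of order one where $\chi_R$ passes from one to zero. It is therefore not $O(R^{-1})$. (For $j\ge2$, the mixed terms also carry $\varepsilon$-dependence, which is harmless since you send $R\to\infty$ first.) This only enlarges the coefficient of $a_1$ by a fixed constant, so your final inequality and the conclusion are unaffected.
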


\begin{proof}[Proof of \Cref{Milton_approx_B2}]
Fix a $1$-Lipschitz function $f$ with $f(0)=0$ and, for $R>0$,
set $f^R=\max\{-R,\min\{f,R\}\}$. Then $f^R$ is bounded and
$1$-Lipschitz, $|f^R(x)|\le\|x\|$, and $f^R\to f$ pointwise.
Let $\gamma$ be the standard Gaussian density on $\mathbb R^d$,
put $\gamma_\theta(x)=\theta^{-d}\gamma(x/\theta)$, and define
$f^R_\theta=f^R*\gamma_\theta$ for $\theta>0$.
Since every Gaussian derivative is integrable,
$f^R_\theta\in C_b^\infty(\mathbb R^d)$.

For \(j\ge1\), differentiating
\(f^R_\theta(x)=\int_{\mathbb R^d}f^R(z)\gamma_\theta(x-z)\,dz\)
with respect to \(x\), changing variables \(y=x-z\), and using
\(\int_{\mathbb R^d}D^j\gamma_\theta(y)\,dy=0\), we obtain
\[
 D^jf^R_\theta(x)
 =
 \int_{\mathbb R^d}
 \bigl(f^R(x-y)-f^R(x)\bigr)D^j\gamma_\theta(y)\,dy.
\]
The Lipschitz bound and a change of variables therefore yield
\begin{align}
\label{eq:trunc_smoothing_error}
 \|f^R_\theta-f^R\|_\infty
 &\le
 \int_{\mathbb R^d}\|y\|\gamma_\theta(y)\,dy
 =C_d\theta,\\
\label{eq:derivative_mollified_bound}
 \|D^jf^R_\theta\|_\infty
 &\le
 \int_{\mathbb R^d}\|y\|\|D^j\gamma_\theta(y)\|\,dy
 =C_{d,j}\theta^{1-j},
 \qquad 1\le j\le\ell.
\end{align}
These constants are finite and independent of $R$ and $f$.

Applying \eqref{eq:regularization_input} to $f^R_\theta$ and
using the smoothing error under each probability measure gives
\begin{equation}
\label{eq:regularization_pre_optimization}
 \left|\int f^R\,d\mu-\int f^R\,d\nu\right|
 \le C(d,\ell)
 \left(\theta+\sum_{j=1}^\ell a_j\theta^{1-j}\right).
\end{equation}
Because $|f^R(x)|\le\|x\|$ and $\mu,\nu\in\mathcal P_1$,
dominated convergence allows $R\to\infty$. Taking the supremum
over all such $f$ then yields the same bound for $d_K(\mu,\nu)$.

Put $A=\max_{1\le j\le\ell}a_j^{1/j}$ and choose
$\theta=A+\varepsilon$ for any $\varepsilon>0$.
Since $a_j\le A^j\le\theta^j$, each term
$a_j\theta^{1-j}$ is at most $\theta$. Hence
$d_K(\mu,\nu)\le C(d,\ell)(A+\varepsilon)$.
Letting $\varepsilon\downarrow0$ proves the claim.
\end{proof}

\begin{lemma}[Comparison of centred Gaussian measures]
\label{dk_two_gaussians}
For symmetric positive semidefinite $k\times k$ matrices
$\Sigma_1,\Sigma_2$,
\[
 d_K(\mathcal N(0,\Sigma_1),\mathcal N(0,\Sigma_2))
 \le \mathbb E|\vec Z|\,
 \|\Sigma_1^{1/2}-\Sigma_2^{1/2}\|_{\mathrm{op}},
 \qquad \vec Z\sim\mathcal N(0,I_k).
\]
If $\Sigma_1,\Sigma_2\ge mI_k$ for some $m>0$, then
\[
 d_K(\mathcal N(0,\Sigma_1),\mathcal N(0,\Sigma_2))
 \le\frac{\mathbb E|\vec Z|}{2\sqrt m}
       \|\Sigma_1-\Sigma_2\|_{\mathrm{op}}
 \le\frac{\sqrt k}{2\sqrt m}
       \|\Sigma_1-\Sigma_2\|_{\mathrm{op}}.
\]
\end{lemma}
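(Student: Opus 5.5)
The plan is to use the Kantorovich--Rubinstein duality \eqref{KR-duality} together with a synchronous coupling. Let $\vec Z\sim\mathcal N(0,I_k)$ and set $\vec X_i=\Sigma_i^{1/2}\vec Z$ for $i=1,2$, where $\Sigma_i^{1/2}$ is the unique symmetric positive semidefinite square root. Then $\vec X_i\sim\mathcal N(0,\Sigma_i)$, so the pair $(\vec X_1,\vec X_2)$ is a coupling. Duality gives
\[
 d_K\le\mathbb E\bigl|(\Sigma_1^{1/2}-\Sigma_2^{1/2})\vec Z\bigr|
 \le\|\Sigma_1^{1/2}-\Sigma_2^{1/2}\|_{\mathrm{op}}\,\mathbb E|\vec Z|,
\]
which is the first assertion.

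For the second assertion, I need a perturbation bound for the matrix square root under the condition $\Sigma_1,\Sigma_2\ge mI_k$. Write $A=\Sigma_1^{1/2}$ and $B=\Sigma_2^{1/2}$. Both are at least $\sqrt m\,I_k$, and they satisfy the Sylvester identity
\[
 A(A-B)+(A-B)B=\Sigma_1-\Sigma_2.
\]
Since $A-B$ is symmetric, take a unit eigenvector $v$ of $A-B$ with eigenvalue $\lambda$, chosen so that $|\lambda|=\|A-B\|_{\mathrm{op}}$. Applying the quadratic form in $v$ to the identity, and using $(A-B)v=\lambda v$ together with the symmetry of $A-B$, gives
\[
 v^\intercal A(A-B)v+v^\intercal(A-B)Bv=\lambda\,v^\intercal(A+B)v=v^\intercal(\Sigma_1-\Sigma_2)v.
\]
The left side has absolute value at least $|\lambda|\cdot2\sqrt m$, and the right side has absolute value at most $\|\Sigma_1-\Sigma_2\|_{\mathrm{op}}$. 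Hence
\[
 \|A-B\|_{\mathrm{op}}\le\frac{\|\Sigma_1-\Sigma_2\|_{\mathrm{op}}}{2\sqrt m}.
\]
Substituting this into the first bound yields the middle inequality.

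The final inequality follows from Jensen's inequality: $\mathbb E|\vec Z|\le(\mathbb E|\vec Z|^2)^{1/2}=\sqrt k$.

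The only step that is not routine is the square-root perturbation estimate. The eigenvector trick handles it cleanly because $A-B$ is symmetric, so there is no need for integral representations of the square root. If needed, one could instead use the formula $A-B=\int_0^\infty e^{-tA}(\Sigma_1-\Sigma_2)e^{-tB}\,dt$, which gives the same constant.
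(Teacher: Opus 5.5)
Your proof is correct and has the same skeleton as the paper's: the synchronous coupling $(\Sigma_1^{1/2}\vec Z,\Sigma_2^{1/2}\vec Z)$ for the first bound, a square-root perturbation estimate with constant $(2\sqrt m)^{-1}$ for the second, and $\mathbb E|\vec Z|\le\sqrt k$ at the end.

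The only difference is how you prove $\|\Sigma_1^{1/2}-\Sigma_2^{1/2}\|_{\mathrm{op}}\le\|\Sigma_1-\Sigma_2\|_{\mathrm{op}}/(2\sqrt m)$. Both arguments start from the Sylvester identity $A(A-B)+(A-B)B=\Sigma_1-\Sigma_2$.

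\begin{itemize}
\item The paper solves the identity explicitly as $A-B=\int_0^\infty e^{-sA}(\Sigma_1-\Sigma_2)e^{-sB}\,\dd s$. It then bounds the integrand by $e^{-2s\sqrt m}\|\Sigma_1-\Sigma_2\|_{\mathrm{op}}$. This requires checking that the boundary term $e^{-sA}(A-B)e^{-sB}$ vanishes as $s\to\infty$.
\item You instead evaluate the quadratic form of the identity at a unit eigenvector of the symmetric matrix $A-B$ that realises its operator norm.
\end{itemize}

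Your route is shorter and purely algebraic, with no limit to justify. However, it relies specifically on the operator norm of a symmetric matrix being attained at an eigenvector. The integral representation gives the same constant in every unitarily invariant norm, such as the Frobenius norm, since $\|e^{-sA}\|_{\mathrm{op}},\|e^{-sB}\|_{\mathrm{op}}\le e^{-s\sqrt m}$. For the paper's purposes the operator-norm version suffices, because the lemma is only applied with $\|\Gamma_t^{(n)}-\bar\Gamma_t^{(n)}\|_{\mathrm{op}}$.
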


\begin{proof}[Proof of \Cref{dk_two_gaussians}]
The coupling $\Sigma_1^{1/2}\vec Z,\Sigma_2^{1/2}\vec Z$
gives the first bound.
For the second, differentiate
$e^{-s\Sigma_1^{1/2}}
(\Sigma_1^{1/2}-\Sigma_2^{1/2})
e^{-s\Sigma_2^{1/2}}$
and integrate over $s\in[0,\infty)$ to obtain
\[
 \Sigma_1^{1/2}-\Sigma_2^{1/2}
 =
 \int_0^\infty
 e^{-s\Sigma_1^{1/2}}(\Sigma_1-\Sigma_2)
 e^{-s\Sigma_2^{1/2}}\,\dd s.
\]
Indeed, $\|e^{-s\Sigma_i^{1/2}}\|_{\mathrm{op}}\le e^{-s\sqrt m}$,
so $e^{-s\Sigma_1^{1/2}}(\Sigma_1^{1/2}-\Sigma_2^{1/2})
e^{-s\Sigma_2^{1/2}}\to0$ as $s\to\infty$, while the integrand
has operator norm at most
$e^{-2s\sqrt m}\|\Sigma_1-\Sigma_2\|_{\mathrm{op}}$.
Integration gives the factor $(2\sqrt m)^{-1}$, and
Cauchy--Schwarz yields
$\mathbb E|\vec Z|\le\sqrt{\mathbb E|\vec Z|^2}=\sqrt k$.
\end{proof}
\subsection{Some useful bounds in total variation distance}
\label{appendix_total_variation_estimates}

\begin{lemma}
\label{dtv_two_binomials_I}
For $n\ge1$ and $p_1,p_2\in[0,1]$,
\[
 \dtv(\mathrm{Bin}(n,p_1),\mathrm{Bin}(n,p_2))
 \le\frac{\sqrt n}{2}
       |\arcsin(1-2p_1)-\arcsin(1-2p_2)|.
\]
If $p_1,p_2\in[\delta,1-\delta]$, with $\delta\in(0,1/2)$, this is at most
$\sqrt n\,|p_1-p_2|/(2\sqrt{\delta(1-\delta)})$.
\end{lemma}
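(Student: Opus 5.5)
The plan is to compare the two binomial laws through the Hellinger distance, which is exactly additive in the appropriate sense for product measures and has an explicit closed form for Bernoulli factors. Write $B_1,\ldots,B_n$ for i.i.d.\ $\mathrm{Ber}(p_1)$ and $B_1',\ldots,B_n'$ for i.i.d.\ $\mathrm{Ber}(p_2)$. Since the sum is a measurable function of the vector, the data-processing inequality gives
\[
 \dtv(\mathrm{Bin}(n,p_1),\mathrm{Bin}(n,p_2))\le \dtv(\mathrm{Ber}(p_1)^{\otimes n},\mathrm{Ber}(p_2)^{\otimes n}),
\]
so it suffices to bound the product distance.

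For the product measures, use $\dtv\le \sqrt{2}\,H$ in the normalisation $H^2(\mu,\nu)=1-\mathrm{BC}(\mu,\nu)$, where $\mathrm{BC}=\sum\sqrt{\mu\nu}$ is the Bhattacharyya coefficient. Then use the multiplicativity of $\mathrm{BC}$ over products:
\[
 1-\mathrm{BC}^n\le n(1-\mathrm{BC}), \qquad \mathrm{BC}=\sqrt{p_1p_2}+\sqrt{(1-p_1)(1-p_2)}.
\]
Parametrise $p_i=\sin^2\theta_i$ with $\theta_i\in[0,\pi/2]$. Then $\mathrm{BC}=\cos(\theta_1-\theta_2)$, so
\[
 1-\mathrm{BC}=2\sin^2\!\Big(\tfrac{\theta_1-\theta_2}{2}\Big)\le \tfrac{(\theta_1-\theta_2)^2}{2}.
\]
Combining, $\dtv\le\sqrt{2n(1-\mathrm{BC})}\le \sqrt n\,|\theta_1-\theta_2|$.

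Next relate $\theta$ to the arcsine in the statement. Since $1-2p=\cos 2\theta$, we have $\arcsin(1-2p)=\pi/2-2\theta$. Hence $|\theta_1-\theta_2|=\tfrac12|\arcsin(1-2p_1)-\arcsin(1-2p_2)|$, which yields precisely the constant $\sqrt n/2$. The main thing to check is the sharp constant in $\dtv\le\sqrt{2}H$ under this normalisation. It is the standard bound $\dtv\le\sqrt{1-\mathrm{BC}^2}\le\sqrt{2(1-\mathrm{BC})}$, and this exact form is what gives the stated factor.

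For the second claim, differentiate: $\big|\tfrac{d}{dp}\arcsin(1-2p)\big|=[p(1-p)]^{-1/2}$. On $[\delta,1-\delta]$ we have $p(1-p)\ge\delta(1-\delta)$, so the mean value theorem bounds the arcsine difference by $|p_1-p_2|/\sqrt{\delta(1-\delta)}$, which gives the second bound.

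Finally, the endpoint cases $p_i\in\{0,1\}$ need no separate treatment. The formulas remain valid there, with $\theta\in\{0,\pi/2\}$ and the Bhattacharyya identity still holding.
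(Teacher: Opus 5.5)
Your proof is correct, and it reaches the stated constant by a different route from the paper. The paper works directly with the binomial family. It differentiates $p\mapsto\mathbb E f(X_p)$, uses the score identity $\frac{\dd}{\dd p}\mathbb E f(X_p)=\mathbb E[(X_p-np)f(X_p)]/(p(1-p))$ and Cauchy--Schwarz to obtain the derivative bound $\|f\|_\infty\sqrt{n/(p(1-p))}$, and integrates in $p$, using $-\arcsin(1-2p)$ as the antiderivative of $[p(1-p)]^{-1/2}$. It then takes the supremum over bounded test functions and treats $p\in\{0,1\}$ by continuity.

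You instead tensorise. You use Le Cam's bound $\dtv\le\sqrt{1-\mathrm{BC}^2}\le\sqrt{2(1-\mathrm{BC})}$, multiplicativity of the Bhattacharyya coefficient, and the exact identity $\mathrm{BC}=\cos(\theta_1-\theta_2)$ under $p_i=\sin^2\theta_i$. Every step checks out, including $\arcsin(1-2p)=\pi/2-2\theta$ for $\theta\in[0,\pi/2]$ and the endpoint cases.

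Your route has three advantages. It is purely algebraic, with no differentiation under the expectation and no limiting argument at $p\in\{0,1\}$. It makes transparent why the arcsine appears: it is twice the angle, i.e. the Fisher--Rao distance between the Bernoulli laws. And your intermediate bound $\sqrt{1-\cos^{2n}(\theta_1-\theta_2)}$ is sharper than the final linear one and never exceeds one. The paper's route is a local, Lipschitz-in-$p$ argument that needs no closed form for the affinity, so it extends directly to any smooth one-parameter family through its Fisher information.

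The second claim is handled identically in both arguments, via $\bigl|\frac{\dd}{\dd p}\arcsin(1-2p)\bigr|=[p(1-p)]^{-1/2}\le[\delta(1-\delta)]^{-1/2}$ on $[\delta,1-\delta]$.
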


\begin{proof}[Proof of \Cref{dtv_two_binomials_I}]
For $X_p\sim\mathrm{Bin}(n,p)$, $p\in(0,1)$, differentiation gives
\[
 \frac{\dd}{\dd p}\mathbb Ef(X_p)
 =\mathbb E\left[\frac{X_p-np}{p(1-p)}f(X_p)\right],\qquad
 \left|\frac{\dd}{\dd p}\mathbb Ef(X_p)\right|
 \le\|f\|_\infty\sqrt{\frac n{p(1-p)}}.
\]
Integration and the variational formula for total variation give
the first bound, using the antiderivative $-\arcsin(1-2p)$.
Continuity covers the endpoints, while
$p(1-p)\ge\delta(1-\delta)$ on $[\delta,1-\delta]$ gives the second.
\end{proof}

\begin{lemma}
\label{dtv_two_binomials_II}
Let $n,m\ge1$, $p\in[0,1)$, and $X_n\sim\mathrm{Bin}(n,p)$. Then
\[
 \dtv(\mathrm{Bin}(n,p),\mathrm{Bin}(n-1,p))
 =\frac{\mathbb E|X_n/n-p|}{2(1-p)}
 \le\frac12\sqrt{\frac p{n(1-p)}}.
\]
Consequently,
$\dtv(\mathrm{Bin}(n,p),\mathrm{Bin}(n+m,p))
 \le(m/2)\sqrt{p/(n(1-p))}$.
\end{lemma}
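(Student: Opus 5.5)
The plan is to compute the total variation distance exactly through the likelihood ratio, and then obtain the inequality from Cauchy--Schwarz. Fix $p\in[0,1)$. If $p=0$, both laws equal $\delta_0$ and every quantity in the statement vanishes, so assume $p\in(0,1)$. Let $X_n\sim\mathrm{Bin}(n,p)$ and $X_{n-1}\sim\mathrm{Bin}(n-1,p)$. For $0\le k\le n$ we have $\binom{n-1}{k}/\binom nk=(n-k)/n$, with the convention $\binom{n-1}{n}=0$. Hence
\[
 \frac{\mathbb P(X_{n-1}=k)}{\mathbb P(X_n=k)}
 =\frac{\binom{n-1}{k}p^k(1-p)^{n-1-k}}{\binom nk p^k(1-p)^{n-k}}
 =\frac{n-k}{n(1-p)},
\]
and this formula correctly gives the value $0$ at $k=n$. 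Because $\mathrm{Bin}(n,p)$ has full support on $\{0,\ldots,n\}$ and dominates $\mathrm{Bin}(n-1,p)$, the identity $\dtv(\mu,\nu)=\frac12\mathbb E_\mu|\dd\nu/\dd\mu-1|$ applies. It yields
\[
 \dtv(\mathrm{Bin}(n,p),\mathrm{Bin}(n-1,p))
 =\frac12\mathbb E\left|\frac{n-X_n}{n(1-p)}-1\right|
 =\frac{\mathbb E|X_n-np|}{2n(1-p)}
 =\frac{\mathbb E|X_n/n-p|}{2(1-p)},
\]
which is the claimed identity.

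For the inequality, apply Cauchy--Schwarz to get $\mathbb E|X_n/n-p|\le\sqrt{\operatorname{Var}(X_n)}/n=\sqrt{p(1-p)/n}$. Dividing by $2(1-p)$ gives $\frac12\sqrt{p/(n(1-p))}$.

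For the consequence, write the passage from $n$ to $n+m$ as $m$ unit steps. The triangle inequality gives
\[
 \dtv(\mathrm{Bin}(n,p),\mathrm{Bin}(n+m,p))
 \le\sum_{j=1}^{m}\dtv(\mathrm{Bin}(n+j,p),\mathrm{Bin}(n+j-1,p)).
\]
Apply the bound just proved with $n+j$ in place of $n$. The $j$-th term is at most $\frac12\sqrt{p/((n+j)(1-p))}\le\frac12\sqrt{p/(n(1-p))}$, since $n+j\ge n$. Summing the $m$ terms gives $(m/2)\sqrt{p/(n(1-p))}$.

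There is no real obstacle. The only points needing care are the boundary term $k=n$ in the likelihood ratio, which the convention above handles, and the trivial case $p=0$.
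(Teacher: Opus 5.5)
Your proof is correct and follows the paper's argument. You use the same likelihood ratio $(n-k)/(n(1-p))$ to get the exact identity, Cauchy--Schwarz with $\operatorname{Var}(X_n/n)=p(1-p)/n$ for the bound, and the triangle inequality over successive trial numbers for the consequence, with the $p=0$ case and the boundary term $k=n$ simply made explicit.
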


\begin{proof}[Proof of \Cref{dtv_two_binomials_II}]
For $0\le k\le n$,
$\mathrm{Bin}(n-1,p)(k)=\mathbb P(X_n=k)(n-k)/(n(1-p))$.
Cauchy--Schwarz therefore gives
\[
 \dtv(\mathrm{Bin}(n,p),\mathrm{Bin}(n-1,p))
 =\frac{\mathbb E|X_n/n-p|}{2(1-p)}
 \le\frac12\sqrt{\frac p{n(1-p)}},
\]
since $\mathbb E(X_n/n-p)^2=p(1-p)/n$.
Applying the triangle inequality over successive trial numbers yields
\[
 \dtv(\mathrm{Bin}(n,p),\mathrm{Bin}(n+m,p))
 \le\frac12\sum_{j=1}^m\sqrt{\frac p{(n+j)(1-p)}}
 \le\frac m2\sqrt{\frac p{n(1-p)}}.
\]
\end{proof}

\begin{lemma}
\label{formula_total_variation}
Let $I$ be finite, let $\lambda_i\ge0$ with $\sum_i\lambda_i=1$, and let
$\mu_i,\nu_i$ be probability measures on a common measurable space. Then
\[
 \dtv\left(\sum_i\lambda_i\mu_i,\sum_i\lambda_i\nu_i\right)
 \le\sum_i\lambda_i\dtv(\mu_i,\nu_i).
\]
If the coordinates within each of $(X_i)_{i\in I}$ and $(Y_i)_{i\in I}$
are independent, then
\[
 \dtv\left(\operatorname{Law}((X_i)_{i\in I}),
           \operatorname{Law}((Y_i)_{i\in I})\right)
 \le\sum_i\dtv(\operatorname{Law}(X_i),\operatorname{Law}(Y_i)).
\]
The same upper bound holds after applying a common measurable map to the
two vectors. No independence between the two vectors is required.
\end{lemma}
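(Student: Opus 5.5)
The statement to prove is Lemma~\ref{formula_total_variation}, which has three parts: convexity of total variation, subadditivity over independent coordinates, and stability under a common measurable map. I will prove them in that order, each directly from the variational formula $\dtv(\mu,\nu)=\sup_A|\mu(A)-\nu(A)|$.

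\textbf{Convexity.} Fix a measurable set $A$. Then
\[
\Bigl|\sum_i\lambda_i\mu_i(A)-\sum_i\lambda_i\nu_i(A)\Bigr|\le\sum_i\lambda_i|\mu_i(A)-\nu_i(A)|\le\sum_i\lambda_i\dtv(\mu_i,\nu_i).
\]
Taking the supremum over $A$ gives the first claim. Only finiteness of $I$ and $\lambda_i\ge0$, $\sum_i\lambda_i=1$ are used.

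\textbf{Product bound.} Enumerate $I=\{1,\ldots,N\}$ and write $P_i=\operatorname{Law}(X_i)$, $R_i=\operatorname{Law}(Y_i)$. By independence within each vector, the two joint laws are the products $P_1\otimes\cdots\otimes P_N$ and $R_1\otimes\cdots\otimes R_N$. I will telescope through the hybrid measures
\[
Q_k=R_1\otimes\cdots\otimes R_k\otimes P_{k+1}\otimes\cdots\otimes P_N,\qquad 0\le k\le N,
\]
so that $Q_0$ is the law of $(X_i)$ and $Q_N$ is the law of $(Y_i)$. By the triangle inequality it suffices to prove the single-factor bound
\[
\dtv(\alpha\otimes P\otimes\beta,\ \alpha\otimes R\otimes\beta)\le\dtv(P,R).
\]
For a measurable set $A$ in the product space, Fubini writes $(\alpha\otimes P\otimes\beta)(A)-(\alpha\otimes R\otimes\beta)(A)$ as the integral over the other coordinates, with respect to $\alpha\otimes\beta$, of $P(A_{u,w})-R(A_{u,w})$, where $A_{u,w}$ is the section of $A$. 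Each integrand is bounded in absolute value by $\dtv(P,R)$, and $\alpha\otimes\beta$ is a probability measure, so the difference is at most $\dtv(P,R)$. Taking the supremum over $A$ proves the single-factor bound. Summing over $k$ gives the second claim. The argument never uses any joint law of $(X,Y)$, which is the sense of the remark that no independence between the two vectors is required.

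\textbf{Common measurable map.} If $g$ is a common measurable map, then for any measurable $B$,
\[
|\operatorname{Law}(g(X))(B)-\operatorname{Law}(g(Y))(B)|=|\operatorname{Law}(X)(g^{-1}B)-\operatorname{Law}(Y)(g^{-1}B)|\le\dtv(\operatorname{Law}(X),\operatorname{Law}(Y)).
\]
So total variation does not increase under pushforward, and the bound from the product step carries over to $g(X)$ and $g(Y)$.

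The only mildly delicate point is the measurability of sections in the Fubini step, which is standard. In every application in the paper the spaces are finite, where it is trivial, so I expect no real obstacle.
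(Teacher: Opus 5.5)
Your proof is correct and follows essentially the same route as the paper: the setwise characterisation for mixtures and for pushforwards, and replacement of one product factor at a time for the independent case. The only difference is cosmetic: the paper quotes the identity $\dtv(\mu\otimes\lambda,\nu\otimes\lambda)=\dtv(\mu,\nu)$, whereas you prove the sufficient inequality directly via Fubini and sections.
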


\begin{proof}[Proof of \Cref{formula_total_variation}]
For mixtures, the setwise characterisation gives
\[
 \dtv\left(\sum_i\lambda_i\mu_i,\sum_i\lambda_i\nu_i\right)
 =\sup_B\left|\sum_i\lambda_i[\mu_i(B)-\nu_i(B)]\right|
 \le\sum_i\lambda_i\dtv(\mu_i,\nu_i).
\]
For products, replacing one factor at a time and using
$\dtv(\mu\otimes\lambda,\nu\otimes\lambda)=\dtv(\mu,\nu)$ yields
$\dtv\left(\bigotimes_i\mu_i,\bigotimes_i\nu_i\right)
 \le\sum_i\dtv(\mu_i,\nu_i)$.
Independence within each vector identifies its law with the
corresponding product. Finally, for any measurable map $T$,
\[
 \dtv(\mu\circ T^{-1},\nu\circ T^{-1})
 =\sup_B|\mu(T^{-1}B)-\nu(T^{-1}B)|
 \le\dtv(\mu,\nu).
\]
\end{proof}

\section{Technical estimates for the diffusion approximation}
\label{appendix_diffusion_estimates}

This section contains the two technical lemmas used in the quantitative
diffusion approximation: the comparison between the Markov chain generator
and the diffusion generator, and the regularity estimate for the backward
Kolmogorov equation.

\subsection{Generator expansion for the local fluctuation vector}
\label{append_com_tk}

We compare the generator of the local fluctuation vector with
\(\mathscr L_t^{(n)}\) in \eqref{diffusion_operator_bkp}.

\begin{lemma}[Generator comparison]
\label{routine_computation_3}
Assume that both initial blocks are nonempty, so that \(a^0,a^1>0\),
and let \(g\in C_b^3(\mathbb R^Q;\mathbb R)\).
For every \(t\ge0\) and \(\sigma\in\Omega_n\), there exist points
\(\vec\vartheta_t^{\,i}(\sigma)\) and
\(\vec{\widehat\vartheta}_t^{\,i}(\sigma)\) on the line segment
joining \(\vec M^{(n)}(\sigma)\) and \(\vec{\rho_t^{(n)}}\) such that
\[
\begin{aligned}
&(\partial_t+L_n)
\left[g\left(\vec{\xi_t^{(n)}}\right)\right](\sigma)
-\bigl(\mathscr L_t^{(n)}g\bigr)
\left(\vec{\xi_t^{(n)}}(\sigma)\right)\\
&\quad=
\frac12\sum_{i\in Q}\sqrt{a^in}\,
\partial_i g\left(\vec{\xi_t^{(n)}}(\sigma)\right)
D^2F^{(i,n)}\left(\vec\vartheta_t^{\,i}(\sigma)\right)
\left[
\vec M^{(n)}(\sigma)-\vec{\rho_t^{(n)}},
\vec M^{(n)}(\sigma)-\vec{\rho_t^{(n)}}
\right]\\
&\qquad+
\frac12\sum_{i\in Q}
\partial_i^2g\left(\vec{\xi_t^{(n)}}(\sigma)\right)
DG^{(i,n)}\left(\vec{\widehat\vartheta}_t^{\,i}(\sigma)\right)
\cdot\left(\vec M^{(n)}(\sigma)-\vec{\rho_t^{(n)}}\right)
+\mathcal R_ng(t,\sigma),
\end{aligned}
\]
where
 $|\mathcal R_ng(t,\sigma)|
 \le
 \frac{C\|D^3g\|_\infty}
      {\sqrt{\min_{i\in Q}a^in}}$.
Consequently,
\[
\begin{aligned}
 &\left|
 (\partial_t+L_n)
 \left[g\left(\vec{\xi_t^{(n)}}\right)\right](\sigma)
 -\bigl(\mathscr L_t^{(n)}g\bigr)
 \left(\vec{\xi_t^{(n)}}(\sigma)\right)
 \right|\\
 &\quad\le
 \frac{C}{\sqrt{\min_{i\in Q}a^in}}
 \left[
 \|Dg\|_\infty
 \left\|\vec{\xi_t^{(n)}}(\sigma)\right\|^2
 +\|D^2g\|_\infty
 \left\|\vec{\xi_t^{(n)}}(\sigma)\right\|
 +\|D^3g\|_\infty
 \right].
\end{aligned}
\]
The constant \(C\) depends only on
\(\kappa_1,\kappa_2,\iota_1,\iota_2\).
\end{lemma}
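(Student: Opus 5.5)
The plan is to compute $(\partial_t+L_n)[g(\vec{\xi_t^{(n)}})](\sigma)$ exactly by splitting it into a time-derivative part and a jump part, and then to Taylor-expand each part. Since $\vec{\xi_t^{(n)}}(\sigma)$ depends on $t$ only through $\vec{\rho_t^{(n)}}$, the time-derivative part is
\[
\partial_t g\bigl(\vec{\xi_t^{(n)}}\bigr)=-\sum_{i\in Q}\sqrt{a^in}\,F^{(i,n)}\bigl(\vec{\rho_t^{(n)}}\bigr)\,\partial_ig\bigl(\vec{\xi_t^{(n)}}\bigr).
\]
For the jump part, note that a flip at $x\in A^i$ changes only the $i$-th coordinate of $\vec{\xi_t^{(n)}}$, by $\pm h_i$ with $h_i\defeq(a^in)^{-1/2}$. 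Grouping sites by block and by current spin gives
\[
L_n g\bigl(\vec{\xi}\bigr)=\sum_{i\in Q}\Bigl[u_i\bigl(g(\vec\xi+h_i\vec e_i)-g(\vec\xi)\bigr)+v_i\bigl(g(\vec\xi-h_i\vec e_i)-g(\vec\xi)\bigr)\Bigr].
\]
Here $u_i=a^in(1-M^{(i,n)})[\kappa_1m(1-m)+\tfrac{\iota_1}{n}(1-m)]$ and $v_i=a^inM^{(i,n)}[\kappa_2m(1-m)+\tfrac{\iota_2}{n}m]$, with $m=M^{(n)}(\sigma)$. By construction $u_i-v_i=a^inF^{(i,n)}(\vec M^{(n)})$ and $u_i+v_i=a^inG^{(i,n)}(\vec M^{(n)})$.

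Next, Taylor-expand $g(\vec\xi\pm h_i\vec e_i)$ to third order with Lagrange remainder. The first-order term is $h_i(u_i-v_i)\partial_ig=\sqrt{a^in}\,F^{(i,n)}(\vec M^{(n)})\partial_ig$. The second-order term is $\tfrac12h_i^2(u_i+v_i)\partial_i^2g=\tfrac12G^{(i,n)}(\vec M^{(n)})\partial_i^2g$. The remainder is bounded by $\tfrac16h_i^3(u_i+v_i)\|D^3g\|_\infty=\tfrac16(a^in)^{-1/2}G^{(i,n)}\|D^3g\|_\infty$, and since $G^{(i,n)}$ is bounded uniformly in $n$, this remainder is $\mathcal R_ng$.

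Adding in the time-derivative part, the first-order terms combine to
\[
\sqrt{a^in}\,\bigl[F^{(i,n)}(\vec M^{(n)})-F^{(i,n)}(\vec{\rho_t^{(n)}})\bigr]\partial_ig.
\]
Expand this to second order around $\vec{\rho_t^{(n)}}$ using Taylor's theorem with Lagrange remainder along the segment; this produces the point $\vec\vartheta_t^{\,i}$. The linear part is $\sqrt{a^in}\sum_j\partial_jF^{(i,n)}(\vec\rho)(M^{(j,n)}-\rho^{(j,n)})$. The key identity is $M^{(j,n)}-\rho^{(j,n)}=\xi^{j}/\sqrt{a^jn}$, which turns this linear part into $\sum_j\beta^{(n)}_{ij}(t)\xi^j$, exactly the drift in $\mathscr L_t^{(n)}$. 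Similarly, write $G^{(i,n)}(\vec M)=G^{(i,n)}(\vec\rho)+DG^{(i,n)}(\vec{\widehat\vartheta}^{\,i})\cdot(\vec M-\vec\rho)$ by the mean value theorem. Its leading term reproduces the diffusion part of $\mathscr L_t^{(n)}$, and the difference term is the stated $DG$ term. This establishes the displayed identity.

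For the consequence, bound the $D^2F$ term by $\sqrt{a^in}\,\|Dg\|_\infty C\|\vec M-\vec\rho\|^2$. Using $|M^{(j,n)}-\rho^{(j,n)}|=|\xi^j|/\sqrt{a^jn}\le\|\vec\xi\|/\sqrt{\min_k a^kn}$, this is at most $C\|Dg\|_\infty\|\vec\xi\|^2\sqrt{a^in}/\min_ka^kn$. The factor $\sqrt{a^in}/\min_k a^kn$ is not automatically $\le(\min_ka^kn)^{-1/2}$ when the blocks are unbalanced, and I expect this to be the main obstacle. To handle it, I would use the structure of $F^{(i,n)}$: its dependence on $m^j$ enters through $m=\sum_ja^jm^j$, so second derivatives in the $j,k$ directions carry factors of $a^j$ or $a^k$ except for the $m^i$ direction. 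Since $a^j|M^{(j,n)}-\rho^{(j,n)}|=\sqrt{a^j}|\xi^j|/\sqrt n$, tracking these weights yields a bound of the form $C\|\vec\xi\|^2/\sqrt{\min_k a^kn}$. The $DG$ term is handled the same way, giving $C\|D^2g\|_\infty\|\vec\xi\|/\sqrt{\min_ka^kn}$. The boundedness of $D^2F^{(i,n)}$ and $DG^{(i,n)}$ uniformly in $n$ makes the constant depend only on $\kappa_1,\kappa_2,\iota_1,\iota_2$.
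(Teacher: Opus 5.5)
Your proposal is correct and follows essentially the same route as the paper. It uses the same third-order Taylor expansion of the single-coordinate jumps via the block sums $a^inF^{(i,n)}$ and $a^inG^{(i,n)}$, the same cancellation of the linear drift through $\beta^{(n)}_{ij}$, and the same resolution of the unbalanced-block factor. The paper states that resolution as $|D^2F^{(i,n)}(\vec m)[\vec u,\vec u]|\le C(|\vec a\cdot\vec u|^2+|u^i|\,|\vec a\cdot\vec u|)$, combined with $|\vec a\cdot\vec u|=|\xi_t^{(n)}|/\sqrt n$. Since the direct dependence of $F^{(i,n)}$ on $m^i$ is affine, every second-derivative entry in fact carries a weight, including the $m^i$ direction you excepted. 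For the $DG^{(i,n)}$ term no weight tracking is needed: $\|\vec M^{(n)}-\vec{\rho_t^{(n)}}\|\le\|\vec{\xi_t^{(n)}}\|/\sqrt{\min_k a^kn}$ already gives the stated bound.
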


\begin{proof}[Proof of \Cref{routine_computation_3}]
For \(x\in A^i\), a flip at \(x\) changes the fluctuation vector
by \((1-2\sigma_x)e_i/\sqrt{a^in}\). Taylor's formula gives
\[
\begin{aligned}
 &g\left(\vec{\xi_t^{(n)}}(\sigma^x)\right)
 -g\left(\vec{\xi_t^{(n)}}(\sigma)\right)\\
 &\quad=
 \frac{1-2\sigma_x}{\sqrt{a^in}}\,
 \partial_i g\left(\vec{\xi_t^{(n)}}(\sigma)\right)
 +\frac{1}{2a^in}\,
 \partial_i^2g\left(\vec{\xi_t^{(n)}}(\sigma)\right)
 +R_{x,i}(t,\sigma),
\end{aligned}
\]
where
\(\lvert R_{x,i}(t,\sigma)\rvert
\le\|D^3g\|_\infty/[6(a^in)^{3/2}]\).
By the definitions of the local coefficients,
\[
\begin{aligned}
 \sum_{x\in A^i}c_x(\sigma)(1-2\sigma_x)
 =a^inF^{(i,n)}\left(\vec M^{(n)}(\sigma)\right) \ \text{and} \
 \sum_{x\in A^i}c_x(\sigma)
 =a^inG^{(i,n)}\left(\vec M^{(n)}(\sigma)\right).
\end{aligned}
\]
Moreover, \eqref{mean_field_ode} gives
\(\partial_t\xi_t^{(i,n)}(\sigma)
=-\sqrt{a^in}\,F^{(i,n)}(\vec{\rho_t^{(n)}})\).
Summing over flips and including this derivative yields
\[
\begin{aligned}
(\partial_t+L_n)
\left[g\left(\vec{\xi_t^{(n)}}\right)\right](\sigma)
&=
\sum_{i\in Q}\sqrt{a^in}\,
\partial_i g\left(\vec{\xi_t^{(n)}}(\sigma)\right)
\left[
F^{(i,n)}\left(\vec M^{(n)}(\sigma)\right)
-F^{(i,n)}\left(\vec{\rho_t^{(n)}}\right)
\right]\\
&\quad+
\frac12\sum_{i\in Q}
\partial_i^2g\left(\vec{\xi_t^{(n)}}(\sigma)\right)
G^{(i,n)}\left(\vec M^{(n)}(\sigma)\right)
+\mathcal R_ng(t,\sigma).
\end{aligned}
\]
Uniform boundedness of \(G^{(i,n)}\) implies
\[
 |\mathcal R_ng(t,\sigma)|
 \le
 \frac{\|D^3g\|_\infty}{6}
 \sum_{i\in Q}
 \frac{G^{(i,n)}(\vec M^{(n)}(\sigma))}{\sqrt{a^in}}
 \le
 \frac{C\|D^3g\|_\infty}{\sqrt{\min_{i\in Q}a^in}}.
\]

Expand \(F^{(i,n)}\) to first order and \(G^{(i,n)}\) to
zeroth order along the segment joining
\(\vec{\rho_t^{(n)}}\) and \(\vec M^{(n)}(\sigma)\).
Since
\[
 \sqrt{a^in}\,
 DF^{(i,n)}\left(\vec{\rho_t^{(n)}}\right)
 \cdot\left(\vec M^{(n)}(\sigma)-\vec{\rho_t^{(n)}}\right)
 =
 \sum_{j\in Q}\beta_{ij}^{(n)}(t)\xi_t^{(j,n)}(\sigma),
\]
subtracting the diffusion generator gives the expansion in
the statement.

To estimate the Taylor terms, direct differentiation gives,
uniformly for \(\vec m\in[0,1]^2\) and \(\vec u\in\mathbb R^2\),
\[
\begin{aligned}
 \left|D^2F^{(i,n)}(\vec m)[\vec u,\vec u]\right|
 &\le C\left(
 |\vec a\cdot\vec u|^2
 +|u^i|\,|\vec a\cdot\vec u|
 \right),\\
 \left|DG^{(i,n)}(\vec m)\cdot\vec u\right|
 &\le C\left(
 |\vec a\cdot\vec u|+|u^i|
 \right).
\end{aligned}
\]
Apply these bounds with
\(u^i=M^{(i,n)}(\sigma)-\rho_t^{(i,n)}
=\xi_t^{(i,n)}(\sigma)/\sqrt{a^in}\).
The identity between global and local fluctuations gives
 $|\vec a\cdot\vec u|
 =
 \frac{|\xi_t^{(n)}(\sigma)|}{\sqrt n}
 \le
 \frac{\left\|\vec{\xi_t^{(n)}}(\sigma)\right\|}{\sqrt n}$.
Summing over \(i\in Q\) and applying Cauchy--Schwarz therefore
bounds the first Taylor term by
\(C\|Dg\|_\infty
\|\vec{\xi_t^{(n)}}(\sigma)\|^2/\sqrt n\),
and the second by
\(C\|D^2g\|_\infty
\|\vec{\xi_t^{(n)}}(\sigma)\|/
\sqrt{\min_{i\in Q}a^in}\).
Together with the remainder estimate and
\(\min_{i\in Q}a^in\le n\), this proves the final inequality.
\end{proof}

\subsection{Regularity of the backward Kolmogorov equation}
\label{appendix_reg_bFP}
We establish stability of the linearised flow and use the explicit
solution of the linear SDE to bound derivatives of the backward solution.
\begin{lemma}[Uniform stability of the linearised flow]
\label{lemma:uniform_stability_linearized_TK}
Under Assumption~\ref{assumption:bulk_initial_condition_tk}, there
exist \(C_{\varepsilon_0},c_{\varepsilon_0}>0\) such that, uniformly
over bulk initial configurations and for sufficiently large \(n\),
\[
 \|\Phi^{(n)}(t,s)\|
 \le C_{\varepsilon_0}e^{-c_{\varepsilon_0}(t-s)},
 \qquad 0\le s\le t.
\]
\end{lemma}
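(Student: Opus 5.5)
The plan is to compute the Jacobian of the linearised vector field along the deterministic trajectory, and to split the fundamental matrix into a scalar \emph{global} mode and a transverse \emph{difference} mode that decouple. Recall $\beta^{(n)}(t)=D^{1/2}J^{(n)}(t)D^{-1/2}$, where $D=\operatorname{diag}(a^0,a^1)$ and $J^{(n)}(t)=D\vec F^{(n)}(\vec{\rho_t^{(n)}})$. Since $a^i\in[\varepsilon_0,1-\varepsilon_0]$, the conjugation costs only a constant factor $C_{\varepsilon_0}$, so it suffices to bound the fundamental matrix $\Psi(t,s)$ of $J^{(n)}(t)$. Differentiating $F^{(i,n)}$ gives
\[
 J^{(n)}_{ij}(t)=-A_t^{(n)}\delta_{ij}+b_t^{(i,n)}a^j,
\]
with $A_t^{(n)}$ as in \eqref{eq:A_t_def}. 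Here $b_t^{(i,n)}$ is the derivative of the local drift with respect to the global density $m$, evaluated at $\vec{\rho_t^{(n)}}$; it is the deterministic analogue of $B_t^{(i,n)}$ in \eqref{eq:B_t_def}. Thus $J^{(n)}(t)=-A_t^{(n)}I+\vec b_t\,\vec a^{\,T}$ is a rank-one perturbation of a scalar matrix.

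The key steps are as follows.

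First, the row vector $\vec a^{\,T}$ is an approximate left eigenvector, since
\[
 \vec a^{\,T}J^{(n)}(t)=\bigl(-A_t^{(n)}+\vec a\cdot\vec b_t\bigr)\vec a^{\,T}=(F^{(n)})'(\rho_t^{(n)})\,\vec a^{\,T}.
\]
This identity follows by differentiating \eqref{eq:global_local_FG_relation}. Hence, for $\vec v(t)=\Psi(t,s)\vec v_0$, the global component $g(t)=\vec a\cdot\vec v(t)$ satisfies the scalar equation $g'=(F^{(n)})'(\rho_t^{(n)})\,g$.

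Second, the local components satisfy $v_i'=-A_t^{(n)}v_i+b_t^{(i,n)}g(t)$. By \eqref{eq:rho_global_bulk_bound}, $A_t^{(n)}\ge c_{\varepsilon_0}>0$, and $b_t^{(i,n)}$ is uniformly bounded. Duhamel's formula then gives $|v_i(t)|\le e^{-c(t-s)}|v_i(s)|+C\int_s^t e^{-c(t-u)}|g(u)|\,\dd u$. It therefore remains to show that $|g(t)|\le C_{\varepsilon_0}e^{-c(t-s)}|g(s)|$; the convolution of two exponentials then gives exponential decay, after shrinking $c$ slightly.

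Third, the scalar global mode. Here $g(t)=g(s)\exp\int_s^t (F^{(n)})'(\rho_u^{(n)})\,\dd u$. By Lemma~\ref{lemma: deterministic_entrance_time_bulk}, $\rho_u^{(n)}\in\mathcal J$ for all $u\ge T_{\varepsilon_0}$. On $\mathcal J\subset\mathcal I$ we have $(F^{(n)})'\le -c_0$, as established in the proof of Lemma~\ref{lemma: priori_estimate_for_the_global_density_II}. On $[0,T_{\varepsilon_0}]$ the integrand is bounded by a constant $C$. Hence the exponent is at most $CT_{\varepsilon_0}-c_0(t-s)$ in all cases, which gives $|g(t)|\le e^{(C+c_0)T_{\varepsilon_0}}e^{-c_0(t-s)}|g(s)|$, uniformly over the bulk class.

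I expect the main obstacle to be this last step: the global drift $(F^{(n)})'$ need not be negative before the entrance time, when the initial density may lie outside $(x_-,x_+)$. This is resolved by absorbing the bounded pre-entrance period into the constant $C_{\varepsilon_0}$ via the uniform bound $t_{\mathrm{ent}}^{(n)}\le T_{\varepsilon_0}$. Everything else is a uniform bound following from \eqref{eq:rho_global_bulk_bound} and $\varepsilon_0\le a^i\le 1-\varepsilon_0$.
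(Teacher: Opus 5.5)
Your proposal is correct and follows essentially the same argument as the paper. The paper also writes $\beta^{(n)}(t)=-A_t^{(n)}I+\mathbf v_t^{(n)}\mathbf u^{\intercal}$ with $\mathbf u=(\sqrt{a^0},\sqrt{a^1})^{\intercal}$, decouples the scalar mode $y_t=\mathbf u^{\intercal}\mathbf z_t$ solving $\dot y_t=(F^{(n)})'(\rho_t^{(n)})y_t$, absorbs the pre-entrance period via $t_{\mathrm{ent}}^{(n)}\le T_{\varepsilon_0}$, and finishes with Duhamel. The only differences are cosmetic: you conjugate by $D^{1/2}$ and use the exact (not merely approximate) left eigenvector $\vec a$ of the unweighted Jacobian, costing a harmless factor $C_{\varepsilon_0}$, and the pre-entrance constant in the exponent is $(C+c_0)T_{\varepsilon_0}$ rather than $CT_{\varepsilon_0}$, as your final bound on $|g(t)|$ already correctly reflects.
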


\begin{proof}[Proof of \Cref{lemma:uniform_stability_linearized_TK}]
Let \(\mathbf u=(\sqrt{a^0},\sqrt{a^1})^{\intercal}\), so that
\(\|\mathbf u\|=1\), and let \(I\) denote the identity matrix.
Define \(\mathbf v_t^{(n)}\) by its components
\[
 v_t^{(i,n)}
 \defeq\sqrt{a^i}\left[
 \bigl(\kappa_1-(\kappa_1+\kappa_2)\rho_t^{(i,n)}\bigr)
 \bigl(1-2\rho_t^{(n)}\bigr)
 -\frac{\iota_1(1-\rho_t^{(i,n)})+\iota_2\rho_t^{(i,n)}}{n}
 \right].
\]
With \(A_t^{(n)}\) as in \eqref{eq:A_t_def}, direct differentiation gives
\[
 \beta^{(n)}(t)
 =-A_t^{(n)}I+\mathbf v_t^{(n)}\mathbf u^{\intercal},
 \qquad
 -A_t^{(n)}+\mathbf u^{\intercal}\mathbf v_t^{(n)}
 =(F^{(n)})'\bigl(\rho_t^{(n)}\bigr).
\]
By \eqref{eq:rho_global_bulk_bound} and
\(\rho_t^{(i,n)}\in[0,1]\), there are constants \(a_0,C>0\)
such that \(A_t^{(n)}\ge a_0\) and
\(\|\mathbf v_t^{(n)}\|\le C\), uniformly in \(t\), sufficiently
large \(n\), and the bulk initial configuration.

Fix $s\ge0$ and an arbitrary $\mathbf z_s\in\mathbb R^2$.
The fundamental matrix gives
$\mathbf z_t=\Phi^{(n)}(t,s)\mathbf z_s$, which solves
$\dot{\mathbf z}_t=\beta^{(n)}(t)\mathbf z_t$ with initial
value $\mathbf z_s$ at time $s$.
Set $y_t=\mathbf u^\intercal\mathbf z_t$.
Since $\mathbf u$ is constant, multiplying by
$\mathbf u^\intercal$ and using the preceding identities gives
\[
 \dot y_t=(F^{(n)})'(\rho_t^{(n)})y_t,
 \qquad
 \dot{\mathbf z}_t=-A_t^{(n)}\mathbf z_t+\mathbf v_t^{(n)}y_t.
\]

Since \(\mathcal I\subset(x_-,x_+)\) and
\(F^{(n)}\to F^{(\infty)}\) in \(C^1([0,1])\), there exist
\(L,c_0>0\) such that
\(\lvert(F^{(n)})'(m)\rvert\le L\) on \([0,1]\) and
\((F^{(n)})'(m)\le-c_0\) on \(\mathcal I\), for all sufficiently
large \(n\).
Lemma~\ref{lemma: deterministic_entrance_time_bulk} gives
\(t_{\mathrm{ent}}^{(n)}\le T_{\varepsilon_0}\) and
\(\rho_r^{(n)}\in\mathcal J\subset\mathcal I\) thereafter. Since
 $(F^{(n)})'\bigl(\rho_r^{(n)}\bigr)
 \le -c_0+(L+c_0)\mathbf1_{\{r<t_{\mathrm{ent}}^{(n)}\}}$,
integration and \(t_{\mathrm{ent}}^{(n)}\le T_{\varepsilon_0}\) give
\[
\begin{aligned}
\int_s^t(F^{(n)})'\bigl(\rho_r^{(n)}\bigr)\,\dd r
 \le -c_0(t-s)
 +(L+c_0)\int_s^t
 \mathbf1_{\{r<t_{\mathrm{ent}}^{(n)}\}}\,\dd r\le (L+c_0)T_{\varepsilon_0}-c_0(t-s).
\end{aligned}
\]
Solving $\dot y_t=(F^{(n)})'(\rho_t^{(n)})y_t$ and using
\(\lvert y_s\rvert=\lvert\mathbf u^\intercal\mathbf z_s\rvert
\le\|\mathbf z_s\|\), we obtain
\[
 |y_t|
 =
 |y_s|\exp\left\{
 \int_s^t(F^{(n)})'\bigl(\rho_r^{(n)}\bigr)\,\dd r
 \right\}
 \le C_{\varepsilon_0}e^{-c_0(t-s)}\|\mathbf z_s\|.
\]
We also have the explicit formula
 $\mathbf z_t
 =
 e^{-\int_s^t A_q^{(n)}\,\dd q}\mathbf z_s
 +
 \int_s^t e^{-\int_r^t A_q^{(n)}\,\dd q}
 \mathbf v_r^{(n)}y_r\,\dd r$.
Using \(A_q^{(n)}\ge a_0\), \(\|\mathbf v_r^{(n)}\|\le C\),
and the preceding bound on \(y_r\), we deduce
\[
\begin{aligned}
 \|\mathbf z_t\|
 &\le
 \left(
 e^{-a_0(t-s)}
 +C_{\varepsilon_0}\int_s^t
 e^{-a_0(t-r)}e^{-c_0(r-s)}\,\dd r
 \right)\|\mathbf z_s\|\\
 &\le
 C_{\varepsilon_0}(1+t-s)
 e^{-\min\{a_0,c_0\}(t-s)}\|\mathbf z_s\|.
\end{aligned}
\]
The last inequality follows from
\(a_0(t-r)+c_0(r-s)\ge\min\{a_0,c_0\}(t-s)\)
for \(s\le r\le t\).
Set \(c_{\varepsilon_0}=\frac12\min\{a_0,c_0\}\).
Since
\((1+t-s)e^{-c_{\varepsilon_0}(t-s)}
\le1+c_{\varepsilon_0}^{-1}\), taking the supremum over
\(\|\mathbf z_s\|=1\) proves the claim.
\end{proof}

\begin{proof}[Proof of \Cref{lemma:reg_bFP}]
The linear SDE gives
\[
 g_{s,t}(\vec x)
 =\mathbb E \left[f\left(
 \Phi^{(n)}(t,s)\vec x+
 \int_s^t\Phi^{(n)}(t,r)\Sigma^{(n)}(r)\,\dd\vec B_r
 \right)\right].
\]
For \(1\le\ell\le k\), boundedness of \(D^\ell f\) permits
differentiation under the expectation. Thus
Lemma~\ref{lemma:uniform_stability_linearized_TK} gives
 $\|D^\ell g_{s,t}\|_\infty
 \le\|\Phi^{(n)}(t,s)\|^\ell\|D^\ell f\|_\infty
 \le C_{\varepsilon_0}^{\,\ell}
 e^{-\ell c_{\varepsilon_0}(t-s)}\|D^\ell f\|_\infty$.
For \(\ell=0\), the representation directly yields
\(\|g_{s,t}\|_\infty\le\|f\|_\infty\), completing the proof.
\end{proof}
\section{Approximation of the invariant measure}
\label{appendix:approx_invariant_measure}

Throughout this section, the four parameters are fixed and strictly positive.
Put $\ell_j=\iota_j/\kappa_j$ for $j=1,2$, and write
$(\ell)_{(k)}=\Gamma(\ell+k)/\Gamma(\ell)$,
with $(\ell)_{(0)}=1$. By permutation invariance, the invariant law is
uniform on each level set of $nM^{(n)}$. Its count probabilities are
\begin{equation}
\label{mean_field_markov_chain}
 \pi_n(k)\defeq\mu_{\ell_1,\ell_2}^{(n)}(k/n)
 =\binom nk\mu_{\kappa_1,\kappa_2,\iota_1,\iota_2}^{(n)}(\sigma),
 \qquad nM^{(n)}(\sigma)=k.
\end{equation}
The notation $\mu_{\ell_1,\ell_2}^{(n)}$ suppresses the dependence on
$\kappa_1/\kappa_2$. We use $\pi_n$ when comparing count laws, so that
both measures are on $\{0,\ldots,n\}$.

\subsection{The invariant count law}
The count chain has rates
\[
 q_n(k,k+1)=\frac{(n-k)^2}{n^2}(\kappa_1k+\iota_1),\qquad
 q_n(k,k-1)=\frac{k^2}{n^2}\{\kappa_2(n-k)+\iota_2\}.
\]
Define
\begin{equation}
\label{eq:tilde_mu_invariant_measure}
 \widetilde\mu_{\ell_1,\ell_2}^{(n)}(k/n)
 =\kappa_1^k\kappa_2^{n-k}\binom nk^2
   \frac{(\ell_1)_{(k)}(\ell_2)_{(n-k)}}{(\ell_1+\ell_2)_{(n)}}.
\end{equation}
For $0\le k<n$, the ratio of consecutive weights is
\[
 \frac{\widetilde\mu_{\ell_1,\ell_2}^{(n)}((k+1)/n)}
      {\widetilde\mu_{\ell_1,\ell_2}^{(n)}(k/n)}
 =\frac{\kappa_1}{\kappa_2}\left(\frac{n-k}{k+1}\right)^2
   \frac{\ell_1+k}{\ell_2+n-k-1}
 =\frac{q_n(k,k+1)}{q_n(k+1,k)}.
\]
All nearest-neighbour transitions within $\{0,\ldots,n\}$
have positive rates; in particular,
$q_n(0,1)=\iota_1>0$ and $q_n(n,n-1)=\iota_2>0$.
Detailed balance and irreducibility therefore give
\begin{equation}
\label{eq:lifted_invariant_measure_formula}
 \pi_n(k)=\frac{\widetilde\mu_{\ell_1,\ell_2}^{(n)}(k/n)}
                   {\mathcal Z_{\ell_1,\ell_2}^{(n)}},
 \qquad
 \mathcal Z_{\ell_1,\ell_2}^{(n)}
 =\sum_{j=0}^n\widetilde\mu_{\ell_1,\ell_2}^{(n)}(j/n).
\end{equation}

\subsection{Binomial approximation}
Let $B_n\sim\mathrm{Bin}(n,\alpha_*)$, with probability mass
function $p_n^{\text{Bin}}$. For $0\le k\le n$, define
\[
\begin{aligned}
 A_n(k)
 &=\binom nk
 \frac{(\ell_1)_{(k)}(\ell_2)_{(n-k)}}
      {(\ell_1+\ell_2)_{(n)}}=\int_0^1\binom nk x^k(1-x)^{n-k}
 \frac{x^{\ell_1-1}(1-x)^{\ell_2-1}}
      {\mathrm B(\ell_1,\ell_2)}\,\dd x.
\end{aligned}
\]
This representation identifies $A_n$ as a beta-binomial probability
mass function: summing over $k$ and applying the binomial theorem
gives $\sum_{k=0}^n A_n(k)=1$, and hence $0<A_n(k)\le1$.

Since $\alpha_*=\kappa_1/(\kappa_1+\kappa_2)$, the invariant
weights factor as
 \[\widetilde\mu_{\ell_1,\ell_2}^{(n)}(k/n)
 =(\kappa_1+\kappa_2)^n p_n^{\text{Bin}}(k)A_n(k).\]
Summing over $k$ and normalising therefore give
\begin{equation}
\label{eq:partition_function_binomial_expectation}
 \frac{\mathcal Z_{\ell_1,\ell_2}^{(n)}}
      {(\kappa_1+\kappa_2)^n}
 =\mathbb E A_n(B_n),
 \qquad
 \frac{\pi_n(k)}{p_n^{\text{Bin}}(k)}
 =\frac{A_n(k)}{\mathbb E A_n(B_n)}.
\end{equation}

\begin{lemma}
\label{partition_function_estimate}
As $n\to\infty$,
\[
 \mathcal Z_{\ell_1,\ell_2}^{(n)}
 =\frac{\Gamma(\ell_1+\ell_2)}{\Gamma(\ell_1)\Gamma(\ell_2)}
   \frac{\alpha_*^{\ell_1-1}(1-\alpha_*)^{\ell_2-1}}n
   (\kappa_1+\kappa_2)^n\bigl(1+O(n^{-1/2})\bigr).
\]
\end{lemma}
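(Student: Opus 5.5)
By \eqref{eq:partition_function_binomial_expectation} it suffices to show
\[
 \mathbb E A_n(B_n)=\frac{\alpha_*^{\ell_1-1}(1-\alpha_*)^{\ell_2-1}}{n\,\mathrm B(\ell_1,\ell_2)}\bigl(1+O(n^{-1/2})\bigr),
\]
since $1/\mathrm B(\ell_1,\ell_2)=\Gamma(\ell_1+\ell_2)/(\Gamma(\ell_1)\Gamma(\ell_2))$. The plan is to use the beta-mixture representation of $A_n$ and reduce everything to a local limit computation. Inserting the integral formula for $A_n(k)$ and the binomial weights, and exchanging sum and integral (Tonelli), gives
\[
 \mathbb E A_n(B_n)=\int_0^1 \frac{x^{\ell_1-1}(1-x)^{\ell_2-1}}{\mathrm B(\ell_1,\ell_2)}\,\sum_{k=0}^n\binom nk^2\alpha_*^k(1-\alpha_*)^{n-k}x^k(1-x)^{n-k}\,\dd x .
\]
The inner sum is $\mathbb P(B_n=B_n')$, where $B_n\sim\mathrm{Bin}(n,\alpha_*)$ and $B_n'\sim\mathrm{Bin}(n,x)$ are independent. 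This is a collision probability. It is exponentially small unless $x$ is close to $\alpha_*$, and for $x$ near $\alpha_*$ it is of order $n^{-1/2}$. Integrating over a window of width $n^{-1/2}$ in $x$ then produces the factor $1/n$.

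First, I will localise. For $|x-\alpha_*|\ge n^{-1/2}\log n$, Hoeffding's inequality bounds $\mathbb P(B_n=B_n')$ by $\mathbb P(|B_n-n\alpha_*|\ge \tfrac12 n|x-\alpha_*|)+\mathbb P(|B_n'-nx|\ge\tfrac12 n|x-\alpha_*|)$, which is $\le 4e^{-(\log n)^2/8}$. The beta density is integrable, since $\ell_1,\ell_2>0$, so this region contributes $o(n^{-2})$.

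Second, on the window $|x-\alpha_*|\le n^{-1/2}\log n$, the beta density equals its value at $\alpha_*$ times $1+O(|x-\alpha_*|)$. It is therefore enough to show that
\[
 \int_{|x-\alpha_*|\le n^{-1/2}\log n}\mathbb P(B_n=B_n')\,\dd x=\frac1n\bigl(1+O(n^{-1/2})\bigr)
\]
and that the first-order moment $\int|x-\alpha_*|\,\mathbb P(B_n=B_n')\,\dd x$ is $O(n^{-3/2})$. For the main term I will use an exact identity rather than a local CLT. Extending the $x$-integral to $[0,1]$ and using $\int_0^1\binom nk x^k(1-x)^{n-k}\dd x=1/(n+1)$ gives
\[
 \int_0^1\mathbb P(B_n=B_n')\,\dd x=\sum_k p_n^{\mathrm{Bin}}(k)\frac1{n+1}=\frac1{n+1}.
\]
Combined with the exponentially small tail from the first step, this yields exactly $1/n\,(1+O(n^{-1}))$. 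For the first-order term, the same idea works: the integrand is controlled by $\sum_k p_n^{\mathrm{Bin}}(k)\int_0^1|x-\alpha_*|\binom nk x^k(1-x)^{n-k}\dd x$. Here the inner integral is $\frac1{n+1}\mathbb E|\beta_k-\alpha_*|$, where $\beta_k\sim\mathrm{Beta}(k+1,n-k+1)$. Averaging over $k\sim\mathrm{Bin}(n,\alpha_*)$ gives $\mathbb E|\beta_k-\alpha_*|=O(n^{-1/2})$, by the bounds $|\beta_k-k/n|=O_{L^1}(n^{-1/2})$ and $|k/n-\alpha_*|=O_{L^1}(n^{-1/2})$.

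Finally, I will assemble the pieces. Replacing the density by its value at $\alpha_*$ costs a relative error of $O(n^{-1/2})$, and this error term is integrable because the Taylor expansion holds on a fixed neighbourhood of $\alpha_*$. The tail from the first step handles the rest, and multiplying by $(\kappa_1+\kappa_2)^n$ gives the claim. The main obstacle I expect is bookkeeping in the first-order term when $\ell_i<1$. There the beta density is unbounded near the endpoints $0$ and $1$, but those regions are far from $\alpha_*$ and are already covered by the tail estimate, so the Taylor expansion is only needed on a compact neighbourhood of $\alpha_*$.
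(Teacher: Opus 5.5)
Your argument is correct, but it takes a different route from the paper's.

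\textbf{The paper's route.} It works pointwise in $k$. The gamma-ratio expansion $\Gamma(m+a)/\Gamma(m+b)=m^{a-b}(1+O(m^{-1}))$ gives $nA_n(k)=h(k/n)+O(n^{-1})$ uniformly for $\delta n\le k\le(1-\delta)n$, where $h$ is the $\mathrm{Beta}(\ell_1,\ell_2)$ density. The paper then uses the Lipschitz bound for $h$ on $[\delta,1-\delta]$, the bound $\mathbb E|B_n/n-\alpha_*|=O(n^{-1/2})$, Hoeffding's inequality and $A_n\le1$. This yields the stronger $L^1$ statement \eqref{eq:invariant_weight_L1}, namely $\mathbb E|nA_n(B_n)-h(\alpha_*)|\le Cn^{-1/2}$, from which the mean asymptotics follow.

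\textbf{Your route.} You average over $k$ first, rewriting $\mathbb E A_n(B_n)$ as $\int_0^1 h(x)\,\mathbb P(B_n=B_n')\,\dd x$. The exact identity $\int_0^1\mathbb P(B_n=B_n')\,\dd x=1/(n+1)$ then replaces all Stirling-type or local-limit asymptotics. Localisation and the beta-posterior bound on the first moment finish the argument.

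\textbf{What your route buys.} It is more self-contained and produces the exact main term $h(\alpha_*)/(n+1)$. It also sharpens cheaply. The signed first moment is $\int_0^1(x-\alpha_*)\,\mathbb P(B_n=B_n')\,\dd x=(1-2\alpha_*)/((n+1)(n+2))$, and the second moment is $O(n^{-2})$. A second-order Taylor expansion of $h$ therefore improves the relative error to $O(n^{-1})$.

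\textbf{What it does not give.} It only controls the mean of $nA_n(B_n)$, not its $L^1$ deviation. The paper reuses \eqref{eq:invariant_weight_L1} in Lemma~\ref{invariant_measure_to_bin} to bound $\dtv(\pi_n,\mathrm{Bin}(n,\alpha_*))=\mathbb E|nA_n(B_n)-n\mathbb E A_n(B_n)|/(2n\mathbb E A_n(B_n))$, and that step needs the fluctuation bound. So the pointwise gamma-ratio estimate, or some substitute, would still be required downstream.
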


\begin{proof}[Proof of \Cref{partition_function_estimate}]
Put
 $h(x)=\frac{x^{\ell_1-1}(1-x)^{\ell_2-1}}
           {\mathrm B(\ell_1,\ell_2)}$,
and choose $0<\delta<\min\{\alpha_*,1-\alpha_*\}$.
The gamma-ratio expansion
$\Gamma(m+a)/\Gamma(m+b)=m^{a-b}(1+O(m^{-1}))$
gives, uniformly for $\delta n\le k\le(1-\delta)n$,
\begin{equation}
\label{eq:An_central_asymptotic}
 nA_n(k)=h(k/n)+O(n^{-1}).
\end{equation}
On $\{\delta\le B_n/n\le1-\delta\}$,
\eqref{eq:An_central_asymptotic} and the mean value theorem give
$|nA_n(B_n)-h(\alpha_*)|
\le Cn^{-1}+C|B_n/n-\alpha_*|$,
since $h'$ is bounded on $[\delta,1-\delta]$.
Hoeffding's inequality gives
$\mathbb P(B_n/n\notin[\delta,1-\delta])\le Ce^{-cn}$
for some $c>0$, while $A_n\le1$ implies
$|nA_n(B_n)-h(\alpha_*)|\le n+h(\alpha_*)$.
Splitting the expectation over these two events and using
Cauchy--Schwarz therefore yields
\begin{equation}
\label{eq:invariant_weight_L1}
 \mathbb E|nA_n(B_n)-h(\alpha_*)|
 \le \frac Cn
 +C\sqrt{\frac{\alpha_*(1-\alpha_*)}{n}}
 +(n+h(\alpha_*))Ce^{-cn}
 \le Cn^{-1/2}.
\end{equation}
Since
$|n\mathbb E A_n(B_n)-h(\alpha_*)|
\le\mathbb E|nA_n(B_n)-h(\alpha_*)|$,
we get $n\mathbb E A_n(B_n)=h(\alpha_*)+O(n^{-1/2})$.
The assertion follows from
\eqref{eq:partition_function_binomial_expectation}
and $h(\alpha_*)>0$.
\end{proof}

We write
$c_* \coloneqq h(\alpha_*)=
\frac{\alpha_*^{\ell_1-1}(1-\alpha_*)^{\ell_2-1}}
     {\mathrm{B}(\ell_1,\ell_2)}
>0$.

\begin{lemma}
\label{invariant_measure_to_bin}
There is a finite constant $C$, depending only on the four parameters, such that
\[
 \dtv\left(\pi_n,\mathrm{Bin}(n,\alpha_*)\right)\le Cn^{-1/2},
 \qquad n\ge1.
\]
\end{lemma}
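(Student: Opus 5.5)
We plan to use the identity \eqref{eq:partition_function_binomial_expectation}. It writes the ratio of the invariant count law to the binomial law as $\pi_n(k)/p_n^{\text{Bin}}(k)=A_n(k)/\mathbb E A_n(B_n)$. Hence
\[
 \dtv\left(\pi_n,\mathrm{Bin}(n,\alpha_*)\right)
 =\tfrac12\mathbb E\left|\frac{A_n(B_n)}{\mathbb E A_n(B_n)}-1\right|
 =\frac{\mathbb E\left|nA_n(B_n)-n\mathbb E A_n(B_n)\right|}{2\,n\mathbb E A_n(B_n)}.
\]
The argument then has two steps.

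First, we bound the denominator from below. The proof of Lemma~\ref{partition_function_estimate} gives $n\mathbb E A_n(B_n)=c_*+O(n^{-1/2})$ with $c_*=h(\alpha_*)>0$. So for all $n\ge n_0$ the denominator is at least $c_*$.

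Second, we bound the numerator. The triangle inequality gives
\[
 \mathbb E\left|nA_n(B_n)-n\mathbb E A_n(B_n)\right|
 \le\mathbb E\left|nA_n(B_n)-c_*\right|+\left|c_*-n\mathbb E A_n(B_n)\right|
 \le 2\,\mathbb E\left|nA_n(B_n)-c_*\right|.
\]
By \eqref{eq:invariant_weight_L1}, the right-hand side is at most $Cn^{-1/2}$. Combining the two steps gives $\dtv\le Cn^{-1/2}/c_*$ for $n\ge n_0$. For the finitely many $n<n_0$ the total variation is at most one, so enlarging $C$ to at least $\sqrt{n_0}$ covers all $n\ge1$.

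The only substantive input is the $L^1$ estimate \eqref{eq:invariant_weight_L1}, which we take from the proof of Lemma~\ref{partition_function_estimate}. Its delicate part is the uniform gamma-ratio asymptotic \eqref{eq:An_central_asymptotic} on the central range $\delta n\le k\le(1-\delta)n$. Outside that range the crude bound $A_n\le1$ and Hoeffding's exponential tail suffice. Since that estimate is already established, the remaining work is the bookkeeping above.
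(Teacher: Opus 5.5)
Your proposal is correct and follows the paper's proof essentially step for step. Both arguments use the ratio identity \eqref{eq:partition_function_binomial_expectation}, bound the numerator by $2\,\mathbb E|nA_n(B_n)-c_*|$ via the triangle inequality, apply \eqref{eq:invariant_weight_L1}, use $n\mathbb E A_n(B_n)\to c_*>0$ for the denominator, and absorb the finitely many small $n$ into the constant.
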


\begin{proof}[Proof of \Cref{invariant_measure_to_bin}]
By \eqref{eq:partition_function_binomial_expectation},
\[
 \dtv(\pi_n,\mathrm{Bin}(n,\alpha_*))
 =\frac{\mathbb E|nA_n(B_n)-n\mathbb E A_n(B_n)|}
        {2n\mathbb E A_n(B_n)}
 \le\frac{\mathbb E|nA_n(B_n)-c_*|}{n\mathbb E A_n(B_n)}.
\]
Use \eqref{eq:invariant_weight_L1} and
$n\mathbb E A_n(B_n)\to c_*>0$; enlarging $C$ covers the finitely many
remaining values of $n$.
\end{proof}

\begin{proof}[Proof of \Cref{lemma:invariant_measure_to_product_bernoulli}]
Both configuration laws are uniform conditional on the global count. Hence
\[
 \dtv\left(\mu_{\kappa_1,\kappa_2,\iota_1,\iota_2}^{(n)},\nu_*^{(n)}\right)
 =\frac12\sum_{k=0}^n\binom nk
       \left|\frac{\pi_n(k)-p_n^{\text{Bin}}(k)}{\binom nk}\right|
 =\dtv(\pi_n,\mathrm{Bin}(n,\alpha_*)).
\]
Apply Lemma~\ref{invariant_measure_to_bin}.
\end{proof}

\section{Computations for Yau's relative entropy method}
\label{appendix_Yau_computations}

We compute $\mathbb F_t^{(n)}$ and we give the three estimates used
in the short-time and long-time arguments.

\subsection{Computation for
\texorpdfstring{$\mathbb F_t^{(n)}$}{F(t,n)}}
\label{subsec:computations_Yau}
For $x,y\in[0,1]$, set
\begin{equation}
\label{eq:F_two_variables_Yau}
 \mathcal F^{(n)}(x,y)
 \defeq [\kappa_1(1-x)-\kappa_2x]y(1-y)
 +\frac{\iota_1(1-x)(1-y)-\iota_2xy}{n}.
\end{equation}

\begin{lemma}
\label{computation_short_time_estimate_Ft}
\label{short_time_estimate_computation_Ft_I}
\label{short_time_estimate_computation_Ft_II}
For every $t>0$ and $\eta\in\Omega_n$, the following identity holds,
with the argument $\eta$ suppressed in the density and fluctuation
functions:
\[
\begin{aligned}
 \mathbb F_t^{(n)}(\eta)
 &=
 \sum_{i\in Q}
 \frac{\sqrt{a^i}\,\xi_t^{(i,n)}\xi_t^{(n)}}
 {\rho_t^{(i,n)}(1-\rho_t^{(i,n)})}
 \Bigl\{
 [\kappa_1-(\kappa_1+\kappa_2)\rho_t^{(i,n)}]\\
 &\qquad\qquad\times(1-M^{(n)}-\rho_t^{(n)})
 -\frac{\iota_1(1-\rho_t^{(i,n)})+\iota_2\rho_t^{(i,n)}}{n}
 \Bigr\}\\
 &\quad+
 \sum_{i\in Q}a^iM^{(i,n)}
 \frac{1-\rho_t^{(i,n)}}{\rho_t^{(i,n)}}
 \left[\kappa_1(2M^{(n)}-1)+\frac{\iota_1-\kappa_1}{n}\right]\\
 &\quad+
 \sum_{i\in Q}a^i(1-M^{(i,n)})
 \frac{\rho_t^{(i,n)}}{1-\rho_t^{(i,n)}}
 \left[\kappa_2(1-2M^{(n)})+\frac{\iota_2-\kappa_2}{n}\right].
\end{aligned}
\]
\end{lemma}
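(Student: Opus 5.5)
We compute $\mathbb F_t^{(n)}=L_n^*\mathbf 1-\partial_t\log\nu_t^{(n)}$ term by term, using only the product form of $\nu_t^{(n)}$ and the ODE \eqref{mean_field_ode}. For fixed $t$, write $\nu=\nu_t^{(n)}$ and $\rho^i=\rho_t^{(i,n)}$. Since $\nu$ has full support, the adjoint satisfies
\[
L_n^*\mathbf 1(\eta)=\sum_{x}\Bigl[c_x(\eta^x)\frac{\nu(\eta^x)}{\nu(\eta)}-c_x(\eta)\Bigr].
\]
The ratio $\nu(\eta^x)/\nu(\eta)$ equals $(1-\rho^i)/\rho^i$ if $\eta_x=1$ and $\rho^i/(1-\rho^i)$ if $\eta_x=0$, for $x\in A^i$.

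The key bookkeeping is that $M^{(n)}(\eta^x)=M^{(n)}\mp1/n$. For $\eta_x=1$, $\eta^x$ has a $0$ at $x$ and global density $M-1/n$, so $c_x(\eta^x)=\kappa_1(M-\tfrac1n)(1-M+\tfrac1n)+\tfrac{\iota_1}{n}(1-M+\tfrac1n)$. For $\eta_x=0$, $c_x(\eta^x)=\kappa_2(M+\tfrac1n)(1-M-\tfrac1n)+\tfrac{\iota_2}{n}(M+\tfrac1n)$. Summing over $x\in A^i$ produces the factors $a^inM^{(i,n)}$ and $a^in(1-M^{(i,n)})$. Separately,
\[
\sum_x c_x(\eta)=n\bigl[(\kappa_1+\kappa_2)M(1-M)+\tfrac{\iota_1}{n}(1-M)^2+\tfrac{\iota_2}{n}M^2\bigr].
\]

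Next, $\partial_t\log\nu$ comes from differentiating the product of Bernoulli weights:
\[
\partial_t\log\nu(\eta)=\sum_i\dot\rho^i\,\frac{a^in(M^{(i,n)}-\rho^i)}{\rho^i(1-\rho^i)},\qquad \dot\rho^i=\mathcal F^{(n)}(\rho^i,\rho_t^{(n)}),
\]
where $\mathcal F^{(n)}$ is as in \eqref{eq:F_two_variables_Yau}. We then reorganize the $L_n^*\mathbf 1$ terms in two steps.

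First, expand $c_x(\eta^x)$ around the unshifted value $M$. The $O(n^{-1})$ corrections of the autocatalytic parts produce the brackets $\kappa_1(2M-1)+(\iota_1-\kappa_1)/n$ and $\kappa_2(1-2M)+(\iota_2-\kappa_2)/n$, after one factor $n$ cancels against the $1/n$ shift. The $-\kappa/n$ pieces come from the $1/n^2$ term.

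Second, the unshifted parts, minus $\sum_x c_x(\eta)$ and minus $\partial_t\log\nu$, must be shown to equal the quadratic term. Write $M^{(i,n)}=\rho^i+u^i$ and $M=\rho+w$, with $w=\sum_ia^iu^i$. The zeroth-order part vanishes exactly. This uses the identity
\[
\rho(1-\rho)\bigl[\kappa_1(1-\rho^i)-\kappa_2\rho^i\bigr]=\rho^i(1-\rho^i)\Bigl[\kappa_1\tfrac{1-\rho^i}{\rho^i}\rho(1-\rho)\cdot\tfrac{\rho^i}{1-\rho^i}\Bigr]\ \text{-type cancellations},
\]
that is, the ODE for $\rho^i$ exactly balances the flux terms evaluated at $\vec\rho$. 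The parts linear in $u^i$ cancel against the $\partial_t\log\nu$ term. The remaining parts bilinear in $u^i$ and $w$ have the coefficient
\[
[\kappa_1-(\kappa_1+\kappa_2)\rho^i](1-M-\rho)-\frac{\iota_1(1-\rho^i)+\iota_2\rho^i}{n},
\]
divided by $\rho^i(1-\rho^i)$. Here the combination $1-M-\rho$ arises from $M(1-M)-\rho(1-\rho)=w(1-M-\rho)$. Finally, $n\,u^iw=\sqrt{a^i}^{-1}\xi^{(i,n)}\xi^{(n)}$ up to the factor $a^i$, which gives the stated weight $\sqrt{a^i}$.

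The main obstacle is this last regrouping. The risk is that the linear terms do not cancel exactly, and that mutation terms at order $n^{-1}$ are lost. I will verify the algebra by checking that $\mathbb F_t^{(n)}$ has zero mean under $\nu$, which must hold because $\mathbb E_\nu[L_n^*\mathbf1]=0$ and $\mathbb E_\nu[\partial_t\log\nu]=0$. I will also check the degenerate case $u^i\equiv0$. If the direct grouping is messy, I will instead compute the coefficient of each monomial in $(M^{(0,n)},M^{(1,n)})$, since both sides are quadratic polynomials in the local densities, and compare the coefficients one by one.
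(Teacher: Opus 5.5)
Your route is the paper's route. You use the adjoint formula with the single-site ratios of $\nu_t^{(n)}$, the shift $M^{(n)}(\eta^x)=M^{(n)}\mp 1/n$ to produce the two correction brackets, the product-measure derivative with $\dot\rho^i=\mathcal F^{(n)}(\rho^i,\rho_t^{(n)})$, and the factorisation $M(1-M)-\rho(1-\rho)=w(1-M-\rho)$. Your correction brackets, your formula for $\partial_t\log\nu$, and your final coefficient are all correct. However, two points in your second step are wrong as written, and following them literally would derail the algebra.

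\textbf{The formula for $\sum_x c_x(\eta)$ is incorrect.} There are $n(1-M)$ vacant sites and $nM$ occupied sites, so
\[
\sum_x c_x(\eta)=nM(1-M)\bigl[\kappa_1(1-M)+\kappa_2M\bigr]+\iota_1(1-M)^2+\iota_2M^2 .
\]
Your version has $n(\kappa_1+\kappa_2)M(1-M)$ in place of the first term; for $\kappa_1=\kappa_2$ it is off by a factor $2$. With it, the zeroth-order terms you claim cancel do not cancel, and the resulting $\mathbb F_t^{(n)}$ has mean of order $n$ under $\nu$. Your zero-mean check would catch this.

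\textbf{The zeroth-order cancellation does not use the ODE.} Your displayed ``identity'' is not an identity. The cancellation is purely algebraic, and the cleanest way to see it is the paper's. Before expanding anything, combine the incoming and outgoing terms within each block using
\[
M^{(i,n)}\frac{1-\rho^i}{\rho^i}-(1-M^{(i,n)})=\frac{M^{(i,n)}-\rho^i}{\rho^i},\qquad
(1-M^{(i,n)})\frac{\rho^i}{1-\rho^i}-M^{(i,n)}=-\frac{M^{(i,n)}-\rho^i}{1-\rho^i}.
\]
This gives exactly
\[
L_n^*\mathbf 1=\sum_i\frac{na^i(M^{(i,n)}-\rho^i)}{\rho^i(1-\rho^i)}\,\mathcal F^{(n)}(\rho^i,M^{(n)})+\text{(the two correction sums)}.
\]
The ODE enters only when you subtract $\partial_t\log\nu$. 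That subtraction replaces $\mathcal F^{(n)}(\rho^i,M^{(n)})$ by $\mathcal F^{(n)}(\rho^i,M^{(n)})-\mathcal F^{(n)}(\rho^i,\rho_t^{(n)})$. The exact two-variable difference formula $\mathcal F^{(n)}(x,y)-\mathcal F^{(n)}(x,z)=(y-z)\{\cdots\}$ then yields your coefficient directly.

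This organisation also shows that the remainder is not literally bilinear in $u^i$ and $w$: it contains $u^iw^2$ terms. That is why $M^{(n)}$, rather than $\rho_t^{(n)}$, appears in the factor $1-M^{(n)}-\rho_t^{(n)}$. So no order-by-order expansion or monomial comparison is needed.
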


\begin{proof}[Proof of \Cref{computation_short_time_estimate_Ft}]
For $x\in A^i$, the product structure and the adjoint formula give
\[
\begin{aligned}
 \frac{\nu_t^{(n)}(\eta^x)}{\nu_t^{(n)}(\eta)}
 =\eta_x\frac{1-\rho_t^{(i,n)}}{\rho_t^{(i,n)}}
 +(1-\eta_x)\frac{\rho_t^{(i,n)}}{1-\rho_t^{(i,n)}},\ \
 L_n^*\mathbf1(\eta)
 =\sum_x\left[
 c_x(\eta^x)\frac{\nu_t^{(n)}(\eta^x)}{\nu_t^{(n)}(\eta)}
 -c_x(\eta)\right].
\end{aligned}
\]
The incoming birth and death rates are evaluated at
$M^{(n)}-n^{-1}$ and $M^{(n)}+n^{-1}$, respectively.
Their differences from the corresponding rates at $M^{(n)}$ are
\[
 \frac{\kappa_1(2M^{(n)}-1)}n+\frac{\iota_1-\kappa_1}{n^2},
 \qquad
 \frac{\kappa_2(1-2M^{(n)})}n+\frac{\iota_2-\kappa_2}{n^2}.
\]
Summing over the $na^iM^{(i,n)}$ occupied and
$na^i(1-M^{(i,n)})$ vacant sites in each block gives
\begin{equation}
\label{eq:Lstar_one_F_plus_error}
\begin{aligned}
 L_n^*\mathbf1(\eta)
 &=
 \sum_i
 \frac{na^i(M^{(i,n)}-\rho_t^{(i,n)})}
 {\rho_t^{(i,n)}(1-\rho_t^{(i,n)})}
 \mathcal F^{(n)}(\rho_t^{(i,n)},M^{(n)})\\
 &\quad+\text{the two correction sums in the statement of this lemma}.
\end{aligned}
\end{equation}
Differentiating the product measure and using
$\partial_t\rho_t^{(i,n)}
=\mathcal F^{(n)}(\rho_t^{(i,n)},\rho_t^{(n)})$ yields
\begin{equation}
\label{eq:dt_log_nu_F}
 \partial_t\log\nu_t^{(n)}(\eta)
 =
 \sum_i
 \frac{na^i(M^{(i,n)}-\rho_t^{(i,n)})}
 {\rho_t^{(i,n)}(1-\rho_t^{(i,n)})}
 \mathcal F^{(n)}(\rho_t^{(i,n)},\rho_t^{(n)}).
\end{equation}
Finally, direct subtraction gives
\[
\begin{aligned}
 \mathcal F^{(n)}(x,y)-\mathcal F^{(n)}(x,z)
 &=(y-z)\Bigl[
 [\kappa_1(1-x)-\kappa_2x](1-y-z)-\frac{\iota_1(1-x)+\iota_2x}{n}
 \Bigr].
\end{aligned}
\]
Subtracting \eqref{eq:dt_log_nu_F} from
\eqref{eq:Lstar_one_F_plus_error} and using
 $na^i(M^{(i,n)}-\rho_t^{(i,n)})(M^{(n)}-\rho_t^{(n)})
 =\sqrt{a^i}\,\xi_t^{(i,n)}\xi_t^{(n)}$
proves the identity.
\end{proof}

The short-time entropy estimate also requires control of the proportion of
sites whose states differ from their states in the initial block decomposition.

\begin{lemma}[Discrepancy bound for the initial blocks]
\label{lemma:defect_estimate_initial_blocks}
If $\min_{i\in Q}a^i\ge\varepsilon_0>0$, then
\[
 \mathbb E_{\eta_0}^{(n)}
 \left[M_t^{(0,n)}+1-M_t^{(1,n)}\right]
 \le C_{\varepsilon_0}t,\qquad t\ge0.
\]
\end{lemma}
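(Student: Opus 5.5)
Write $D_t\defeq M_t^{(0,n)}+1-M_t^{(1,n)}$. This is the fraction of sites, weighted by block, whose current state differs from their initial state: $D_t=\frac1{a^0n}\sum_{x\in A^0}\sigma_x+\frac1{a^1n}\sum_{x\in A^1}(1-\sigma_x)$. At time zero, $D_0=0$. The plan is to bound the generator applied to $D_t$ by a constant and then apply Dynkin's formula.

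The key observation is that every flip rate is uniformly bounded. From \eqref{def:TK_flip_rate},
\[
 c_x(\sigma)\le \frac{\kappa_1+\kappa_2}{4}+\frac{\iota_1+\iota_2}{n}\le C
\]
for all $\sigma$ and $x$. A flip at $x\in A^0$ changes $M^{(0,n)}$ by $\pm(a^0n)^{-1}$. A flip at $x\in A^1$ changes $1-M^{(1,n)}$ by $\pm(a^1n)^{-1}$. Hence
\[
 L_nD(\sigma)\le\sum_{i\in Q}\frac{1}{a^in}\sum_{x\in A^i}c_x(\sigma)\le 2C .
\]
More precisely, only flips away from the initial state contribute positively: $L_nM^{(0,n)}=\frac{1}{a^0n}\sum_{x\in A^0}\bigl(c_x\mathbf 1_{\sigma_x=0}-c_x\mathbf 1_{\sigma_x=1}\bigr)\le\frac{1}{a^0n}\sum_{x\in A^0}c_x\le C$, and the same holds for $-M^{(1,n)}$. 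Note that $\varepsilon_0$ does not even enter this bound, since each block average of rates is at most $C$. Alternatively one can use $L_nM^{(i,n)}=F^{(i,n)}(\vec M^{(n)})$ together with the uniform boundedness of $F^{(i,n)}$ on $[0,1]^2$: this gives $|L_nD|\le\sum_i\|F^{(i,n)}\|_\infty\le C$ directly.

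Dynkin's formula for the bounded function $D$ on the finite state space then yields
\[
 \mathbb E_{\eta_0}^{(n)}[D_t]=D_0+\int_0^t\mathbb E_{\eta_0}^{(n)}\bigl[L_nD(\sigma^{(n)}(s))\bigr]\,\dd s\le Ct .
\]
The constant depends only on $\kappa_1,\kappa_2,\iota_1,\iota_2$, so it can be absorbed into $C_{\varepsilon_0}$. The only point to check is that $M^{(i,n)}$ is well defined, which requires $a^i>0$; this is guaranteed by the hypothesis $\min_ia^i\ge\varepsilon_0$. I do not expect any step to be a real obstacle, since the argument is a one-line generator bound.
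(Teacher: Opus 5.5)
Your argument is correct, and it reaches the bound by a slightly different route from the paper.

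The paper argues pathwise. It lets $N_t$ be the total number of flips up to time $t$ and notes that every site differing from its initial state has flipped at least once. This gives $\varepsilon_0 n\,(M_t^{(0,n)}+1-M_t^{(1,n)})\le N_t$. It then bounds $\mathbb E_{\eta_0}^{(n)}[N_t]\le Cnt$ using $\sum_x c_x\le Cn$, and divides by $\varepsilon_0 n$. That division is where the factor $1/\varepsilon_0$ enters the constant.

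You instead apply Dynkin's formula directly to $D=M^{(0,n)}+1-M^{(1,n)}$ and normalise block by block. The weight $1/(a^in)$ is then cancelled by the $a^in$ sites of block $i$. Equivalently, $|L_nD|=|F^{(0,n)}-F^{(1,n)}|$ is bounded uniformly on $[0,1]^2$. Your constant therefore depends only on $\kappa_1,\kappa_2,\iota_1,\iota_2$, and you need only $a^i>0$ rather than $a^i\ge\varepsilon_0$.

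What each approach buys: the paper's domination by a counting process with bounded intensity controls the discrepancy pathwise and needs no generator computation. Yours is more direct for the first moment, which is all the short-time entropy estimate uses, and it gives a slightly sharper constant.
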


\begin{proof}[Proof of \Cref{lemma:defect_estimate_initial_blocks}]
Let $N_t$ be the number of flips up to time $t$.
Every site differing from its initial state has flipped at least
once, so
\[
 \varepsilon_0n\left(M_t^{(0,n)}+1-M_t^{(1,n)}\right)
 \le\sum_{x\in\Lambda_n}
 \mathbf1_{\{\sigma_x^{(n)}(t)\ne(\eta_0)_x\}}
 \le N_t.
\]
Since $\sum_x c_x(\sigma)\le Cn$ uniformly in $\sigma$,
 $\mathbb E_{\eta_0}^{(n)}[N_t]
 =\mathbb E_{\eta_0}^{(n)}
 \left[\int_0^t\sum_x c_x(\sigma^{(n)}(s))\,\dd s\right]
 \le Cnt$.
Taking expectations in the first inequality and dividing by
$\varepsilon_0n$ proves the claim.
\end{proof}
\subsection{Initialisation at time zero}

Since $\nu_0^{(n)}=\delta_{\eta_0}$ does not have full support,
we apply Yau's inequality at positive times. The following lemma
justifies letting the initial time decrease to zero.

\begin{lemma}
\label{lemma:initial_relative_entropy_limit}
Fix $n\in\mathbb N$ and $\eta_0\in\Omega_n^*$, and construct
$\nu_t^{(n)}$ from $\eta_0$. Then
\[
 H\left(\eta_0S_t^{(n)}\,\middle|\,\nu_t^{(n)}\right)
 \longrightarrow0\qquad\text{as }t\downarrow0.
\]
\end{lemma}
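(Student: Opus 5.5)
The plan is to compute the relative entropy explicitly for small $t$, using the fact that $n$ is fixed, so everything lives on the finite space $\Omega_n$. Write $\mu_t=\eta_0S_t^{(n)}$. Then
\[
 H(\mu_t\mid\nu_t^{(n)})=\sum_{\eta}\mu_t(\eta)\log\mu_t(\eta)-\sum_\eta\mu_t(\eta)\log\nu_t^{(n)}(\eta).
\]
The first sum tends to $0$ as $t\downarrow0$, since $\mu_t\to\delta_{\eta_0}$ and $x\log x$ is continuous on $[0,1]$.

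For the second sum, I classify configurations by their Hamming distance $d(\eta,\eta_0)$. The total jump rate is bounded by $Cn$, so the chain makes at least $d$ flips with probability at most $(Cnt)^d/d!$. Hence
\[
 \mu_t(\eta)\le C_n t^{d(\eta,\eta_0)},
 \qquad
 \mu_t(\eta_0)\ge e^{-Cnt}.
\]

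For the reference measure, the ODE \eqref{mean_field_ode} starts at $(0,1)$. Because $\eta_0\in\Omega_n^*$, we have $a^0,a^1>0$, and the mutation terms give $\partial_t\rho^{(0,n)}_0>0$ and $\partial_t(1-\rho^{(1,n)})_0>0$. Note that the initial global density is strictly between $0$ and $1$, so the autocatalytic part is also positive; in any case the $n^{-1}\iota$ terms suffice. Therefore, for small $t$,
\[
 c_nt\le\rho_t^{(0,n)}\le C_nt,\qquad c_nt\le1-\rho_t^{(1,n)}\le C_nt.
\]
Since $\nu_t^{(n)}$ is a product measure, this gives
\[
 \nu_t^{(n)}(\eta)\ge (c_nt)^{d(\eta,\eta_0)}(1-C_nt)^n .
\]
Consequently,
\[
 -\log\nu_t^{(n)}(\eta)\le d(\eta,\eta_0)\,|\log(c_nt)|+C_nt
\]
for small $t$.

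Splitting the second sum into $\eta=\eta_0$ and $\eta\ne\eta_0$:
\begin{itemize}
\item The term $\eta=\eta_0$ contributes at most $-\log\nu_t^{(n)}(\eta_0)\le -n\log(1-C_nt)\to0$.
\item The remaining terms contribute at most $\sum_{d\ge1}\#\{\eta:d(\eta,\eta_0)=d\}\,C_nt^d\bigl(d|\log(c_nt)|+C\bigr)$. This is a finite sum over $d\le n$, and each term is $O(t|\log t|)\to0$.
\end{itemize}
Since relative entropy is nonnegative, the two bounds together give $H\to0$.

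The only delicate step is the lower bound on $\nu_t^{(n)}$, i.e.\ that $\rho^{(0,n)}_t$ and $1-\rho^{(1,n)}_t$ vanish only linearly in $t$ rather than faster. This follows from the strictly positive initial derivative supplied by $\iota_1,\iota_2>0$ together with the $C^1$ smoothness of the ODE solution. The constants here may depend on $n$, which is acceptable because $n$ is fixed.
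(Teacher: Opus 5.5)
Your proof is correct, but it takes a somewhat different route from the paper's, with a one-sided bound on $\mu_t$ in place of exact asymptotics.

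\textbf{The paper's route.} It proves matching two-sided asymptotics for each configuration. Alongside $\nu_t^{(n)}(\eta)=c_\eta t^r(1+O(t))$, it expands $e^{tL_n}$ to obtain $\eta_0S_t^{(n)}(\eta)=\frac{(L_n^r)(\eta_0,\eta)}{r!}\,t^r(1+O(t))$. This requires checking that $(L_n^r)(\eta_0,\eta)>0$, i.e.\ that every flip along a shortest path has a positive rate. Each log-ratio $\log\bigl(\eta_0S_t^{(n)}(\eta)/\nu_t^{(n)}(\eta)\bigr)$ is then bounded, and one gets $H=O(t)$.

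\textbf{Your route.} You need only an upper bound on the evolved law: the Poisson domination $\mu_t(\eta)\le C_nt^{d(\eta,\eta_0)}$. Combined with $\sum_\eta\mu_t(\eta)\log\mu_t(\eta)\le 0$, this gives
\[
0\le H\le-\sum_\eta\mu_t(\eta)\log\nu_t^{(n)}(\eta)=O(t|\log t|).
\]
Your argument needs no lower bound on the transition probabilities and no path-positivity. It applies to any bounded-rate flip dynamics whose reference probabilities for flipped coordinates are bounded below by $ct$. The price is a slightly weaker rate, which does not matter here since only the limit is used.

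Your claim that the first sum tends to zero is true but unnecessary: its nonpositivity already gives the upper bound.
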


\begin{proof}[Proof of \Cref{lemma:initial_relative_entropy_limit}]
The ODE gives
\begin{equation}
\label{eq:small_time_rho_expansion_entropy}
\begin{aligned}
 \rho_t^{(0,n)}
 =a^0\left(\kappa_1a^1+\frac{\iota_1}{n}\right)t+O(t^2),\quad
 1-\rho_t^{(1,n)}
 =a^1\left(\kappa_2a^0+\frac{\iota_2}{n}\right)t+O(t^2).
\end{aligned}
\end{equation}
Both leading coefficients are positive. If $\eta$ differs from
$\eta_0$ at $r$ sites, the product formula therefore gives
$\nu_t^{(n)}(\eta)=c_\eta t^r(1+O(t))$ with $c_\eta>0$.

A nonzero term connecting $\eta_0$ to $\eta$ in a matrix power
of $L_n$ requires at least $r$ off-diagonal factors.
At order $r$, such terms exist and contain only positive flip
rates. Hence $(L_n^m)(\eta_0,\eta)=0$ for $m<r$, and
 $\eta_0S_t^{(n)}(\eta)
 =\frac{(L_n^r)(\eta_0,\eta)}{r!}\,t^r(1+O(t))$, and $(L_n^r)(\eta_0,\eta)>0$.
For $\eta\ne\eta_0$, combining this with
$\nu_t^{(n)}(\eta)=c_\eta t^r(1+O(t))$ gives
 $\log\frac{\eta_0S_t^{(n)}(\eta)}{\nu_t^{(n)}(\eta)}
 =\log\frac{(L_n^r)(\eta_0,\eta)}{r!c_\eta}+O(t)$.
This logarithm is bounded, while
$\eta_0S_t^{(n)}(\eta)=O(t^r)$ with $r\ge1$. Therefore
 $\eta_0S_t^{(n)}(\eta)
 \log\frac{\eta_0S_t^{(n)}(\eta)}{\nu_t^{(n)}(\eta)}
 =O(t^r)\to 0$.
For $\eta=\eta_0$, both probabilities equal $1+O(t)$, so their
logarithmic ratio and the corresponding entropy contribution
are $O(t)$. Since $n$ is fixed and $\Omega_n$ is finite,
$H\left(\eta_0S_t^{(n)}\,\middle|\,\nu_t^{(n)}\right)
 =\sum_{\eta\in\Omega_n}\eta_0S_t^{(n)}(\eta)
 \log\frac{\eta_0S_t^{(n)}(\eta)}{\nu_t^{(n)}(\eta)}
 \to 0$.
\end{proof}

\subsection{Uniform exponential moment under the reference measure}

The following bound allows the entropy inequality to
control the quadratic fluctuation terms in $\mathbb F_t^{(n)}$.

\begin{lemma}
\label{lemma:Yau_quadratic_exponential_moment}
There exist constants $\gamma,C>0$ such that, for every
$n$, $t>0$, and choice of nonempty initial blocks,
\[
 \int_{\Omega_n}
 \exp\left\{\gamma\left[
 1+(\xi_t^{(n)})^2+\sum_{i\in Q}(\xi_t^{(i,n)})^2
 \right]\right\}\,\dd\nu_t^{(n)}
 \le C.
\]
\end{lemma}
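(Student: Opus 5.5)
Under $\nu_t^{(n)}$ the block counts $X^i=\sum_{x\in A^i}\eta_x$ are independent, $X^i\sim\mathrm{Bin}(a^in,\rho_t^{(i,n)})$. We have $\xi_t^{(i,n)}=(X^i-a^in\rho_t^{(i,n)})/\sqrt{a^in}$ and $\xi_t^{(n)}=\sum_i\sqrt{a^i}\,\xi_t^{(i,n)}$. Cauchy--Schwarz and $a^0+a^1=1$ give $(\xi_t^{(n)})^2\le\sum_i(\xi_t^{(i,n)})^2$. The integrand is therefore at most $e^{\gamma}\exp\{2\gamma\sum_i(\xi_t^{(i,n)})^2\}$, and by independence of the blocks the integral factorises:
\[
 \int\exp\Bigl\{\gamma\Bigl[1+(\xi_t^{(n)})^2+\sum_{i\in Q}(\xi_t^{(i,n)})^2\Bigr]\Bigr\}\,\dd\nu_t^{(n)}
 \le e^{\gamma}\prod_{i\in Q}\mathbb E\Bigl[\exp\Bigl\{2\gamma\frac{(X^i-\mathbb EX^i)^2}{a^in}\Bigr\}\Bigr].
\]
It therefore suffices to show that there are universal $\gamma,C$ with $\mathbb E\exp\{2\gamma(S_N-Np)^2/N\}\le C$ for all $N\ge1$ and $p\in[0,1]$, where $S_N\sim\mathrm{Bin}(N,p)$.

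We prove this scalar bound using the sub-Gaussian tail from Hoeffding's inequality. For every $p$, $\mathbb P(|S_N-Np|\ge u\sqrt N)\le2e^{-2u^2}$, with no dependence on $p$. This uniformity matters, because $\rho_t^{(i,n)}$ can be arbitrarily close to $0$ or $1$ for small $t$ (near the restart), so the bound must not use the variance $p(1-p)$.

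Write $Y=(S_N-Np)^2/N$, so that $\mathbb P(Y\ge v)\le2e^{-2v}$. The layer-cake formula then gives
\[
 \mathbb Ee^{2\gamma Y}=1+\int_0^\infty2\gamma e^{2\gamma v}\,\mathbb P(Y\ge v)\,\dd v\le1+4\gamma\int_0^\infty e^{-(2-2\gamma)v}\,\dd v=1+\frac{2\gamma}{1-\gamma}.
\]
This is finite for any $\gamma<1$. Taking $\gamma=1/2$ gives the bound $3$, so the lemma holds with $C=9e^{1/2}$.

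An alternative route is Hoeffding's lemma: $\mathbb Ee^{\lambda(S_N-Np)}\le e^{\lambda^2N/8}$. One then linearises $e^{cY}$ with the Gaussian identity $e^{cx^2}=\mathbb E\,e^{\sqrt{2c}\,xG}$ for $G\sim\mathcal N(0,1)$ and applies Fubini. The result is $\mathbb E e^{cY}\le\mathbb E e^{cG^2/4}=(1-c/2)^{-1/2}$ for $c<2$, giving the same conclusion.

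There is no real obstacle. The only point needing care is uniformity in $p$ near $\{0,1\}$ and in the block sizes, and both are handled by Hoeffding's inequality, which depends only on the range of the summands. In particular, the constants do not depend on $n$, $t$, or the initial blocks.
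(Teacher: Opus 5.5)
Your proof is correct and has the same structure as the paper's: Cauchy--Schwarz bounds $(\xi_t^{(n)})^2$ by $\sum_{i\in Q}(\xi_t^{(i,n)})^2$, the integral factorises over the independent blocks, and a Hoeffding sub-Gaussian bound, uniform in $p$, finishes it with $\gamma=1/2$. The only difference is the scalar step: the paper uses the route you list as your alternative (Hoeffding's lemma plus Gaussian linearisation, giving $\sqrt2$ per block and $C=2e^{1/2}$), while your tail-plus-layer-cake argument gives the cruder constant $9e^{1/2}$, which is equally sufficient.
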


\begin{proof}[Proof of \Cref{lemma:Yau_quadratic_exponential_moment}]
Under $\nu_t^{(n)}$, the local fluctuations are independent
normalised centred binomial variables. Hoeffding's lemma gives
$\mathbb E_{\nu_t^{(n)}}e^{\theta\xi_t^{(i,n)}}
\le e^{\theta^2/8}$ for every $\theta\in\mathbb R$.
For an independent $Z\sim\mathcal N(0,1)$, the Gaussian
moment-generating function gives
$\mathbb E_Ze^{\sqrt2 Zy}=e^{y^2}$ for every $y\in\mathbb R$.
Tonelli's theorem and Hoeffding's bound with
$\theta=\sqrt2 z$ therefore yield
\[
\begin{aligned}
 \mathbb E_{\nu_t^{(n)}}e^{(\xi_t^{(i,n)})^2}
 =\mathbb E_Z\mathbb E_{\nu_t^{(n)}}
   e^{\sqrt2 Z\xi_t^{(i,n)}}\le\mathbb E_Ze^{Z^2/4}
 =\frac1{\sqrt{2\pi}}\int_{\mathbb R}e^{-z^2/4}\,\dd z
 =\sqrt2.
\end{aligned}
\]
Since $\sum_i a^i=1$, Cauchy--Schwarz gives
$(\xi_t^{(n)})^2\le\sum_i(\xi_t^{(i,n)})^2$.
For $\gamma=1/2$, independence thus bounds the required
expectation by
 $e^{1/2}\prod_{i\in Q}
 \mathbb E_{\nu_t^{(n)}}e^{(\xi_t^{(i,n)})^2}
 \le2e^{1/2}$.
\end{proof}

\section{A technical proof on the cutoff window}
\label{appendix_profile_window}
\label{appendix_profile_window_reference}

\begin{proof}[Proof of
\Cref{lemma:profile_window_reference_approximation}]
Fix $R>0$ and $u\in(0,T_0]$, and put
$r_n(s)=t^{(n)}(s)-u$. All estimates below are uniform over
$|s|\le R$ and initial configurations in the fixed bulk class.

Since $r_n(s)=t_n+O_{R,u}(1)$, the quantitative CLTs in
Propositions~\ref{prop:CLT_mag_tk}
and~\ref{prop:CLT_mag_GD} apply at these times.
The covariance bounds
\eqref{eq:beta_sigma_close_to_limit} and
\eqref{eq:Gamma_barGamma_close}, together with
Lemma~\ref{dk_two_gaussians}, therefore show that the laws of
$\vec{\xi_{r_n(s)}^{(n)}}$ and
$\vec{\zeta_{r_n(s)}^{(n)}}$ have $d_K$-distance $o_n(1)$.
These underlying estimates also apply when $r_n(s)<t_n$.

Together with the reference variance bound
\eqref{quadratic_fluctuation_estimates_under_reference_measure}
and the deterministic estimate
\eqref{eq:sharp_bulk_deterministic_decay}, this yields the
coupling used in Subsection~\ref{estimate_in_dtv}, with all
four local densities in $[\alpha_*/2,(1+\alpha_*)/2]$
with probability tending to one.
Applying the short-restart argument leading to
\eqref{eq:unconditioned_evolved_TV_bound} with $T=r_n(s)$,
and taking $n\to\infty$ followed by $\varepsilon\downarrow0$,
gives
\[
 \limsup_{n\to\infty}\sup_{|s|\le R}
 \dtv\left(
 \sigma_0^{(n)}S_{t^{(n)}(s)}^{(n)},
 \nu_{r_n(s)}^{(n)}S_u^{(n)}
 \right)
 \le C_{\varepsilon_0}\sqrt u.
\]
The constant is independent of $u\in(0,T_0]$, since the
restarted configurations lie in the same fixed interior
density interval.

Finally, \eqref{eq:sharp_bulk_deterministic_decay} gives
$\omega_{r_n(s)+v}^{(n)}
 \le C_{\varepsilon_0,R,T_0}/\sqrt n$
for $0\le v\le u$. The reference parameters thus remain
in a fixed compact subinterval of $(0,1)$, so
\eqref{eq:shifted_time_entropy_bound} applies even if
$r_n(s)<t_n$ and yields
\[
 \sup_{|s|\le R}
 H\left(
 \nu_{r_n(s)}^{(n)}S_u^{(n)}
 \,\middle|\,\nu_{t^{(n)}(s)}^{(n)}
 \right)
 \le
 \exp\left\{\frac{C_{\varepsilon_0,R,T_0}u}{\sqrt n}\right\}
 -1
 \longrightarrow0.
\]
Pinsker's inequality and the triangle inequality give an
upper limit of $C_{\varepsilon_0}\sqrt u$ for the supremum
in the lemma. Letting $u\downarrow0$ completes the proof.
\end{proof}
\section{Auxiliary estimates for worst-case mixing}
\label{app:worst_case_tools}
We give the estimates used in Section~\ref{sec:worst_case}.

\subsection{Sharp boundary entrance}
\label{app:sharp_boundary_entrance}

\begin{proof}[Proof of \Cref{lemma:sharp_boundary_entrance}]
Put $F(x)=x(1-x)\left(\kappa_1-(\kappa_1+\kappa_2)x\right)$.
The clock in \eqref{eq:boundary_clock_definition} satisfies
$\psi'(x)=1/F(x)$ and
\begin{equation}
\label{eq:boundary_clock_at_zero}
 \psi(x)-\kappa_1^{-1}\log x\longrightarrow0
 \qquad\text{as }x\downarrow0.
\end{equation}
Choose $\gamma\in(0,1/2)$ and $j_n=\lfloor n^\gamma\rfloor$.
We first couple $K$ with the pure-birth immigration process $Y$,
started at $\ell$, with birth rate $\kappa_1k+\iota_1$.
While the chains agree at $k<j_n$, use three independent
exponential clocks. At rate $b_n(k)$, both chains jump to $k+1$;
at rate $(\kappa_1k+\iota_1)-b_n(k)$, only $Y$ jumps to $k+1$;
and at rate $d_n(k)$, only $K$ jumps to $k-1$.
These rates are nonnegative, and $d_n(0)=0$.
The marginal rates are the required ones. Restart the clocks after
each common birth, and stop at the first discrepancy or the common
visit to $j_n$. 

Only the second and third clocks create a discrepancy. Their total
rate is bounded by
 $(\kappa_1k+\iota_1)-b_n(k)+d_n(k)
 \le\frac{C(k+1)^2}{n}$.
Conditional on reaching $(k,k)$ without a discrepancy, the
probability that the next event creates one is therefore
 $\frac{(\kappa_1k+\iota_1)-b_n(k)+d_n(k)}
      {\kappa_1k+\iota_1+d_n(k)}
 \le\frac{C(k+1)}n$,
because $\kappa_1k+\iota_1\ge
\min\{\kappa_1,\iota_1\}(k+1)$.
Before a discrepancy, the common path visits the successive states
$\ell,\ldots,j_n-1$. Summing the conditional failure probabilities
shows that the probability of stopping at a discrepancy is at most
 $\frac Cn\sum_{k=\ell}^{j_n-1}(k+1)
 \le\frac{Cj_n^2}{n}=o(1)$.

The process $(Y_{t/\kappa_1}-\ell)_{t\ge0}$ is a linear
birth process with immigration, started from zero, with birth rate
$m+\ell+\iota_1/\kappa_1$ at state $m$.
By \cite[Theorem~5 and Corollary~1]{Tavare1987BirthImmigration},
\[
 e^{-\kappa_1t}Y_t\longrightarrow W_{0,\ell}
 \quad\text{almost surely},\qquad
 W_{0,\ell}\sim\operatorname{Gamma}
           (\ell+\iota_1/\kappa_1,1).
\]
Let $\tau_j^Y$ be the first visit of $Y$ to $j$.
The process is nonexplosive and has positive birth rates, so
$\tau_j^Y\to\infty$ almost surely and $Y_{\tau_j^Y}=j$.
Evaluating the preceding limit at these times and using
$W_{0,\ell}>0$ almost surely gives
 $\tau_j^Y-\kappa_1^{-1}\log j
 \longrightarrow-\kappa_1^{-1}\log W_{0,\ell}
 \ \text{almost surely}$.
Under the coupling, $\tau_{j_n}=\tau_{j_n}^Y$ unless a
discrepancy occurs before the common visit to $j_n$. Hence
 $\mathbb P(\tau_{j_n}\ne\tau_{j_n}^Y)
 \le \frac{Cj_n^2}{n}=o(1)$,
so $\tau_{j_n}-\tau_{j_n}^Y=o_{\mathbb P}(1)$.
Since $j_n\to\infty$, the preceding almost-sure limit gives
\begin{equation}
\label{eq:mesoscopic_boundary_entrance}
 \tau_{j_n}-\kappa_1^{-1}\log j_n
 \ \Longrightarrow\ -\kappa_1^{-1}\log W_{0,\ell}.
\end{equation}

For the second stage, start $K$ at $j_n$, put
$l_n=\lfloor j_n/2\rfloor$ and $u_n=\lfloor nq\rfloor$, and
stop at $\tau=\inf\{t:K_t\in\{l_n,u_n\}\}$.
For all sufficiently large $n$, uniformly over $l_n\le k\le u_n$,
 $\frac{d_n(k)}{b_n(k)}
 \le\frac{\kappa_2q}{\kappa_1(1-q)}+O(n^{-1})<1$.
Choose $h>0$, independent of $n$, so that
$d_n(k)\le e^{-h}b_n(k)$ on this interval.
Writing $\mathcal L_n$ for the count generator and $V(k)=e^{-hk}$,
we have, at every interior state,
\[
\begin{aligned}
 \mathcal L_nV(k)
 &=b_n(k)\{V(k+1)-V(k)\}
   +d_n(k)\{V(k-1)-V(k)\}\\
 &=e^{-hk}(e^h-1)\{d_n(k)-e^{-h}b_n(k)\}\le0.
\end{aligned}
\]
Thus $(e^{-hK_{t\wedge\tau}})_{t\ge0}$ is a bounded
supermartingale. The finite irreducible count chain has
$\tau<\infty$ almost surely. Optional stopping at $t\wedge\tau$
and bounded convergence give
\[
 e^{-hl_n}\mathbb P_{j_n}(K_\tau=l_n)
 \le\mathbb E_{j_n}[e^{-hK_\tau}]
 \le e^{-hj_n}.
\]
Consequently,
\begin{equation}
\label{eq:mesoscopic_lower_exit_probability}
 \mathbb P_{j_n}(K_\tau=l_n)
 \le e^{-h(j_n-l_n)}=o(1),
\end{equation}
since $j_n-l_n\sim n^\gamma/2$.

Set $g_n(k)=\psi(k/n)$ for $l_n\le k\le u_n$.
Uniformly over the interior states, we claim that
\begin{equation}
\label{eq:boundary_clock_generator_bounds}
\begin{split}
 |\mathcal L_ng_n(k)-1|\le\frac Ck,\qquad \text{and} \qquad
 \mathcal L_n(g_n^2)(k)-2g_n(k)\mathcal L_ng_n(k)
 \le\frac Ck.
\end{split}
\end{equation}
Indeed, write $x=k/n$. Since $q<\alpha_*$ is fixed,
$F(x)\ge cx$ on $(0,q]$, and hence
$|\psi'(x)|\le C/x$ and $|\psi''(x)|\le C/x^2$ there.
For all sufficiently large $n$, the points between
$(k-1)/n$ and $(k+1)/n$ lie in $[x/2,q]$.
Taylor's theorem therefore gives
 $g_n(k\pm1)-g_n(k)
 =\pm\frac{\psi'(x)}n+O(k^{-2})$.
Using $b_n(k)+d_n(k)\le C(k+1)$ and
\[
 b_n(k)-d_n(k)=nF(x)+I(x),\qquad
 I(x)=\iota_1(1-x)^2-\iota_2x^2,
\]
we obtain
 $\mathcal L_ng_n(k)
 =1+\frac{I(x)}{nF(x)}+O(k^{-1})
 =1+O(k^{-1}).$
For the second bound, use the identity
\[
\begin{aligned}
 \mathcal L_n(g_n^2)(k)-2g_n(k)\mathcal L_ng_n(k)=b_n(k)\left(g_n(k+1)-g_n(k)\right)^2  +d_n(k)\left(g_n(k-1)-g_n(k)\right)^2.
\end{aligned}
\]
Each clock jump is $O(k^{-1})$, so the right-hand side is
$O((k+1)/k^2)=O(k^{-1})$.

For large $n$, the clock drift is at least $1/2$.
Dynkin's formula and the monotonicity of $\psi$
give
\[
\begin{aligned}
 \tfrac12\mathbb E_{j_n}\left[\tau\wedge t\right]
 &\le \mathbb E_{j_n}\left[
        \int_0^{\tau\wedge t}
        \mathcal L_ng_n(K_s)\,\mathrm ds
       \right]=\mathbb E_{j_n}\left[g_n(K_{\tau\wedge t})\right]
       -g_n(j_n)\\
 &\le \psi(u_n/n)-\psi(j_n/n)=O(\log n).
\end{aligned}
\]

Here $\psi(u_n/n)\to\psi(q)$, while
\eqref{eq:boundary_clock_at_zero} implies
$\psi(j_n/n)=-(1-\gamma)\kappa_1^{-1}\log n+o(1)$.
Monotone convergence gives $\mathbb E_{j_n}\tau=O(\log n)$.

Define the stopped martingale
 $\mathcal M_t
 =g_n(K_{t\wedge\tau})-g_n(j_n)
  -\int_0^{t\wedge\tau}\mathcal L_ng_n(K_s)\,\mathrm ds$.
Before $\tau$, we have $K_s>l_n$ and hence $K_s\ge j_n/2$.
By \eqref{eq:boundary_clock_generator_bounds},
\[
\begin{aligned}
 \mathbb E_{j_n}\left[\left|
   \int_0^\tau\{\mathcal L_ng_n(K_s)-1\}\,\mathrm ds\right|\right]\le\frac C{j_n}\mathbb E_{j_n}\left[\tau\right]
 \le\frac{C\log n}{j_n}=o(1).
\end{aligned}
\]
The predictable quadratic variation satisfies the same bound:
\[
\begin{aligned}
 \mathbb E_{j_n}\left[\langle\mathcal M\rangle_\tau\right]
 =\mathbb E_{j_n}\left[\int_0^\tau
  \{\mathcal L_n(g_n^2)-2g_n\mathcal L_ng_n\}(K_s)
  \,\mathrm ds\right]\le\frac C{j_n}\mathbb E_{j_n}\left[\tau\right]
 \le\frac{C\log n}{j_n}=o(1).
\end{aligned}
\]
The martingale square identity, first at bounded times and then
by $L^2$ convergence, gives
$\mathbb E_{j_n}\left[|\mathcal M_\tau|^2\right]
=\mathbb E_{j_n}\left[\langle\mathcal M\rangle_\tau\right]=o(1)$.
Consequently,
\[
\begin{aligned}
 \psi(K_\tau/n)-\psi(j_n/n)-\tau=\mathcal M_\tau
   +\int_0^\tau\{\mathcal L_ng_n(K_s)-1\}\,\mathrm ds
   =o_{\mathbb P}(1).
\end{aligned}
\]

Let $H_n$ be the first hitting time of $u_n$ from $j_n$.
On $\{K_\tau=u_n\}$, we have $H_n=\tau$; its complement has
probability $o(1)$ by \eqref{eq:mesoscopic_lower_exit_probability}.
Thus
 $H_n=\psi(u_n/n)-\psi(j_n/n)+o_{\mathbb P}(1)$.
Apply the strong Markov property.
Since the count must pass through $j_n$ before reaching $u_n$,
\[
\begin{aligned}
 \tau_{u_n}-\kappa_1^{-1}\log n
 =\bigl(\tau_{j_n}-\kappa_1^{-1}\log j_n\bigr)
   +\psi(u_n/n)-\bigl\{\psi(j_n/n)
          -\kappa_1^{-1}\log(j_n/n)\bigr\}
   +o_{\mathbb P}(1).
\end{aligned}
\]
Now use \eqref{eq:mesoscopic_boundary_entrance},
$\psi(u_n/n)\to\psi(q)$, and
\eqref{eq:boundary_clock_at_zero}.
\eqref{eq:sharp_boundary_entrance_limit} follows.
Applying the same argument to $n-K$, with the two species
interchanged, proves the reflected assertion.
\end{proof}

\subsection{From weak convergence to total variation}

\begin{lemma}[Total variation from a point for a birth--death chain]
\label{lemma:birth_death_tv_oscillation}
Let $P_t$ be the transition semigroup of an irreducible continuous-time
birth--death chain on $\{0,\ldots,n\}$, with invariant law $\pi$.
For an initial state $k$ and $t\ge0$, define
\[
 H_t(j)=P_t(k,\{0,\ldots,j\})-\pi(\{0,\ldots,j\}),
 \qquad 0\le j\le n,
\]
and set $H_t(-1)=0$. Then
\begin{equation}
\label{eq:birth_death_tv_oscillation}
 \dtv(P_t(k,\cdot),\pi)
 =\max_{-1\le j\le n}H_t(j)-\min_{-1\le j\le n}H_t(j).
\end{equation}

For a sequence of such chains with semigroups $P_t^{(n)}$ and
invariant laws $\pi_n$, choose deterministic initial states
$k_n\in\{0,\ldots,n\}$, times $v_n\ge0$, and strictly increasing
affine maps $r_n:\mathbb R\to\mathbb R$. Define
\[
\begin{aligned}
 F_n(x)
 &=P_{v_n}^{(n)}
   \bigl(k_n,\{j\in\{0,\ldots,n\}:r_n(j)\le x\}\bigr),\\
 G_n(x)
 &=\pi_n\bigl(\{j\in\{0,\ldots,n\}:r_n(j)\le x\}\bigr).
\end{aligned}
\]
Suppose the rescaled laws converge weakly to laws with continuous
distribution functions $F$ and $G$, respectively; equivalently,
$F_n(x)\to F(x)$ and $G_n(x)\to G(x)$ for every $x\in\mathbb R$.
Then
\begin{equation}
\label{eq:birth_death_tv_weak_limit}
 \dtv(P_{v_n}^{(n)}(k_n,\cdot),\pi_n)
 \longrightarrow
 \sup_{x\in\mathbb R}\{F(x)-G(x)\}
 -\inf_{x\in\mathbb R}\{F(x)-G(x)\}.
\end{equation}
\end{lemma}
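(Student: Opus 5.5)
The first assertion is the key one, and I will get it from the monotone likelihood ratio structure of birth--death chains. By the definition of total variation,
\[
\dtv(P_t(k,\cdot),\pi)=\sum_j\bigl(P_t(k,j)-\pi(j)\bigr)^+ .
\]
The plan is to show that $j\mapsto P_t(k,j)/\pi(j)$ is unimodal in $j$: nondecreasing up to some index and nonincreasing afterwards. This is the unimodality of birth--death likelihood ratios mentioned in the introduction. Given unimodality, the set $\{j:P_t(k,j)>\pi(j)\}$ is an interval $\{a,\ldots,b\}$. The total variation distance is then $H_t(b)-H_t(a-1)$, with the convention $H_t(-1)=0$.

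Since the increments $H_t(j)-H_t(j-1)=P_t(k,j)-\pi(j)$ are positive exactly on that interval, $H_t$ is nonincreasing up to $a-1$, increasing on $[a,b]$, and nonincreasing after $b$. It ends at $H_t(n)=0=H_t(-1)$. Hence $H_t(a-1)$ is the minimum and $H_t(b)$ is the maximum, which gives \eqref{eq:birth_death_tv_oscillation}.

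For unimodality I would use reversibility: $\pi(j)P_t(j,k)=\pi(k)P_t(k,j)$, so
\[
\frac{P_t(k,j)}{\pi(j)}=\frac{P_t(j,k)}{\pi(k)} .
\]
It therefore suffices that $j\mapsto P_t(j,k)$ is unimodal with its mode placed appropriately. I plan to obtain this by monotone coupling of birth--death chains. For $j<j'\le k$, a chain started at $j$ must pass through $j'$ before it can reach $k$. Writing $\tau$ for that hitting time,
\[
P_t(j,k)=\mathbb E_j\bigl[P_{t-\tau}(j',k);\tau\le t\bigr].
\]
This alone does not give monotonicity, so I would instead use the classical total positivity of birth--death transition kernels (Karlin--McGregor). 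The kernel $P_t$ is $TP_2$, and by a variation-diminishing argument the ratio $P_t(k,\cdot)/\pi(\cdot)$ crosses each constant at most twice, in the pattern $-,+,-$. That is exactly unimodality.

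I expect this step to be the main obstacle, and the route I would try first is the following. The $TP_2$ property of $P_t$ implies that the chains started at the initial distributions $\delta_k$ and $\pi$ satisfy a sign-change bound. The signed function $P_t(k,\cdot)-c\,\pi(\cdot)$ equals $(\delta_k-c\pi)P_t$ applied to an initial measure with at most two sign changes, and variation diminishing preserves that bound. So for every $c$ the set where the ratio exceeds $c$ is an interval, which is unimodality.

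The limit statement then follows by elementary arguments. Since $r_n$ is strictly increasing, the events $\{r_n(j)\le x\}$ are exactly the initial segments $\{0,\ldots,j\}$, and therefore
\[
\dtv=\sup_x\{F_n(x)-G_n(x)\}-\inf_x\{F_n(x)-G_n(x)\},
\]
where the inclusion of $x$ below all $r_n(j)$ accounts for $H(-1)=0$.

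Pointwise convergence of distribution functions to continuous limits is uniform on $\mathbb R$ by a Pólya-type argument: use monotonicity, the limits $0$ and $1$ at $\mp\infty$, and a finite grid. Hence $\sup_x|(F_n-G_n)-(F-G)|\to0$. Because supremum and infimum are $1$-Lipschitz under uniform perturbation, this yields \eqref{eq:birth_death_tv_weak_limit}.
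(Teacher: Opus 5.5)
Your proof matches the paper everywhere except the key unimodality step for $j\mapsto P_t(k,j)/\pi(j)$, where you take a genuinely different route. The max--min identity and the P\'olya-type uniform-convergence step are the same as in the paper. The paper discretises: it applies Ding--Lubetzky--Peres (Corollary~4.5) to the lazy birth--death matrices $R_m=I+(t/m)Q$ and passes to the limit $R_m^m\to e^{tQ}=P_t$. You stay in continuous time and apply total positivity and variation diminishing to $(\delta_k-c\pi)P_t=P_t(k,\cdot)-c\pi$. This identity uses only stationarity $\pi P_t=\pi$, so your detours through reversibility and the hitting-time decomposition are unnecessary. Your route gives a self-contained continuous-time argument based on classical total-positivity theory. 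The paper's route avoids that machinery by relying on a published discrete-time corollary and an elementary approximation limit.

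Two points must be tightened for your route to work. First, $\delta_k-c\pi$ can have two sign changes (pattern $-,+,-$ when $k$ is interior). Variation diminishing for such vectors needs $P_t$ to be $TP_3$; a $TP_2$ kernel only controls vectors with at most one sign change, so the sentence ``$P_t$ is $TP_2$'' does not suffice. The missing fact is available from the Karlin--McGregor coincidence formula. It writes each minor of $P_t$ as a non-coincidence probability for independent skip-free paths, so $P_t$ is totally positive of every order. You also need the sign-arrangement clause of the variation-diminishing theorem to rule out an output pattern $+,-,+$. Second, the usual sign-change count ignores zero entries, so a zero of $P_t(k,\cdot)-c\pi$ could break the interval property of the superlevel set. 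To avoid this, replace $c$ by some $c'$ slightly larger, outside the finite range of the ratio, with $\{P_t(k,\cdot)/\pi>c'\}=\{P_t(k,\cdot)/\pi>c\}$. Alternatively, use strict total positivity for $t>0$.
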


\begin{proof}[Proof of \Cref{lemma:birth_death_tv_oscillation}]
Fix $k$ and $t>0$, and let $Q$ be the generator.
We use \cite[Corollary~4.5 and the continuous-time argument
in the proof of Proposition~4]{DingLubetzkyPeres2010BirthDeath}.
For integers $m\ge1$ satisfying
$m\ge2t\max_j\{-Q(j,j)\}$, the matrix
$R_m=I+(t/m)Q$ is lazy, irreducible, and birth--death, with
invariant law $\pi$. The cited corollary shows that
$j\mapsto R_m^m(k,j)/\pi(j)$ is unimodal, and
 $R_m^m=\left(I+\frac tmQ\right)^m\longrightarrow e^{tQ}=P_t$. Passing its monotonicity inequalities to the
limit shows that $j\mapsto P_t(k,j)/\pi(j)$ is nondecreasing
up to a mode and nonincreasing thereafter.

Put $\Delta(j)=P_t(k,j)-\pi(j)$. Since $\pi(j)>0$, unimodality
implies that
$ A_+=\{j:\Delta(j)>0\}
     =\{j:P_t(k,j)/\pi(j)>1\}$
is an interval, possibly empty. Since $\sum_j\Delta(j)=0$,
 $\dtv(P_t(k,\cdot),\pi)
 =\frac12\sum_{j=0}^n|\Delta(j)|
 =\sum_{j\in A_+}\Delta(j)$.
For every $B\subseteq\{0,\ldots,n\}$,
$\left|\sum_{j\in B}\Delta(j)\right|
\le\sum_{j\in A_+}\Delta(j)$, with equality for $B=A_+$.
Thus, in either case,
\[
\begin{aligned}
 &\dtv(P_t(k,\cdot),\pi)
 =\max_{0\le a\le b\le n}
   \left|\sum_{j=a}^b\Delta(j)\right|\\
 &\quad=
 \max_{0\le a\le b\le n}|H_t(b)-H_t(a-1)|
 =\max_{-1\le j\le n}H_t(j)-\min_{-1\le j\le n}H_t(j).
\end{aligned}
\]
Here we used
$\sum_{j=a}^b\Delta(j)=H_t(b)-H_t(a-1)$ and the fact that
the largest absolute difference between two values is their
maximum minus their minimum.
At $t=0$, the likelihood ratio is
$\mathbf1_{\{j=k\}}/\pi(j)$, which is also unimodal.
This proves \eqref{eq:birth_death_tv_oscillation}.

For the final assertion, $F_n-G_n$ takes exactly the values
of the cumulative probability differences in
\eqref{eq:birth_death_tv_oscillation}. Hence
\[
 \dtv(P_{v_n}^{(n)}(k_n,\cdot),\pi_n)
 =\sup_x\{F_n(x)-G_n(x)\}
  -\inf_x\{F_n(x)-G_n(x)\}.
\]
Weak convergence to continuous distribution functions implies
$\|F_n-F\|_\infty+\|G_n-G\|_\infty\to0$, and therefore
$F_n-G_n\to F-G$ uniformly. For bounded functions $h_n,h$,
\[
 |\sup_x h_n(x)-\sup_x h(x)|\le\|h_n-h\|_\infty,
 \qquad
 |\inf_x h_n(x)-\inf_x h(x)|\le\|h_n-h\|_\infty.
\]
Applying these inequalities to $F_n-G_n$ and $F-G$ proves
\eqref{eq:birth_death_tv_weak_limit}.
\end{proof}

\subsection{Uniform entrance and growing minorities}

\begin{lemma}[Uniform boundary entrance]
\label{lem:uniform_nonzero_minority_entrance}
Put \(j_n=\lfloor an\rfloor\), where
\(a\in(0,\min\{\alpha_*,1-\alpha_*\})\) is fixed and sufficiently small.
If $\tau_{j_n}$ is the first visit to $j_n$, for all sufficiently large \(n\), then
\[
 \sup_{0\le k<j_n}
 \mathbb E_k\left[\left|\tau_{j_n}
          -\frac1{\kappa_1}\log\frac n{k+1}\right|\right]\le C_a.
\]
The reflected statement holds with $\kappa_2$.
\end{lemma}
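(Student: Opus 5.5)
We will prove the bound with a Lyapunov-type clock argument that is uniform in the starting point. The clock is $\psi$ from \eqref{eq:boundary_clock_definition}, modified near zero to absorb the immigration term. Put $F(x)=x(1-x)(\kappa_1-(\kappa_1+\kappa_2)x)$. Take $a$ small, so that on $[0,a]$ the death/birth ratio $d_n(k)/b_n(k)$ is at most $e^{-h}<1$ and $F(x)\ge c x$.

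\textbf{Step 1: the test function.} Define
\[
 g_n(k)=\frac1{\kappa_1}\log\frac{k+\iota_1/\kappa_1}{n}+\psi_0(k/n),
\]
where $\psi_0$ is the smooth correction $\psi(x)-\kappa_1^{-1}\log x$, bounded on $[0,a]$ by \eqref{eq:boundary_clock_at_zero}. We compute $\mathcal L_ng_n(k)$ for $0\le k<j_n$. The shift by $\iota_1/\kappa_1$ is chosen so that the leading birth rate $\kappa_1k+\iota_1$ matches the increment
\[
 \kappa_1^{-1}\log\bigl(1+1/(k+\iota_1/\kappa_1)\bigr)\approx 1/(\kappa_1k+\iota_1).
\]
The main target is the two-sided estimate
\[
 1-\frac{C}{k+1}-\frac{C(k+1)}{n}\;\le\;\mathcal L_ng_n(k)\;\le\;1+\frac{C}{k+1}+\frac{C(k+1)}{n},
\]
together with the carré-du-champ bound $\Gamma g_n(k)\le C/(k+1)$. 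Both follow from Taylor expansion as in \eqref{eq:boundary_clock_generator_bounds}. The errors come from the $(1-k/n)^2$ factor, from $d_n(k)\le Ck^2/n$, and from second-order terms. For $k=0$ only a birth is possible, and the estimate holds directly.

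\textbf{Step 2: Dynkin's formula.} Apply Dynkin up to $\tau_{j_n}$ from state $k$. The drift error integrated over time requires $\mathbb E_k\int_0^{\tau}(K_s+1)^{-1}\,ds\le C$ and $\mathbb E_k\int_0^\tau (K_s+1)n^{-1}\,ds\le C$. This is the \textbf{main obstacle}: the process may linger at small values or backtrack.

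To handle it, we use the drift inequality $b_n\ge e^{h}d_n$. Each level $m$ is then visited $O(1)$ times in expectation, via a geometric number of returns, using the supermartingale $e^{-hK}$ as in \eqref{eq:mesoscopic_lower_exit_probability}. Each visit lasts an expected time of order $1/(\kappa_1 m+\iota_1)$. This gives
\[
 \mathbb E_k\int_0^\tau f(K_s)\,ds\le C\sum_{m<j_n}\frac{f(m)}{m+1}.
\]
With $f(m)=1/(m+1)$ the sum is finite, and with $f(m)=(m+1)/n$ it is $O(a)$. Levels below $k$ are visited with geometrically small probability $e^{-h(k-m)}$, which only improves these bounds.

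\textbf{Step 3: from the mean to the absolute deviation.} The steps above yield
\[
 \mathbb E_k\tau=g_n(j_n)-g_n(k)+O(1)=\kappa_1^{-1}\log\frac{n}{k+1}+O(1),
\]
since $\psi(a)$ is bounded. For the absolute deviation we write
\[
 \tau-\bigl(g_n(j_n)-g_n(k)\bigr)=-\mathcal M_\tau+\int_0^\tau\bigl(1-\mathcal L_ng_n(K_s)\bigr)\,ds .
\]
The second term has bounded $L^1$ norm by Step 2. For the martingale term, $\mathbb E\mathcal M_\tau^2=\mathbb E\langle\mathcal M\rangle_\tau\le C\sum_m (m+1)^{-2}\le C$, so $\mathbb E|\mathcal M_\tau|\le C$ by Cauchy--Schwarz.

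The overshoot at $j_n$ vanishes because jumps are nearest-neighbour. All constants depend only on $a$ and the parameters, uniformly in $k<j_n$. The reflected statement follows by applying the same argument to $n-K$, with $(\kappa_1,\iota_1)$ replaced by $(\kappa_2,\iota_2)$.
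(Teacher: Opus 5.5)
Your proof is correct, but it takes a different route from the paper's in both halves.

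\textbf{The paper's route.} For the mean, the paper uses no test function. It writes $\mathbb E_k\tau_{j_n}=\sum_{r=k}^{j_n-1}e_r$, where $e_r$ is the mean passage time from $r$ to $r+1$. It then solves the first-jump recursion $e_r=b_n(r)^{-1}+\frac{d_n(r)}{b_n(r)}e_{r-1}$ by induction, using $d_n(r)/b_n(r)\le Lr/n$ and $2La<1/2$, to get $e_r=(\kappa_1r+\iota_1)^{-1}+O(n^{-1})$. For the deviation, it uses $|\tau-m|=(\tau-m)+2(m-\tau)_+$, so only the lower tail needs a separate estimate. That estimate comes from dominating $K$ by the pure-birth immigration process with rate $\kappa_1y+\iota_1$ and applying Markov's inequality to its explicit mean, which gives $\mathbb P_k(\tau_{j_n}\le m_n(k)-u)\le C_ae^{-\kappa_1u}$.

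\textbf{Your route.} You run Dynkin's formula on the shifted clock $g_n$. You control the drift and quadratic-variation errors through the occupation bound $\mathbb E_k\int_0^\tau f(K_s)\,ds\le C\sum_m f(m)/(m+1)$, which follows from the geometric drift $d_n\le e^{-h}b_n$. You then bound the martingale part in $L^2$.

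\textbf{What each buys.} The paper's argument is more elementary: it needs neither occupation-time nor second-moment estimates. Yours has several advantages:
\begin{itemize}
\item it handles both tails at once;
\item it gives the finer decomposition of $\tau_{j_n}-(g_n(j_n)-g_n(k))$ into an $L^2$-bounded martingale plus an $L^1$-bounded drift error;
\item it matches the clock argument already used for Lemma~\ref{lemma:sharp_boundary_entrance};
\item it uses the smallness of $a$ only through $\sup_{k<j_n}d_n(k)/b_n(k)<1$, which holds for every fixed $a<\alpha_*$.
\end{itemize}

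A side remark: the $C(k+1)/n$ term in your drift estimate in fact cancels. For $k\ge1$ one has $g_n(k)=\psi(k/n)+\kappa_1^{-1}\log(1+\iota_1/(\kappa_1k))$, and $\psi'=1/F$ absorbs the factor $(1-k/n)^2$ exactly, so the drift error is $O(1/k)$. Your cruder bound is nonetheless sufficient.
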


\begin{proof}[Proof of \Cref{lem:uniform_nonzero_minority_entrance}]
Let $e_r$ be the mean passage time from $r$ to $r+1$. The first-jump
decomposition gives
\[
 e_0=\iota_1^{-1},\qquad
 e_r=b_n(r)^{-1}+\frac{d_n(r)}{b_n(r)}e_{r-1}\quad(r\ge1).
\]
Uniformly below $j_n$, $b_n(r)\ge c(r+1)$ and
$d_n(r)/b_n(r)\le Lr/n$. Choose $a$ so that $2La<1/2$.
Induction then gives $e_r\le C/(r+1)$: the second term is at most
$LaC/r\le2LaC/(r+1)$, and the first term is absorbed by enlarging
$C$. Substituting this bound back into the recurrence, and using
$(1-r/n)^{-2}-1=O(r/n)$, gives, uniformly for $0\le r<j_n$,
 $e_r=\frac1{\kappa_1r+\iota_1}+O(n^{-1})$.
By the strong Markov property at the successive levels,
 $\mathbb E_k\tau_{j_n}
 =\sum_{r=k}^{j_n-1}e_r
 =\frac1{\kappa_1}\log\frac n{k+1}+O_a(1)$,
where the final error is uniform in $k<j_n$. 

Put $m_n(k)=\kappa_1^{-1}\log(n/(k+1))$.
Since $b_n(r)\le\kappa_1r+\iota_1$, the count process can be
coupled below a pure-birth process $Y$, started at $k$, with
birth rate $\kappa_1y+\iota_1$. Its mean is
 $\mathbb E_kY_t
 =\left(k+\frac{\iota_1}{\kappa_1}\right)e^{\kappa_1t}
 -\frac{\iota_1}{\kappa_1}$.
Because $Y$ is nondecreasing,
$\{\tau_{j_n}\le t\}\subseteq\{Y_t\ge j_n\}$.
For $0\le u\le m_n(k)$, Markov's inequality therefore gives
\[
\begin{aligned}
 \mathbb P_k(\tau_{j_n}\le m_n(k)-u)
 \le\frac{\mathbb E_kY_{m_n(k)-u}}{j_n}\le\frac n{j_n}
 \frac{k+\iota_1/\kappa_1}{k+1}e^{-\kappa_1u}
 \le C_a e^{-\kappa_1u},
\end{aligned}
\]
where $n/j_n\le2/a$ for large $n$ and
$(k+\iota_1/\kappa_1)/(k+1)\le\max\{1,\iota_1/\kappa_1\}$.
We also have that
\[
\begin{aligned}
 \mathbb E_k[(m_n(k)-\tau_{j_n})_+]
 =\int_0^{m_n(k)}
 \mathbb P_k(\tau_{j_n}<m_n(k)-u)\,\dd u\le C_a\int_0^\infty e^{-\kappa_1u}\,\dd u
 \le C_a.
\end{aligned}
\]
Together with the uniform mean estimate, this yields, after
enlarging $C_a$,
\[
 \mathbb E_k|\tau_{j_n}-m_n(k)|
 =\mathbb E_k\tau_{j_n}-m_n(k)
 +2\mathbb E_k[(m_n(k)-\tau_{j_n})_+]
 \le C_a.
\]
Reflection proves the second statement.
\end{proof}

\begin{lemma}[Convergence to equilibrium on the worst-case window]
\label{lem:growing_minority_already_mixed}
Fix $s\in\mathbb R$, and let $j_n=\lfloor an\rfloor$ be as in
Lemma~\ref{lem:uniform_nonzero_minority_entrance}.
For initial configurations $\sigma_0^{(n)}$ with counts
$k_n=nM^{(n)}(\sigma_0^{(n)})$, 
\[
 \mathscr D_n(T_n+s/\lambda_*;\sigma_0^{(n)})\longrightarrow0
\]
in each of the following cases, for all sufficiently large $n$:
\begin{enumerate}
\item $k_n<j_n$, and either $k_n\to\infty$ or
      $\kappa_1>\kappa_2$;
\item $k_n>n-j_n$, and either $n-k_n\to\infty$ or
      $\kappa_2>\kappa_1$;
\item $j_n\le k_n\le n-j_n$.
\end{enumerate}
In the third case, convergence is uniform over all such initial
configurations.
\end{lemma}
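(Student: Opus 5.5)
The plan is to reduce every case to the bulk cutoff profile, Theorem~\ref{cutoff_profile_tk}, combined with time monotonicity of the distance to stationarity. In each boundary case, the strong Markov property at the entrance time of Lemma~\ref{lem:uniform_nonzero_minority_entrance} brings the process into the bulk, with an $O(\log n)$ margin to spare. Throughout, write $t=T_n+s/\lambda_*$ and observe that
\[
 t-t_n=\frac{\log n}{\min\{\kappa_1,\kappa_2\}}+\frac{s}{\lambda_*}\longrightarrow\infty .
\]

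\textbf{Case (3).} Apply Theorem~\ref{cutoff_profile_tk} with bulk parameter $\varepsilon_0'=a/2$; all configurations with $j_n\le k_n\le n-j_n$ belong to this class for large $n$. Fix $R>0$. For large $n$ we have $t\ge t^{(n)}(R)$. Since $u\mapsto\mathscr D_{n,a/2}(u)$ is nonincreasing (shown in the proof of Theorem~\ref{cutoff_profile_tk}), we get
\[
 \limsup_n\sup_{\sigma_0}\mathscr D_n(t;\sigma_0)\le\lim_n \mathscr D_{n,a/2}(t^{(n)}(R))=\mathcal G(r^*_{a/2}e^{-R}).
\]
Letting $R\to\infty$ gives uniform convergence to zero.

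\textbf{Case (1).} Let $\tau=\tau_{j_n}$ be the first time the count reaches $j_n$. This is a stopping time for the configuration chain, and because the count moves by nearest-neighbour steps, $M^{(n)}(\sigma^{(n)}(\tau))=j_n/n$. The configuration at time $\tau$ is therefore random but always lies in the bulk class with parameter $a/2$. Fix $R>0$ and put $u_n=t-t_n-R/\lambda_*$. On $\{\tau\le u_n\}$, the strong Markov property and time monotonicity of $\mathscr D_n(\cdot;\eta)$ for each fixed $\eta$ show that the conditional law at time $t$ is a mixture of laws $\eta S^{(n)}_{t-\tau}$ with $t-\tau\ge t_n+R/\lambda_*$. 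Each of these is within $\mathscr D_{n,a/2}(t_n+R/\lambda_*)$ of $\mu^{(n)}$. The mixture bound of Lemma~\ref{formula_total_variation} then yields
\[
 \mathscr D_n(t;\sigma_0^{(n)})\le \mathbb P_{k_n}(\tau>u_n)+\mathscr D_{n,a/2}(t_n+R/\lambda_*).
\]
The second term tends to $\mathcal G(r^*_{a/2}e^{-R})$, as in Case (3). For the first term, write $m_n=\kappa_1^{-1}\log(n/(k_n+1))$; Lemma~\ref{lem:uniform_nonzero_minority_entrance} together with Markov's inequality gives $\mathbb P(\tau>u_n)\le C_a/(u_n-m_n)$ whenever $u_n-m_n>0$. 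It remains to check that $u_n-m_n\to\infty$:
\begin{itemize}
\item if $k_n\to\infty$, then $u_n-m_n\ge \kappa_1^{-1}\log(k_n+1)+O_R(1)\to\infty$, using $\log n/\min\{\kappa_1,\kappa_2\}\ge\log n/\kappa_1$;
\item if $\kappa_1>\kappa_2$, then $u_n-m_n\ge(\kappa_2^{-1}-\kappa_1^{-1})\log n+O_R(1)\to\infty$.
\end{itemize}
Letting $n\to\infty$ and then $R\to\infty$ completes this case.

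\textbf{Case (2).} This follows by the reflected argument: use $n-K$ in place of $K$, the reflected statement of Lemma~\ref{lem:uniform_nonzero_minority_entrance} with $\kappa_2$, and the first visit of the count to $n-j_n$.

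The main point needing care is the mixture step in Case (1). The entrance configuration is random and may depend on the path, so the argument needs the bulk estimate to hold uniformly over all bulk configurations; Theorem~\ref{cutoff_profile_tk} provides exactly this uniformity. It also needs the random remaining time $t-\tau$ to be handled through monotonicity in time for each fixed starting configuration. A secondary point is that $j_n/n\to a$ lies in the interior of $[a/2,1-a/2]$, which I would verify explicitly.
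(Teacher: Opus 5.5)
Your proposal is correct and follows essentially the same route as the paper. Case (3) uses time monotonicity of $\mathscr D_{n,a/2}$ together with Theorem~\ref{cutoff_profile_tk}. Cases (1)--(2) use the strong Markov property at $\tau_{j_n}$, Lemma~\ref{lem:uniform_nonzero_minority_entrance} with Markov's inequality, and convexity of total variation over the random bulk entrance configuration.

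The only difference is cosmetic. You reserve a fixed margin $R/\lambda_*$ and send $R\to\infty$ at the end. The paper instead splits the divergent gap $g_n=c_n-m_n(k_n)$ in half, so that the entrance-failure bound $2C_a/g_n$ and the bulk margin $g_n/2$ improve simultaneously.
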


\begin{proof}[Proof of \Cref{lem:growing_minority_already_mixed}]
Consider first $k_n<j_n$. Put
$c_n=\log n/\min\{\kappa_1,\kappa_2\}$ and
$m_n(k)=\kappa_1^{-1}\log(n/(k+1))$, so that $T_n=t_n+c_n$.
Under either hypothesis in the first case,
\[
 g_n\defeq c_n-m_n(k_n)
 =\left(\frac1{\min\{\kappa_1,\kappa_2\}}
       -\frac1{\kappa_1}\right)\log n
 +\frac{\log(k_n+1)}{\kappa_1}\longrightarrow\infty.
\]
Indeed, both terms are nonnegative, and the second diverges if
$k_n\to\infty$, while the first diverges if $\kappa_1>\kappa_2$.
Since $c_n-g_n/2=m_n(k_n)+g_n/2$,
Lemma~\ref{lem:uniform_nonzero_minority_entrance} and Markov's
inequality give
 $\mathbb P_{k_n}(\tau_{j_n}>c_n-g_n/2)
 \le\frac{2}{g_n}
 \mathbb E_{k_n}|\tau_{j_n}-m_n(k_n)|
 \le\frac{2C_a}{g_n}$.
At entrance the count equals $j_n$, and
$j_n/n\in[a/2,1-a/2]$ for all sufficiently large $n$.
On $\{\tau_{j_n}\le c_n-g_n/2\}$, the entrance configuration
belongs to the bulk class $[a/2,1-a/2]$, and the remaining time is
$T_n+s/\lambda_*-\tau_{j_n}\ge t_n+s/\lambda_*+g_n/2$.
By the strong Markov property, conditioning on the entrance time
and configuration gives a law started from that bulk configuration.
Contraction bounds its distance to equilibrium by
$\mathscr D_{n,a/2}(t_n+s/\lambda_*+g_n/2)$, and convexity
preserves this bound after averaging. The complementary event
contributes at most its probability, bounded by $2C_a/g_n$. Hence
\[
 \mathscr D_n(T_n+s/\lambda_*;\sigma_0^{(n)})
 \le\frac{2C_a}{g_n}
 +\mathscr D_{n,a/2}(t_n+s/\lambda_*+g_n/2).
\]

For every fixed $R>0$, we have $s+\lambda_*g_n/2\ge R$
eventually. By monotonicity of the bulk distance and
Theorem~\ref{cutoff_profile_tk},
 $\limsup_{n\to\infty}
 \mathscr D_n(T_n+s/\lambda_*;\sigma_0^{(n)})
 \le\mathcal G(r_{a/2}^*e^{-R})$.
Letting $R\to\infty$ proves the first case. Applying the same
argument to $n-K^{(n)}$, with the two species interchanged,
proves the second.

Finally, if $j_n\le k_n\le n-j_n$, the initial density belongs
to $[a/2,1-a/2]$ for all sufficiently large $n$.
Since $T_n-t_n=c_n\to\infty$, for every fixed $R>0$,
\[
 \sup_{\substack{\sigma_0\in\Omega_n:\\
 j_n\le nM^{(n)}(\sigma_0)\le n-j_n}}
 \mathscr D_n(T_n+s/\lambda_*;\sigma_0)
 \le\mathscr D_{n,a/2}(t_n+R/\lambda_*)
\]
for all sufficiently large $n$. Using the bulk profile theorem, we first let $n\to\infty$ and then $R\to\infty$ to obtain uniform convergence in the third case.
\end{proof}

\subsection{Loss of dependence on the initial occupied set}
\begin{lemma}[Loss of dependence on finitely many initially occupied sites]
\label{lem:finite_minority_label_erasure}
Fix $k\in\mathbb N_0$ and $s\in\mathbb R$. For each sufficiently
large $n$, let $\sigma_0^{(n)}$ have exactly $k$ ones. At
$t=T_{1,n}+s/\lambda_*$,
\[
 \mathscr D_n(t;\sigma_0^{(n)})
 =\dtv\bigl(\mathcal L_k(K_t^{(n)}),\pi_n\bigr)+o(1),
\]
where $\pi_n$ is the invariant count law and $\mathcal L_k$
denotes the law of the count chain started at $k$.
The reflected assertion holds for configurations with exactly
$k$ zeros, at time $T_{2,n}+s/\lambda_*$.
\end{lemma}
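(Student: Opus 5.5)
The plan is to use permutation symmetry. The law $\sigma_0^{(n)}S_t^{(n)}$ is invariant under permutations that fix the initial set $A^1$ of $k$ occupied sites, and likewise under permutations of its complement $A^0$. Conditional on the pair $(X^1,X^0)$, the counts in $A^1$ and $A^0$, the configuration is therefore uniform. The invariant measure is uniform given the global count $K=X^0+X^1$. It follows that
\[
\mathscr D_n(t;\sigma_0^{(n)})-\dtv\bigl(\mathcal L_k(K_t^{(n)}),\pi_n\bigr)
\]
is at most the total variation distance between the conditional law of $X^1$ given $K$ under the chain and the hypergeometric law $\mathrm{Hyp}(n,k,K)$ that $\mu^{(n)}$ induces on $A^1$. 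Since $X^1$ takes at most $k+1$ values, it suffices to show this conditional law is asymptotically hypergeometric. At the relevant times $K\approx n\alpha_*$, so $\mathrm{Hyp}(n,k,K)$ is close to $\mathrm{Bin}(k,\alpha_*)$, up to $o(1)$. Concretely, I will show that at time $t$ the vector $(\sigma_x)_{x\in A^1}$ is close in total variation to i.i.d.\ $\mathrm{Ber}(\alpha_*)$, independent of $K$ (equivalently of $X^0$), up to $o(1)$.

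The key steps, in order, are as follows.

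\textbf{Step 1 (entrance into the bulk).} Apply Lemma~\ref{lemma:sharp_boundary_entrance} with $q$ fixed. The hitting time $\tau$ of $\lfloor nq\rfloor$ equals $\kappa_1^{-1}\log n+O_{\mathbb P}(1)$, so the remaining time $t-\tau=t_n+O_{\mathbb P}(1)\to\infty$.

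\textbf{Step 2 (restart at $\tau$).} By the strong Markov property, restart at time $\tau$ with the new blocks $A^i(\sigma(\tau))$. The $k$ seed sites are a fixed finite subset. Their states at time $\tau$ are whatever they happen to be, but that does not matter, because each site's state after time $\tau$ is forgotten on a bounded time scale.

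\textbf{Step 3 (forgetting on the bulk scale).} For a single tagged site, the flip rates given the density path are $\kappa_1 m(1-m)$ (up) and $\kappa_2 m(1-m)$ (down). Over the remaining time, in which $m\approx\alpha_*$ except for an $O(1)$ initial stretch, the site's marginal relaxes to $\mathrm{Ber}(\alpha_*)$ at exponential rate $(\kappa_1+\kappa_2)\alpha_*(1-\alpha_*)$. To make this rigorous, I will condition on the global density path $M$, which is a $o(1)$-perturbation of a deterministic trajectory on this window. Given the path, the $k$ tagged sites evolve as nearly independent two-state chains with time-inhomogeneous rates $c_x$. The one exception is jump coupling: each flip of a tagged site changes $M$ by only $1/n$, which is negligible.

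Rather than handling the path conditioning directly, the cleaner route is a \emph{coupling} argument. I will run two copies of the chain from $\sigma_0$ and from $\varphi\sigma_0$, where $\varphi$ is a permutation moving the seeds. The copies share the global count process, and each tagged site is coupled with the site it is swapped with; the two sites coalesce once both have flipped after entrance. This gives $\dtv(\sigma_0S_t,(\varphi\sigma_0)S_t)\to0$ uniformly in $\varphi$. Averaging over $\varphi$ yields a law that is uniform given $K$, and therefore has distance exactly $\dtv(\mathcal L_k(K_t),\pi_n)$ to $\mu^{(n)}$. The main obstacle is constructing this coupling of two sites with identical global count. The natural choice is a swap coupling: whenever site $x$ flips in one copy, site $y$ flips in the other, with matching rates. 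This works because the rates depend only on the site's state and on $M$, which the coupling preserves. The discrepancy disappears at the first time $x$ and $y$ share a state, and that happens with probability tending to one over a diverging time spent in the bulk. The reflected case at $T_{2,n}$ is identical.
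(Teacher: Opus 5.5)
Your final coupling route is essentially the paper's proof. The paper couples the chain from $\sigma_0^{(n)}$ with one started from a uniformly random $k$-set, which is equivalent to your average over $\varphi$. It uses the same swap coupling with a shared count process, under which each discrepancy pair is erased at its first flip at rate $b_n(K)/(n-K)+d_n(K)/K$, and then uses exchangeability to identify the averaged law's distance to $\mu^{(n)}$ with $\dtv(\mathcal L_k(K_t^{(n)}),\pi_n)$.

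The main difference is in how the paper keeps the density near $\alpha_*$. You track the whole post-entrance period via Lemma~\ref{lemma:sharp_boundary_entrance}, which would additionally require proving that the density stays in the bulk for a diverging time; you assert this but do not show it. The paper instead controls the density only on a final window $[t-L,t]$, using \eqref{eq:fixed_seed_count_weak_limit} at time $t-L$ together with a Doob/Gr\"onwall bound, and then lets $n\to\infty$ followed by $L\to\infty$.
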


\begin{proof}[Proof of \Cref{lem:finite_minority_label_erasure}]
Couple the chain from the prescribed occupied set with one whose
initial occupied set is uniform among all sets of size $k$.
When their occupied sets are $A,B$, with $|A|=|B|=K$, use shared
birth and death clocks of rates $b_n(K)$ and $d_n(K)$. Put
 $\ell=|A\setminus B|=|B\setminus A|$,
and choose a bijection $\varphi:A\setminus B\to B\setminus A$
at each update.

At a birth, provided $K<n$, with probability
$(n-K-\ell)/(n-K)$ choose a site uniformly from $(A\cup B)^c$
and add it to both sets; with the remaining probability
$\ell/(n-K)$, choose $a$ uniformly from $A\setminus B$ and add
$\varphi(a)$ to $A$ and $a$ to $B$.
At a death, provided $K>0$, with probability $(K-\ell)/K$
remove a uniformly chosen site of $A\cap B$ from both sets;
with the remaining probability $\ell/K$, choose $a$ uniformly
from $A\setminus B$ and remove $a$ from $A$ and $\varphi(a)$
from $B$. 

In either chain, each vacant site is selected at a birth with
probability $1/(n-K)$, and each occupied site at a death with
probability $1/K$, giving the correct transition rates.
Each update changes both counts by the same amount, so they
remain equal. Updating a common site leaves $\ell$ unchanged,
whereas a paired update removes one site from each discrepancy
set and decreases $\ell$ by one. Hence
$0\le\ell_t\le\ell_0\le k$, and once the configurations agree,
they remain equal. They necessarily agree when $K=0$ or $K=n$.

Choose a fixed interval
 $\mathcal U=[\alpha_*-\delta,\alpha_*+\delta]\subset(0,1)$.
For $x=K/n\in\mathcal U$, the discrepancy decreases at rate
$\ell\{b_n(K)/(n-K)+d_n(K)/K\}$, and
\[
 \frac{b_n(K)}{n-K}+\frac{d_n(K)}K
 =(\kappa_1+\kappa_2)x(1-x)
 +\frac{\iota_1(1-x)+\iota_2x}{n}\ge c
\]
for some $c>0$ independent of $n$. Its drift is therefore at
most $-c\ell$ while the common density lies in $\mathcal U$.

We next show that the density remains in $\mathcal U$ during the final
interval of any fixed length $L>0$, with probability tending
to one. For all sufficiently large $n$, $t-L\ge0$, and
 $t-L=T_{1,n}+\frac{s-\lambda_*L}{\lambda_*}$.
Thus \eqref{eq:fixed_seed_count_weak_limit} gives
$M_{t-L}^{(n)}\to\alpha_*$ in probability.
Recall that $F^{(\infty)}(x)
 =x(1-x)\bigl[\kappa_1-(\kappa_1+\kappa_2)x\bigr]$.
For $0\le u\le L$, the density martingale decomposition is
\[
 M_{t-L+u}^{(n)}
 =M_{t-L}^{(n)}
 +\int_0^u
 \left\{
 F^{(\infty)}(M_{t-L+v}^{(n)})
 +\frac{I(M_{t-L+v}^{(n)})}{n}
 \right\}\,\mathrm dv+N_u,
\]
where $I(x)=\iota_1(1-x)^2-\iota_2x^2$ and $N$ is a
square-integrable martingale with $N_0=0$.
The density jumps have size $1/n$, and
$b_n(K)+d_n(K)\le Cn$ uniformly in $K$, so Doob's inequality gives
 $\mathbb E\left[\sup_{0\le u\le L}|N_u|^2\right]
 \le4\mathbb E[\langle N\rangle_L]\le\frac{CL}{n}$.
Since $F^{(\infty)}(\alpha_*)=0$, $F^{(\infty)}$ is Lipschitz
on $[0,1]$, and $I$ is bounded, Gr\"onwall's inequality yields
\[
 \sup_{0\le u\le L}|M_{t-L+u}^{(n)}-\alpha_*|
 \le e^{CL}\left(
 |M_{t-L}^{(n)}-\alpha_*|+\frac{CL}{n}
 +\sup_{0\le u\le L}|N_u|
 \right)=o_{\mathbb P}(1).
\]
Consequently, the probability that the density lies outside
$\mathcal U$ at any time during $[t-L,t]$ tends to zero.

Set
 $\zeta_n=\inf\{u\ge0:M_{t-L+u}^{(n)}\notin\mathcal U\}$ and $Z_u=\ell_{t-L+u}\mathbf1_{\{u<\zeta_n\}}$.
For $u<\zeta_n$, the density lies in $\mathcal U$, so the discrepancy
drift bound applies. At $\zeta_n$, $Z$ can only decrease when
it is set to zero, and it remains zero thereafter. Its drift is
therefore at most $-cZ_u$, giving
 $\mathbb E[Z_L]\le e^{-cL}\mathbb E[Z_0]\le ke^{-cL}$.
On $\{\zeta_n>L\}$, $Z_L=\ell_t$, and a positive discrepancy
is at least one. Hence
\[
 \mathbb P(\ell_t>0)
 \le\mathbb P(\zeta_n\le L)+\mathbb E[Z_L]
 \le o_n(1)+ke^{-cL},
\]
where $L$ is fixed in the $o_n(1)$ term. First letting
$n\to\infty$ and then $L\to\infty$ proves
$\mathbb P(\ell_t>0)\to0$.

Let $\nu_n=\sigma_0^{(n)}S_t^{(n)}$, $\bar\nu_n$ be the
time-$t$ law of the chain with uniform initial occupied set,
and $\mu_n$ denote the invariant law.
The coupling gives
$\dtv(\nu_n,\bar\nu_n)\le\mathbb P(\ell_t>0)=o(1)$.
 
Both laws have count distribution $p_n=\mathcal L_k(K_t^{(n)})$.
The uniform initial law and the dynamics are invariant under
site permutations, so $\bar\nu_n$ is exchangeable; permutation
invariance and uniqueness give the same property for $\mu_n$.
Thus, for every configuration $\eta$ with $r$ ones,
$\bar\nu_n(\eta)=p_n(r)/\binom nr$ and
$\mu_n(\eta)=\pi_n(r)/\binom nr$. Summing over configurations
at each count gives
 $\dtv(\bar\nu_n,\mu_n)
 =\frac12\sum_{r=0}^n|p_n(r)-\pi_n(r)|
 =\dtv(p_n,\pi_n)$.
Since $\mathscr D_n(t;\sigma_0^{(n)})=\dtv(\nu_n,\mu_n)$,
the triangle inequality in both directions yields
\[
 \left|
 \mathscr D_n(t;\sigma_0^{(n)})-\dtv(p_n,\pi_n)
 \right|
 \le\dtv(\nu_n,\bar\nu_n)=o(1).
\]
Exchanging zeros and ones proves the reflected assertion.
\end{proof}

\end{appendix}

\section*{Use of Artificial Intelligence (AI) tools}
The authors used AI tools to check grammar, improve the writing, and generate Figure~\ref{fig:bulk_worst_cutoff}. The mathematical results and proofs were developed independently by the authors.
AI tools also assisted with the literature review, identifying \cite{Tavare1987BirthImmigration} and \cite{DingLubetzkyPeres2010BirthDeath} as useful references for the proofs of Lemmas~\ref{lemma:sharp_boundary_entrance} and~\ref{lemma:birth_death_tv_oscillation}, respectively.
The authors take full responsibility for the content of this paper.

\section*{Acknowledgements}
This work was initiated following the American Institute of Mathematics (AIM) Square group meetings in 2022, 2023, and 2024 in which the first author participated. The first author thanks Ruth J. Williams, Hye-Won Kang, Lea Popovic, Yi Fu, and Grzegorz Rempa{\l}a for many helpful discussions about the model in the context of stochastic averaging. He also thanks the American Institute of Mathematics for their warm hospitality. The authors also thank Arnab Ganguly
for helpful discussions.

\section*{Funding}
The work was supported in part by the Engineering and Physical
Sciences Research Council (EPSRC) of UK Research and Innovation
(UKRI) [grant number EP/Y027795/1] and by the National Science
Foundation (NSF) [grant number DMS-2246815].

\bibliographystyle{plain}
\bibliography{bibliography}

@book {Rachev1991ProbabilityMetrics,
    AUTHOR = {Rachev, Svetlozar T.},
     TITLE = {Probability metrics and the stability of stochastic models},
    SERIES = {Wiley Series in Probability and Mathematical Statistics:
              Applied Probability and Statistics},
 PUBLISHER = {John Wiley \& Sons, Ltd., Chichester},
      YEAR = {1991},
     PAGES = {xiv+494},
      ISBN = {0-471-92877-1},
   MRCLASS = {60B10 (60F05)},
  MRNUMBER = {1105086},
MRREVIEWER = {R.\ M.\ Dudley},
}

@article {Zolotarev1983ProbabilityMetrics,
    AUTHOR = {Zolotarev, V. M.},
     TITLE = {Probability metrics},
   JOURNAL = {Teor. Veroyatnost. i Primenen.},
  FJOURNAL = {Akademiya Nauk SSSR. Teoriya Veroyatnoste\u i\ i ee
              Primeneniya},
    VOLUME = {28},
      YEAR = {1983},
    NUMBER = {2},
     PAGES = {264--287},
      ISSN = {0040-361X},
   MRCLASS = {60B05 (60B11 60F05)},
  MRNUMBER = {700210},
MRREVIEWER = {Jerzy\ Szulga},
}

@article {Mueller1997IntegralProbMetrics,
    AUTHOR = {M\"uller, Alfred},
     TITLE = {Integral probability metrics and their generating classes of functions},
   JOURNAL = {Adv. in Appl. Probab.},
  FJOURNAL = {Advances in Applied Probability},
    VOLUME = {29},
      YEAR = {1997},
    NUMBER = {2},
     PAGES = {429--443},
      ISSN = {0001-8678,1475-6064},
   MRCLASS = {60E99 (60B10)},
  MRNUMBER = {1450938},
MRREVIEWER = {Juan\ A.\ Cuesta-Albertos},
       DOI = {10.2307/1428011},
       URL = {https://doi.org/10.2307/1428011},
}

@book {Villani2009optimaltransport,
    AUTHOR = {Villani, C\'edric},
     TITLE = {Optimal transport},
    SERIES = {Grundlehren der mathematischen Wissenschaften [Fundamental
              Principles of Mathematical Sciences]},
    VOLUME = {338},
      NOTE = {Old and new},
 PUBLISHER = {Springer-Verlag, Berlin},
      YEAR = {2009},
     PAGES = {xxii+973},
      ISBN = {978-3-540-71049-3},
   MRCLASS = {49-02 (28A75 37J50 49Q20 53C23 58E30)},
  MRNUMBER = {2459454},
MRREVIEWER = {Dario\ Cordero-Erausquin},
       DOI = {10.1007/978-3-540-71050-9},
       URL = {https://doi.org/10.1007/978-3-540-71050-9},
}

@article{Ganguly2026StatInference,
  doi = {10.48550/ARXIV.2409.06565},
  url = {https://arxiv.org/abs/2409.06565},
  author = {Ganguly,  Arnab and KhudaBukhsh,  Wasiur R.},
  title = {Statistical inference for a multiscale stochastic model of enzyme kinetics via propagation of chaos},
  publisher = {Elsevier},
  year = {2026},
  journal = {Stochastic Processes and their Applications},
  copyright = {arXiv.org perpetual,  non-exclusive license}
}

@article {Ball2006AAM,
    AUTHOR = {Ball, Karen and Kurtz, Thomas G. and Popovic, Lea and Rempa{\l}a,
              Greg},
     TITLE = {Asymptotic analysis of multiscale approximations to reaction
              networks},
   JOURNAL = {Ann. Appl. Probab.},
  FJOURNAL = {The Annals of Applied Probability},
    VOLUME = {16},
      YEAR = {2006},
    NUMBER = {4},
     PAGES = {1925--1961},
      ISSN = {1050-5164,2168-8737},
   MRCLASS = {60J27 (60F17 60J80 80A30 92C45 92E20)},
  MRNUMBER = {2288709},
MRREVIEWER = {Didier\ Piau},
       DOI = {10.1214/105051606000000420},
       URL = {https://doi.org/10.1214/105051606000000420},
}

@incollection{Kurtz1992Averaging,
  author     = {Kurtz, Thomas G.},
  title      = {Averaging for martingale problems and stochastic
                approximation},
  booktitle  = {Applied stochastic analysis ({N}ew {B}runswick, {NJ}, 1991)},
  series     = {Lect. Notes Control Inf. Sci.},
  volume     = {177},
  pages      = {186--209},
  publisher  = {Springer, Berlin},
  year       = {1992},
  isbn       = {3-540-55296-0},
  mrclass    = {60G42 (60B10 60G57 62L20)},
  mrnumber   = {1169928},
  mrreviewer = {R.\ Sh.\ Liptser},
  doi        = {10.1007/BFb0007058},
}

@article {Ganguly2026tQSSA,
    AUTHOR = {Ganguly, Arnab and KhudaBukhsh, Wasiur R.},
     TITLE = {Asymptotic analysis of the total quasi-steady state
              approximation for the {M}ichaelis-{M}enten enzyme kinetic
              reactions},
   JOURNAL = {J. Math. Anal. Appl.},
  FJOURNAL = {Journal of Mathematical Analysis and Applications},
    VOLUME = {561},
      YEAR = {2026},
    NUMBER = {1},
     PAGES = {130551},
      ISSN = {0022-247X,1096-0813},
   MRCLASS = {60J27 (60H10 60J28 92C45)},
  MRNUMBER = {5040107},
       DOI = {10.1016/j.jmaa.2026.130551},
       URL = {https://doi.org/10.1016/j.jmaa.2026.130551},
}

@article{Hasminskii1966a,
  title     = {On Stochastic Processes Defined by Differential Equations with a Small Parameter},
  volume    = {11},
  issn      = {1095-7219},
  doi       = {10.1137/1111018},
  number    = {2},
  journal   = {Theory of Probability \& Its Applications},
  publisher = {Society for Industrial \& Applied Mathematics (SIAM)},
  author    = {Has’minskii,  R. Z.},
  year      = {1966},
  month     = {1},
  pages     = {211--228}
}

@article{Khasminskii1966b,
  title     = {A Limit Theorem for the Solutions of Differential Equations with Random Right-Hand Sides},
  volume    = {11},
  issn      = {1095-7219},
  doi       = {10.1137/1111038},
  number    = {3},
  journal   = {Theory of Probability \& Its Applications},
  publisher = {Society for Industrial & Applied Mathematics (SIAM)},
  author    = {Khas’minskii,  R. Z.},
  year      = {1966},
  month     = {1},
  pages     = {390--406}
}

@misc{Elsayed2024StochasticAveraging,
  doi = {10.48550/ARXIV.2412.16157},
  url = {https://arxiv.org/abs/2412.16157},
  author = {Elsayed,  Karim S. and Izyumtseva,  Olga and KhudaBukhsh,  Wasiur R. and Rizk,  Amr},
  title = {Stochastic Analysis of Entanglement-assisted Quantum Communication Channels},
  publisher = {arXiv},
  year = {2024},
  copyright = {arXiv.org perpetual,  non-exclusive license}
}

@article{Aldous_Diaconis_cutoff,
  author = {Aldous, David and Diaconis, Persi},
  title = {Shuffling cards and stopping times},
  journal = {Amer. Math. Monthly},
  volume = {93},
  number = {5},
  pages = {333--348},
  year = {1986},
  doi = {10.1080/00029890.1986.11971821},
  url = {https://doi.org/10.1080/00029890.1986.11971821}
}

@misc{BBL2025DensityCutoff,
  author = {Barbour, A. D. and Brightwell, Graham and Luczak, Malwina},
  title = {Convergence to equilibrium for density dependent {Markov} jump processes},
  year = {2025},
  eprint = {2505.12926},
  url = {https://arxiv.org/abs/2505.12926},
  archivePrefix = {arXiv},
  primaryClass = {math.PR},
  note = {Version 4, 1 June 2026; accepted for publication in Electron. J. Probab.}
}

@article{Bibbona2020Interface,
  author  = {Bibbona, Enrico and Kim, Jinsu and Wiuf, Carsten},
  doi     = {10.1098/rsif.2020.0243},
  journal = {J. R. Soc. Interface},
  number  = {168},
  pages   = {20200243},
  title   = {Stationary distributions of systems with discreteness-induced transitions},
  url     = {https://royalsocietypublishing.org/doi/abs/10.1098/rsif.2020.0243},
  volume  = {17},
  year    = {2020}
}

@book{Billingsley1999Convergence,
  author    = {Billingsley, Patrick},
  title     = {Convergence of probability measures},
  series    = {Wiley Series in Probability and Statistics: Probability and
               Statistics},
  edition   = {Second},
  note      = {A Wiley-Interscience Publication},
  publisher = {John Wiley \& Sons, Inc., New York},
  year      = {1999},
  pages     = {x+277},
  isbn      = {0-471-19745-9},
  mrclass   = {60B10 (28A33 60F17)},
  mrnumber  = {1700749},
  doi       = {10.1002/9780470316962},
  url       = {https://doi.org/10.1002/9780470316962}
}

@book{ChenGoldsteinShao2011Stein,
  author = {Chen, Louis H. Y. and Goldstein, Larry and Shao, Qi-Man},
  title = {Normal Approximation by {Stein}'s Method},
  series = {Probability and Its Applications},
  publisher = {Springer, Berlin, Heidelberg},
  year = {2011},
  doi = {10.1007/978-3-642-15007-4},
  url = {https://doi.org/10.1007/978-3-642-15007-4}
}

@article{Diaconis1996cutoff,
  author = {Diaconis, Persi},
  title = {The cutoff phenomenon in finite {Markov} chains},
  journal = {Proc. Natl. Acad. Sci. USA},
  volume = {93},
  number = {4},
  pages = {1659--1664},
  year = {1996},
  doi = {10.1073/pnas.93.4.1659},
  url = {https://doi.org/10.1073/pnas.93.4.1659}
}

@book{Ethier_Kurtz1986MPCC,
  author     = {Ethier, Stewart N. and Kurtz, Thomas G.},
  title      = {Markov Processes: Characterization and Convergence},
  series     = {Wiley Series in Probability and Mathematical Statistics:
                Probability and Mathematical Statistics},
  publisher  = {John Wiley \& Sons, Inc., New York},
  year       = {1986},
  pages      = {x+534},
  isbn       = {0-471-08186-8},
  mrclass    = {60J25 (60B10 60F05 60F17 60G44 60J80)},
  mrnumber   = {838085},
  mrreviewer = {S.\ R. S. Varadhan},
  doi        = {10.1002/9780470316658},
  url        = {https://doi.org/10.1002/9780470316658}
}

@article{FuEtAl2026Fragility,
  author  = {Yi Fu and Hye-Won Kang and Wasiur R. KhudaBukhsh and
             Lea Popovic and Grzegorz A. Rempa{\l}a and Ruth J. Williams},
  title   = {Fragility in a {Togashi--Kaneko} Stochastic Model with Mutations},
  journal = {SIAM Journal on Life Sciences},
  year    = {2026},
  volume  = {1},
  number  = {2},
  pages   = {202--228},
  doi     = {10.1137/25M1799544},
  url     = {https://doi.org/10.1137/25M1799544}
}

@article{gonccalves2021sharp,
  author = {Gon{\c{c}}alves, Patr{\'\i}cia and Jara, Milton and Marinho, Rodrigo and Menezes, Ot{\'a}vio},
  title = {Sharp convergence to equilibrium for the {SSEP} with reservoirs},
  journal = {Ann. Inst. Henri Poincar{\'e} Probab. Stat.},
  volume = {62},
  number = {1},
  pages = {146--165},
  year = {2026},
  doi = {10.1214/23-AIHP1419},
  url = {https://doi.org/10.1214/23-AIHP1419}
}

@article{HeLuczakRoss2025SIS,
  author = {He, Roxanne and Luczak, Malwina and Ross, Nathan},
  title = {Cutoff for the logistic {SIS} epidemic model with self-infection},
  journal = {Adv. in Appl. Probab.},
  volume = {57},
  number = {4},
  pages = {1456--1483},
  year = {2025},
  doi = {10.1017/apr.2025.16},
  url = {https://doi.org/10.1017/apr.2025.16}
}

@misc{jara2018non,
  author = {Jara, Milton and Menezes, Ot{\'a}vio},
  title = {Non-equilibrium fluctuations of interacting particle systems},
  year = {2018},
  eprint = {1810.09526},
  url = {https://arxiv.org/abs/1810.09526},
  archivePrefix = {arXiv},
  primaryClass = {math.PR},
}

@article {jara_amorim_xiang,
    AUTHOR = {Amorim, Julian and Jara, Milton and Xiang, Yangrui},
     TITLE = {Quantitative hydrodynamics for a generalized contact model},
   JOURNAL = {Stochastic Process. Appl.},
  FJOURNAL = {Stochastic Processes and their Applications},
    VOLUME = {188},
      YEAR = {2025},
     PAGES = {104680},
      ISSN = {0304-4149},
   MRCLASS = {60K35 (82C05)},
  MRNUMBER = {4906118},
       DOI = {10.1016/j.spa.2025.104680},
       URL = {https://doi.org/10.1016/j.spa.2025.104680},
}

@misc{JaraXiang,
  author = {Aljovin, Enzo and Jara, Milton and Xiang, Yangrui},
  title = {Thermalization and convergence to equilibrium of the noisy voter model},
  year = {2024},
  eprint = {2409.05722},
  url = {https://arxiv.org/abs/2409.05722},
  archivePrefix = {arXiv},
  primaryClass = {math.PR},
}

@article {Lacoin_profile,
    AUTHOR = {Lacoin, Hubert},
     TITLE = {The cutoff profile for the simple exclusion process on the
              circle},
   JOURNAL = {Ann. Probab.},
  FJOURNAL = {The Annals of Probability},
    VOLUME = {44},
      YEAR = {2016},
    NUMBER = {5},
     PAGES = {3399--3430},
      ISSN = {0091-1798,2168-894X},
   MRCLASS = {60K35 (37L60 60B10 60J10 82C20)},
  MRNUMBER = {3551201},
       DOI = {10.1214/15-AOP1053},
       URL = {https://doi.org/10.1214/15-AOP1053},
}

@article{TK2001Transitions,
  title     = {Transitions Induced by the Discreteness of Molecules in a Small Autocatalytic System},
  author    = {Togashi, Yuichi and Kaneko, Kunihiko},
  journal   = {Phys. Rev. Lett.},
  volume    = {86},
  number    = {11},
  pages     = {2459--2462},
  year      = {2001},
  month     = {Mar},
  publisher = {American Physical Society},
  doi       = {10.1103/PhysRevLett.86.2459},
  url       = {https://link.aps.org/doi/10.1103/PhysRevLett.86.2459}
}

@article{Togashi2003,
  author = {Togashi, Yuichi and Kaneko, Kunihiko},
  title = {Alteration of chemical concentrations through discreteness-induced transitions in small autocatalytic systems},
  journal = {J. Phys. Soc. Japan},
  volume = {72},
  number = {1},
  pages = {62--68},
  year = {2003},
  doi = {10.1143/JPSJ.72.62},
  url = {https://doi.org/10.1143/JPSJ.72.62}
}

@Article{Yau-relative-entropy,
 Author = {Yau, Horng-Tzer},
 Title = {Relative entropy and hydrodynamics of {Ginzburg}-{Landau} models},
 FJournal = {Letters in Mathematical Physics},
 Journal = {Lett. Math. Phys.},
 ISSN = {0377-9017},
 Volume = {22},
 Number = {1},
 Pages = {63--80},
 Year = {1991},
 Language = {English},
 DOI = {10.1007/BF00400379},
 zbMATH = {4197129},
 Zbl = {0725.60120}
}

@article{DingLubetzkyPeres2010BirthDeath,
  author = {Ding, Jian and Lubetzky, Eyal and Peres, Yuval},
  title = {Total variation cutoff in birth-and-death chains},
  journal = {Probab. Theory Related Fields},
  volume = {146},
  number = {1--2},
  pages = {61--85},
  year = {2010},
  doi = {10.1007/s00440-008-0185-3},
  url = {https://doi.org/10.1007/s00440-008-0185-3}
}

@article{Tavare1987BirthImmigration,
  author = {Tavar{\'e}, Simon},
  title = {The birth process with immigration, and the genealogical structure of large populations},
  journal = {J. Math. Biol.},
  year = {1987},
  volume = {25},
  pages = {161--168},
  doi = {10.1007/BF00276387}
}

@article{BarbourHamzaKaspiKlebaner2015Escape,
  author = {Barbour, A. D. and Hamza, K. and Kaspi, Haya and Klebaner, F. C.},
  title = {Escape from the boundary in {Markov} population processes},
  journal = {Adv. in Appl. Probab.},
  year = {2015},
  volume = {47},
  number = {4},
  pages = {1190--1211},
  doi = {10.1239/aap/1449859806}
}

@article{Barbour1990Diffusion,
  author  = {Barbour, A. D.},
  title   = {{Stein}'s method for diffusion approximations},
  journal = {Probability Theory and Related Fields},
  volume  = {84},
  number  = {3},
  pages   = {297--322},
  year    = {1990},
  doi     = {10.1007/BF01197887}
}

@book{KipnisLandim1999,
  author    = {Kipnis, Claude and Landim, Claudio},
  title     = {Scaling Limits of Interacting Particle Systems},
  series    = {Grundlehren der mathematischen Wissenschaften},
  volume    = {320},
  publisher = {Springer-Verlag},
  address   = {Berlin, Heidelberg},
  year      = {1999},
  doi       = {10.1007/978-3-662-03752-2}
}

@article{BordenaveCaputoSalez2018,
  author  = {Bordenave, Charles and Caputo, Pietro and Salez, Justin},
  title   = {Random walk on sparse random digraphs},
  journal = {Probability Theory and Related Fields},
  year    = {2018},
  volume  = {170},
  number  = {3-4},
  pages   = {933--960},
  doi     = {10.1007/s00440-017-0796-7}
}

@article{Teyssier2020Profile,
  author  = {Teyssier, Lucas},
  title   = {Limit profile for random transpositions},
  journal = {The Annals of Probability},
  year    = {2020},
  volume  = {48},
  number  = {5},
  pages   = {2323--2343},
  doi     = {10.1214/20-AOP1424}
}

\end{document}